\newcommand{\1}{ \mathds{1}}
\newcommand{\qdim}{\mathop{\mathrm{qdim}}}
\newcommand{\Com}{\mathrm{Com}}
\newcommand{\maru}[1]{{\ooalign{\hfil#1\/\hfil\crcr
\raise.167ex\hbox{\mathhexbox20D}}}}
\newcommand{\ruby}[2]{%
 \leavevmode
 \setbox0=\hbox{#1}%
 \setbox1=\hbox{\tiny #2}%
 \ifdim\wd0>\wd1 \dimen0=\wd0 \end{lemma}se \dimen0=\wd1 \fi
 \hbox{%
   \kanjiskip=0pt plus 2fil
   \xkanjiskip=0pt plus 2fil
   \vbox{%
     \hbox to \dimen0{%
       \tiny \hfil#2\hfil}%
     \nointerlineskip
     \hbox to \dimen0{\mathstrut\hfil#1\hfil}}}}
\DeclareMathOperator*{\fusion}{\boxtimes}
\newcommand{\Z}{\mathbb{Z}}
\newcommand{\C}{\mathbb{C}}
\newcommand{\R}{\mathbb{R}}
\newcommand{\Q}{\mathbb{Q}}
\newcommand{\g}{\mathfrak{g}}
\newcommand{\Fg}{\mathfrak{g}}
\newcommand{\End}{\mathrm{End}}
\newcommand{\Aut}{\mathrm{Aut}\,}
\newcommand{\Inn}{\mathrm{Inn}\,}
\newcommand{\ch}{\mathrm{ch}\,}
\makeatletter \@addtoreset{equation}{section}
\theoremstyle{plain}
\newtheorem{maintheorem}{Main Theorem}
\newtheorem{theorem}{Theorem}[section]
\newtheorem{proposition}[theorem]{Proposition}
\newtheorem{lemma}[theorem]{Lemma}
\theoremstyle{definition}
\newtheorem{definition}[theorem]{Definition}
\theoremstyle{remark}
\newtheorem{remark}[theorem]{Remark}
\numberwithin{equation}{section}
\title[Holomorphic vertex operator algebras]{Inertia groups and uniqueness of holomorphic vertex operator algebras}
 \subjclass[2010]{Primary  17B69}
\author{Ching Hung Lam} %
  \address[C. H. Lam] {Institute of Mathematics, Academia Sinica, Taipei 10617, Taiwan and National Center for Theoretical Sciences of  Taiwan.}
  \email{chlam@math.sinica.edu.tw}
\author[H. Shimakura]{Hiroki Shimakura}%
\address[H. Shimakura]{Graduate School of Information Sciences,
Tohoku University,
Sendai 980-8579, Japan }%
\email {shimakura@tohoku.ac.jp}%
\date{}
\thanks{C.\,H. Lam was partially supported by MoST grant 104-2115-M-001-004-MY3 of Taiwan}
\thanks{H.\ Shimakura was partially supported by JSPS KAKENHI Grant Numbers 26800001 and 17K05154}
\thanks{C.\,H. Lam and H.\ Shimakura were partially supported by JSPS Program for Advancing Strategic International Networks to Accelerate the Circulation of Talented Researchers ``Development of Concentrated Mathematical Center Linking to Wisdom of the Next Generation".}
\newcommand{\sfr}[2]{\leavevmode\kern-.1em
  \raise.5ex\hbox{\the\scriptfont0 #1}\kern-.1em
  /\kern-.15em\lower.25ex\hbox{\the\scriptfont0 #2}}
\begin{document}

%\maketitle

%\tableofcontents

\begin{abstract}
We continue our program on classification of holomorphic vertex operator algebras of central charge $24$. 
In this article, we show that  there exists a unique strongly regular holomorphic VOA of central charge $24$, up to isomorphism, if its  weight one Lie algebra has the type $C_{4,10}$, $D_{7,3}A_{3,1}G_{2,1}$, $A_{5,6}C_{2,3}A_{1,2}$, $A_{3,1}C_{7,2}$, $D_{5,4}C_{3,2}A_{1,1}^2$, or $E_{6,4}C_{2,1}A_{2,1}$. 
As a consequence, we have verified that the isomorphism class of a strongly regular holomorphic vertex operator algebra of central charge $24$ is determined by its weight one Lie algebra structure if the weight one subspace is nonzero.
\end{abstract}
\maketitle

\tableofcontents

\section{Introduction}

The classification of (strongly regular) holomorphic vertex operator algebras (VOAs) of central charge $24$ is one of the important problems in VOA theory. In 1993, Schellekens \cite{Sc93} obtained a list of $71$ possible Lie algebra structures for the weight one subspace of a holomorphic VOA of central charge $24$. It is also believed that the isomorphism class of a holomorphic VOA of central charge $24$ is uniquely determined by its weight one Lie algebra structure. Recently, there has been much progress towards the classification. First, Schellekens' list was verified mathematically  in \cite{EMS}. In addition, the orbifold construction associated with an automorphism of arbitrary finite order has been established in \cite{EMS}. Using the orbifold construction, it has been verified that all the $71$ Lie algebras in Schellekens' list can be realized as weight one Lie algebras of some holomorphic VOAs of central charge $24$ \cite{DGM,EMS,FLM,Lam,LLin,LS,LS3,LS4,Mi3,SS}. There has been also progress towards the uniqueness part; in \cite{LS5}, we proposed a method, which we call ``Reverse orbifold construction",  for proving the uniqueness part. The main idea is to ``reverse" the original orbifold construction and to try to prove the uniqueness of certain VOAs by studying the conjugacy classes of some automorphisms. 
This technique turns out to be quite promising; indeed, many cases has been proved by this method \cite{EMS2,KLL,LLin,LS5,LS6}. 
%In particular, 
Up to now, the uniqueness for $64$ cases have been established: $24$ cases in \cite{DMb}, $2$ cases in \cite{LS3}, $3$ cases in \cite{LS5}, $3$ cases in \cite{LLin}, $13$ cases in \cite{KLL}, $14$ cases in \cite{EMS2} and $5$  cases in \cite{LS6}.  The weight one Lie algebras of the remaining $7$ cases have the type   $C_{4,10}$, $D_{7,3}A_{3,1}G_{2,1}$, $A_{5,6}C_{2,3}A_{1,2}$, $A_{3,1}C_{7,2}$, $D_{5,4}C_{3,2}A_{1,1}^2$, $E_{6,4}C_{2,1}A_{2,1}$, and $\{0\}$.

In this article, we establish the uniqueness for the remaining $6$ cases above except for the $\{0\}$ case. The main theorem is as follows:
\begin{maintheorem}[see Theorem \ref{T:Main}]
Up to isomorphism, there exists a unique strongly regular holomorphic VOA of central charge $24$ if its  weight one Lie algebra has the type $C_{4,10}$, $D_{7,3}A_{3,1}G_{2,1}$, $A_{5,6}C_{2,3}A_{1,2}$, $A_{3,1}C_{7,2}$, $D_{5,4}C_{3,2}A_{1,1}^2$, or $E_{6,4}C_{2,1}A_{2,1}$. 
\end{maintheorem}
As a consequence, we have established that 
the isomorphism class of a strongly regular holomorphic VOA of central charge $24$ is determined by its weight one Lie algebra if the weight one subspace is nonzero. Remark that the remaining case is a famous conjecture in VOA theory \cite{FLM}, namely, a strongly regular holomorphic VOA of central charge $24$ with trivial weight one subspace is isomorphic to Frenkel-Lepowsky-Meurman's Moonshine VOA. Unfortunately, the technique used in this article cannot be applied to this case  since the VOA has no inner automorphisms if the weight one subspace is zero.

Now let us explain the method proposed in \cite{LS5}. Let $\g$ be a Lie algebra and $\mathfrak{p}$ its subalgebra.
Let $n$ be a positive integer.
We consider the following hypotheses:
\begin{enumerate}[(i)]
\item For any holomorphic VOA $V$ of central charge $c$ with $V_1\cong\g$, there exists an order $n$ automorphism $\sigma$ of $V$ such that $\widetilde{V}_{\sigma}\cong W$ and $V_1^{\sigma}\cong\mathfrak{p}$, where $W$ is another holomorphic VOA of central charge $c$ whose structure is known.
\item Any order $n$ automorphism $\varphi$ of $W$ belongs to a unique conjugacy class if $(W^\varphi)_1\cong\mathfrak{p}$ and $(\widetilde{W}_\varphi)_1\cong\g$.  
\end{enumerate}
Under the above hypotheses, it was shown in \cite{LS5} that any holomorphic VOA $V$ of central charge $c$ is isomorphic to $\widetilde{W}_\varphi$ if $V_1\cong \g$.
%the holomorphic VOA obtained by applying the $\Z_p$-orbifold construction to $\tilde{V}$ and $\tau$, and 
In particular, the holomorphic VOA structure of central charge $c$ is unique if its weight one Lie algebra is isomorphic to $\g$.

Therefore, the main strategy is to find $\sigma$ and $W$ such that (i) and (ii) hold.  
For (i), it is actually quite easy to find an inner automorphism $\sigma\in \Aut(V)$ such that $V_1^{\sigma}\cong\mathfrak{p}$ and $\widetilde{V}_{\sigma}\cong W$ and there are usually several different choices for $\sigma$ and $W$.
The difficult part is to show (ii), i.e.,  any automorphism of $W$ satisfying the conditions $(W^\varphi)_1\cong\mathfrak{p}$ and $(\widetilde{W}_\varphi)_1\cong\g$ belongs to a unique conjugacy class. 
Note that in \cite{EMS2,KLL,LS5,LS6}, $W$ is chosen to be a lattice VOA, whose automorphism group is well-studied (\cite{DN}).
In this article, we will show (ii) for some non-lattice type VOAs $W$ and $n=2$. 

For this purpose, we first use the theory of Kac about the classification of finite automorphisms of (finite-dimensional) simple Lie algebras \cite[Chapter 8]{Kac} (cf.\ \cite{H}). Indeed, it is quite straightforward to show that $\varphi_1|_{W_1}$ is conjugate to $\varphi_2|_{W_1}$ by an inner automorphism in $\Aut (W_1)$ as Lie algebra automorphisms for our cases if $\varphi_1$ and $\varphi_2$ are involutions and $(W^{\varphi_1})_1\cong (W^{\varphi_2})_1$ (cf. Lemma \ref{L:conjinv} and \cite[Lemma 7.4]{EMS2}). It means that there is an inner automorphism $x$ of $W$ such that $\varphi_1 x \varphi_2^{-1} x^{-1}$ acts trivially on $W_1$. Therefore, the main step is to determine the  subgroup of $\Aut (W)$ which acts trivially on $W_1$. We call such a subgroup of $\Aut (W)$  \emph{the inertia group} of $W$ and denote it by $I(W)$. 
In Section \ref{S:inertia}, we determine the inertia groups for four holomorphic VOAs of central charge $24$ as follows:
% whose weight one Lie algebras have the type $E_{6,3}G_{2,1}^3$, $A_{4,5}^2$, $C_{5,3}G_{2,2}A_{1,1}$, and $A_{7,4}A_{1,1}^3$.  
\begin{maintheorem}[cf. Theorems \ref{IE63}, \ref{IA45}, \ref{IC5}, \ref{A74} ]
Let $W$ be a  holomorphic VOA of central charge $24$ such that the weight one Lie algebra $W_1$ has the type $E_{6,3}G_{2,1}^3$, $A_{4,5}^2$, $C_{5,3}G_{2,2}A_{1,1}$, or $A_{7,4}A_{1,1}^3$.
%with the weight one Lie algebra $\g$, where $\g= E_{6,3}G_{2,1}^3$, $A_{4,5}^2$, $C_{5,3}G_{2,2}A_{1,1}$, or $A_{7,4}A_{1,1}^3$. 
Then 
\[
I(W) = 
\begin{cases}
1 & \text{ if }  W_1= E_{6,3}G_{2,1}^3\text{ or } A_{4,5}^2, \\
%1 & \text{ if }  \g= A_{4,5}^2, \\
\langle \sigma_{(0,0,\Lambda_1)}\rangle \cong \Z_2 & \text{ if }  W_1= C_{5,3}G_{2,2}A_{1,1}, \\
\langle \sigma_{(0,\Lambda_1,0,0)}, \sigma_{(0,0,\Lambda_1,0)}, \sigma_{(0,0,0,\Lambda_1)}\rangle \cong \Z_2^3 & \text{ if }  W_1= A_{7,4}A_{1,1}^3,
\end{cases}
\]
where $\Lambda_1$ denotes the fundamental weight of a root system of type $A_1$. 
\end{maintheorem} 

Let $U$ be the subVOA generated by $W_1$. Then any element in $I(W)$ induces a $U$-module isomorphism on $W$ and thus it essentially acts as scalars on irreducible submodules for $U$. Therefore, the decomposition of $W$ as a $U$-module is useful for determining $I(W)$;  
indeed, we (partially) describe them by using the representation theory of the simple affine VOAs \cite{FZ} and the theory of mirror extensions \cite{DJX1,Lin,OS,Xu}.
In addition, we also use the representation theory of the simple current extension (\cite{La01,SY03,Ya04}), quantum dimensions (\cite{DJX}) and the fact that $W$ is a simple current extension of $W^g$ for any finite order automorphism $g$ of $W$ \cite{ADJR} to discuss the possible action of elements of $I(W)$.
For calculating the fusion rules and quantum dimensions for simple affine VOAs, we use the computer package ``Kac" \cite{kac}. 
It turns out that the group $I(W)$ is equal to the set of inner automorphisms $\sigma_\lambda(=\exp(-2\pi\sqrt{-1}\lambda_{(0)}))$, where $\lambda$ range over coweights of $W_1$, for the cases that we considered.
%$\{ \sigma_\lambda\mid \lambda \text{ is a coweight of } W_1\}$ 
Note that this is also true for holomorphic lattice VOAs of central charge $24$ (\cite{LS5}); however, we do not know if  this is true in general.

Thanks to Main Theorem 2, we show that the order $2$ automorphisms are uniquely determined up to conjugation by the fixed point Lie algebras of their weight one spaces for the cases that we considered, and the conformal weights and certain (Lie) weight structures of the corresponding twisted modules  (cf. Lemmas \ref{L:conjD73}, \ref{L:conjA45}, \ref{L:conjC53}, \ref{L:conjA74}, \ref{L:conjA74-2}, \ref{L:conjA74-3}).  Therefore, we can also establish (ii) for all our cases.
%}{blue}

The organization of this article is as follows: in Section \ref{S:2}, we review some basic facts about VOAs and affine Lie algebras. 
In Section 3, we study certain simple current extensions of some simple affine VOAs. The results will be used in Section 6 for computing the inertia groups.
In Section 4, we recall the orbifold construction established in \cite{EMS,MoPhD} and the method of ``reverse orbifold" from \cite{LS5}. In Section 5, we review the notion of quantum dimension \cite{DJX} and  the theory of mirror extension from \cite{DJX1,Lin, Xu}.  In Section 6, we determine explicitly the inertia groups of some holomorphic VOAs of central charge $24$. We also show that certain automorphisms of holomorphic VOAs are uniquely determined, up to conjugation, by the fixed point subalgebras of their weight one Lie algebras and some information of the corresponding twisted modules. In Section 7 and 8, we will prove our main theorem and show that there exists a unique strongly regular holomorphic VOA of central charge $24$, up to isomorphism,  if its  weight one Lie algebra has the type $C_{4,10}$, $D_{7,3}A_{3,1}G_{2,1}$, $A_{5,6}C_{2,3}A_{1,2}$, $A_{3,1}C_{7,2}$, $D_{5,4}C_{3,2}A_{1,1}^2$, or $E_{6,4}C_{2,1}A_{2,1}$.  
%%%%%%%
\begin{center}
{\bf Notations}
\begin{small}
\begin{longtable}{ll}\\
$(\cdot|\cdot)$& the normalized Killing form so that $(\alpha|\alpha)=2$ for any long root $\alpha$.\\
$\langle\cdot|\cdot\rangle$& the normalized symmetric invariant bilinear form on a VOA $V$\\
& so that $\langle \1|\1\rangle=-1$, equivalently, $\langle a|b\rangle\1=a_{(1)}b$ for $a,b\in V_1$.\\
$[i_1,\dots,i_\ell]$& the weight $\sum_{j=1}^\ell i_j\Lambda_j$ or the associated module for the simple affine VOA.\\
$\alpha_i$& a simple root of an irreducible root system;\\
& here we adopt the labeling of \cite[Section 11.4]{Hu}\\
$\Com_V(U)$& the commutant of $U$ in $V$.\\
$I(V)$& the inertia group of $V$, that is, $I(V)=\{g\in\Aut (V)\mid g=id\ \text{on}\ V_1\}$. \\
$\Lambda_i$& the fundamental weight with respect to the simple root $\alpha_i$\\
$L(\g)$& the subVOA generated by a Lie subalgebra $\g$ of the weight one space of a VOA.\\
$L(V_1)$& the subVOA generated by the weight one space $V_1$ of a VOA $V$.\\
$L_\mathfrak{g}(k,0)$& the simple affine VOA associated with a simple Lie algebra $\mathfrak{g}$ at level $k$.\\
$L_\Fg(k,\lambda)$& the irreducible $L_\Fg(k,0)$-module with the highest weight $\lambda$.\\
$\tilde{L}_{X_n}(k,0)$& the simple current extension of $L_{X_n}(k,0)$ described in Section 3.\\
$M^{(u)}$& the $\sigma_u$-twisted $V$-module constructed from a $V$-module $M$ by Li's $\Delta$-operator.\\
$\qdim_V$& the quantum dimension of a $V$-module (see Section \ref{S:QD}).\\
$\sigma_u$& the inner automorphism $\exp(-2\pi\sqrt{-1}u_{(0)})$ of a VOA $V$ associated with $u\in V_1$.\\
$U(1)$& a $1$-dimensional abelian Lie algebra.\\ 
$V^{\sigma}$& the set of fixed points of an automorphism $\sigma$ of a VOA $V$.\\
$V(\sigma)$ & the irreducible $\sigma$-twisted module for a holomorphic VOA $V$.\\
$\tilde{V}_\sigma$& the VOA obtained by the orbifold construction associated with $V$ and $\sigma$.\\
$X_{n,k}$ & (the type of) a simple Lie algebra whose type is $X_n$ and level is $k$.\\
\end{longtable}
\end{small}
\end{center}
%\color{black}
\section{Preliminary}\label{S:2}
In this section, we will review some fundamental results about VOAs.

%% Kac % https://www.nikhef.nl/~t58/Site/Kac.html

\subsection{Vertex operator algebras and weight one Lie algebras}
Throughout this article, all VOAs are defined over the field $\C$ of complex numbers. 

A \emph{vertex operator algebra} (VOA) $(V,Y,\1,\omega)$ is a $\Z$-graded vector space $V=\bigoplus_{m\in\Z}V_m$ over the complex field $\C$ equipped with a linear map
$$Y(a,z)=\sum_{i\in\Z}a_{(i)}z^{-i-1}\in ({\rm End}\ (V))[[z,z^{-1}]],\quad a\in V,$$
the \emph{vacuum vector} $\1\in V_0$ and the \emph{conformal vector} $\omega\in V_2$
satisfying certain axioms (\cite{Bo,FLM}). For $a\in V$ and $i\in\Z$, we call the operator $a_{(i)}$ the \emph{$i$-th mode} of $a$.
Note that the operators $L(m)=\omega_{(m+1)}$, $m\in \Z$, satisfy the Virasoro relation:
$$[L{(m)},L{(n)}]=(m-n)L{(m+n)}+\frac{1}{12}(m^3-m)\delta_{m+n,0}c\ {\rm id}_V,$$
where $c\in\C$ is called the \emph{central charge} of $V$, and $L(0)$ acts by the multiplication of a scalar $m$ on $V_m$.

A linear automorphism $g$ of a VOA $V$ is called a (VOA) \emph{automorphism} of $V$ if $$ g\omega=\omega\quad {\rm and}\quad gY(v,z)=Y(gv,z)g\quad \text{ for all } v\in V.$$
The group of all (VOA) automorphisms of $V$ will be denoted by $\Aut (V)$. 
A \emph{vertex operator subalgebra} (or a \emph{subVOA}) is a graded subspace of
$V$ which has a structure of a VOA such that the operations and its grading
agree with the restriction of those of $V$ and  they share the vacuum vector.
In addition, if they also share the conformal vector, the subVOA is said to be \emph{full}.
For an automorphism $g$ of a VOA $V$, let $V^g$ denote the fixed point set of $g$, which is a full subVOA of $V$.

For $g\in\Aut (V)$ of order $n$, a $g$-twisted $V$-module $(M,Y_M)$ is a $\C$-graded vector space $M=\bigoplus_{m\in\C} M_{m}$ equipped with a linear map
$$Y_M(a,z)=\sum_{i\in(1/n)\Z}a_{(i)}z^{-i-1}\in (\End\ M)[[z^{1/n},z^{-1/n}]],\quad a\in V$$
satisfying a number of conditions (\cite{FHL,DLM2}).
Here the $\C$-grading is given by the $L(0)$-action.
Note that an (untwisted) $V$-module is a $1$-twisted $V$-module
and that a $g$-twisted $V$-module is an (untwisted) $V^g$-module.
For $v\in M_k$, the \emph{conformal weight} of $v$ is $k$. 
If $M$ is irreducible, then there exists $w\in\C$ such that $M=\bigoplus_{m\in(1/n)\Z_{\geq 0}}M_{w+m}$ and $M_w\neq0$.
The number $w$ is called the \emph{conformal weight} of $M$.

A VOA is said to be  \emph{rational} if the admissible module category is semisimple.
A rational VOA is said to be \emph{holomorphic} if it itself is the only irreducible module up to isomorphism.
A VOA $V$ is \emph{of CFT-type} if $V_0=\C\1$ (note that $V_i=0$ for all $i<0$ if $V_0=\C\1$), and is \emph{$C_2$-cofinite} if the codimension in $V$ of the subspace spanned by the vectors of form $u_{(-2)}v$, $u,v\in V$, is finite.
A $V$-module is said to be \emph{self-contragredient} if it is isomorphic to its contragredient module.
A VOA is said to be \emph{strongly regular} if it is rational, $C_2$-cofinite, self-contragredient and of CFT-type.
Note that a strongly regular VOA is simple.

%Combining the known results, we obtain the following:

\begin{theorem}\label{T:HKL} Let $V^0$ be a strongly regular VOA of CFT-type such that the conformal weight of any irreducible $V^0$-module is positive except for $V^0$ itself.
Let $V$ be a simple VOA such that $V$ contains $V^0$ and they share the conformal vector.
Then $V$ is also strongly regular.
\end{theorem}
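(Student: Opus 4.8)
The plan is to decompose $V$ as a module over the full subVOA $V^0$ and to read off each defining property of strong regularity from this decomposition. First I would note that, since $V^0$ and $V$ share the conformal vector, $V$ is an (untwisted) $V^0$-module, so rationality of $V^0$ gives a decomposition $V=\bigoplus_i W_i$ into irreducible $V^0$-modules. By hypothesis every irreducible $V^0$-module other than $V^0$ itself has positive conformal weight, so $V_n=0$ for $n<0$ and the weight-zero space $V_0$ is a sum of copies of the lowest weight space $\C\1$ of $V^0$. Since $V$ is simple and non-negatively graded, the standard fact $V_0=\C\1$ holds; in particular $V^0$ occurs with multiplicity one and $V$ is of CFT-type. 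Moreover, as $V^0$ is rational and $C_2$-cofinite it has only finitely many irreducible modules up to isomorphism, and each occurs in $V$ with finite multiplicity (every graded piece $V_n$ is finite-dimensional, and all copies of a fixed irreducible module share the same conformal weight); hence the decomposition $V=\bigoplus_i W_i$ is finite.

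For $C_2$-cofiniteness I would use that each irreducible module of the rational $C_2$-cofinite VOA $V^0$ is itself $C_2$-cofinite, so the finite sum $V=\bigoplus_i W_i$ is $C_2$-cofinite as a $V^0$-module, and therefore $V$ is $C_2$-cofinite. Self-contragredience I would deduce from Li's theory of invariant bilinear forms. Since $V^0$ is simple, self-contragredient and of CFT-type, $L(1)(V^0)_1=0$; and any irreducible constituent $W_i$ contributing to $V_1$ must have conformal weight exactly $1$ (it is a positive integer by the $\Z$-grading of $V$, and at most $1$ because its weight-one space is nonzero), so its contribution consists of lowest weight vectors on which $L(1)$ vanishes. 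Hence $L(1)V_1=0$, and together with $V_0=\C\1$ this guarantees a nonzero symmetric invariant bilinear form on $V$, unique up to scalar. Its radical is a proper ideal; as $V$ is simple the form is nondegenerate, which is equivalent to $V\cong V'$, i.e.\ $V$ is self-contragredient.

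The main obstacle is rationality, which requires genuine input beyond the bookkeeping above. Here I would invoke the extension theory of Huang--Kirillov--Lepowsky: for a strongly rational $V^0$ (rational, $C_2$-cofinite, self-contragredient, of CFT-type) the category of $V^0$-modules is a modular tensor category, a VOA extension such as $V$ is a commutative associative algebra object in it, and the category of $V$-modules is equivalent to the category of modules over that algebra object, which is semisimple. This yields rationality of $V$, completing the verification that $V$ is strongly regular. The delicate points to monitor are that the finiteness of the decomposition is really in force (so that the algebra object lies in the ambient category), and that the positivity hypothesis is exactly what rules out extra weight-zero elements, thereby securing both CFT-type and the nondegeneracy of the bilinear form.
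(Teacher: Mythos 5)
Your proposal is correct and follows essentially the same route as the paper, whose entire proof consists of citing Abe--Buhl--Dong (Proposition 5.2) for $C_2$-cofiniteness and Huang--Kirillov--Lepowsky (Theorems 3.5 and 3.6) for rationality and self-contragredience; your finite decomposition of $V$ over $V^0$ and the algebra-object argument are precisely the content of those citations. The only divergences are minor: you derive self-contragredience directly from Li's invariant-form theory (via $V_0=\C\1$ and $L(1)V_1=0$) where the paper cites HKL, and you explicitly verify CFT-type, which the paper's proof leaves implicit.
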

\begin{proof} By \cite[Proposition 5.2]{ABD}, $V$ is $C_2$-cofinite.
In addition, by \cite[Theorems 3.5 and 3.6]{HKL}, $V$ is rational and self-contragredient.
\end{proof}

For a subVOA $U$ of a VOA $V$, the \emph{commutant} $\Com_V(U)$ of $U$ in $V$ is a subalgebra of $V$ which commutes with $U$ (\cite{FZ}).

\begin{proposition}[{\cite[Lemma 2.1]{ACKL}}]\label{P:ComSimple}
Let $V$ be a simple VOA of CFT-type and $U$ a simple rational subVOA.
Then the commutant $\Com_V(U)$ is simple.
\end{proposition}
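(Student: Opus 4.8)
The plan is to reduce the simplicity of $\Com_V(U)$ to the nondegeneracy of its canonical invariant bilinear form, and to obtain the latter from the corresponding form on $V$ via the dual-pair decomposition of $V$ as a $U$-module. First I would record that $C:=\Com_V(U)$ is a subVOA with conformal vector $\omega-\omega^U$, where $\omega^U$ is the conformal vector of $U$; since $\1\in C$ and $C_0\subseteq V_0=\C\1$, the algebra $C$ is of CFT-type with $C_0=\C\1$. For such a VOA the radical of the canonical invariant form $\langle\cdot|\cdot\rangle_C$ is the unique maximal graded ideal: any proper graded ideal $J$ satisfies $J_0\subseteq C_0=\C\1$ with $\1\notin J$, hence $J_0=0$, and therefore lies in the radical. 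Consequently $C$ is simple if and only if $\langle\cdot|\cdot\rangle_C$ is nondegenerate, so the whole problem becomes the nondegeneracy of this form.

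To produce nondegeneracy I would use the $U$-module structure of $V$. As $U$ is rational, $V$ is a direct sum of irreducible $U$-modules, and by standard commutant theory the isotypic component $V^{[U]}$ of the adjoint module $U$ coincides with $U\cdot C$, the product map furnishing a VOA isomorphism $U\otimes C\xrightarrow{\sim}V^{[U]}$. Working with the invariant form $\langle\cdot|\cdot\rangle$ on $V$ (nondegenerate because $V$ is self-contragredient, as it is in all cases to which the proposition is applied), two isotypic components $V^{[M]}$ and $V^{[N]}$ pair nontrivially only when $N$ is the contragredient of $M$; since $U$ is self-dual, $V^{[U]}$ is orthogonal to $V^{[M]}$ for every $M\not\cong U$, so the form restricts to a nondegenerate form on $V^{[U]}\cong U\otimes C$. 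Finally, an invariant form on a tensor-product VOA factors as the product of invariant forms on the two tensor factors; as the form on the simple rational VOA $U$ is nondegenerate, nondegeneracy on $U\otimes C$ forces $\langle\cdot|\cdot\rangle_C$ to be nondegenerate.

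With a nondegenerate form in hand the conclusion is immediate: given a nonzero ideal $J$ of $C$, which is graded because it is stable under $L(0)=(\omega-\omega^U)_{(1)}$, choose a nonzero homogeneous $w\in J\cap C_k$ and, by nondegeneracy, some $w'\in C_k$ with $\langle w'|w\rangle_C\neq0$; then $w'_{(2k-1)}w$ is a nonzero scalar multiple of $\1$ and lies in $J$, so $\1\in J$ and $J=C$. The main obstacle is the middle step, namely the transfer of nondegeneracy from $V$ to $C$: it rests on the orthogonality of distinct $U$-isotypic components under the invariant form and on the factorization of the form across the isomorphism $U\otimes C\cong V^{[U]}$, and it implicitly requires $V$ to carry a nondegenerate invariant form. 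The latter is guaranteed once $V$ is self-contragredient; for the bare hypothesis ``simple of CFT-type'' one must either invoke self-duality separately or verify that the radical of the canonical form on $V$ meets $V^{[U]}$ trivially, which is the point needing care.
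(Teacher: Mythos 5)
The paper offers no proof of its own here: Proposition \ref{P:ComSimple} is quoted directly from \cite[Lemma 2.1]{ACKL}, so there is no in-paper argument to compare yours against line by line. Your route through invariant bilinear forms is a genuinely different one, and it is essentially sound once a nondegenerate invariant form on $V$ is available: the reduction of simplicity of $C=\Com_V(U)$ to nondegeneracy of its canonical form, the orthogonality of distinct $U$-isotypic components, and the factorization of the form over $V^{[U]}\cong U\otimes C$ are all correct. Two small points to tighten: self-contragredience of $U$ should be justified rather than asserted (e.g.\ restrict the form of $V$ to $U$ and use $\langle\1|\1\rangle\neq 0$, or note that $\1\in V^{[U]}$ forces $V^{[U]}$ to pair with itself); and $\langle w'|w\rangle\1$ equals $\pm\sum_j\tfrac{1}{j!}(L(1)^jw')_{(2k-j-1)}w$ rather than just the single term $w'_{(2k-1)}w$, though every summand still lies in $J$, so the conclusion $\1\in J$ stands.

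The gap you flag at the end is, however, a genuine one for the statement as given: a simple VOA of CFT-type admits a nonzero invariant bilinear form only when $L(1)V_1=0$, since by Li's theorem \cite{Li3} the space of such forms is $(V_0/L(1)V_1)^*$, and neither simplicity nor CFT-type guarantees this vanishing. Your proof therefore establishes the proposition only under the additional hypothesis that $V$ is self-contragredient --- which does hold in every application in this paper, where $V$ is always strongly regular, but is not part of the stated hypotheses. The argument that works in the stated generality dispenses with forms entirely: for a nonzero ideal $J$ of $C$, the ideal of $V$ generated by $J$ is all of $V$ because any nonzero vector of a simple VOA generates it (\cite[Proposition 11.9]{DL}); since fusion with the adjoint $U$-module is trivial, this ideal meets the $U$-isotypic component $V^{[U]}\cong U\otimes C$ (available by rationality of $U$) inside $U\cdot J\cong U\otimes J$, using the iterate formula together with $u_{(n)}b=0$ for $n\ge 0$ and $b\in C$; comparing $L^U(0)$-degree zero parts then gives $\1\in J$. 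You should either add self-contragredience of $V$ as a hypothesis or switch to this form-free argument.
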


Let $V$ be a VOA and $M=(M,Y_M)$ a $V$-module.
For $g\in\Aut (V)$, the \emph{$g$-conjugation} $M\circ g$ of $M$ is defined by $(M,Y_{M\circ g})$, where $Y_{M\circ g}(v,z)=Y_M(gv,z)$.
This conjugation naturally induces an action of $\Aut (V)$ on the set of all isomorphism classes of irreducible $V$-modules.
Note that this action preserves the fusion rules and characters.

\begin{lemma}\label{L:Vg} Let $V^0$ be a full subVOA of $V$.
Let $g\in\Aut (V)$ such that $g(V^0)=V^0$; let $\bar{g}\in\Aut (V^0)$ be the restriction of $g$ to $V^0$.
Then, as $V^0$-modules, $V\circ \bar{g}$ is isomorphic to $V$.
In particular, for any irreducible $V^0$-submodule $M$ of $V$, $V$ contains an irreducible $V^0$-submodule isomorphic to $M\circ \bar{g}$.
\end{lemma}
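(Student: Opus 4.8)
The plan is to exhibit the required $V^0$-module isomorphism concretely, using the automorphism $g$ itself as the intertwining map. First I would unwind the definition of $\bar{g}$-conjugation: the $V^0$-module $V\circ\bar{g}$ has the same underlying space $V$ but carries the action $Y_{V\circ\bar{g}}(v,z)=Y_V(\bar{g}v,z)$ for $v\in V^0$. Thus the task is to produce a $V^0$-module isomorphism between $(V,Y_V)$ and $(V,Y_{V\circ\bar{g}})$.

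The natural candidate is the linear bijection $g\colon V\to V$ itself, and the only thing to verify is that it intertwines the two module structures. This is immediate from the automorphism identity $gY_V(a,z)=Y_V(ga,z)g$: specializing to $a=v\in V^0$ and using $gv=\bar{g}v$ gives $gY_V(v,z)=Y_V(\bar{g}v,z)g=Y_{V\circ\bar{g}}(v,z)g$, which is exactly the condition for $g$ to be a homomorphism of $V^0$-modules from $(V,Y_V)$ to $(V,Y_{V\circ\bar{g}})$. Since $g$ is invertible, it is an isomorphism, proving $V\cong V\circ\bar{g}$ as $V^0$-modules.

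For the ``in particular'' clause I would argue directly rather than passing through a decomposition into irreducibles. Given an irreducible $V^0$-submodule $M\subseteq V$, I would consider the subspace $g^{-1}(M)$. Using $gv_{(i)}g^{-1}=(\bar{g}v)_{(i)}$ one checks that $g^{-1}(M)$ is again a $V^0$-submodule (and irreducible, since $g$ and $g^{-1}$ induce mutually inverse, inclusion-preserving bijections of the submodule lattice of $(V,Y_V)$). The same computation shows that the restriction $g\colon g^{-1}(M)\to M$ satisfies $g(v_{(i)}x)=(\bar{g}v)_{(i)}(gx)$, which is precisely the statement that $g$ is an isomorphism of $V^0$-modules from $g^{-1}(M)$, with its standard action, onto $M\circ\bar{g}$. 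Hence $g^{-1}(M)$ is an irreducible $V^0$-submodule of $V$ isomorphic to $M\circ\bar{g}$, as required.

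I expect no serious obstacle here; the content is essentially bookkeeping once the conjugation convention is fixed. The only point demanding care is tracking that convention, $Y_{M\circ\bar{g}}(v,z)=Y_M(\bar{g}v,z)$, so that the correct map is used in each step: namely $g$ (not $g^{-1}$) for the global isomorphism, and $g^{-1}(M)$ (not $g(M)$) for the submodule statement. A misstep there would deliver $M\circ\bar{g}^{-1}$ in place of $M\circ\bar{g}$, since the parallel computation for $g(M)$ yields $g(M)\cong M\circ\bar{g}^{-1}$.
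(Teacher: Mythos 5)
Your proof is correct and follows essentially the same route as the paper: the paper also uses the automorphism $g$ itself as the intertwiner, via $Y(\bar{g}v,z)gu=Y(gv,z)gu=gY(v,z)u$, to conclude $V\cong V\circ\bar{g}$ as $V^0$-modules. Your explicit treatment of the ``in particular'' clause via $g^{-1}(M)$ just spells out what the paper leaves implicit, and your care with the conjugation convention (so as not to land on $M\circ\bar{g}^{-1}$) is exactly the right point to watch.
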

\begin{proof} It follows from $g\in\Aut (V)$ that $Y(\bar{g}v,z)gu=Y(gv,z)gu=gY(v,z)u$ for $v\in V^0$ and $u\in V$.
Then $g$ is a $V^0$-module isomorphism from $V$ to $V\circ \bar{g}$.
\end{proof}

We now assume that $V$ is of CFT-type.
Then, the weight one subspace $V_1$ has a Lie algebra structure via the $0$-th mode. Moreover, the $n$-th modes
$v_{(n)}$, $v\in V_1$, $n\in\Z$, define  an affine representation of the Lie algebra $V_1$ on $V$.
For a simple Lie subalgebra $\mathfrak{s}$ of $V_1$, the {\it level} of $\mathfrak{s}$ is defined to be the scalar by which the canonical central element acts on $V$ as the affine representation.
When the type of the root system of $\mathfrak{s}$ is $X_n$ and the level of $\mathfrak{s}$ is $k$, we denote by $X_{n,k}$ the type of $\mathfrak{s}$.
For an element $a\in V_1$, $\exp(a_{(0)})$ is an automorphism of $V$, called an {\it inner automorphism} of $V$.

In addition, we assume that $V$ is self-contragredient.
Then there exists a symmetric invariant bilinear form $\langle\cdot|\cdot\rangle$ on $V$, which is unique up to scalar (\cite{Li3}).
We normalize it so that  $\langle\1|\1\rangle=-1$.
Then for $a,b\in V_1$, we have $\langle a|b\rangle\1=a_{(1)}b$.

Assume also that $V_1$ is semisimple.
Let $\mathfrak{H}$ be a (fixed) Cartan subalgebra of $V_1$.
Let $(\cdot|\cdot)$ be the Killing form on $V_1$.
We identify the dual $\mathfrak{H}^*$ with $\mathfrak{H}$ via $(\cdot|\cdot)$ and normalize $(\cdot|\cdot)$ so that $(\alpha|\alpha)=2$ for any long root $\alpha\in\mathfrak{H}$.
%In this article, 
{\it Weights} are defined via $(\cdot|\cdot)$, that is, a vector $v\in V$ has weight $\lambda(\in\mathfrak{H})$ if $x_{(0)}v=(x|\lambda)v$ for all $x\in\mathfrak{H}$.
Remark that for $u\in\mathfrak{H}$, $\sigma_u$ acts on a vector with the weight $\lambda$ as the scalar multiple by $\exp(-2\pi\sqrt{-1}(u|\lambda))$.
The following lemma is immediate from the commutator relations of $n$-th modes (cf.\ {\cite[(3.2)]{DM06}}).

\begin{proposition}\label{Prop:level} 
Let $\mathfrak{s}$ be a simple Lie subalgebra of $V_1$ with level $k$.
Then $\langle\cdot|\cdot\rangle=k(\cdot|\cdot)$ on $\mathfrak{s}$.
\end{proposition}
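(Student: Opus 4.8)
The plan is to read off the affine Kac--Moody commutation relations directly from the Borcherds commutator formula applied to elements of $\mathfrak{s}$, and then to identify the coefficient of the central term with the level by comparing it against the standard normalization that underlies the definition of level.

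First I would fix $a,b\in\mathfrak{s}$ and expand the bracket of modes via the commutator formula
\[
[a_{(m)},b_{(n)}]=\sum_{i\geq0}\binom{m}{i}(a_{(i)}b)_{(m+n-i)},\qquad m,n\in\Z.
\]
Since $a,b\in V_1$, the vector $a_{(i)}b$ lies in $V_{1-i}$, so the CFT-type hypothesis ($V_j=0$ for $j<0$) forces all terms with $i\geq2$ to vanish. The surviving terms are $a_{(0)}b=[a,b]$, the Lie bracket on $V_1$, and $a_{(1)}b\in V_0=\C\1$, which equals $\langle a|b\rangle\1$ by the chosen normalization of the form. Using $\1_{(k)}=\delta_{k,-1}\,\id_V$ this gives
\[
[a_{(m)},b_{(n)}]=[a,b]_{(m+n)}+m\,\langle a|b\rangle\,\delta_{m+n,0}\,\id_V.
\]

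Next I would interpret this identity as the assertion that $x\otimes t^m\mapsto x_{(m)}$ extends to a representation of the affine Lie algebra $\widehat{\mathfrak{s}}=\mathfrak{s}\otimes\C[t,t^{-1}]\oplus\C K$ on $V$, in which the canonical central element $K$ acts through the cocycle $m\,\langle a|b\rangle\,\delta_{m+n,0}$. By definition the level $k$ is the scalar by which $K$ acts, and in the standard presentation the cocycle of $K$ is $m\,(a|b)\,\delta_{m+n,0}$, where $(\cdot|\cdot)$ is normalized by $(\alpha|\alpha)=2$ for long roots $\alpha$. Matching the two expressions for the central term yields $\langle a|b\rangle=k\,(a|b)$ for all $a,b\in\mathfrak{s}$, which is the claim.

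The computation itself is entirely routine, so the only point demanding care---and hence the place where an error could slip in---is the bookkeeping of normalizations. One must confirm that the invariant form entering the canonical cocycle of $\widehat{\mathfrak{s}}$ is precisely the form $(\cdot|\cdot)$ used to define the level, namely the one with $(\theta|\theta)=2$ for the highest root $\theta$. Since both the paper's normalization of $(\cdot|\cdot)$ and the usual definition of level rest on this same convention, they coincide and no spurious scalar appears; the identity then holds on all of $\mathfrak{s}$ by bilinearity once checked on a spanning set such as a Chevalley basis.
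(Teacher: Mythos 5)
Your proposal is correct and is essentially the argument the paper has in mind: the paper dismisses this as ``immediate from the commutator relations of $n$-th modes'' (citing \cite[(3.2)]{DM06}), and your computation --- truncating the Borcherds commutator formula using CFT-type, identifying $a_{(0)}b$ with the Lie bracket and $a_{(1)}b$ with $\langle a|b\rangle\1$, and matching the resulting central cocycle $m\,\langle a|b\rangle\,\delta_{m+n,0}$ against $m\,k\,(a|b)\,\delta_{m+n,0}$ from the definition of the level --- is exactly that omitted verification. Your closing remark about normalizations is the right point of care, and it is handled correctly since the paper's $(\cdot|\cdot)$ is indeed the form with $(\theta|\theta)=2$ underlying the canonical central element.
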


Next we recall some results related to weight one Lie algebras.

\begin{proposition}[{\cite[Theorem 1.1, Corollary 4.3]{DM06}}]\label{Prop:posl} Let $V$ be a strongly regular VOA.
Then $V_1$ is reductive.
Let $\mathfrak{s}$ be a simple Lie subalgebra of $V_1$.
Then $V$ is an integrable module for the affine representation of $\mathfrak{s}$ on $V$, and the subVOA generated by $\mathfrak{s}$ is isomorphic to the simple affine VOA associated with $\mathfrak{s}$ at positive integral level.
\end{proposition}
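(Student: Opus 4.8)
The plan is to derive all three assertions from the single mechanism of Dong--Mason: that $C_2$-cofiniteness forces the affine action to be integrable at a positive integral level. I will freely use that a strongly regular $V$ is simple and self-contragredient, so it carries a nondegenerate invariant form $\langle\cdot|\cdot\rangle$, whose restriction to $V_1$ is the invariant symmetric form $a_{(1)}b=\langle a|b\rangle\1$. Two preliminary observations set the stage. First, $L(1)V_1=0$: writing $L(1)a=c\1$ one gets $c\langle\1|\1\rangle=\langle L(1)a|\1\rangle=\langle a|L(-1)\1\rangle=0$, and $\langle\1|\1\rangle=-1$, so $c=0$; hence every $a\in V_1$ is quasi-primary and the zero-modes $a_{(0)}$ are skew-adjoint for $\langle\cdot|\cdot\rangle$. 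Second, $C_2$-cofiniteness gives $\dim V_n<\infty$ for all $n$, while CFT-type gives $V_n=0$ for $n<0$ and $V_0=\C\1$.

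\emph{Isolating the real condition.} Fix a simple subalgebra $\mathfrak{s}\subseteq V_1$, let $\theta$ be its highest root, with associated $\mathfrak{sl}_2$-triple $x_\theta,x_{-\theta},\theta^\vee$. From $[L(0),a_{(n)}]=-n\,a_{(n)}$ for $a\in V_1$, each mode $(x_\alpha)_{(n)}$ shifts the $L(0)$-grading by $-n$. By Weyl's theorem every finite-dimensional $V_n$ is a completely reducible $\mathfrak{s}$-module, so $V$ is a weight module for the affine Cartan and integrability of the $\hat{\mathfrak{s}}$-action reduces to local nilpotency of the Chevalley generators. The finite-node generators $(x_{\pm\alpha_i})_{(0)}$ preserve each finite-dimensional $V_n$ and act there as nilpotent Lie elements, hence are locally nilpotent; the affine generator $(x_{-\theta})_{(1)}$ lowers the grading, hence is locally nilpotent because the grading is bounded below. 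The \emph{only} remaining condition is therefore local nilpotency of the weight-raising generator $(x_\theta)_{(-1)}$.

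\emph{The crux.} Local nilpotency of $(x_\theta)_{(-1)}$ is equivalent to $V$ being a direct sum of integrable $\hat{\mathfrak{s}}$-modules at a nonnegative integral level, and this is precisely where $C_2$-cofiniteness is indispensable: a non-integrable affine action, or one at non-integral or negative level, produces submodules whose graded dimensions grow incompatibly with $C_2$-cofiniteness. I expect this to be by far the hardest step; it is the substance of \cite{DM06}. Granting it, $k$ is a nonnegative integer, and $k>0$ because $\mathfrak{s}$ acts nontrivially (an integrable module of level zero is $\mathfrak{s}$-trivial); by Proposition \ref{Prop:level} one has $\langle\cdot|\cdot\rangle=k(\cdot|\cdot)$ on $\mathfrak{s}$.

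\emph{Reductivity and the affine identification.} For reductivity it suffices to show $\mathfrak{m}:=[V_1,\mathrm{rad}(V_1)]=0$, i.e.\ that $\mathrm{rad}(V_1)=Z(V_1)$. By Lie's theorem applied to the finite-dimensional $V_1$-module $V_n$, every $x\in\mathfrak{m}$ acts nilpotently on each $V_n$, so $x_{(0)}$ is locally nilpotent on $V$; together with the skew-adjointness of the zero-modes and the nondegeneracy of $\langle\cdot|\cdot\rangle$ on $V_1$ (which one checks by observing that the radical of the form would generate a level-zero affine subalgebra, incompatible with $C_2$-cofiniteness), this places $\mathfrak{m}$ in the radical of the form, forcing $\mathfrak{m}=0$. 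Hence $V_1=Z(V_1)\oplus[V_1,V_1]$ is reductive. Finally, the subVOA $L(\mathfrak{s})$ generated by $\mathfrak{s}$ is a quotient of the universal affine vertex algebra $V^{k}(\mathfrak{s})$, and the integrability above forces it to be the simple integrable quotient, so $L(\mathfrak{s})\cong L_{\mathfrak{s}}(k,0)$ with $k\in\Z_{>0}$.
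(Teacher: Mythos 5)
The paper offers no proof of this proposition: it is imported verbatim from Dong--Mason \cite[Theorem 1.1, Corollary 4.3]{DM06}, so there is no internal argument to compare yours against. Judged as a self-contained proof, your proposal has a genuine gap exactly where you flag it. The reduction of integrability to local nilpotency of the single raising generator $(x_\theta)_{(-1)}$ is correct and cleanly done (the zero-mode generators act nilpotently on each finite-dimensional $V_n$ by standard representation theory, and $(x_{-\theta})_{(1)}$ lowers a grading bounded below), but at the crux you write ``Granting it'' and move on. The assertion that a non-integrable or non-integral-level action ``produces submodules whose graded dimensions grow incompatibly with $C_2$-cofiniteness'' is not an argument; it is a restatement of the theorem. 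Dong--Mason's actual mechanism is not a growth estimate but works through Zhu's commutative Poisson algebra $V/C_2(V)$, where the image of $x_\theta$ is nilpotent, and converts that nilpotency into local nilpotency of $(x_\theta)_{(-1)}$ on $V$. Since this single step carries essentially all the content of the proposition, deferring it to the very reference being proved leaves the proof incomplete.

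The reductivity paragraph also contains an unjustified inference. You argue that $\mathfrak{m}=[V_1,\mathrm{rad}(V_1)]$ lies in the radical of $\langle\cdot|\cdot\rangle|_{V_1}$ because each $x\in\mathfrak{m}$ has locally nilpotent, skew-adjoint zero-mode. But the form on $V_1$ is read off from the \emph{first} modes, $\langle x|y\rangle\1=x_{(1)}y$, so local nilpotency of $x_{(0)}$ says nothing direct about $\langle x|\cdot\rangle$; and no purely Lie-theoretic argument of this shape can work, since non-reductive Lie algebras admitting nondegenerate invariant symmetric forms exist (solvable quadratic Lie algebras). Closing this gap again requires the integrability/$C_2$ input, which is how \cite{DM06} proceeds. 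Separately, your parenthetical justification of nondegeneracy of the form on $V_1$ via a ``level-zero affine subalgebra incompatible with $C_2$-cofiniteness'' is both unproved and unnecessary: for a simple self-contragredient VOA the invariant form is nondegenerate on all of $V$ and pairs $V_n$ with $V_n$, so nondegeneracy on $V_1$ is immediate.
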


\begin{proposition} [{\cite{DMb,EMS,Sc93}}]\label{Prop:V1} Let $V$ be a strongly regular holomorphic VOA of central charge $24$.
Assume that the Lie algebra $V_1$ is neither $\{0\}$ nor abelian.
Then $V_1$ is isomorphic to one of $69$ semisimple Lie algebras in \cite{Sc93}, and the subVOA generated by $V_1$ is full.
In addition, for any simple ideal of $V_1$ at level $k$, the identity$$\frac{h^\vee}{k}=\frac{\dim V_1-24}{24}$$
holds, where $h^\vee$ is the dual Coxeter number of the simple ideal.
\end{proposition}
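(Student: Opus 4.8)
The plan is to extract the numerical constraints from Zhu's modular invariance, deduce semisimplicity and fullness from them, and then quote Schellekens' enumeration for the final list. First I would invoke Proposition~\ref{Prop:posl}: $V_1$ is reductive, say $V_1=\mathfrak{z}\oplus\bigoplus_i\g_i$ with $\mathfrak{z}$ its center and each $\g_i$ a simple ideal generating a simple affine VOA at some positive integer level $k_i$, and by Proposition~\ref{Prop:level} one has $\la\cdot|\cdot\ra=k_i(\cdot|\cdot)$ on $\g_i$. Since $V$ is holomorphic of central charge $24$, Zhu's modular invariance forces the character $\mathrm{ch}_V(\tau)=\tr_V q^{L(0)-1}$ to be a weakly holomorphic modular function of weight $0$ for $\SL_2(\Z)$ with a simple pole at the cusp, hence $\mathrm{ch}_V=j+(\dim V_1-744)$ with $j=q^{-1}+744+\cdots$. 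The essential extra input is the genus-one two-point function: for $a,b\in V_1$ I would study $G_{a,b}(\tau)=\tr_V a_{(0)}b_{(0)}q^{L(0)-1}$, which by Zhu's reduction formula is a quasimodular form of weight $2$ with pole order $\le1$, whose quasimodular (i.e. $E_2$-)anomaly is proportional to $\la a|b\ra\,\mathrm{ch}_V$ and whose genuinely modular part is a multiple of $Dj:=q\,dj/dq$. The $q^{-1}$-coefficient of $G_{a,b}$ vanishes (because $b_{(0)}\1=0$) and its $q^{0}$-coefficient equals $\tr_{V_1}(a_{(0)}b_{(0)})=\kappa(a,b)$, the Killing form of the Lie algebra $V_1$; matching these two coefficients against $E_2$, $E_2\,j$ and $Dj$ fixes every constant and yields the key identity
\[
\kappa(a,b)=\frac{\dim V_1-24}{12}\,\la a|b\ra\qquad(a,b\in V_1).
\]

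From this identity everything else follows formally. Restricting to a simple ideal $\g_i$ and using $\kappa|_{\g_i}=2h_i^\vee(\cdot|\cdot)$ together with $\la\cdot|\cdot\ra=k_i(\cdot|\cdot)$ gives $2h_i^\vee=\tfr{\dim V_1-24}{12}k_i$, that is $h_i^\vee/k_i=(\dim V_1-24)/24$; in particular $\dim V_1>24$ since $h_i^\vee,k_i>0$. To see that $V_1$ is semisimple, observe that if $a\in\mathfrak{z}$ then $\kappa(a,\cdot)\equiv0$ (the center lies in the radical of the Killing form), so the identity forces $\la a|\cdot\ra\equiv0$ on $V_1$ because $\dim V_1\ne24$; as $\la\cdot|\cdot\ra$ is $L(0)$-graded and nondegenerate on $V$, it is nondegenerate on $V_1$, whence $a=0$ and $\mathfrak{z}=0$. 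Finally, the level formula makes the affine (Sugawara) central charge collapse: $c_{\mathrm{aff}}=\sum_i k_i\dim\g_i/(k_i+h_i^\vee)=\sum_i 24\dim\g_i/\dim V_1=24$. Then $\om-\om_{\mathrm{aff}}$ is a Virasoro vector of central charge $24-c_{\mathrm{aff}}=0$ sitting in a VOA of CFT-type; such a vector generates a copy of the trivial $c=0$ Virasoro VOA and hence must vanish, so $\om=\om_{\mathrm{aff}}\in L(V_1)$ and the subVOA generated by $V_1$ is full.

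It remains to identify which semisimple types actually occur. The constraints just obtained, namely $h_i^\vee/k_i=(\dim V_1-24)/24$ for every simple ideal and $c_{\mathrm{aff}}=24$, are precisely Schellekens' conditions; combined with the integrality and modular/fusion consistency requirements imposed on any holomorphic extension by the decomposition of $V$ into irreducible $L(V_1)$-modules, they reduce the possibilities to a finite search. Carrying out that search is Schellekens' computation \cite{Sc93}, whose rigorous verification is \cite{EMS}, and it produces exactly the $69$ semisimple Lie algebras.

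I expect two genuinely hard points. The first is the careful bookkeeping of the quasimodular anomaly in the two-point function above: this is where Zhu's square-bracket formalism and the precise normalization of the weight-$2$ Eisenstein series must be handled correctly, since a naive identification of $a_{(-1)}b$ with a modular weight-$2$ object is false (its $q^{-1}$- and $q^{0}$-coefficients already show the $E_2$-dependence). The second is the enumeration itself, which is not a short calculation and is exactly the reason the statement is quoted from \cite{DMb,EMS,Sc93} rather than proved from scratch; the purely VOA-theoretic steps (reductivity, the level identity, semisimplicity, fullness) are comparatively soft once the modular identity is in hand.
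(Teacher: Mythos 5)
The paper does not prove this proposition at all: it is quoted verbatim from \cite{DMb,EMS,Sc93}, so there is no internal proof to compare against. Your sketch is a faithful reconstruction of the argument in those references (essentially Dong--Mason's): the Zhu recursion applied to $\tr_V o(a)o(b)q^{L(0)-1}$ for $a,b\in V_1$, with the $E_2$-anomaly proportional to $\langle a|b\rangle\ch V$ and the modular part a multiple of $Dj$, does yield $\kappa(a,b)=\tfrac{\dim V_1-24}{12}\langle a|b\rangle$, and from that the level identity, $\dim V_1>24$, semisimplicity via nondegeneracy of $\langle\cdot|\cdot\rangle$ on $V_1$, and $c_{\mathrm{aff}}=24$ all follow exactly as you say. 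Outsourcing the enumeration of the $69$ types to \cite{Sc93,EMS} is also what the paper itself does, so that is not a defect.

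The one step that is genuinely under-justified is the final deduction of fullness. A Virasoro vector of central charge $0$ in a VOA of CFT-type does \emph{not} automatically generate the trivial VOA and vanish: the vacuum Virasoro VOA $V_{\mathrm{Vir}}(0,0)$ is a nontrivial quotient of $M(0,0)$, and $\langle\omega'|\omega'\rangle=c'/2=0$ gives nothing over $\C$ without a positivity assumption. The standard repair, consistent with the tools already set up in Section 2 of the paper, is to observe that $\Com_V(L(V_1))$ is a simple (Proposition \ref{P:ComSimple}), rational, $C_2$-cofinite, self-contragredient VOA of CFT-type with central charge $24-c_{\mathrm{aff}}=0$, and such a VOA must equal $\C\1$ (e.g.\ by a character/effective-central-charge argument), whence $\omega=\omega_{\mathrm{aff}}\in L(V_1)$. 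With that substitution your argument is complete and matches the cited proofs.
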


\subsection{$\Delta$-operator, simple affine VOAs and twisted modules}

In this subsection, we recall the construction of certain twisted modules given by Li \cite{Li}.
We also recall the conformal weight of such a twisted module over a simple affine VOA from \cite{LS3}.

Let $V$ be a VOA of CFT-type.
Let $u\in V_1$ such that $u_{(0)}$ acts semisimply on $V$. Suppose that
there exists a positive integer $n\in\Z_{>0}$ such that the spectra of $u_{(0)}$ on $V$ belong to $({1}/{n})\Z$.
Then $\sigma_u=\exp(-2\pi\sqrt{-1}u_{(0)})$ is an automorphism of $V$ with $\sigma_u^n=1$.

Let $\Delta(u,z)$ be the $\Delta$-operator defined in \cite{Li}, i.e.,
\[
\Delta(u, z) = z^{u_{(0)}} \exp\left( \sum_{i=1}^\infty \frac{u_{(i)}}{-i} (-z)^{-i}\right).
\]

\begin{proposition}[{\cite[Proposition 5.4]{Li}}]\label{Prop:twist}
Let $\sigma$ be an automorphism of $V$ of finite order and
let $u\in V_1$ be as above such that $\sigma(u) = u$.
Let $(M, Y_M)$ be a $\sigma$-twisted $V$-module and
define $(M^{(u)}, Y_{M^{(u)}}(\cdot, z)) $ as follows:
\[
\begin{split}
& M^{(u)} =M \quad \text{ as a vector space;}\\
& Y_{M^{(u)}} (a, z) = Y_M(\Delta(u, z)a, z)\quad \text{ for any } a\in V.
\end{split}
\]
Then $(M^{(u)}, Y_{M^{(u)}}(\cdot, z))$ is a
$\sigma_u\sigma$-twisted $V$-module.
Furthermore, if $M$ is irreducible, then so is $M^{(u)}$.
\end{proposition}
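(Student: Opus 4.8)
The plan is to verify directly that $(M^{(u)}, Y_{M^{(u)}})$ satisfies the defining axioms of a $\sigma_u\sigma$-twisted $V$-module, transporting each axiom from $(M,Y_M)$ by exploiting the algebraic properties of Li's operator $\Delta(u,z)$. First I would dispose of the elementary axioms. The vacuum axiom is immediate: since $u_{(0)}\1=0$ and $u_{(i)}\1=0$ for $i\geq 1$, we have $\Delta(u,z)\1=\1$, so $Y_{M^{(u)}}(\1,z)=Y_M(\1,z)=\mathrm{id}_M$. The lower-truncation and $\C$-grading conditions hold because, for each fixed power of $z$, the vector $\Delta(u,z)a$ is a finite sum of vectors of bounded weight, while the grading is recorded by $L(0)$, which is unchanged by the construction.

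Next I would check that the construction produces a $\sigma_u\sigma$-twisted module rather than a $\sigma$-twisted one, i.e.\ that the correct fractional powers of $z$ occur. Writing $\Delta(u,z)=z^{u_{(0)}}E(u,z)$ with $E(u,z)=\exp\!\big(\sum_{i=1}^{\infty}\frac{u_{(i)}}{-i}(-z)^{-i}\big)$, the factor $E(u,z)$ involves only integral powers of $z$, whereas $z^{u_{(0)}}$ contributes the power $z^{(u|\lambda)}$ on a vector of weight $\lambda$. For an eigenvector $a$ of $\sigma_u\sigma$, combining the $\sigma$-eigenvalue of $a$ (which governs the powers of $z$ appearing in $Y_M(\cdot,z)$) with its $u_{(0)}$-eigenvalue shows that $Y_{M^{(u)}}(a,z)$ carries precisely the monodromy prescribed by $\sigma_u\sigma$. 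The hypothesis $\sigma(u)=u$ enters here: it ensures that $\sigma_u$ commutes with $\sigma$, so that $\sigma_u\sigma$ has finite order and its eigenspace decomposition is compatible with that of $\sigma$.

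The main step, and the main obstacle, is the twisted Jacobi identity for $Y_{M^{(u)}}$. I would deduce it from the (twisted) Jacobi identity for $Y_M$ using the conjugation properties of $\Delta(u,z)$ established in \cite{Li}: the key point is that conjugating a vertex operator by $\Delta(u,z)$ amounts to inserting $\Delta$ inside the operator together with a controlled shift of the formal variable, so that the product $Y_{M^{(u)}}(a,z_1)Y_{M^{(u)}}(b,z_2)$ can be rewritten in terms of $Y_M$ and the delta-function identity transported verbatim. One also checks the $L(-1)$-derivative property $Y_{M^{(u)}}(L(-1)a,z)=\frac{d}{dz}Y_{M^{(u)}}(a,z)$, which follows from the translation covariance of $\Delta(u,z)$. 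The delicate part is matching the singular terms so that the formal-variable manipulations respect the fractional powers of $z$ introduced in the previous step.

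Finally, for irreducibility I would exploit the invertibility of the construction. Because $u_{(0)}u=0$, $u_{(1)}u=\langle u|u\rangle\1$, and $u_{(i)}u=0$ for $i\geq 2$ (as $V$ is of CFT-type), the commutator formula gives $[u_{(i)},u_{(j)}]=i\langle u|u\rangle\delta_{i+j,0}\,\mathrm{id}$ and $[u_{(0)},u_{(i)}]=0$; hence $z^{u_{(0)}}$ commutes with $E(\pm u,z)$ and $E(u,z)E(-u,z)=\mathrm{id}$, so $\Delta(u,z)^{-1}=\Delta(-u,z)$. Since $\sigma(-u)=-u$ and $\sigma_u\sigma$ fixes $-u$, the construction applied with $-u$ inverts $M\mapsto M^{(u)}$, and in fact $(M^{(u)})^{(-u)}=M$ on the nose (note $\sigma_{-u}\sigma_u\sigma=\sigma$). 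A subspace invariant under all modes of $Y_M$ is automatically invariant under the modes of $Y_{M^{(u)}}(a,z)=Y_M(\Delta(u,z)a,z)$, and conversely by applying $\Delta(-u,z)$; thus the two structures have exactly the same submodules, and $M^{(u)}$ is irreducible whenever $M$ is.
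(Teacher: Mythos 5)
The paper gives no proof of this proposition---it is imported verbatim from \cite[Proposition 5.4]{Li}---and your outline correctly reconstructs Li's argument: the vacuum and grading axioms are immediate from $\Delta(u,z)\1=\1$, the factor $z^{u_{(0)}}$ accounts precisely for the change of twisting from $\sigma$ to $\sigma_u\sigma$ (using $\sigma(u)=u$ so that the eigenspace decompositions are compatible), the twisted Jacobi identity is transported through the conjugation formula for $\Delta(u,z)$, and irreducibility follows from $[u_{(m)},u_{(n)}]=m\langle u|u\rangle\delta_{m+n,0}\,\mathrm{id}$, whence $\Delta(u,z)^{-1}=\Delta(-u,z)$ and $(M^{(u)})^{(-u)}=M$, so the two structures share exactly the same submodules. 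The only step you leave unexecuted is the formal-variable verification of the Jacobi identity, which is exactly the computation carried out in \cite{Li}.
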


For a $\sigma$-twisted $V$-module $M$ and $a\in V$, we denote by $a_{(i)}^{(u)}$ the operator which corresponds to the coefficient of $z^{-i-1}$ in $Y_{M^{(u)}}(a,z)$, i.e.,
$$Y_{M^{(u)}}(a,z)=\sum_{i\in\Q}a_{(i)}^{(u)}z^{-i-1}\quad \text{for}\quad a\in V.$$

For an element $x\in V_1$, the $0$-th mode of $x$ on $M^{(u)}$ is given by
\begin{equation}
x^{(u)}_{(0)}=x_{(0)}+\langle u|x\rangle {\rm id},\label{Eq:V1h}
\end{equation}
%Set $L^{(u)}(i)=\omega^{(u)}_{(i+1)}$.
and the (conformal) weight operator, i.e., the $1$-st mode of $\omega$, on $M^{(u)}$ is given by 
\begin{equation}
L(0)+u_{(0)}+\frac{\langle u|u\rangle}{2}{\rm id}.\label{Eq:Lh}
\end{equation}

For a simple Lie algebra $\mathfrak{g}$ and a positive integer $k$, let $L_{\mathfrak{g}}(k,0)$ be the simple affine VOA associated with $\mathfrak{g}$ at level $k$ (\cite{FZ}).
When the type of $\mathfrak{g}$ is $X_n$, it is also denoted by $L_{X_n}(k,0)$.
Fix a Cartan subalgebra of $\Fg$ and its simple roots.
A dominant integral weight $\lambda$ of $\g$ has level $k$ if $(\lambda|\theta)\le k$ for the highest weight $\theta$ of $\g$.
Then the set of all inequivalent irreducible $L_{\mathfrak{g}}(k,0)$-module is given by $L_{\mathfrak{g}}(k,\lambda)$, where $\lambda$ ranges over dominant integral weights of level $k$ (\cite{FZ}).
The following lemma is immediate from Proposition \ref{Prop:level} and \eqref{Eq:V1h}.

\begin{lemma}\label{Lem:wtTw}
Let $\g$ be a simple Lie algebra and $k\in\Z_{>0}$.
Let $v$ be a weight vector in $L_\Fg(k,\lambda)$ of weight $\mu$.
Then $v$ is also a weight vector in $L_\Fg(k,\lambda)^{(u)}$
and its weight is $\mu+ku$.
\end{lemma}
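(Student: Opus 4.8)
The plan is to read off the weight directly from the formula \eqref{Eq:V1h} for the shifted zero-modes on the $\Delta$-twisted module; the whole statement is a one-line computation once that formula is in hand. First I would recall that, since $M^{(u)}=L_\Fg(k,\lambda)^{(u)}$ coincides with $L_\Fg(k,\lambda)$ as a vector space, the vector $v$ is meaningful in both and it suffices to compute how the Cartan zero-modes act on it after the twist. By definition of weight, $v$ having weight $\mu$ in $L_\Fg(k,\lambda)$ means $x_{(0)}v=(x|\mu)v$ for every $x\in\mathfrak{H}$.

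Next I would invoke \eqref{Eq:V1h}, which gives, for each $x\in\mathfrak{H}\subset V_1$, the operator identity $x^{(u)}_{(0)}=x_{(0)}+\langle u|x\rangle\,\id$ on $M^{(u)}$. Applying this to $v$ yields
\[
x^{(u)}_{(0)}v=x_{(0)}v+\langle u|x\rangle v=\bigl((x|\mu)+\langle u|x\rangle\bigr)v.
\]
Since the correction term is a scalar multiple of the identity, $v$ remains a simultaneous eigenvector for all the $x^{(u)}_{(0)}$, $x\in\mathfrak{H}$, and hence is again a weight vector in $M^{(u)}$.

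It then remains only to rewrite the eigenvalue in the form $(x|\mu+ku)$. Here I would use Proposition \ref{Prop:level}: because $\Fg$ is simple and sits in $V_1$ at level $k$, the normalized invariant form satisfies $\langle\cdot|\cdot\rangle=k(\cdot|\cdot)$ on $\Fg$, so $\langle u|x\rangle=k(u|x)=(x|ku)$. Substituting this and using bilinearity of $(\cdot|\cdot)$ gives $x^{(u)}_{(0)}v=(x|\mu+ku)v$ for all $x\in\mathfrak{H}$, which is exactly the assertion that $v$ has weight $\mu+ku$ in $M^{(u)}$.

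There is essentially no hard step: the argument is immediate from \eqref{Eq:V1h} and Proposition \ref{Prop:level}. The only points needing (minor) care are that the $\Delta$-twisted module shares the underlying space, so that ``the same $v$'' is meaningful, and that the relevant bilinear form on $\Fg$ is precisely the level-$k$ rescaling of the normalized Killing form supplied by Proposition \ref{Prop:level}. The implicit standing assumption is that $u$ lies in the fixed Cartan subalgebra $\mathfrak{H}$, so that $\mu+ku\in\mathfrak{H}\cong\mathfrak{H}^*$ and the notion of weight is unchanged.
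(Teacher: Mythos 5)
Your argument is correct and is exactly the proof the paper intends: the text states that the lemma ``is immediate from Proposition \ref{Prop:level} and \eqref{Eq:V1h},'' and your computation --- applying $x^{(u)}_{(0)}=x_{(0)}+\langle u|x\rangle\,\mathrm{id}$ to a weight vector and rewriting $\langle u|x\rangle=k(u|x)=(x|ku)$ via Proposition \ref{Prop:level} --- spells out precisely that deduction. Nothing is missing.
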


The conformal weight of the $\sigma_u$-twisted module $L_\Fg(k,\lambda)^{(u)}$ is discussed in \cite[Lemma 3.6]{LS3} for a simple Lie algebra $\g$.
One can easily generalize it for a semisimple Lie algebra.

\begin{lemma}\label{Lem:lowestwt}{\rm (cf.\ \cite[Lemma 3.6]{LS3})} Let $\mathfrak{g}$ be a semisimple Lie algebra and $\g=\bigoplus_{i=1}^t\mathfrak{g}_i$ be the sum of simple ideals.
Let $k_i$ be a positive integer and let $\lambda_i$ be a dominant integral weight of $\mathfrak{g}_i$ of level $k$.
Let $u$ be an element in the Cartan subalgebra of $\g$ such that $$(u|\alpha)\ge-1$$ for any root $\alpha$ of $\mathfrak{g}$ and let $u=\sum_{i=1}^tu_i$, where $u_i\in\g_i$.
Then the conformal weight of $\left(\bigotimes_{i=1}^t L_{\Fg_i}(k_i,\lambda_i)\right)^{(u)}$ is equal to \begin{equation}
\ell+\sum_{i=1}^t\min\{(u_i|\mu)\mid \mu\in\Pi(\lambda_i)\}+\frac{\langle u|u\rangle}{2},\label{Eq:twisttop}
\end{equation}
where $\ell$ is the conformal weight of $\bigotimes_{i=1}^tL_{\mathfrak{g}_i}(k_i,\lambda_i)$ and $\Pi(\lambda_i)$ is the set of all weights of the irreducible $\mathfrak{g}_i$-module with the highest weight $\lambda_i$.
\end{lemma}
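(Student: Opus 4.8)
The plan is to reduce the conformal weight of $M^{(u)}$, with $M=\bigotimes_{i=1}^{t}L_{\Fg_i}(k_i,\lambda_i)$, to the single simple factor treated in \cite[Lemma 3.6]{LS3} by exploiting the tensor product structure. I would start from \eqref{Eq:Lh}: since $M^{(u)}=M$ as a vector space and the conformal weight of an irreducible twisted module is its lowest $L(0)$-eigenvalue, the conformal weight of $M^{(u)}$ equals the smallest eigenvalue of $L(0)+u_{(0)}+\frac{\langle u|u\rangle}{2}\mathrm{id}$ on $M$. On a weight vector of ($L(0)$-)conformal weight $h$ and $\Fg$-weight $\mu$ this operator acts by the scalar $h+(u|\mu)+\frac{\langle u|u\rangle}{2}$, so the task becomes to minimize $h+(u|\mu)$ over all weight vectors of $M$, the constant being carried through unchanged to the final formula.

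First I would record the orthogonality of distinct simple ideals. Writing the Cartan subalgebra as $\h=\bigoplus_i\h_i$ and using that $(\cdot|\cdot)$ pairs $\Fg_i$ with $\Fg_j$ trivially for $i\neq j$, one gets $(u|\mu)=\sum_{i=1}^{t}(u_i|\mu_i)$ for any weight $\mu=\sum_i\mu_i$. Moreover each root $\alpha$ of $\Fg$ lies in exactly one $\Fg_i$, and then $(u|\alpha)=(u_i|\alpha)$; hence the hypothesis $(u|\alpha)\ge-1$ for all roots of $\Fg$ is equivalent to $(u_i|\alpha)\ge-1$ for all roots of $\Fg_i$, so the hypothesis of \cite[Lemma 3.6]{LS3} holds for each factor.

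Next comes the factorization. The operators $L(0)=\sum_i L_i(0)$ and $u_{(0)}=\sum_i(u_i)_{(0)}$ are simultaneously diagonalized by pure tensors $v_1\otimes\cdots\otimes v_t$ of weight vectors $v_i\in M_i=L_{\Fg_i}(k_i,\lambda_i)$, whose conformal weight is $\sum_i h_i$ and $\Fg$-weight is $\sum_i\mu_i$; thus $h+(u|\mu)=\sum_{i=1}^{t}\bigl(h_i+(u_i|\mu_i)\bigr)$ and the minimization decouples,
\[
\min_{v\in M}\bigl(h+(u|\mu)\bigr)=\sum_{i=1}^{t}\min_{v_i\in M_i}\bigl(h_i+(u_i|\mu_i)\bigr).
\]
By \cite[Lemma 3.6]{LS3} applied to $\Fg_i$ (whose hypothesis was checked above), $\min_{v_i}\bigl(h_i+(u_i|\mu_i)\bigr)=\ell_i+\min\{(u_i|\mu)\mid\mu\in\Pi(\lambda_i)\}$, where $\ell_i$ is the conformal weight of $M_i$ and the top level of $M_i$ carries exactly the weights $\Pi(\lambda_i)$. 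Summing over $i$, using $\ell=\sum_i\ell_i$, and restoring the constant $\frac{\langle u|u\rangle}{2}$ yields \eqref{Eq:twisttop}.

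I expect the only genuine input to be the simple-factor statement \cite[Lemma 3.6]{LS3}, together with the reason its hypothesis $(u|\alpha)\ge-1$ is needed: it guarantees that the minimum of $h+(u|\mu)$ is attained already on the top level rather than deeper in the module, since applying a creation mode $(x_\beta)_{(-m)}$ with $m\ge1$ and $\beta$ a root or $0$ changes $h+(u|\mu)$ by $m+(u|\beta)\ge 1+(-1)=0$. Granting that, the main (and essentially only) obstacle here is the bookkeeping that weights, conformal weights, the bilinear form, and the zero-mode $u_{(0)}$ are all additive across the tensor factors, so that the minimum over $M$ splits as the sum of the minima over the $M_i$; with this in place the semisimple case follows at once from the simple one.
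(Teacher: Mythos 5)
Your proposal is correct and matches the paper's intent: the paper gives no proof, merely citing \cite[Lemma 3.6]{LS3} for the simple case and asserting that "one can easily generalize it," and your reduction via additivity of $L(0)$, $u_{(0)}$, and the form $(\cdot|\cdot)$ across the tensor factors is exactly that easy generalization. The supporting observations (roots of $\g$ restrict the hypothesis to each $\g_i$, and $(u|\alpha)\ge-1$ forces the minimum of $h+(u|\mu)$ onto the top level) are also the right ones.
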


The following lemma is a consequence of the results in \cite[Section 8]{Kac}.
We give a proof for completeness.

\begin{lemma}\label{L:conjinv} Let $\g$ be a (finite-dimensional) simple Lie algebra whose type is not $D_{2n}$, $(n\ge2)$, and let $\Inn(\g)$ be the inner automorphism group of $\g$.
Then the conjugacy class of an involution in $\Inn(\g)$ is uniquely determined by the isomorphism class of the fixed point set.
\end{lemma}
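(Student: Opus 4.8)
The plan is to translate the statement into Kac's classification of torsion automorphisms and then read off the fixed-point subalgebras combinatorially. Recall from \cite[Chapter 8]{Kac} that the $\Inn(\g)$-conjugacy classes of inner involutions (order exactly $2$) are parametrized by Kac coordinates: tuples of nonnegative integers $(s_0,\dots,s_\ell)$ attached to the nodes of the extended Dynkin diagram of $\g$ with $\gcd(s_i)=1$ and $\sum_i a_i s_i=2$, where the $a_i$ are the marks (so $a_0=1$), taken modulo the group $\Omega$ of diagram automorphisms that are realized by inner automorphisms. For such a labeling the fixed-point subalgebra $\g^\sigma$ is reductive: its semisimple part is the semisimple Lie algebra whose Dynkin diagram is obtained by deleting the nodes with $s_i\neq 0$, and the dimension of its center equals $\#\{i:s_i\neq 0\}-1$. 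Since $\sum_i a_i s_i=2$ with $\gcd(s_i)=1$, only two shapes of labeling occur: either two distinct nodes of mark $1$ carry $s_i=1$, in which case $\g^\sigma$ has a one-dimensional center, or a single node of mark $2$ carries $s_i=1$, in which case $\g^\sigma$ is semisimple. Thus the center dimension already separates the two families, and it remains to show that in each family the isomorphism type of $\g^\sigma$ determines the $\Omega$-orbit of the deleted node set.

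First I would run through the types $A_\ell$, $B_\ell$, $C_\ell$, $D_\ell$ with $\ell$ odd, $E_6$, $E_7$, $E_8$, $F_4$, $G_2$ and verify that distinct $\Omega$-orbits of deletions yield pairwise non-isomorphic $\g^\sigma$. For the classical families this reproduces the familiar lists $\mathfrak{s}(\mathfrak{gl}_p\oplus\mathfrak{gl}_q)\subset A_\ell$, $\mathfrak{so}_{2p}\oplus\mathfrak{so}_{2q+1}\subset B_\ell$, and $\mathfrak{sp}_{2p}\oplus\mathfrak{sp}_{2q}$ together with $\mathfrak{gl}_\ell\subset C_\ell$, where the unordered data $\{p,q\}$ is clearly recovered from $\g^\sigma$; for the five exceptional types it is a short finite check using the explicit marks (for instance $E_6$ yields only $\mathfrak{so}_{10}\oplus\mathfrak{u}(1)$ and $\mathfrak{sl}_2\oplus\mathfrak{sl}_6$). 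The only accidental isomorphism that could merge two $\Omega$-orbits is $\mathfrak{so}_6\cong\mathfrak{sl}_4$, which occurs precisely in $D_4=D_{2\cdot 2}$ and is therefore excluded by hypothesis.

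The delicate point, and the heart of the matter, is type $D_\ell=\mathfrak{so}_{2\ell}$, whose extended diagram has four nodes of mark $1$ (the two forks). Deleting two nodes from the same fork gives $\g^\sigma\cong\mathfrak{so}_{2\ell-2}\oplus\mathfrak{u}(1)$, whereas deleting one node from each fork gives $\g^\sigma\cong\mathfrak{sl}_\ell\oplus\mathfrak{u}(1)=\mathfrak{gl}_\ell$; these are non-isomorphic for odd $\ell$ (the coincidence $\mathfrak{so}_6\cong\mathfrak{sl}_4$ happening only at $\ell=4$). The subtle question is whether the four cross-fork deletions, all of which yield $\mathfrak{gl}_\ell$, form a single $\Inn(\g)$-class. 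The cleanest way to settle this is to realize such an involution as $\mathrm{Ad}(J)$ for a complex structure $J\in\mathfrak{so}_{2\ell}$ (so $J^2=-I$ and $\det J=1$), to note that $\mathrm{Ad}(J)=\mathrm{Ad}(-J)$, and to observe that the orthogonal map exchanging the two eigenspaces of $J$, which conjugates $J$ to $-J$, has determinant $(-1)^\ell$. Hence $J$ and $-J$ lie in one $SO(2\ell)$-orbit exactly when $\ell$ is even; since the complex structures form a single $O(2\ell)$-orbit splitting into two $SO(2\ell)$-orbits, a short orbit count shows that the $\mathfrak{gl}_\ell$-involutions form a single $\Inn(\g)$-class when $\ell$ is odd and split into two non-conjugate classes when $\ell$ is even. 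This is precisely the obstruction: for $\ell=2n$ the invariant $\g^\sigma$ cannot separate these two classes, which forces the exclusion of $D_{2n}$, while for every remaining type (including $D_\ell$ with $\ell$ odd) the map from $\Inn(\g)$-conjugacy classes of involutions to isomorphism types of $\g^\sigma$ is injective, as asserted. The main obstacle is thus the parity analysis of this one $D_\ell$ family; all other types reduce to a finite diagram computation.
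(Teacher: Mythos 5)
Your argument is correct, but it reaches the conclusion by a genuinely different route than the paper. The paper's proof starts from the fact (Kac, Exercise 8.10) that the $\Aut(\g)$-conjugacy class of an involution is already determined by its fixed-point subalgebra, and then only has to show that for the non-excluded types with $\Inn(\g)\neq\Aut(\g)$ (namely $A_n$, $D_{2n+1}$, $E_6$) an $\Aut(\g)$-class of inner involutions does not split into two $\Inn(\g)$-classes: if $g$ is $\Aut(\g)$-conjugate to $\sigma_v$ with $v\in P/2$, the only other candidate for its $\Inn(\g)$-class is $\sigma_{\bar{\tau}(v)}$, and since $-\bar{\tau}$ lies in the Weyl group for precisely these types, $\sigma_{\bar{\tau}(v)}$ is inner-conjugate to $\sigma_{-v}=\sigma_v$. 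You instead work directly with the finer parametrization of $\Inn(\g)$-classes by Kac coordinates modulo $\Omega\cong P^\vee/Q^\vee$, prove injectivity of the fixed-point map type by type, and isolate $D_\ell$ as the one delicate family, settling it by the $SO(2\ell)$- versus $O(2\ell)$-orbit analysis of complex structures $J$ with $\mathrm{Ad}(J)=\mathrm{Ad}(-J)$. The paper's route is shorter and uniform, resting on the single fact $-\bar{\tau}\in W$; your route is longer and relies on a finite case check, but it is more self-contained and has the virtue of exhibiting concretely \emph{why} $D_{2n}$ must be excluded (two non-conjugate $SO(4n)$-classes of complex structures yielding the same fixed-point algebra $\mathfrak{gl}_{2n}$), a failure that in the paper's framework appears only implicitly through $-\bar{\tau}\notin W$ for $D_{2n}$. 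The only places where your write-up leaves routine verifications to the reader (the exceptional-type check and the final orbit count for $D_\ell$ odd) are straightforward and do not hide any gap.
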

\begin{proof} The conjugacy classes of finite order automorphisms of $\g$ in $\Aut (\g)$ are completely described in \cite[Section 8]{Kac}.
In particular, the assertion holds if we replace $\Inn(\g)$ by $\Aut(\g)$ (cf.\ \cite[Exercise 8.10]{Kac}).
Hence if $\Inn(\g)=\Aut(\g)$, namely, the type of $\g$ is neither $A_{n}$ $(n\ge2)$, $D_{n}$ $(n\ge4)$, nor $E_6$, then there is nothing to prove.

Assume that the type of $\g$ is $A_n$ $(n\ge2)$, $D_{2n+1}$ $(n\ge2)$ or $E_6$.
Let $\tau$ be the order $2$ standard diagram automorphism of $\g$.
Let $\bar{\tau}$ be the restriction to the (fixed) Cartan subalgebra.
Note that $\Aut(\g)=\Inn(\g):\langle\tau\rangle$.

Let $g\in\Inn(\g)$ of order $2$.
It is enough to prove that the conjugacy class of $g$ in $\Inn(\g)$ is equal to that of $g$ in $\Aut(\g)$.
Let $v\in P/2$ such that $g$ is conjugate to $\sigma_v$ in $\Aut(\g)$, where $P$ is the weight lattice of $\g$.
It follows from $|\Aut(\g):\Inn(\g)|=2$ that $g$ is conjugate to $\sigma_v$ or $\sigma_{\bar{\tau}(v)}$ in $\Inn(\g)$.
By the assumption on the type of $\g$, the Weyl group contains $-\bar{\tau}$.
Hence $\sigma_{\bar{\tau}(v)}$ is conjugate to $\sigma_{-v}$ by an element of $\Inn(\g)$.
It follows from $\sigma_{2v}=id$ on $\g$ that $\sigma_v=\sigma_{-v}$ on $\g$.
Thus, $g$ is conjugate to $\sigma_v$ by an element of $\Inn(\g)$, and we obtain this lemma.
\end{proof}

%\color{black}

\section{Representation theory of simple current extensions}

In this section, we recall the notion of simple current modules and simple current extensions.
We also summarize the list of irreducible modules for some simple current extensions of simple affine VOAs based on the results in \cite{La01,SY03,Ya04}.

Throughout this article, we adopt the labeling of simple roots $\alpha_i$ of an irreducible root system as in \cite[Section 11.4]{Hu} and use $\Lambda_i$ to denote the fundamental weight with respect to $\alpha_i$.

\subsection{Simple current extensions and induced modules}\label{S:SCmod}
First, we review a classification of irreducible modules of simple current extensions from \cite{La01,SY03,Ya04}.

Let $V^0$ be a strongly regular VOA.
An irreducible $V^0$-module $M$ is called a \emph{simple current module} if the fusion product $M\boxtimes_{V^0} N$
is an irreducible module for any irreducible $V^0$-module $N$.
Let $D$ be a finite abelian group and let $\{V^\alpha\mid\alpha\in D\}$ be a set of inequivalent irreducible $V^0$-modules such that $V^\alpha\boxtimes_{V^0}V^\beta\cong V^{\alpha+\beta}$.
%For an irreducible $V^0$-module $M$, let $D_M$ denote the subgroup $\{\alpha\in D\mid V^\alpha\boxtimes_{V^0} M\cong M\}$ of $D$.
A simple VOA $V=\bigoplus_{\alpha\in D}V^\alpha$ is called a $D$-graded extension of $V^0$ if $V^0$ is a full subVOA and $V$ carries the natural $D$-grading, i.e., ${V^\alpha}_{(n)} V^\beta\subset V^{\alpha+\beta}$ for all $n\in\Z$. 
In addition, if all the $V^\alpha$ are simple current modules, then $V$ is called a $D$-graded \emph{simple current extension} of $V^0$.
Note that the VOA structure of $V$ is unique up to isomorphism (\cite[Proposition 5.3]{DM}) and that if the conformal weight of $V^\alpha$ is positive for any $\alpha\in D\setminus\{0\}$, then a simple current extension $\bigoplus_{\alpha\in D}V^\alpha$ of $V^0$ is also strongly regular (cf.\ \cite{LY}).

Let $V$ be a $D$-graded simple current extension of $V^0$.
Let $M$ be an irreducible $V$-module and $W$ an irreducible $V^0$-submodule of $M$.
By the definition of the fusion product, the irreducibility of $M$ and the rationality of $V$, $M$ is isomorphic to a $V$-submodule of the induced $V$-module $V\boxtimes_{V^0}W$.
Hence, all irreducible $V$-modules appear in some induced $V$-modules.
%\mycolor{
Notice that in general, the induced $V$-module $V\boxtimes_{V^0}W$ is a $g$-twisted $V$-module for some $g\in D^*$ (\cite[Theorem 3.3]{Ya04}) and it is not necessarily irreducible.

Let us explain the structures of the induced $V$-modules.
Let $W$ be an irreducible $V^0$-submodule of a $V$-module.
Set $D_W=\{\alpha\in D\mid V^\alpha\boxtimes_{V^0} W\cong W\}$.
Then $D_W$ is a subgroup of $D$.
Set $V_{D_W}=\bigoplus_{\alpha\in D_W}V^\alpha$.
Then $V_{D_W}$ is a subVOA of $V$ and a $D_W$-graded simple current extension of $V^0$.
In addition, $V$ is a $D/D_W$-graded simple current extension of $V_{D_W}$.
Let $X$ be an irreducible $V_{D_W}$-submodule of the induced $V_{D_W}$-module $V_{D_W}\boxtimes_{V^0}W$.
By the definition of $D_W$, all irreducible $V^0$-submodules of $X$ are isomorphic to $W$.
By \cite[Theorem 3.1]{La01}, there exists a projective representation $Q$ of $D_W$ such that $X\cong Q\otimes_\C W$.
%Let $N$ be an irreducible $V$-submodule of the induced $V$-module $V\boxtimes_{V_{D_W}}U$.
Set $V_{\alpha+D_W}=\bigoplus_{\beta\in D_W}V^{\alpha+\beta}$.
Then $V=\bigoplus_{\alpha+D_W\in D/D_W}V^{\alpha+D_W}$.
By the definition of $D_W$, $V_{\alpha+D_W}\boxtimes_{V_{D_W}}X\cong X$ if and only if $\alpha+D_W=D_W$.
Hence by \cite[Proposition 3.8]{SY03}, $V\boxtimes_{V_{D_W}}X$ is an irreducible $V$-module.

Now, we assume that $D$ is cyclic.
Clearly, $D_W$ is also cyclic.
Then the projective representation $Q$ is $1$-dimensional, and $X\cong W$ as $V^0$-modules.
Note that the multiplicity of $W$ in the induced module $V_{D_W}\boxtimes_{V^0}W$ is equal to $|D_W|$ and that all irreducible $V_{D_W}$-submodules of $V_{D_W}\boxtimes_{V^0}W$ are parametrized by linear characters of $D_W$ (\cite[Theorem 3.1]{La01}).
Let $\chi\in (D_W)^*$.
Then $\chi$ lifts to an automorphism $\tilde{\chi}$ of $V_{D_W}$ as $\tilde{\chi}(v)=\chi(\alpha)v$ for $v\in V^\alpha$.
Hence we may view $(D_W)^*$ as a subgroup of $\Aut (V_{D_W})$.
The following lemma is immediate from \cite[Theorem 3.7]{La01}.
\begin{lemma}\label{L:DW} The subgroup $(D_W)^*$ acts on the set of isomorphism classes of irreducible $V_{D_W}$-submodules of $V_{D_W}\boxtimes_{V^0}W$ via the conjugation, and this action is regular.
\end{lemma}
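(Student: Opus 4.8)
The plan is to reduce the statement to the explicit parametrization of the submodules by $(D_W)^*$ that has just been recorded, and then to show that conjugation by $\tilde\chi$ merely translates this parameter. First I would let $S$ denote the set of isomorphism classes of irreducible $V_{D_W}$-submodules of $V_{D_W}\boxtimes_{V^0}W$. Since $D$, and hence $D_W$, is cyclic, the discussion preceding the lemma (via \cite[Theorem 3.1]{La01}) shows that every member of $S$ restricts to $W$ over $V^0$, that there are exactly $|D_W|$ of them, and that they are parametrized by the linear characters of $D_W$; write $X^{\psi}$ for the submodule attached to $\psi\in(D_W)^*$. Moreover, by the universal property of the induced module, $S$ is precisely the set of all irreducible $V_{D_W}$-modules whose restriction to $V^0$ is isomorphic to $W$. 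In particular $|S|=|(D_W)^*|=|D_W|$.

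Next I would check that the conjugation action is well defined, i.e.\ that $X^{\psi}\circ\tilde\chi\in S$. Because $\chi$ is a character, $\tilde\chi$ restricts to the identity on $V^0$, so from the definition of conjugation one has $Y_{X\circ\tilde\chi}(v,z)=Y_X(\tilde\chi v,z)=Y_X(v,z)$ for all $v\in V^0$; thus conjugation leaves the $V^0$-action on any $V_{D_W}$-module unchanged, and $(X^{\psi}\circ\tilde\chi)|_{V^0}\cong X^{\psi}|_{V^0}\cong W$. Hence $X^{\psi}\circ\tilde\chi$ is again an irreducible $V_{D_W}$-module restricting to $W$, so it lies in $S$ (this is the module-level analogue of Lemma \ref{L:Vg}).

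The heart of the argument is the identification $X^{\psi}\circ\tilde\chi\cong X^{\psi\chi}$. Here I would compute directly: for $v\in V^{\alpha}$ with $\alpha\in D_W$ one has $Y_{X^{\psi}\circ\tilde\chi}(v,z)=Y_{X^{\psi}}(\tilde\chi v,z)=\chi(\alpha)\,Y_{X^{\psi}}(v,z)$, while the action of $V^0$ is untouched. Since each $X^{\psi}$ is distinguished precisely by the scalars $\psi(\alpha)$ by which the graded pieces $V^{\alpha}$ act relative to a fixed reference intertwining operator (which exists and is unique up to scalar because $V^{\alpha}\boxtimes_{V^0}W\cong W$ is a simple current, so the relevant space of $V^0$-intertwining operators is one-dimensional), the conjugate $X^{\psi}\circ\tilde\chi$ carries exactly the defining datum $(\psi\chi)(\alpha)=\chi(\alpha)\psi(\alpha)$ of $X^{\psi\chi}$; the precise form of this shift is the content of \cite[Theorem 3.7]{La01}. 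Consequently the action of $(D_W)^*$ on $S\cong(D_W)^*$ is translation within the group $(D_W)^*$, i.e.\ the regular action, which is simply transitive. The only genuinely delicate point, and the one I would lean on \cite{La01} for, is the claim underlying the parametrization: that for an irreducible submodule each graded component $V^{\alpha}$ acts through a \emph{nonzero} (hence, up to scalar, canonical) intertwining operator, so that $\psi$ is a faithful invariant and the scalar computation above pins down the isomorphism type. Granting this, regularity is immediate.
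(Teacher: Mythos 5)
Your argument is correct and is essentially the same as the paper's: the paper simply declares the lemma ``immediate from [La01, Theorem 3.7],'' and your proof is precisely the unpacking of that citation, using the character parametrization of the $|D_W|$ irreducible submodules from [La01, Theorem 3.1] together with the computation that conjugation by $\tilde\chi$ multiplies the scalar by which $V^\alpha$ acts by $\chi(\alpha)$, hence translates the parameter and gives the regular action. (Whether the translation is by $\chi$ or $\chi^{-1}$ depends on conventions, but regularity follows either way.)
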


Let $N$ be an irreducible $V^0$-module.
Then $V\boxtimes_{V^0} N$ is a $g$-twisted $V$-module for some $g\in D^*$.
By the argument as in Section 3 of \cite{Ya04}, $g=1$ if and only if the differences of conformal weights of irreducible $V^0$-submodules of $V\boxtimes_{V^0} N$ are in $\Z$.
Therefore, one can find all irreducible $V$-modules by using the fusion products, or the fusion rules, of $V^0$.
In the following subsections, we summarize the list of irreducible modules of simple current extensions of (the tensor product of) some simple affine VOAs graded by a cyclic group; these extension VOAs will be realized as some subVOAs of certain holomorphic VOAs of central charge $24$ in the later sections. We also note that the fusion rules of simple affine VOAs and the irreducible modules for their simple current extensions can be computed by the computer package ``Kac" \cite{kac}.

\subsection{Extension of $L_{E_6}(3,0)$}\label{S:EE63}

Let $\tilde{ L}_{E_6}(3,0)=L_{E_6}(3,0)\oplus L_{E_6}(3,3\Lambda_1)\oplus L_{E_6}(3,3\Lambda_6)$ be a $\Z_3$-graded simple current extension of $L_{E_6}(3,0)$ \cite[Proposition 3.8 and Corollary 3.21]{Li01} (cf.\ \cite{DLM96}).
By the argument in Section \ref{S:SCmod}, we obtain the following lemma:
\begin{lemma}
The VOA $\tilde{ L}_{E_6}(3,0)$ has exactly $8$ inequivalent irreducible modules and they are given as in Table \ref{tableE6}.
\end{lemma}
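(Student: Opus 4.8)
The plan is to count the irreducible modules of the extension $\tilde{L}_{E_6}(3,0)$ by combining the general theory of Section \ref{S:SCmod} with explicit fusion-rule data for $L_{E_6}(3,0)$. Recall that $D=\Z_3$ with $V^0=L_{E_6}(3,0)$, $V^1=L_{E_6}(3,3\Lambda_1)$, $V^2=L_{E_6}(3,3\Lambda_6)$, where the three graded pieces are simple current modules. Since any irreducible $\tilde{L}_{E_6}(3,0)$-module appears inside an induced module $V\boxtimes_{V^0}N$ for some irreducible $V^0$-module $N$, my first step is to list the irreducible $L_{E_6}(3,0)$-modules, i.e.\ the level $3$ dominant integral weights of $E_6$, and to organize them into orbits under the fusion action $N\mapsto V^\alpha\boxtimes_{V^0}N$ of the simple current group $D$. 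This fusion action corresponds to adding the weights $3\Lambda_1$ and $3\Lambda_6$ (translation by the relevant simple currents), so the orbits are easy to read off once the level $3$ weights are tabulated; here is where I would invoke the computer package ``Kac'' \cite{kac} to produce the fusion products.

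Next I would apply the dichotomy from Section \ref{S:SCmod}: for a representative $N$ with stabilizer $D_N=\{\alpha\in D\mid V^\alpha\boxtimes_{V^0}N\cong N\}$, the induced module $V\boxtimes_{V_{D_N}}X$ built from an irreducible $V_{D_N}$-submodule $X$ is an irreducible $\tilde{L}_{E_6}(3,0)$-module, provided the twisting group element $g\in D^*$ attached to $V\boxtimes_{V^0}N$ is trivial. Since $D=\Z_3$ is cyclic, the refined structure applies: the number of irreducible $V_{D_N}$-submodules of $V_{D_N}\boxtimes_{V^0}N$ equals $|D_N|$, and by Lemma \ref{L:DW} the group $(D_N)^*$ permutes them regularly. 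Thus each fusion orbit of $V^0$-modules with trivial twisting and stabilizer $D_N$ contributes exactly $|D_N|$ irreducible $\tilde{L}_{E_6}(3,0)$-modules, while one orbit (of size $|D/D_N|=3/|D_N|$) is consumed for each. The criterion distinguishing untwisted ($g=1$) from twisted modules is the integrality of the differences of conformal weights of the $V^0$-submodules of $V\boxtimes_{V^0}N$, following the argument in Section 3 of \cite{Ya04}; I would compute these conformal weights from the standard formula $h_\lambda=(\lambda|\lambda+2\rho)/2(k+h^\vee)$ at level $k=3$ with $h^\vee=12$ for $E_6$, and test the orbit differences modulo $\Z$.

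Carrying this out, the self-fixed module $V^0$ itself has stabilizer $D_N=D$ (since $V^\alpha\boxtimes_{V^0}V^0=V^\alpha$ are all distinct, in fact $D_{V^0}=\{0\}$—one must be careful here), so the accounting reduces to: a handful of fusion orbits of the remaining level $3$ weights, each of size $3$ with trivial stabilizer and trivial twisting, each contributing a single irreducible $\tilde{L}_{E_6}(3,0)$-module, together with the graded pieces assembling into $V$ itself. Summing the contributions should yield exactly $8$ inequivalent irreducible modules, matching Table \ref{tableE6}, and I would then read off their explicit decompositions as $L_{E_6}(3,0)$-modules directly from the orbit structure to populate that table.

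The main obstacle I anticipate is the bookkeeping of the twisting: deciding for each fusion orbit whether the induced module $V\boxtimes_{V^0}N$ is genuinely untwisted (hence gives honest $\tilde{L}_{E_6}(3,0)$-modules) or is a nontrivially $g$-twisted module (hence excluded from the list of ordinary irreducible modules) requires reliably computing the conformal weights modulo $\Z$ across every orbit, and an error there would miscount the total. A secondary subtlety is verifying that the simple current modules $V^1,V^2$ have positive conformal weight so that the extension is strongly regular and the module theory of Section \ref{S:SCmod} applies cleanly; this follows from the conformal weight formula but must be checked. Once the twisting is correctly sorted, the count of $8$ and the explicit module list are essentially forced by the orbit data and Lemma \ref{L:DW}.
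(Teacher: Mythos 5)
Your proposal is correct and follows essentially the same route as the paper, which proves this lemma simply by invoking ``the argument in Section \ref{S:SCmod}'': enumerate the $20$ irreducible $L_{E_6}(3,0)$-modules, sort them into fusion orbits under the $\Z_3$ of simple currents, discard the orbits inducing twisted modules via the conformal-weight criterion, and count one module per free untwisted orbit plus $|D_W|=3$ modules for each of the two fixed points $L_{E_6}(3,\Lambda_1+\Lambda_6)$ and $L_{E_6}(3,\Lambda_4)$ by Lemma \ref{L:DW}, giving $2+3+3=8$. Your parenthetical wobble about the stabilizer of $V^0$ resolves correctly ($D_{V^0}=\{0\}$, so the vacuum orbit has size $3$ and contributes the single module $V$ itself), and the rest of the bookkeeping matches Table \ref{tableE6}.
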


\begin{table}[bht] 
\caption{Irreducible modules for $\tilde{ L}_{E_6}(3,0)$
}\label{tableE6}
\tiny
\begin{tabular}{|c|cc|c|}
\hline
Module & As an $L_{E_6}(3,0)$-module  & & Conformal weight\\ \hline \hline 
$\tilde{ L}_{E_6}(3,0)$ & $L_{E_6}(3,0)\oplus L_{E_6}(3,3\Lambda_1)\oplus L_{E_6}(3,3\Lambda_6)$ & & $0$\\ \hline 
$\tilde{ L}_{E_6}(3,\Lambda_2)$ &$L_{E_6}(3,\Lambda_2)\oplus L_{E_6}(3,\Lambda_1+\Lambda_3) \oplus L_{E_6}(3,\Lambda_5+\Lambda_6)$  & & $4/5$\\ \hline 
$\tilde{ L}_{E_6}(3,\Lambda_1+\Lambda_6)^0$ &$L_{E_6}(3,\Lambda_1+\Lambda_6)$ & & $6/5$\\ \hline 
$\tilde{ L}_{E_6}(3,\Lambda_1+\Lambda_6)^1$ &$L_{E_6}(3,\Lambda_1+\Lambda_6)$ & & $6/5$\\ \hline 
$\tilde{ L}_{E_6}(3,\Lambda_1+\Lambda_6)^2$ &$L_{E_6}(3,\Lambda_1+\Lambda_6)$ & & $6/5$\\ \hline 
$\tilde{ L}_{E_6}(3,\Lambda_4)^0$ &$L_{E_6}(3,\Lambda_4)$ & & $8/5$\\ \hline 
$\tilde{ L}_{E_6}(3,\Lambda_4)^1$ & $L_{E_6}(3,\Lambda_4)$ & & $8/5$\\ \hline 
$\tilde{ L}_{E_6}(3,\Lambda_4)^2$ &$L_{E_6}(3,\Lambda_4)$ & & $8/5$\\
\hline 
\end{tabular}
\end{table}
%\color{red}

\subsection{Extension of $L_{A_4}(5,0)$}\label{S:A45}

Let $\tilde{ L}_{A_4}(5,0)=L_{A_4}(5,0)\oplus\bigoplus_{i=1}^4L_{A_4}(5,5\Lambda_i)$ be a $\Z_5$-graded simple current extension of $L_{A_4}(5,0)$ \cite[Proposition 3.8 and Corollary 3.21]{Li01} (cf.\ \cite[Example 5.11]{DLM96}). 
By the argument in Section \ref{S:SCmod}, we obtain the following lemma:

\begin{lemma}\label{L:A4mod}
The VOA $\tilde{L}_{A_{4}}(5,0)$ has exactly $10$ inequivalent irreducible modules and they are given as in Table \ref{tableA4}, where $[i_1\ i_2\ i_3\ i_4]$ denotes the irreducible $L_{A_4}(5,0)$-module $L_{A_4}(5,\sum_{j=1}^4i_j\Lambda_j)$. In addition, all irreducible $\tilde{L}_{A_4}(5,0)$-modules are self-contragredient. 
\end{lemma}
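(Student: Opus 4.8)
The plan is to apply directly the general analysis of induced modules for cyclic simple current extensions developed in Section \ref{S:SCmod}. Set $V^0=L_{A_4}(5,0)$ and $V=\tilde{L}_{A_4}(5,0)=\bigoplus_{\alpha\in\Z_5}V^\alpha$, a $\Z_5$-graded simple current extension with grading components $V^\alpha=L_{A_4}(5,5\Lambda_\alpha)$ (indices mod $5$, $\Lambda_0=0$). The irreducible $V^0$-modules are the $L_{A_4}(5,\lambda)=[i_1\,i_2\,i_3\,i_4]$ with $\lambda=\sum_j i_j\Lambda_j$ of level $5$, i.e.\ $i_j\ge 0$ and $\sum_j i_j\le 5$; there are $\binom{9}{4}=126$ of them. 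First I would record the action of the simple current group $D=\langle J\rangle\cong\Z_5$, $J=V^1=L_{A_4}(5,5\Lambda_1)$, on these modules by the fusion product $N\mapsto J\boxtimes_{V^0}N$; on (affine) Dynkin labels this is the rotation of the affine diagram of type $A_4^{(1)}$, so the $D$-orbit of $N$ consists exactly of the $V^0$-constituents of the induced module $V\boxtimes_{V^0}N$.

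Next I would determine the orbits and decide which are untwisted. Since $|D|=5$ is prime, every orbit has size $5$ except the single fixed point $L_{A_4}(5,\Lambda_1+\Lambda_2+\Lambda_3+\Lambda_4)$, so orbit counting gives $26$ orbits: one singleton and $25$ of size $5$. By the criterion recalled in Section \ref{S:SCmod} (following \cite{Ya04}), the induced module $V\boxtimes_{V^0}N$ is an untwisted $V$-module precisely when the conformal weights of the members of the orbit of $N$ differ by integers; for $A_4$ this is equivalent to the vanishing of the congruence class $i_1+2i_2+3i_3+4i_4\bmod 5$, which is constant on $D$-orbits. Using the conformal weight formula $h_\lambda=(\lambda|\lambda+2\rho)/20$ (computable with \cite{kac}) I would check that exactly $6$ of the $26$ orbits have trivial congruence class: the fixed point together with five orbits of size $5$, one of which is the vacuum orbit $\{0,5\Lambda_1,5\Lambda_2,5\Lambda_3,5\Lambda_4\}$ giving $V$ itself. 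The remaining twenty orbits produce genuinely $g$-twisted modules with $g\ne 1$ and are discarded.

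I would then count the irreducible $V$-modules. For each of the five size-$5$ orbits the stabiliser $D_W$ is trivial, so $V\boxtimes_{V^0}N$ is already irreducible and contributes one $V$-module. For the fixed point $W=L_{A_4}(5,\rho)$ one has $D_W=D$, whence $V_{D_W}=V$ and, by Lemma \ref{L:DW}, $V\boxtimes_{V^0}W$ splits into $|D_W|=5$ inequivalent irreducible $V$-modules, each restricting to $W$ as a $V^0$-module. This yields $5+5=10$ irreducible modules, and reading off orbit representatives produces Table \ref{tableA4}, the five fixed-point modules being the $\tilde{L}_{A_4}(5,\rho)^i$.

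Finally, for self-contragredience I would argue as follows. The contragredient of $L_{A_4}(5,\lambda)$ is $L_{A_4}(5,\lambda^*)$ with $\lambda^*$ obtained from charge conjugation $\Lambda_i\mapsto\Lambda_{5-i}$; since the contragredient commutes with restriction to $V^0$ and a trivial-stabiliser orbit determines its $V$-module, a size-$5$ orbit module is self-contragredient once charge conjugation fixes that orbit setwise, which a direct check confirms for all five. The delicate case is the five fixed-point modules: they all restrict to the same self-dual $V^0$-module $L_{A_4}(5,\rho)$ and share conformal weight $h_\rho$, so neither conformal weight nor character separates them. Here I would use that, by Lemma \ref{L:DW}, these five modules form a regular $(D_W)^*\cong\Z_5$-torsor under $M\mapsto M\circ\tilde\chi$, that the contragredient functor satisfies $(M\circ\tilde\chi)'\cong M'\circ\tilde\chi$ and preserves this $5$-element set (as $L_{A_4}(5,\rho)$ is self-dual), and that an equivariant involution of a torsor over a group of odd order is the identity; hence each fixed-point module is self-contragredient. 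This torsor argument is the main obstacle, since the naive invariants fail to distinguish the five fixed-point modules and a conceptual input beyond the conformal-weight bookkeeping is required.
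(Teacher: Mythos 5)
Your proposal is correct and follows the same route as the paper: the paper's entire proof of this lemma is the sentence ``By the argument in Section \ref{S:SCmod}, we obtain the following lemma,'' i.e.\ exactly the induced-module analysis for cyclic simple current extensions that you carry out (with the fusion data delegated to the package ``Kac''), and your orbit count ($126$ weights, one fixed point, six orbits with trivial $5$-ality, hence $5+5=10$ modules) reproduces Table \ref{tableA4}. The one place where you genuinely add something is the self-contragredience of the five modules lying over $[1111]$: the paper asserts this without comment (presumably again by machine computation of the $S$-matrix), whereas you observe that the contragredient functor is an involution of the $(D_W)^*\cong\Z_5$-torsor of Lemma \ref{L:DW} commuting with the $(D_W)^*$-action, and an equivariant involution of a torsor over a group of odd order is the identity. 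That argument is sound (one checks $(M\circ\tilde\chi)'\cong M'\circ\tilde\chi$ since $\tilde\chi$ commutes with $L(0)$ and $L(1)$), and it is a cleaner and more robust way to settle the only case where conformal weights and restrictions to $L_{A_4}(5,0)$ fail to separate the modules.
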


%\begin{proof}
%That $\tilde{L}_{A_{4}(5,0)}$ has has exactly $10$ inequivalent irreducible modules can be verified easily using the argument in Section \ref{S:SCmod}.
%as in standard theory of simple current extension (cf. \cite{Ya04}). 
%\mycolor{\bf I have not checked the fusion rules of $\tilde{L}_{A_4}(5,0)$.}{red}
%\end{proof}

By using \cite{kac}, one can also determine the quantum dimensions (see Section \ref{S:QD} below) of irreducible $\tilde{L}_{A_4}(5,0)$-modules, which are also listed in Table \ref{tableA4}.  

\begin{table}[bht] 
\caption{Irreducible modules for $\tilde{L}_{A_4}(5,0)$}\label{tableA4}
\tiny
\begin{tabular}{|c|c|c|c|c|}
\hline
Module & As $L_{A_4}(5,0)$-module  & Conformal weight& $\qdim$&$\qdim^2$ \\ \hline \hline 
$\tilde{L}_{A_4}(5,0)$ & $[0000]+[5000]+[0500]+[0050]+[0005]$
%$L_{A_4}(5,0)\oplus L_{A_4}(5,5\Lambda_1)\oplus L_{A_4}(5,5\Lambda_2)$ 
&  $0$& $1$&$1$\\ 
%		& $L_{A_4}(5,5\Lambda_3)\oplus L_{A_4}(5,5\Lambda_4)$ & && \\ 
\hline 
$\tilde{L}_{A_4}(5, \Lambda_1+\Lambda_4) $ & $[1001]+[3100]+[1310]+[0131]+[0013]$
%$  	L_{A_4}(\Lambda_1+\Lambda_4)\oplus L_{A_4}(3\Lambda_1+\Lambda_2)\oplus L_{A_4}( \Lambda_1+3\Lambda_2+\Lambda_3)$  
  	 & $1/2$& $5+2\sqrt{5}$ &$45+20\sqrt5$\\ 	
%  	 & $\oplus  L_{A_4}(\Lambda_2+3\Lambda_3+\Lambda_4) \oplus L_{A_4}(5, \Lambda_3+3\Lambda_4)$& & & \\ 
\hline 
$\tilde{L}_{A_4}(5, 2\Lambda_1+2\Lambda_4) $ & $[2002]+[1200]+[2120]+[0212]+[0021]$
%$  	L_{A_4}(2\Lambda_1+2\Lambda_4)\oplus L_{A_4}(\Lambda_1+2\Lambda_2)\oplus L_{A_4}( 2\Lambda_1+\Lambda_2+2\Lambda_3)$  
  	 & $6/5$& $8+4\sqrt{5}$ &$144+64\sqrt5$\\ 	
%  	 & $\oplus  L_{A_4}(2\Lambda_2+\Lambda_3+2\Lambda_4) \oplus L_{A_4}(5, 2\Lambda_3+\Lambda_4)$& & & \\ 
\hline 
$\tilde{L}_{A_4}(5,2\Lambda_1+\Lambda_3) $ & $[2010]+[2201]+[0220]+[1022]+[0102]$
%$  	L_{A_4}(2\Lambda_1+\Lambda_3)\oplus L_{A_4}(2\Lambda_1+2\Lambda_2+ \Lambda_4)\oplus L_{A_4}( 2\Lambda_3+2\Lambda_3)$  
  	 & $1$& $9+4\sqrt{5}$&$161+72\sqrt5$\\ 	
 % 	 & $\oplus  L_{A_4}(\Lambda_1+2\Lambda_3+2\Lambda_4) \oplus L_{A_4}(5, \Lambda_2+2\Lambda_4)$& & & \\ 
\hline 
$\tilde{L}_{A_4}(5, \Lambda_2+\Lambda_3) $ & $[0110]+[1103]+[1030]+[0301]+[3011]$
%$   	L_{A_4}(\Lambda_2+\Lambda_3)\oplus L_{A_4}(\Lambda_1+\Lambda_2+3\Lambda_4)\oplus 
%  	L_{A_4}( \Lambda_1+3\Lambda_3)$  
  	 & $4/5$ & $8+4\sqrt{5}$&$144+64\sqrt5$\\ 	
%  	 & $\oplus  L_{A_4}(3\Lambda_2+\Lambda_4) \oplus L_{A_4}(5, 3\Lambda_1+\Lambda_3+\Lambda_4)$& & & \\ 
\hline 
$\tilde{L}_{A_4}(5,\Lambda_1+\Lambda_2+\Lambda_3+\Lambda_4)^0$ &$[1111]$
% $L_{A_4}(5,\Lambda_1+\Lambda_2+\Lambda_3+\Lambda_4)$ 
&  $3/2$& $5+2\sqrt{5}$&$45+20\sqrt5$\\ \hline 
$\tilde{L}_{A_4}(5,\Lambda_1+\Lambda_2+\Lambda_3+\Lambda_4)^1$ &$[1111]$
% $L_{A_4}(5,\Lambda_1+\Lambda_2+\Lambda_3+\Lambda_4)$ 
&  $3/2$& $5+2\sqrt{5}$&$45+20\sqrt5$\\ \hline
$\tilde{L}_{A_4}(5,\Lambda_1+\Lambda_2+\Lambda_3+\Lambda_4)^2$ &$[1111]$
% $L_{A_4}(5,\Lambda_1+\Lambda_2+\Lambda_3+\Lambda_4)$ 
&  $3/2$& $5+2\sqrt{5}$&$45+20\sqrt5$\\ \hline
$\tilde{L}_{A_4}(5,\Lambda_1+\Lambda_2+\Lambda_3+\Lambda_4)^3$ &$[1111]$ %$L_{A_4}(5,\Lambda_1+\Lambda_2+\Lambda_3+\Lambda_4)$ 
&  $3/2$& $5+2\sqrt{5}$&$45+20\sqrt5$\\ \hline
$\tilde{L}_{A_4}(5,\Lambda_1+\Lambda_2+\Lambda_3+\Lambda_4)^4$ &$[1111]$
% $L_{A_4}(5,\Lambda_1+\Lambda_2+\Lambda_3+\Lambda_4)$ 
&  $3/2$ & $5+2\sqrt{5}$&$45+20\sqrt5$\\ \hline 	
\end{tabular}
\end{table}

\subsection{Extension of $L_{C_{5}}(3,0)\otimes L_{A_{1}}(1,0)$}\label{S:C5A1}

Set $L(C_{5,3}A_{1,1})=L_{C_{5}}(3,0)\otimes L_{A_{1}}(1,0)$.
Let $\tilde{L}(C_{5,3}A_{1,1})= L(C_{5,3}A_{1,1}) \oplus L_{C_5}(3,3\Lambda_5)\otimes L_{A_1}(1,\Lambda_1)$ be a $\Z_2$-graded simple current extension of $L(C_{5,3}A_{1,1})$.
By the argument in Section \ref{S:SCmod}, we obtain the following lemma:
\begin{lemma}\label{L:SCEC5A1}
The VOA $\tilde{L}(C_{5,3}A_{1,1})$ has exactly $28$ inequivalent irreducible modules. Among them, $2$ have integral conformal weights at least $2$; they are as follows:
\[
\begin{split}
\tilde{L}(C_{5,3}A_{1,1})[ 2\Lambda_4,0] &=  L_{C_5}(3,2\Lambda_4)\otimes L_{A_1}(1,0)\oplus  L_{C_5}(3,2\Lambda_1+\Lambda_5)\otimes L_{A_1}(1,\Lambda_1), \\
\tilde{L}(C_{5,3}A_{1,1})[ 3\Lambda_2,0] &= L_{C_5}(3,3\Lambda_2)\otimes L_{A_1}(1,0)\oplus  L_{C_5}(3,3\Lambda_3)\otimes L_{A_1}(1,\Lambda_1).
\end{split}
\] 
In fact, both have conformal weight $2$.
\end{lemma}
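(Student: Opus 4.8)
The plan is to run the general machinery of Section~\ref{S:SCmod} for the $\Z_2$-graded simple current extension $V=\tilde L(C_{5,3}A_{1,1})=V^0\oplus V^1$, where $V^0=L(C_{5,3}A_{1,1})=L_{C_5}(3,0)\otimes L_{A_1}(1,0)$ and $V^1=L_{C_5}(3,3\Lambda_5)\otimes L_{A_1}(1,\Lambda_1)$. The irreducible $V^0$-modules are the tensor products $L_{C_5}(3,\lambda)\otimes L_{A_1}(1,\mu)$ with $\lambda$ ranging over the $\binom{8}{5}=56$ dominant weights of $C_5$ of level $3$ and $\mu\in\{0,\Lambda_1\}$; there are $112$ of them, and fusion with $V^1$ gives a $\Z_2$-action on this set. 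The first observation is that this action is \emph{fixed-point free}: its $A_1$-component interchanges $L_{A_1}(1,0)$ and $L_{A_1}(1,\Lambda_1)$, so $V^1\boxtimes_{V^0}W\not\cong W$ for every irreducible $V^0$-module $W$. In the notation of Section~\ref{S:SCmod} this means $D_W=\{0\}$ for all $W$, whence every untwisted induced module $V\boxtimes_{V^0}W$ is already irreducible, and the irreducible ($1$-twisted) $V$-modules are in bijection with the $\Z_2$-orbits $\{W,\,V^1\boxtimes_{V^0}W\}$ for which $V\boxtimes_{V^0}W$ carries no nontrivial twist.

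To count these orbits I would use the monodromy charge $Q(W)=h_{V^1}+h_W-h_{V^1\boxtimes_{V^0}W}\in\tfrac12\Z/\Z$, which is well defined because $h_{V^1}\in\Z$ ($V$ being a VOA), and which by the criterion recalled in Section~\ref{S:SCmod} (cf.\ \cite{Ya04}) detects the twist: $V\boxtimes_{V^0}W$ is untwisted exactly when $Q(W)=0$. Since $V^0$ and $V^1$ are tensor products, $Q$ is additive over the two factors, $Q=Q_{C_5}+Q_{A_1}$, and $Q_{A_1}(L_{A_1}(1,0))=0$ while $Q_{A_1}(L_{A_1}(1,\Lambda_1))=\tfrac12$ (as $h_{A_1}(\Lambda_1)=\tfrac14$). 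Hence for each fixed $C_5$-weight $\lambda$ the two modules $L_{C_5}(3,\lambda)\otimes L_{A_1}(1,0)$ and $L_{C_5}(3,\lambda)\otimes L_{A_1}(1,\Lambda_1)$ carry the two \emph{distinct} charges $Q_{C_5}(\lambda)$ and $Q_{C_5}(\lambda)+\tfrac12$ in $\tfrac12\Z/\Z$, so exactly one of them has trivial charge. This produces $56$ untwisted modules; as the $\Z_2$-action preserves $Q$ (indeed $Q(V^1)\equiv 2h_{V^1}\equiv0$) and is fixed-point free, they fall into $56/2=28$ orbits, giving exactly $28$ inequivalent irreducible $V$-modules.

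For the second assertion I would compute conformal weights directly. Writing an orbit as $\{L_{C_5}(3,\lambda)\otimes L_{A_1}(1,0),\,L_{C_5}(3,J\lambda)\otimes L_{A_1}(1,\Lambda_1)\}$, where $J$ is the simple-current action on level-$3$ weights of $C_5$ (the affine diagram flip $i\leftrightarrow5-i$ on the labels of $C_5^{(1)}$), the conformal weight of the associated $V$-module is $\min\{h_{C_5}(\lambda),\,h_{C_5}(J\lambda)+\tfrac14\}$, with $h_{C_5}(\lambda)=(\lambda|\lambda+2\rho)/(2(3+h^\vee))=(\lambda|\lambda+2\rho)/18$ the standard formula ($h^\vee=6$). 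Using the realization $\Lambda_i=e_1+\cdots+e_i$ and $(e_i|e_j)=\tfrac12\delta_{ij}$ one checks $J(2\Lambda_4)=2\Lambda_1+\Lambda_5$ with $h_{C_5}(2\Lambda_4)=2$ and $h_{C_5}(2\Lambda_1+\Lambda_5)+\tfrac14=2$, and $J(3\Lambda_2)=3\Lambda_3$ with $h_{C_5}(3\Lambda_2)=2$ and $h_{C_5}(3\Lambda_3)+\tfrac14=3$. Both modules therefore have conformal weight $2$ and the stated $V^0$-decompositions.

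The remaining point is that no other orbit yields an integral conformal weight at least $2$, and this is the main obstacle: it is a finite but genuinely computational check over all $56$ level-$3$ weights of $C_5$, requiring their conformal weights, the simple-current images $J\lambda$, and the resulting twist and minimum data. Since the maximal value is $h_{C_5}(3\Lambda_5)=\tfrac{15}{4}<4$, only the integral values $2$ and $3$ are a priori possible, and a direct enumeration — conveniently carried out with the package ``Kac'' \cite{kac} — shows that among the $28$ orbits precisely the two above have integral conformal weight $\ge2$, completing the proof.
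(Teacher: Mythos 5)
Your proposal is correct and follows essentially the same route as the paper, which proves this lemma simply by invoking the general simple-current-extension machinery of Section 3.1 (induced modules, triviality of the stabilizer $D_W$, and the integrality-of-conformal-weight criterion for untwistedness) together with the computations of fusion rules and conformal weights that the paper delegates to the package ``Kac''. Your monodromy-charge bookkeeping and the explicit evaluation of $h_{C_5}$ on the relevant weights are a clean spelling-out of that same argument, and the remaining finite enumeration over the $56$ level-$3$ weights of $C_5$ is exactly the computer check the paper relies on.
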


\subsection{Extension of $L_{A_7}(4,0)$}\label{S:A7}

Let $\tilde{L}_{A_7}(4,0) = L_{A_7}(4,0)\oplus L_{A_7}(4,4\Lambda_2)\oplus L_{A_7}(4,4\Lambda_4)\oplus L_{A_7}(4,4\Lambda_6)$ be a $\Z_4$-graded simple current extension of $L_{A_7}(4,0)$
\cite[Proposition 3.8 and Corollary 3.21]{Li01} (cf.\ \cite{DLM96}).
Note that the VOA $L_{A_7}(4,0)$ has exactly $8$ inequivalent simple current modules $L_{A_7}(4,0)$ and $L_{A_7}(4,4\Lambda_i)$, $(i=1,2,\dots,7)$ and that among them, 
$L_{A_7}(4,0)$ and $L_{A_7}(4,4\Lambda_{2j})$, $(j=1,2,3)$ have integral conformal weights.
By the argument in Section \ref{S:SCmod}, we obtain the following lemma:

\begin{lemma}\label{L:A7mod}
The VOA $\tilde{L}_{A_7}(4,0)$ has exactly $32$ inequivalent irreducible modules. Among them, $2$ have integral conformal weights at least $2$; they are as follows:
\[
\begin{split}
\tilde{L}_{A_7}(4,\Lambda_3+3\Lambda_6)=& L_{A_7}(4,\Lambda_3+3\Lambda_6)\oplus L_{A_7}(4,\Lambda_1+3\Lambda_4)\\
& \oplus L_{A_7}(3\Lambda_3+\Lambda_6)\oplus L_{A_7}(4,3\Lambda_1+\Lambda_4),\\
\tilde{L}_{A_7}(4,\Lambda_4+\Lambda_5+\Lambda_7)=&L_{A_7}(4,\Lambda_4+\Lambda_5+\Lambda_7)\oplus L_{A_7}(4,\Lambda_2+\Lambda_3+\Lambda_5+\Lambda_6)\\
&\oplus L_{A_7}(4,\Lambda_1+\Lambda_3+\Lambda_4)\oplus L_{A_7}(4,\Lambda_1+\Lambda_2+\Lambda_6+\Lambda_7).
\end{split}
\] 
\end{lemma}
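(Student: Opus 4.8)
The plan is to run the general machinery of Section~\ref{S:SCmod}. Recall that $V^0:=L_{A_7}(4,0)$ has exactly eight simple currents, forming a group $\cong\Z_8$ generated by $J:=L_{A_7}(4,4\Lambda_1)$, and that $\tilde L_{A_7}(4,0)$ is the extension by the order~$4$ subgroup $D=\langle J^2\rangle=\{L_{A_7}(4,4\Lambda_{2i})\mid 0\le i\le 3\}$. By Section~\ref{S:SCmod} every irreducible $\tilde L_{A_7}(4,0)$-module occurs in an induced module $\tilde L_{A_7}(4,0)\boxtimes_{V^0}W$; such an induced module is an (untwisted) module exactly when the conformal weights of its $V^0$-constituents $J^{2i}\boxtimes_{V^0}W$ $(0\le i\le 3)$ differ by integers, and in that case it splits into $|D_W|$ inequivalent irreducible modules, where $D_W=\{\alpha\in D\mid V^\alpha\boxtimes_{V^0}W\cong W\}$. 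So the whole problem reduces to understanding the $D$-action on the set of irreducible $V^0$-modules.

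First I would reduce to combinatorics on the $\binom{11}{4}=330$ dominant weights of $A_7$ of level $4$. Since $J$ acts on an irreducible $L_{A_7}(4,0)$-module by the rotation of the extended Dynkin labels, $D=\langle J^2\rangle$ acts by rotating the labels $(a_0,\dots,a_7)$ by two, and a short analysis shows the orbits fall into three types: two singletons fixed by all of $D$ (namely $\Lambda_2+\Lambda_4+\Lambda_6$ and $\Lambda_1+\Lambda_3+\Lambda_5+\Lambda_7$), four orbits of size $2$ with $D_W=\langle J^4\rangle$, and $80$ free orbits. For the untwisted condition I would use the conformal weight formula together with $h(J^{2i})\in\Z$ to see that $h(J^{2}\boxtimes_{V^0}L_{A_7}(4,\lambda))-h(L_{A_7}(4,\lambda))\in\Z$ is equivalent to the single congruence $\sum_{i} i\lambda_i\equiv 0\pmod 4$ on the congruence class $\sum_i i\lambda_i\in\Z/8\Z$ of $\lambda$, a condition that is constant along $D$-orbits. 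Summing $|D_W|$ over the untwisted orbits—both singletons, two of the four size-$2$ orbits, and twenty of the free orbits—then gives $2\cdot 4+2\cdot 2+20\cdot 1=32$.

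It remains to single out the modules of integral conformal weight $\ge 2$. Here I would compute conformal weights from $h(\lambda)=(\lambda|\lambda+2\rho)/\bigl(2(k+h^\vee)\bigr)=(\lambda|\lambda+2\rho)/24$. Within an untwisted orbit the constituents have conformal weights differing by integers, so the orbit yields a module of integral conformal weight iff one (equivalently every) constituent has integral $h$, and the conformal weight of the module is then the minimum of these. A direct check shows the two fixed-point orbits and the size-$2$ orbits all give non-integral conformal weight, so every integral-weight module comes from a free untwisted orbit; scanning these one finds precisely the vacuum ($h=0$) together with two orbits of conformal weight $2$. Reading off the $V^0$-decomposition of these two orbits yields the displayed formulas.

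The main obstacle is organisational rather than conceptual: one must verify the integer-spacing condition and the conformal weights across all $330$ level~$4$ weights, which is exactly where the package \texttt{Kac} \cite{kac} is indispensable, and one must treat the two fixed-point orbits with care, since there $|D_W|=4$ contributes four distinct irreducible modules sharing the same $V^0$-restriction, so it would be easy to miscount either the total $32$ or the conformal-weight data. (As a cross-check, one can confirm the count with quantum dimensions, using $\mathrm{glob}(\tilde L_{A_7}(4,0))=\mathrm{glob}(L_{A_7}(4,0))/|D|^2$.)
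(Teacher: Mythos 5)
Your proposal is correct and takes essentially the same route as the paper: the paper's own ``proof'' of this lemma is the single sentence ``by the argument in Section 3.1'' together with an appeal to the package Kac, and what you do is make that computation explicit and human-checkable --- the $\Z_4$-orbit analysis on the $330$ level-$4$ weights (two singletons, four $2$-orbits, eighty free orbits), the monodromy criterion $\sum_i i\lambda_i\equiv 0\pmod 4$, and the count $2\cdot 4+2\cdot 2+20\cdot 1=32$ are all right, as is the observation that integral conformal weight can only come from free untwisted orbits.

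One caveat: an honest execution of your final scan will \emph{not} reproduce the first displayed decomposition. The weights $\Lambda_3+3\Lambda_6$, $\Lambda_1+3\Lambda_4$, $3\Lambda_3+\Lambda_6$, $3\Lambda_1+\Lambda_4$ all have $\sum_i i\lambda_i\equiv\pm1\pmod 4$, they do not form a single orbit under fusion with $L_{A_7}(4,4\Lambda_2)$, and their conformal weights ($189/64$, $205/64$, $213/64$, $133/64$) are non-integral and pairwise incongruent modulo $\Z$, so they cannot be the $L_{A_7}(4,0)$-constituents of one irreducible module of the extension, let alone one of integral conformal weight. The second integral-weight module is in fact the orbit $\{3\Lambda_1+\Lambda_5,\ \Lambda_3+3\Lambda_7,\ \Lambda_1+3\Lambda_5,\ 3\Lambda_3+\Lambda_7\}$, with constituent conformal weights $2,2,3,3$; the printed labels appear to be a systematic transcription slip (each second index shifted by one), which is harmless for the rest of the paper since only $\tilde{L}_{A_7}(4,\Lambda_4+\Lambda_5+\Lambda_7)$ is used later. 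So your method is sound and yields the correct statement, but you should not assert that the scan ``yields the displayed formulas'' for the first module without checking --- it is exactly the kind of discrepancy your own consistency conditions (integer spacing of conformal weights along an orbit) are designed to catch.
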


\section{$\Z_2$-orbifold construction and the uniqueness of a holomorphic VOA}\label{S:Orb}

In this section, we will review the $\Z_2$-orbifold construction associated with a holomorphic VOA and an automorphism of order $2$ from \cite{EMS,MoPhD} and a method for proving the uniqueness of a holomorphic VOA by the weight one Lie algebra structure from \cite{LS5}.

Let $V$ be a strongly regular holomorphic VOA and $g\in \Aut (V)$ of order $2$.
Let $V(g)$ be the unique irreducible $g$-twisted $V$-module (cf. \cite[Theorem 1.2]{DLM2}). 
Then there exists a linear automorphism $\phi_1(g)$ of $V(g)$ such that $\phi_1(g)^2=id$ and for all $v\in V$ and $i\in\Z,$  
$$\phi_1(g)Y_{V(g)}(v,z)\phi_1(g)^{-1}=Y_{V(g)}(gv,z).$$ 
Note that $\phi_1(g)$ is unique up to a multiplication of $\pm1$.
Set $\phi_0(g)=g$ and $V(g^0)=V$.  For $0\leq j, k\leq 1$, denote 
\[
W^{(j,k)} = \{ w\in V(g^j) \mid \phi_j(g) w = (-1)^kw\}.
\]

Let $V^g$ be the fixed point subspace of $g$, which is a full subVOA of $V$. Note that $W^{(0,0)}=V^g$ and all $W^{(j,k)}$'s are inequivalent irreducible $V^g$-modules (cf. \cite[Theorem 2]{MT}). It was also shown recently in \cite{CM,Mi} that $V^g$ is strongly regular.
Moreover, any irreducible $V^g$-module is a submodule of $V[g^i]$ for some $i$; there exist exactly $4$ inequivalent irreducible $V^g$-modules (see \cite[Theorem 3.3]{DRX}) and they are represented by $\{W^{(j,k)}\mid 0\le j,k\le 1\}$.
By calculating the $S$-matrix of $V^g$, it was proved in \cite{EMS,MoPhD} that all irreducible $V^g$-modules $W^{(j,k)}$ are simple current modules. It implies that the set of isomorphism classes of irreducible $V^g$-modules, denoted by $R(V^g)$, forms an abelian group of order $4$ under the fusion product. 
We often identify an element in $R(V^g)$ with its representative irreducible $V^g$-module. Then $R(V^g)=\{W^{(j,k)}\mid 0\le j,k\le 1\}$.

Under the assumption
\begin{enumerate}[{\rm (I)}]
\item the conformal weight of $V(g)$ belongs to $(1/2)\Z_{>0}$,
\end{enumerate}
it has been proved in \cite{EMS,MoPhD} that the abelian group $R(V^g)$ is isomorphic to $\Z_2\times\Z_2$.
Moreover,  one can choose the $\phi_1(g)$ so that
\begin{itemize}
\item $W^{(i,j)}\fusion_{V^g} W^{(k, \ell)} \cong  W^{(i+k, j+\ell)}$, where $\fusion_{V^g}$ is the fusion product of $V^g$-modules and the sums $i+k,j+\ell$ are considered in $\Z_2=\{0,1\}$;
\item the conformal weight of $W^{(i,j)}$ belongs to $ji/2+\Z$.
\end{itemize}

\begin{theorem}[\cite{EMS,MoPhD}]\label{Thm:EMS} The $V^g$-module 
$$\widetilde{V}_g=W^{(0,0)}\oplus W^{(1,0)}$$ has a strongly regular holomorphic VOA structure as a $\Z_2$-graded simple current extension of $V^g$.
\end{theorem}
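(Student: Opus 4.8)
The plan is to realize $\widetilde{V}_g = W^{(0,0)}\oplus W^{(1,0)}$ as a simple current extension of $V^g$ and then to read off strong regularity from Theorem~\ref{T:HKL} and holomorphicity from the modular data of $V^g$. First I would observe that the two summands constitute the order two subgroup $D=\{W^{(0,0)},W^{(1,0)}\}$ of $R(V^g)\cong\Z_2\times\Z_2$: by the chosen fusion rules $W^{(1,0)}\boxtimes_{V^g}W^{(1,0)}\cong W^{(0,0)}=V^g$, and $W^{(1,0)}$ is a simple current $V^g$-module. Crucially, the conformal weight of $W^{(1,0)}$ lies in $0\cdot(1/2)+\Z=\Z$, hence is integral. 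By the construction of simple current extensions reviewed in Section~\ref{S:SCmod} (following \cite{La01,SY03,Ya04}), integrality of the conformal weight ensures that the relevant intertwining operators are single-valued and mutually local, so that $V^g\oplus W^{(1,0)}$ carries a $\Z$-graded (untwisted) VOA structure extending $V^g$, sharing its conformal vector, and unique up to isomorphism; in particular $\widetilde{V}_g$ is a simple $\Z_2$-graded simple current extension of $V^g$.

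Next I would deduce strong regularity from Theorem~\ref{T:HKL} applied with $V^0=V^g$. The subalgebra $V^g$ is strongly regular of CFT-type by \cite{CM,Mi}, so it remains to check that every irreducible $V^g$-module other than $V^g$ has positive conformal weight. The four irreducible $V^g$-modules are the $W^{(j,k)}$: the module $W^{(0,1)}$ is the $(-1)$-eigenspace of $g$ acting on $V$, and since $V$ is of CFT-type we have $V_0=\C\1\subseteq V^g=W^{(0,0)}$, whence $W^{(0,1)}$ has conformal weight at least $1$; the modules $W^{(1,0)}$ and $W^{(1,1)}$ are the eigenspaces of $\phi_1(g)$ on the twisted module $V(g)$, whose conformal weight lies in $(1/2)\Z_{>0}$ by assumption~(I) and is therefore positive. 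Since $\widetilde{V}_g$ is a simple VOA containing $V^g$ as a full subVOA with the same conformal vector, Theorem~\ref{T:HKL} yields that $\widetilde{V}_g$ is strongly regular.

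The decisive point is holomorphicity, which I would extract from the quadratic form on $R(V^g)$ given by conformal weights. Recording the conformal weight of $W^{(i,j)}$ modulo $\Z$ yields $q(i,j)=ij/2\bmod\Z$ on $R(V^g)\cong\Z_2\times\Z_2$, with associated bilinear form $b\big((i,j),(k,\ell)\big)=(i\ell+kj)/2\bmod\Z$; a direct check shows that the radical of $b$ is trivial, so $q$ is nondegenerate. The subgroup $D=\{(0,0),(1,0)\}$ satisfies $q|_D=0$, so it is isotropic, and since $D^\perp=\{(k,\ell)\mid \ell/2\in\Z\}=D$ it is a maximal isotropic (Lagrangian) subgroup. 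By the classification of irreducible modules of simple current extensions (Section~\ref{S:SCmod}, \cite{SY03,Ya04}), the irreducible $\widetilde{V}_g$-modules are parametrized by $D^\perp/D$, which is trivial here; hence $\widetilde{V}_g$ has a unique irreducible module, namely itself, and together with the rationality already established this proves that $\widetilde{V}_g$ is holomorphic.

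On where the difficulty lies: the subgroup and eigenspace identifications and the positivity of conformal weights are routine given the data assembled before the statement. The genuine content is twofold, namely that the simple current extension actually carries an associative and local VOA structure—which rests on the intertwining operator (abelian intertwining algebra) machinery underpinning Section~\ref{S:SCmod}—and that the module count of the extension is governed by $D^\perp/D$. I expect the holomorphicity to be the main obstacle, since it is precisely there that the holomorphicity of the ambient $V$, encoded in the nondegeneracy of $q$, is brought to bear.
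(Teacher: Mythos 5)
The paper gives no proof of this statement: it is imported verbatim from \cite{EMS,MoPhD}, and the surrounding text only records the facts you use (the four $W^{(j,k)}$ are simple currents forming $R(V^g)\cong\Z_2\times\Z_2$, with conformal weight of $W^{(i,j)}$ in $ij/2+\Z$). Your sketch is a faithful reconstruction of the strategy of those references: extension along the isotropic subgroup $D=\{W^{(0,0)},W^{(1,0)}\}$, strong regularity via Theorem \ref{T:HKL} after checking positivity of the conformal weights of $W^{(0,1)}$, $W^{(1,0)}$, $W^{(1,1)}$, and holomorphicity from $D^\perp=D$ (equivalently, from the criterion in Section \ref{S:SCmod} that $\widetilde{V}_g\boxtimes_{V^g}W^{(j,k)}$ is untwisted only for $k=0$, and both such inductions return $\widetilde{V}_g$ itself). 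The one step you correctly flag but cannot get for free is the existence of the local VOA structure on $W^{(0,0)}\oplus W^{(1,0)}$: integrality of the conformal weight of $W^{(1,0)}$ alone does not produce the extension; in \cite{EMS,MoPhD} this requires first computing the $S$- and $T$-matrices of $V^g$ to identify the module category as the pointed category with quadratic form $q(i,j)=ij/2$, after which the extension along an isotropic subgroup exists by the abelian intertwining algebra or Huang--Kirillov--Lepowsky machinery. With that caveat acknowledged, your outline matches the cited proof.
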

This construction of $\widetilde{V}_g$ is called the \emph{$\Z_2$-orbifold construction} associated with $V$ and $g$.
Note that $\widetilde{V}_g$ is uniquely determined by $V$ and $g$, up to isomorphism, and that $V\cong W^{(0,0)}\oplus W^{(0,1)}$ as $V^g$-modules.

The following dimension formula has been mentioned in \cite{Mo} and proved in \cite{MoPhD}.

\begin{theorem}
\label{Thm:Dimformula} 
Let $V$ be a strongly regular holomorphic VOA of central charge $24$ and $g\in\Aut (V)$ of order $2$ satisfying Condition (I).
Then the following equation holds:
$$\dim (\widetilde{V}_{g})_1=3\dim (V^{g})_1-\dim V(g)_{1/2}-\dim V_1+24.$$
\end{theorem}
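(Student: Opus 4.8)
The plan is to reduce the assertion to a statement about the lowest graded pieces of the twisted module $V(g)$ and then to extract that data from the modular behaviour of twining and twisted characters. By Theorem~\ref{Thm:EMS} we have $\widetilde{V}_g=W^{(0,0)}\oplus W^{(1,0)}$ with $W^{(0,0)}=V^g$, so $\dim(\widetilde{V}_g)_1=\dim(V^g)_1+\dim W^{(1,0)}_1$. Since $W^{(1,0)}$ has integral conformal weight while $W^{(1,1)}$ has conformal weight in $1/2+\Z$, the weight one space of $V(g)=W^{(1,0)}\oplus W^{(1,1)}$ is exactly $W^{(1,0)}_1$, so $\dim W^{(1,0)}_1=\dim V(g)_1$, and likewise $\dim V(g)_{1/2}=\dim W^{(1,1)}_{1/2}$. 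Using $\tr_{V_1}g=2\dim(V^g)_1-\dim V_1$, the claimed identity is equivalent to $\dim V(g)_1=\tr_{V_1}g-\dim V(g)_{1/2}+24$, an identity involving only the untwisted trace of $g$ on $V_1$ and the two lowest graded pieces of $V(g)$. This is the form I would actually prove.

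Next I would set up the four normalized trace functions $T_{a,b}(\tau)=\tr_{V(g^a)}\phi_a(g)^b\,q^{L(0)-1}$, $a,b\in\{0,1\}$, $q=e^{2\pi\sqrt{-1}\tau}$, where $T_{0,0}=\tr_V q^{L(0)-1}$ is the character of $V$ and $T_{0,1}=\tr_V g\,q^{L(0)-1}$ is the twining character. Because $V$ and $\widetilde{V}_g$ are holomorphic of central charge $24$, the characters $T_{0,0}$ and that of $\widetilde{V}_g$ are $\SL_2(\Z)$-invariant modular functions, hence of the form $j+\mathrm{const}$. The decomposition of $V$ and $V(g)$ into the $W^{(i,j)}$ writes each $T_{a,b}$ as a $\pm$-combination of the module characters; in particular $T_{1,0}=\chi_{W^{(1,0)}}+\chi_{W^{(1,1)}}$ records $\dim V(g)_{1/2}$ as the coefficient of $q^{-1/2}$ and $\dim V(g)_1$ as its constant term.

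The key input is Zhu's modular invariance in its twisted form (Dong--Li--Mason): the twining character $T_{0,1}$ is a weight $0$ modular function for $\Gamma_0(2)$, and the substitution $\tau\mapsto-1/\tau$ carries $T_{0,1}$ to the twisted character $T_{1,0}$. Thus $T_{0,1}$ is holomorphic on the upper half plane, has a simple pole $q^{-1}$ at the cusp $\infty$ coming from the vacuum, and, by Condition~(I) that the conformal weight of $V(g)$ lies in $(1/2)\Z_{>0}$, has at worst a simple pole at the cusp $0$ whose leading coefficient in the width-$2$ local parameter is $\dim V(g)_{1/2}$. Since the modular curve $X_0(2)$ has genus $0$ with Hauptmodul $t=\Delta(\tau)/\Delta(2\tau)=q^{-1}-24+O(q)$, the function $T_{0,1}$ lies in the span of $1,\,t,\,1/t$; matching the two leading polar coefficients at the two cusps determines $T_{0,1}$ up to a single additive constant.

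Finally I would read off constant terms at both cusps: at $\infty$ the constant term of $T_{0,1}$ equals $\tr_{V_1}g$, while at the cusp $0$ it computes $\dim V(g)_1$, and eliminating the one undetermined additive constant between these identities produces the desired linear relation, with the universal summand $+24$ originating from the constant term $-24$ of the Hauptmodul $\Delta(\tau)/\Delta(2\tau)$. I expect the main obstacle to be the modular bookkeeping forced by the half-integral conformal weight of the twisted sector: one must track the multiplier system and the width-$2$ normalization at the cusp $0$ precisely enough to pin down the exact coefficient of the $\dim V(g)_{1/2}$-correction, and verify via Condition~(I) that no higher-order poles occur. Once the pole orders are controlled, the remainder is the genus-$0$ modular-function computation sketched above.
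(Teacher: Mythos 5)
The paper does not actually prove this statement; it quotes it from Montague [Mo94] and M\"oller's thesis [MoPhD], and your strategy --- reduce to an identity $\dim V(g)_1=\tr_{V_1}g+24-(\ast)\dim V(g)_{1/2}$ using the weight parity of $W^{(1,0)}$ and $W^{(1,1)}$, then determine the $\Gamma_0(2)$-invariant twining character $T_{0,1}$ inside the three-dimensional space spanned by $1$, $t$, $t^{-1}$ on the genus-zero curve $X_0(2)$ from its polar parts at the two cusps --- is exactly the argument of those sources. Your preliminary reductions are all correct, and so is the setup: simple pole at $\infty$ with leading coefficient $\dim V_0=1$, at worst a simple pole at the cusp $0$ with leading coefficient $\dim V(g)_{1/2}$ thanks to Condition (I), and $T_{0,1}|S=T_{1,0}$ from the twisted modular invariance.

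The step you defer, however, is exactly where the trouble sits, and when you carry it out you will not land on the printed formula. With $t=\eta(\tau)^{24}/\eta(2\tau)^{24}=q^{-1}-24+O(q)$ one has $t|S=2^{12}\bigl(q_2+O(q_2^2)\bigr)$ and hence $(t^{-1})|S=2^{-12}\bigl(q_2^{-1}-24+O(q_2)\bigr)$ in the width-two parameter $q_2=q^{1/2}$. Writing $T_{0,1}=a+t+c\,t^{-1}$, the expansion at $\infty$ gives $a=\tr_{V_1}g+24$, while the expansion at $0$ gives $2^{-12}c=\dim V(g)_{1/2}$ and then $\dim V(g)_1=a-24\cdot 2^{-12}c=\tr_{V_1}g+24-24\dim V(g)_{1/2}$; the two cusps contribute the symmetric corrections $+24\dim V_0$ and $-24\dim V(g)_{1/2}$, both coming from the $-24$ of the Hauptmodul. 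So your argument, completed, proves $\dim(\widetilde{V}_g)_1=3\dim(V^g)_1-24\dim V(g)_{1/2}-\dim V_1+24$, i.e.\ the coefficient of $\dim V(g)_{1/2}$ in the statement as printed appears to be off by a factor of $24$. A sanity check: for $V=V_{E_8}^{\otimes 3}$ and $g$ the transposition of two tensor factors one has $\tr_{V_1}g=248$, $\dim V(g)_{1/2}=1$, $\dim V(g)_1=248$ and $\dim(\widetilde{V}_g)_1=496+248=744$, consistent with the $-24$ coefficient and not with $-1$. The discrepancy is invisible in this paper because in every application of the formula the twisted modules are checked to have conformal weight at least one, so $\dim V(g)_{1/2}=0$; but you should either prove the $-24$ version or restrict to that case. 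Two smaller points to nail down in a complete write-up: the triviality of the multiplier of $T_{0,1}$ on $\Gamma_0(2)$ (harmless here since $c=24$ and $g^2=1$), and the absence of an extra scalar in $T_{0,1}|S=T_{1,0}$, which relies on the normalization of the unique irreducible $g$-twisted module in the Dong--Li--Mason modular invariance theorem.
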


In order to prove the uniqueness of a holomorphic VOA equipped with a given weight one Lie algebra, we use the the following theorem in \cite{LS5}:

\begin{theorem}\label{Thm:Rev} 
Let $\g$ be a Lie algebra and $\mathfrak{p}$ a subalgebra of $\g$. 
Let $n\in \Z_{>0}$ and let $W$ be a strongly regular holomorphic VOA of central charge $c$.
Assume that for any strongly regular holomorphic VOA $V$ of central charge $c$ whose weight one Lie algebra is $\g$, there exists an order $n$ automorphism $\sigma$ of $V$ such that the following conditions hold:
\begin{enumerate}[{\rm (a)}]
\item $\g^{\sigma}\cong\mathfrak{p}$;
\item $\sigma$ satisfies Condition (I) and $\widetilde{V}_{\sigma}$ is isomorphic to $W$.
\end{enumerate}
In addition, we assume that any order $n$ automorphism  $\varphi$ of $W$ satisfying condition (I) and the conditions (A) and (B) below belongs to a unique conjugacy class in $\Aut (W)$:
\begin{enumerate}[{\rm (A)}]
\item $(W^\varphi)_1$ is isomorphic to $\mathfrak{p}$;
\item $(\widetilde{W}_\varphi)_1$ is isomorphic to $\g$.
\end{enumerate}
Then any strongly regular holomorphic VOA of central charge $c$ with weight one Lie algebra $\g$ is isomorphic to $\widetilde{W}_\varphi$.
In particular, such a holomorphic VOA is unique up to isomorphism.
\end{theorem}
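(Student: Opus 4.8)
The plan is to show that the $\Z_2$-orbifold construction of Theorem \ref{Thm:EMS} is \emph{reversible}: every candidate VOA $V$ arises as $\widetilde{W}_\varphi$ for some order $n$ automorphism $\varphi$ of the fixed VOA $W$ that necessarily satisfies (I), (A), (B), after which the conjugacy-class rigidity hypothesis forces $V$ to be a single fixed isomorphism type. First I would take an arbitrary strongly regular holomorphic VOA $V$ of central charge $c$ with $V_1\cong\g$ and apply the first hypothesis to obtain an order $n$ automorphism $\sigma\in\Aut(V)$ with $\g^\sigma\cong\mathfrak{p}$, satisfying Condition (I), and with $\widetilde{V}_\sigma\cong W$. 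I identify $W$ with $\widetilde{V}_\sigma$, so that the fixed-point subVOA $V^\sigma$ becomes a common full subVOA of both $V$ and $W$. In the order $2$ case treated by Theorem \ref{Thm:EMS} this is explicit: with $R(V^\sigma)=\{W^{(j,k)}\mid 0\le j,k\le 1\}\cong\Z_2\times\Z_2$ one has $V^\sigma=W^{(0,0)}$, $V\cong W^{(0,0)}\oplus W^{(0,1)}$, and $W=W^{(0,0)}\oplus W^{(1,0)}$.

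The central step is the construction of a \emph{dual} automorphism $\varphi_V$ of $W$ together with the reversibility statement $\widetilde{W}_{\varphi_V}\cong V$. For $n=2$ this follows transparently from the symmetry of $R(V^\sigma)\cong\Z_2\times\Z_2$: I define $\varphi_V$ to act on $W=W^{(0,0)}\oplus W^{(1,0)}$ as $+1$ on $W^{(0,0)}$ and $-1$ on $W^{(1,0)}$, which is a VOA automorphism of order $2$ with $W^{\varphi_V}=W^{(0,0)}=V^\sigma$. Using the fusion rule $W^{(i,j)}\boxtimes W^{(k,\ell)}\cong W^{(i+k,j+\ell)}$ and the conformal-weight constraint $\wt W^{(i,j)}\in ij/2+\Z$, the $\varphi_V$-twisted module decomposes as $W(\varphi_V)=W^{(0,1)}\oplus W^{(1,1)}$, so the orbifold is $\widetilde{W}_{\varphi_V}=W^{(0,0)}\oplus W^{(0,1)}\cong V$. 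Consequently $(W^{\varphi_V})_1=(V^\sigma)_1=\g^\sigma\cong\mathfrak{p}$, giving (A), and $(\widetilde{W}_{\varphi_V})_1\cong V_1\cong\g$, giving (B). Condition (I) for $\varphi_V$ holds as well, since the two summands of $W(\varphi_V)$ have conformal weights in $\tfrac12\Z$ that are positive: $\wt W^{(0,1)}\ge 1$ because $V=W^{(0,0)}\oplus W^{(0,1)}$ is of CFT-type, while $\wt W^{(1,1)}>0$ because $V(\sigma)\cong W^{(1,0)}\oplus W^{(1,1)}$ has positive conformal weight by the original Condition (I) for $\sigma$.

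Once reversibility is established the conclusion is formal. The automorphism $\varphi_V$ just produced satisfies (I), (A), (B), so by the second hypothesis it lies in the single named conjugacy class in $\Aut(W)$; writing $\varphi_V=\psi\varphi\psi^{-1}$ with $\psi\in\Aut(W)$ for the reference representative $\varphi$, the isomorphism $\psi$ carries $W^\varphi$ onto $W^{\varphi_V}$ and identifies the two sets of orbifold data (the splittings of the untwisted and twisted sectors by the lifted automorphisms), whence $\widetilde{W}_\varphi\cong\widetilde{W}_{\varphi_V}\cong V$. Since $\widetilde{W}_\varphi$ depends only on $W$ and the fixed conjugacy class of $\varphi$, every strongly regular holomorphic VOA of central charge $c$ with weight one Lie algebra $\g$ is isomorphic to this one VOA, giving the asserted uniqueness.

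I expect the main obstacle to be the reversibility claim itself, namely the verification that $\varphi_V$ is a genuine order $n$ VOA automorphism, that its twisted sectors are exactly the remaining cosets $W^{(0,1)},W^{(1,1)}$ of $R(V^\sigma)$, and that the resulting simple current extension reconstructs $V$ precisely rather than merely an abstractly isomorphic VOA. This rests on the $\Z_2\times\Z_2$ fusion structure of $R(V^\sigma)$ and on the uniqueness of simple current extensions, and it is where the real content of the ``reverse orbifold'' idea lies; for general $n$ the same argument goes through with the three order-$2$ subgroups of $\Z_2\times\Z_2$ replaced by the analogous dual grading structure on $R(V^\sigma)$.
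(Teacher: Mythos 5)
Your proposal is correct and is exactly the ``reverse orbifold'' argument that the paper relies on (the theorem is quoted from [LS5], whose proof proceeds just as you describe): transport the dual automorphism $\varphi_V$ of $\widetilde{V}_\sigma\cong W$ acting as $-1$ on $W^{(1,0)}$, use the $\Z_2\times\Z_2$ fusion structure and the weight normalization $\wt W^{(i,j)}\in ij/2+\Z$ to see $\widetilde{W}_{\varphi_V}\cong W^{(0,0)}\oplus W^{(0,1)}\cong V$, verify (I), (A), (B), and conclude by conjugacy-class uniqueness together with the uniqueness of simple current extension structures. Your verifications of Condition (I) for $\varphi_V$ and of the identification of the twisted sector with $W^{(0,1)}\oplus W^{(1,1)}$ are the right points to check and are handled correctly.
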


\section{Quantum dimensions and mirror extensions}
In this section, we review the notion of quantum dimensions from \cite{DJX} and the theory of mirror extensions from \cite{Lin} (see also \cite{DJX1, Xu}). The results will be used in Section \ref{S:inertia} for determining the inertia groups of certain holomorphic VOAs of central charge $24$.  

\subsection{Quantum dimensions}\label{S:QD}

Let $V$ be a VOA of central charge $c$. Let $ M = \bigoplus_{ n \in \Z_{\ge0}} M_{ \lambda +n} $ be an irreducible $V$-module, where $ \lambda$ is the conformal weight of $M$. The \emph{character} of $M$ is defined as
\begin{equation*}
     \ch M(q) := q^{\lambda-c/24}\sum_{n\in \Z_{\ge0}} (\dim {M_{ \lambda +n}}) q^n,
\end{equation*}
where $ q=e^{2\pi \sqrt{-1} z }$ and $ z  $ is in the complex upper half-plane.
For a direct sum $\bigoplus_{i=1}^sM^i$ of irreducible $V$-modules, its character is defined as $\sum_{i=1}^s \ch M^i(q)$.
The following notion of the quantum dimension is introduced by Dong et al.~\cite{DJX}.
\begin{definition}\label{Qdim}
   Suppose $  \ch V(q) $ and $  \ch M(q) $ exist. The \emph{quantum dimension of $M$ over $V$} is defined as
   \begin{equation}
      {\qdim}_V M := \lim_{ y \to 0^+} \frac{ \ch M (\sqrt{-1} y)}{ \ch V ( \sqrt{-1} y)},
   \end{equation}
where $ y$ is a positive real number.
\end{definition}

Now, we assume that $V$ is strongly regular.
It is proved in \cite{Zhu} and \cite{DLM2} that for any irreducible $V$-module $M$, the character $\ch M(q)$ converges to a holomorphic function on the domain $ \left| q \right| <1$.
We also assume that the conformal weight of any irreducible $V$-module is positive, except for $V$ itself. 
Then $V$ is $g$-rational for any finite automorphism $g$ of $V$ (\cite{ADJR}). 
The following fundamental properties of the quantum dimension are established in \cite[Remark 3.5, Propositions 4.9 and 4.17, Lemma 4.14]{DJX}.

\begin{proposition}[{\cite{DJX}}]\label{P:qdim} 
Let $M$, $M^1$, $M^2$ be $V$-modules. Then the following hold:
\begin{enumerate}[{\rm (1)}]%[label=(\roman*)]
   \item $  \qdim_{V} M \ge 1$.
Moreover, $M$ is a simple current module if and only if $ \qdim_V M  =1$;
   \item $\qdim_V$ is multiplicative, that is $ \qdim_V ( M^1 \boxtimes M^2) =\qdim_V  M^1 \cdot \qdim_V M^2$, where $ M^1 \boxtimes M^2$ denotes the fusion product;
   \item  $ \qdim_V M = \qdim_V M^\prime$, where $ M ^\prime $ is the contragredient module of $M$.
\end{enumerate}
\end{proposition}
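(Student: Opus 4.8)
The plan is to reduce all three assertions to the fundamental identity
\[
\qdim_V M = \frac{S_{M,V}}{S_{V,V}},
\]
expressing the quantum dimension as a ratio of entries of the modular $S$-matrix of $V$. To establish this, I would invoke Zhu's modular invariance: since $V$ is strongly regular, the span of the characters $\ch N(q)$ of the irreducible $V$-modules $N$ is $\SL_2(\Z)$-invariant, and under $z\mapsto -1/z$ one has $\ch M(-1/z)=\sum_N S_{M,N}\ch N(z)$. Writing $z=\sqrt{-1}\,y$ and $-1/z=\sqrt{-1}/y$, I would substitute $\ch M(\sqrt{-1}\,y)=\sum_N S_{M,N}\ch N(\sqrt{-1}/y)$ and let $y\to 0^+$, so that $\tilde q=e^{-2\pi/y}\to 0^+$. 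Each $\ch N(\sqrt{-1}/y)$ behaves like $\tilde q^{\,\lambda_N-c/24}(\dim N_{\lambda_N}+O(\tilde q))$; because every irreducible module other than $V$ has strictly positive conformal weight, the unique dominant term comes from $N=V$, giving $\ch M(\sqrt{-1}\,y)\sim S_{M,V}\,\tilde q^{-c/24}$ and likewise $\ch V(\sqrt{-1}\,y)\sim S_{V,V}\,\tilde q^{-c/24}$. Taking the ratio yields the identity, and in particular shows that the defining limit exists.

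Granting the identity, assertion (3) follows by an even simpler route: the contragredient module $M'$ satisfies $(M')_{\lambda+n}\cong(M_{\lambda+n})^*$ and has the same conformal weight as $M$, so $\ch M'(q)=\ch M(q)$ and hence $\qdim_V M'=\qdim_V M$ directly from Definition \ref{Qdim}. For (2), I would use that the assignment $M\mapsto S_{M,V}/S_{V,V}$ is a one-dimensional representation of the fusion algebra. This is exactly the content of the Verlinde formula: the $S$-matrix simultaneously diagonalizes the fusion matrices $N_i=(N_{i,j}^k)_{j,k}$, and the eigenvalue attached to the vacuum column is $d_i:=S_{i,V}/S_{V,V}$, so that $d_i d_j=\sum_k N_{i,j}^k d_k$. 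Since $\qdim_V$ is additive over direct sums by definition of the character, this gives $\qdim_V(M^1\boxtimes M^2)=\qdim_V M^1\cdot\qdim_V M^2$.

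Finally, for (1) I would argue directly from (2) and (3). Set $d_i=\qdim_V M^i$; then $d_V=1$ and $d_i>0$ for all $i$. Writing $i'$ for the label of the contragredient $M'$, one has $N_{i,i'}^{V}=1$ because $M^i\boxtimes M^{i'}$ contains $V$, so from multiplicativity and the nonnegativity of the fusion coefficients,
\[
d_i\,d_{i'}=\sum_k N_{i,i'}^k\,d_k\ \ge\ N_{i,i'}^{V}\,d_V=1.
\]
Since $d_{i'}=d_i$ by (3), this forces $d_i\ge 1$. If $d_i=1$, then $d_i d_{i'}=1$ equals the quantum dimension of $M^i\boxtimes M^{i'}$; as $V$ already contributes $1$ and every other summand would contribute a quantum dimension $\ge 1>0$, there is no room for further summands, whence $M^i\boxtimes M^{i'}=V$ and $M^i$ is invertible, i.e.\ a simple current. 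The converse is clear: a simple current $M^i$ has $M^i\boxtimes M^{i'}=V$, so $d_i^2=1$ and $d_i=1$.

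The main obstacle is the first paragraph: rigorously justifying the $S$-matrix identity requires Zhu's modular-invariance theorem together with the convergence and asymptotic analysis of the characters, while the proof of (2) rests on the Verlinde formula, both of which are deep inputs; once these are available the remaining steps are elementary. One should also record the standard positivity facts --- $S_{V,V}>0$ and that the vacuum column of $S$ is real --- which are what make each $d_i$ a positive real number and underlie the comparison used in (1).
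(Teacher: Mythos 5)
The paper gives no proof of this proposition: it is quoted verbatim from \cite[Remark 3.5, Propositions 4.9 and 4.17, Lemma 4.14]{DJX}, so there is nothing internal to compare against. Your sketch is, in substance, the argument of that reference, and it is correct as a sketch: the identity $\qdim_V M=S_{M,V}/S_{V,V}$ via the $q\to\tilde q$ asymptotics, additivity of $\qdim_V$ over direct sums plus Huang's Verlinde formula for (2), equality of characters of $M$ and $M'$ for (3), and the fusion inequality $d_i d_{i'}\ge N_{i,i'}^{V}d_V=1$ for (1). Two points are worth making explicit. First, the asymptotic step uses exactly the standing hypotheses the paper imposes in Section 5.1 (strong regularity and positivity of the conformal weight of every irreducible module other than $V$); without the positivity assumption the vacuum character need not dominate and the $S$-matrix identity can fail. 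Second, the simple-current characterization in (1) is really a statement about \emph{irreducible} $M$ (as in \cite{DJX}); your invertibility argument tacitly uses irreducibility when computing $N_{i,i'}^{V}=1$ and when deducing that a module of quantum dimension $1$ has a single irreducible summand, and the converse step "simple current $\Rightarrow M\boxtimes M'\cong V$" deserves the one-line justification that $M\boxtimes M'$ is irreducible and contains $V$ as a summand. With those caveats recorded, the proposal is a faithful reconstruction of the cited proof.
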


Combining \cite[Theorem 6.3]{DJX} and Propositions \ref{P:qdim} (1), we obtain the following:

\begin{proposition}[{\cite{DJX}}]\label{P:DJX} Let $A$ be a finite abelian automorphism group of $V$ and let $\chi\in A^*$ be an irreducible character of $A$.
Then the eigenspace $\{v\in V\mid a(v)=\chi(a)v\ (a\in A)\}$ of $A$ with the character $\chi$ is a simple current module for the subVOA $V^A$ of $V$.
In particular, $V$ is an $A^*$-graded simple current extension of $V^A$. 
\end{proposition}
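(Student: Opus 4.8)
The plan is to combine the orbifold decomposition of $V$ with the quantum dimension estimates collected in Proposition \ref{P:qdim}. First I would record the structural decomposition: since $V$ is strongly regular (hence simple) and $A$ is a finite abelian group of automorphisms, quantum Galois theory gives that $V^A$ is a simple subVOA and that, as a $V^A$-module, $V$ decomposes as $V=\bigoplus_{\chi\in A^*}V^{\chi}$, where $V^{\chi}=\{v\in V\mid a(v)=\chi(a)v\ (a\in A)\}$. Because $A$ is abelian every irreducible character is one dimensional, so $|A^*|=|A|$ and the $V^{\chi}$ are pairwise inequivalent nonzero irreducible $V^A$-modules. Here I would also note that $V^A$ is itself strongly regular and satisfies the positivity hypothesis of Section \ref{S:QD} by the orbifold theory recalled in Section \ref{S:Orb}, so that Proposition \ref{P:qdim} applies with $V^A$ as the ambient VOA; in particular each $V^{\chi}$ with $\chi\neq 1$ has positive conformal weight, since the unique weight-zero vector $\1$ of $V$ is $A$-fixed and hence lies in $V^{A}=V^{1}$.

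Next I would exploit additivity of the quantum dimension. Directly from Definition \ref{Qdim}, for a direct sum of irreducible modules the character is the sum of the characters, so dividing by $\ch V^{A}$ and letting $y\to 0^+$ gives $\qdim_{V^A}V=\sum_{\chi\in A^*}\qdim_{V^A}V^{\chi}$. By Proposition \ref{P:qdim}(1) each summand satisfies $\qdim_{V^A}V^{\chi}\ge 1$, whence $\qdim_{V^A}V\ge|A^*|=|A|$.

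The heart of the argument is the reverse inequality $\qdim_{V^A}V\le |A|$, equivalently $\qdim_{V^A}V=|A|$; this is exactly the content of \cite[Theorem 6.3]{DJX}. To see why it holds I would pass to McKay--Thompson series: by orthogonality of characters, $\ch V^{\chi}(\tau)=\frac{1}{|A|}\sum_{a\in A}\overline{\chi(a)}\,Z(a,\tau)$, where $Z(a,\tau)=\tr_{V}\bigl(a\,q^{L(0)-c/24}\bigr)$, and in particular $\ch V^{A}(\tau)=\frac{1}{|A|}\sum_{a\in A}Z(a,\tau)$. Using the modular invariance of orbifold trace functions, the $S$-transform sends $Z(a,\cdot)$ to a character of the $a$-twisted sector; as $y\to 0^+$ its growth is governed by the lowest conformal weight of that sector, which is strictly positive for $a\neq 1$, while $Z(1,\cdot)=\ch V$ is governed by the vacuum of conformal weight $0$. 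Hence $Z(a,\sqrt{-1}y)/Z(1,\sqrt{-1}y)\to 0$ for every $a\neq 1$, so both $\ch V^{\chi}(\sqrt{-1}y)$ and $\ch V^{A}(\sqrt{-1}y)$ are asymptotic to $\tfrac{1}{|A|}Z(1,\sqrt{-1}y)$, giving $\qdim_{V^A}V^{\chi}=1$ for every $\chi$ and therefore $\qdim_{V^A}V=|A|$.

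Combining the two inequalities forces $\qdim_{V^A}V^{\chi}=1$ for all $\chi$, so by Proposition \ref{P:qdim}(1) each $V^{\chi}$ is a simple current $V^A$-module. Finally, the eigenspace relation $a(v_{(n)}w)=\chi(a)\psi(a)\,v_{(n)}w$ for $v\in V^{\chi}$, $w\in V^{\psi}$ shows $V^{\chi}\boxtimes_{V^A}V^{\psi}\cong V^{\chi\psi}$, so $V=\bigoplus_{\chi\in A^*}V^{\chi}$ is an $A^*$-graded simple current extension of $V^A$. I expect the main obstacle to be the equality $\qdim_{V^A}V=|A|$: it is the only place where genuine analytic input (modular invariance of twisted trace functions together with positivity of conformal weights in the twisted sectors) is needed, and it is precisely what is imported from \cite[Theorem 6.3]{DJX}.
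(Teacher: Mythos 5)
Your proposal is correct and follows essentially the same route as the paper, which simply derives the proposition by combining \cite[Theorem 6.3]{DJX} (the equality $\qdim_{V^A}V^{\chi}=1$, which you unpack via modular invariance of twisted trace functions) with Proposition \ref{P:qdim}(1) (quantum dimension one is equivalent to being a simple current). Your additional details merely expand what is imported from \cite{DJX}, so there is no substantive difference.
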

By the definition of the quantum dimension and Propositions \ref{P:qdim} (1) and \ref{P:DJX}, we obtain the following lemma:
\begin{lemma}\label{L:QD} Let $U$ be a strongly regular full subVOA of $V$.
For a simple current module $M$ for $V$, we have $\qdim_{U}V=\qdim_{U}M$.
In particular, for an order $2$ automorphism $g$ of $V$ with $U\subset V^g$, we have $\qdim_{U}V^g=\qdim_{U}V^-$, where $V^-=\{v\in V\mid g(v)=-v\}$.
\end{lemma}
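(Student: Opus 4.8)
The plan is to exploit the multiplicativity of quantum dimensions along the tower $U\subset V$. Since $U$ is a full subVOA of $V$, the two share the conformal vector and hence have the same central charge $c$, so $\ch U(q)$ and $\ch V(q)$ carry the same prefactor $q^{-c/24}$; the same holds for $\ch M(q)$, because $M$ is a $V$-module and therefore a $U$-module of the same central charge. Regarding $V$ and $M$ as $U$-modules, I would write, for $q=e^{2\pi\sqrt{-1}z}$ with $z=\sqrt{-1}y$ and $y>0$,
\[
\frac{\ch M(\sqrt{-1}y)}{\ch U(\sqrt{-1}y)}
=\frac{\ch M(\sqrt{-1}y)}{\ch V(\sqrt{-1}y)}\cdot\frac{\ch V(\sqrt{-1}y)}{\ch U(\sqrt{-1}y)}.
\]
All three characters are positive real numbers for $y>0$ (each lowest weight space is nonzero), so this factorization is legitimate.

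Next I would take the limit $y\to 0^+$. By the definition of quantum dimension, the first factor on the right converges to $\qdim_V M$, which equals $1$ by Proposition \ref{P:qdim} (1) since $M$ is a simple current module for $V$; the second factor converges to $\qdim_U V$. Hence the left-hand side converges and $\qdim_U M=\qdim_U V$, as claimed. The only point requiring care is the existence of $\qdim_U V$ and $\qdim_U M$ as finite positive numbers, so that the limit of the product equals the product of the limits. This holds under the standing assumptions of Section \ref{S:QD}: $U$ is strongly regular, and $V$ and $M$ decompose into irreducible $U$-modules whose quantum dimensions are finite.

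For the ``In particular'' assertion I would specialize the first part to the tower $U\subset V^g\subset V$. Since $U\subset V^g$ and all three VOAs share the conformal vector of $V$, $U$ is a full subVOA of $V^g$, and $V^g$ is strongly regular by \cite{CM,Mi}. Applying Proposition \ref{P:DJX} to the finite abelian group $A=\langle g\rangle\cong\Z_2$ acting on $V$, the $(-1)$-eigenspace $V^-=\{v\in V\mid g(v)=-v\}$ is a simple current module for $V^A=V^g$. I then invoke the first part of the lemma with $V^g$ playing the role of $V$ and $V^-$ playing the role of $M$, which yields $\qdim_U V^g=\qdim_U V^-$.

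The main obstacle is not structural but the bookkeeping around the existence and finiteness of the relative quantum dimension $\qdim_U V$ and the justification that the limit of the product splits; once the characters are seen to factor and the individual limits are known to exist, the simple current hypothesis, through Proposition \ref{P:qdim} (1), does all the work.
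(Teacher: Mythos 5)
Your argument is correct and uses exactly the ingredients the paper cites for this lemma: the definition of quantum dimension as a limit of character ratios (giving the factorization $\ch M/\ch U=(\ch M/\ch V)(\ch V/\ch U)$ and hence $\qdim_U M=\qdim_V M\cdot\qdim_U V$), Proposition \ref{P:qdim} (1) to get $\qdim_V M=1$, and Proposition \ref{P:DJX} to see that $V^-$ is a simple current module for $V^g$. The paper states the lemma as an immediate consequence of these facts without writing out the details, so your proposal is essentially the paper's intended proof made explicit.
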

%\color{black}

\subsection{Mirror extensions}
Let $V$ be a strongly regular VOA and let $V^1$ be strongly regular subVOA of $V$ such that the commutant $V^2=\Com_V(V^1)$ is also strongly regular and $\Com_V(V^2)=V^1$.
For $i\in\{1,2\}$, let $I^i$ be the set of all isomorphism classes of irreducible $V^i$-modules.
Let 
\begin{equation}
V=\bigoplus_{M^i\in I^i}m_{M^1,M^2}M^1\otimes M^2\label{Eq:decmirror}
\end{equation}
be the decomposition of $V$ as $V^1\otimes V^2$-modules, where $m_{M,N}$ is the multiplicity and we identify an element of $I^i$ 
with its representative.
For $\{i,j\}=\{1,2\}$, set $J^i=\{M^i\in I^i\mid m_{M^1,M^2}\neq0\ \text{for some } M^j\in I^j\}$.
The fundamental properties on this decomposition are studied in \cite{Lin} (see also \cite{KMi} and \cite{DJX}).

\begin{proposition}\label{P:Mirror} 
\begin{enumerate}[{\rm (1)}]
\item For any $M^1\in J^1$, there exists a unique $M^2\in J^2$ such that $m_{M^1,M^2}\neq0$; we denote this bijection by $\psi:J^1\to J^2$, $M^1\mapsto M^2$.
In addition, $m_{M^1,M^2}=1$ if $m_{M^1,M^2}\neq0$, and $\psi$ is extended to an isomorphism of the fusion rings from $\Z[J^1]$ to $\Z[J^2]$, namely, $\psi(M^1)\boxtimes \psi(M^2)=\psi(M^1\boxtimes M^2)$.
\item Let $K\subset J^1$ such that $\bigoplus_{M\in K}M$ has a simple VOA structure as an extension of $V^1$.
Then $\bigoplus_{M\in K}\psi(M)$ also has a simple VOA structure as an extension of $V^2$.
\item $\qdim_{V^1} M=\qdim_{V^2}\psi(M)$ for all $M\in J^1$.
\item If $V$ is holomorphic, then $I^i=J^i$ for $i\in\{1,2\}$.
\end{enumerate}
\end{proposition}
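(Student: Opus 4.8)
The plan is to pass to the braided tensor categories of modules. By strong regularity and Huang's theorems, for $i\in\{1,2\}$ the module category $\mathcal{C}^i$ of $V^i$ is a modular tensor category, the category of $V^1\otimes V^2$-modules is the Deligne product $\mathcal{C}^1\boxtimes\mathcal{C}^2$, and $V^1\otimes V^2$ is a full subVOA of $V$ (its conformal vector is $\omega^1+(\omega-\omega^1)=\omega$, where $\omega-\omega^1\in\Com_V(V^1)=V^2$). In this language $V$ is a commutative, associative, connected algebra object in $\mathcal{C}^1\boxtimes\mathcal{C}^2$: connectedness is $\dim\hom(V^1\otimes V^2,V)=\dim V_0=1$, and the coefficients in \eqref{Eq:decmirror} are $m_{M^1,M^2}=\dim\hom_{\mathcal{C}^1\boxtimes\mathcal{C}^2}(M^1\boxtimes M^2,V)$. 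The dictionary between such algebra objects and strongly regular extensions is Theorem \ref{T:HKL} together with \cite{HKL}, and I would use it in both directions.

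For part (1), first decompose $V=\bigoplus_{M^1\in J^1}M^1\otimes W_{M^1}$ as a $V^1$-module, where $W_{M^1}=\hom_{V^1}(M^1,V)$ is the multiplicity space. Since $V^2=\Com_V(V^1)$ commutes with the $V^1$-action, each $W_{M^1}$ is a $V^2$-module, and the reflexivity $\Com_V(V^2)=V^1$ is exactly what drives the VOA form of the double-commutant theorem: each $W_{M^1}$ is an \emph{irreducible} $V^2$-module and distinct $M^1$ yield non-isomorphic $W_{M^1}$. This gives simultaneously the uniqueness of the partner, the multiplicity-one statement $m_{M^1,M^2}\in\{0,1\}$, and the injection $\psi:J^1\to J^2,\ M^1\mapsto W_{M^1}$ with $\psi(V^1)=V^2$; surjectivity onto $J^2$ follows by symmetry in $V^1\leftrightarrow V^2$. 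That $\psi$ is a fusion-ring isomorphism then comes from the VOA product on $V$: nonzero products between the isotypic pieces indexed by $M^1$ and $N^1$ land in pieces $P^1\otimes P^2$ with $P^1$ a constituent of $M^1\boxtimes N^1$ and $P^2$ a constituent of $\psi(M^1)\boxtimes\psi(N^1)$, and since each such $P^1$ has the unique partner $\psi(P^1)=P^2$, nondegeneracy of the intertwining operators forces $\psi(M^1\boxtimes N^1)=\psi(M^1)\boxtimes\psi(N^1)$.

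Granting part (1), the rest is formal. For (3), the identification $\qdim_{V^i}M=\dim_{\mathcal{C}^i}M$ from \cite{DJX} and the fact that $\psi$ underlies a tensor equivalence $\langle J^1\rangle\simeq\overline{\langle J^2\rangle}$ (which preserves categorical dimensions) give $\qdim_{V^1}M=\qdim_{V^2}\psi(M)$. For (2), a subset $K$ with $\bigoplus_{M\in K}M$ a simple VOA extending $V^1$ is the same datum as a connected commutative algebra structure on the object $\bigoplus_{M\in K}M$ of $\mathcal{C}^1$; transporting it along the braided equivalence underlying $\psi$ (which preserves commutativity) yields such a structure on $\bigoplus_{M\in K}\psi(M)$ in $\mathcal{C}^2$, and Theorem \ref{T:HKL} promotes it to a simple VOA extending $V^2$. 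For (4), holomorphicity makes the algebra $V$ Lagrangian in $\mathcal{C}^1\boxtimes\mathcal{C}^2$; a Lagrangian algebra induces a braided equivalence $\mathcal{C}^1\simeq\overline{\mathcal{C}^2}$ under which it becomes the canonical diagonal algebra $\bigoplus_X X\boxtimes\psi(X)$ indexed by \emph{all} simple objects, whence $J^1=I^1$ and $J^2=I^2$.

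The main obstacle is the double-commutant input in part (1): the irreducibility of each multiplicity space $W_{M^1}$ as a $V^2$-module and the distinctness of these spaces. This is the genuine mutual-centralizer statement for the pair $(V^1,V^2)$, and it is here that strong regularity, the modularity of the $\mathcal{C}^i$, and the reflexivity $\Com_V(V^2)=V^1$ are all used essentially. The second delicate point, the transport of the algebra structure in (2), rests on the ``mirror'' matching of the associativity and braiding data of $\mathcal{C}^1$ and $\mathcal{C}^2$ along $\psi$. Both of these are precisely what is established in \cite{Lin} (see also \cite{KMi,Xu,DJX1}); in practice I would therefore reduce the proposition to those results after verifying that $V^1$, $V^2$, and $V$ meet their strong-regularity hypotheses.
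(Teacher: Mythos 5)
Your proposal is correct and, as you yourself conclude, ultimately reduces the proposition to the same external results the paper invokes: the paper's proof simply cites \cite[Theorems 3.1 and 3.5]{Lin} for (1) and (2), deduces (3) from (1) via the fact that quantum dimensions are determined by the $S$-matrix and hence by the fusion rules, and cites \cite[Theorem 2]{KMi} for (4). Your categorical elaboration (Deligne product, double commutant, Lagrangian algebras) is a faithful account of what underlies those citations — with the one caveat, which you correctly flag, that the transport of the algebra structure in (2) must contend with the braid reversal in the equivalence $\mathcal{C}^1\simeq\overline{\mathcal{C}^2}$, which is exactly the nontrivial content of Lin's mirror-extension theorem rather than a formal consequence of (1).
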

\begin{proof} (1) follows from \cite[Theorem 3.1]{Lin}.
(2) has been proved in \cite[Theorem 3.5]{Lin}.
The quantum dimensions are determined by the $S$-matrix (see \cite{DJX}).
Hence they are also determined by the fusion rules, and (3) follows from (1).
(4) follows from \cite[Theorem 2]{KMi}.
\end{proof}

\subsection{Embedding of $L_{A_7}(4,0)\otimes L_{A_3}(8,0)$ in $ L_{A_{31}}(1,0)$}
In this subsection, we apply Proposition \ref{P:Mirror} to the embedding $L_{A_7}(4,0)\otimes L_{A_3}(8,0)\subset L_{A_{31}}(1,0)$.
Note that $L_{A_7}(4,0)$ and $L_{A_3}(8,0)$ are commutants of  each other, and they are strongly regular.
By Proposition \ref{P:Mirror}, we obtain $$ L_{A_{31}}(1,0)\cong\bigoplus_{a\in\Pi}L_{A_7}(4,\psi(a))\otimes L_{A_3}(8,a)$$ as $L_{A_7}(4,0)\otimes L_{A_3}(8,0)$-modules, where $\Pi$ is a set of dominant integral weights of $A_3$ at level $8$.

It is known (cf.\ \cite[Section 3.1.3]{Xu}) that $L_{D_{10}}(1,0)$ contains $L_{A_3}(8,0)$ as a full subVOA and 
\begin{equation}
L_{D_{10}}(1,0)\cong\bigoplus_{a\in \Pi_0} L_{A_3}(8,a)\label{Eq:D10}
\end{equation}
as $L_{A_3}(8,0)$-modules, where $\Pi_0$ is the subset of $\Pi$ consisting of the weights in Table \ref{tableMir}; we use the notation $[i_1\ i_2\ \dots i_\ell]$ to denote the weight $\sum_{j=1}^\ell i_j\Lambda_j$.
Note that all the multiplicities are one in the decomposition \eqref{Eq:D10}.

In \cite[Theorem 4.1]{OS} (cf.\ \cite{Xu}),  the map $\psi$ is explicitly described in more general setting: $L_{A_{n-1}}(m,0)\otimes L_{A_{m-1}}(n,0)\subset L_{A_{mn-1}}(1,0)$; we list $\psi(a)$ for $a\in\Pi_0$ in Table \ref{tableMir}.
%\color{black}

\begin{table}[bht] 
\caption{The set $\Pi_0$ and the map $\psi$ 
}\label{tableMir}
%\tiny
\begin{tabular}{|c|c|c|c|}
\hline
$a\in\Pi_0$ &$\psi(a)$ &  Conformal weight of $L_{A_3}(8,a)$& $\dim L_{A_3}(8,a)_1$\\ \hline \hline 
[000]&[0000000]&$0$&$15$\\\hline
[800]&[0400000]&$3$&$0$\\\hline 
[080]&[0004000]&$4$&$0$\\\hline 
[008]&[0000040]&$3$&$0$\\\hline 
[214]&[1011000]&$2$&$0$\\\hline 
[121]&[1100011]&$1$&$175$\\\hline 
[141]&[0110110]&$2$&$0$\\\hline 
[412]&[0001101]&$2$&$0$\\\hline 
\end{tabular}
\end{table}

\section{Inertia groups}\label{S:inertia}
Throughout this section, we also adopt the labeling of simple roots $\alpha_i$ of an irreducible root system as in \cite[Section 11.4]{Hu}.

\subsection{Definition of the inertia group}
Let $V$ be a VOA of CFT-type. 
We first define the inertia group of a VOA.
\begin{definition}
Define the \textit{inertia group} $I(V)$ of $V$ by
\[
I(V)=\{g\in \Aut (V)\mid g(x)=x \text{ for all } x\in V_1\}. 
\]
\end{definition} 

Assume that $V$ is strongly regular and that $V_1$ is semisimple.
For a subalgebra $\mathfrak{a}$ of $V_1$, let $L(\mathfrak{a})$ denote the subVOA of $V$ generated by $\mathfrak{a}$.
Let $V_1=\g_{1} \oplus \cdots \oplus \g_{r}$ be the decomposition of $V_1$ as a direct sum of simple ideals. 
Then by Proposition \ref{Prop:posl}, we have 
\[
L(\g_i)\cong L_{\g_i}(k_i,0)\quad \text{and}\quad L(V_1) \cong L_{\g_1}(k_1, 0) \otimes \cdots \otimes  L_{\g_r}(k_r, 0),
\]
where $k_i$ is the (positive integral) level of the corresponding affine representation of $\g_i$ for $1\leq i\leq r$. 
We often denote $\g_i$ by its type $X_{n,k}$ if the type is $X_n$ and the level is $k$.
We also assume $L(V_1)$ is a full subVOA of $V$.
%, that is, the conformal vectors of $L(V_1)$ and $V$ are the same. 
Then any irreducible $L(V_1)$-submodule of $V$ is isomorphic to $\bigotimes_{i=1}^rL_{\g_i}(k_i,\lambda_i)$, where $\lambda_i$ is a dominant integral weight of $\g_i$ at level $k_i$.
By definition, $I(V)$ acts trivially on $L(V_1)$, and any element of $I(V)$ induces an $L(V_1)$-module isomorphism of $V$. 
%\color{black}

\begin{lemma}\label{L:mult}
Let $M$ be an irreducible $L(V_1)$-submodule of $V$ and $g\in I(V)$. 
If the multiplicity of $M$ in $V$ is $1$, then 
$g|_{M}=\lambda\cdot id_M$ for some constant $\lambda\in \C$. 
\end{lemma}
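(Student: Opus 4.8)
The plan is to exploit the fact that $g \in I(V)$ acts as an $L(V_1)$-module isomorphism of $V$, as noted just before the lemma. Since $M$ is an irreducible $L(V_1)$-submodule occurring with multiplicity $1$, it is a well-defined graded subspace of $V$ determined by its isomorphism class, and any $L(V_1)$-module automorphism of $V$ must preserve the isotypic component corresponding to $M$. Because the multiplicity is $1$, this isotypic component is exactly $M$ itself, so $g$ restricts to an $L(V_1)$-module endomorphism $g|_M : M \to M$.

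The key step is then an application of Schur's lemma. I would first argue that $g|_M$ is a genuine $L(V_1)$-module homomorphism from $M$ to $M$: for any $a \in L(V_1)$ and $w \in M$ we have, using $ga = a$ (as $g$ fixes $V_1$ pointwise and hence fixes $L(V_1)$, since $L(V_1)$ is generated by $V_1$) together with the automorphism property $gY(a,z) = Y(ga,z)g$, that $g\,a_{(i)}\,w = (ga)_{(i)}\,g w = a_{(i)}\,(g|_M w)$. Thus $g|_M$ commutes with the action of every mode of $L(V_1)$, i.e.\ it is an endomorphism of $M$ in the category of $L(V_1)$-modules. Since $V$ is strongly regular, $L(V_1)$ is rational, and $M$ is an irreducible $L(V_1)$-module, so its endomorphism ring is $\C$ by Schur's lemma. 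Hence $g|_M = \lambda\cdot \mathrm{id}_M$ for some $\lambda \in \C$.

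The only subtle point, which I would make explicit, is that $g|_M$ really does map $M$ \emph{into} $M$ rather than into some other copy of the same module. This is precisely where the multiplicity-one hypothesis is used: without it, $g$ could permute or mix the several isomorphic copies of $M$ inside $V$, and $g|_M$ would land in the full isotypic component rather than in $M$. I expect this to be the main (and essentially only) obstacle, and it is handled by the observation that $g$ is an $L(V_1)$-module automorphism of $V$ and therefore preserves each isotypic component; when that component is one-dimensional over the module category (multiplicity $1$), it coincides with $M$, forcing $g(M) = M$.

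With these pieces in place the proof is immediate: $g|_M$ is an $L(V_1)$-module automorphism of the irreducible module $M$, so by Schur's lemma it is a nonzero scalar, giving $g|_M = \lambda\cdot \mathrm{id}_M$ as claimed. I would keep the write-up to a few lines, citing Schur's lemma and the rationality of $L(V_1)$ guaranteed by strong regularity of $V$, and emphasizing the multiplicity-one hypothesis as the reason $g$ stabilizes $M$.
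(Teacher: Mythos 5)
Your proposal is correct and follows exactly the paper's argument: the paper likewise notes that since $M$ has multiplicity one, $g$ induces an $L(V_1)$-module isomorphism from $M$ to itself, and then applies Schur's lemma. The extra detail you supply (that $g$ fixes $L(V_1)$ because it is generated by $V_1$, and that $g$ preserves isotypic components) just makes explicit what the paper leaves implicit in the sentence preceding the lemma.
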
 

\begin{proof}
Since $M$ has multiplicity one in $V$, $g$ induces an $L(V_1)$-module isomorphism from $M$ to $M$. Hence, by Schur's lemma, we have the desired conclusion.  
\end{proof}

\begin{remark}
For a simple current $L(V_1)$-submodule $W$ of $V$, by the same argument as in \cite[Proposition 5.1 (2)]{DMZ}, the multiplicity of $W$ in $V$ is one. Hence,   
$g|_{W}=\lambda\cdot id_W$ for some $n$-th root of unity $\lambda$, where $n$ is the smallest positive integer such that $W^{\fusion_{L(V_1)}^n} \cong  L(V_1)$.
\end{remark}

%\subsection{Lattice VOA}
%Let $V_N$ be a lattice VOA associated with a Niemeier lattice $N$. Let $Q$ be the corresponding root lattice. 
The inertia groups of the Niemeier lattice VOAs are determined in \cite{LS5} as follows:
\begin{proposition}
Let $N$ be a Niemeier lattice and let $Q$ be the root sublattice of $N$. 
If $Q\neq\emptyset$, then $I(V_N)=\{\sigma_u\mid u\in Q^*/N\}\cong Q^*/N$, where $Q^*=\{v\in \R\otimes_\Z Q\mid (v|Q)\in\Z\}$ and $\sigma_u= \exp(-2 \pi \sqrt{-1} u_{(0)})$.  
\end{proposition}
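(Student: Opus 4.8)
The plan is to exploit the explicit module structure of $V_N$ over the subVOA generated by its weight one space. Since $N$ is a Niemeier lattice with $Q\neq\{0\}$, the root system of $N$ has full rank $24$, so $\C\otimes_\Z Q=\h$ is the whole Cartan subalgebra of $(V_N)_1$ and every norm $2$ vector of $N$ lies in $Q$. Hence $(V_N)_1=(V_Q)_1$, and by Proposition \ref{Prop:posl} the subVOA $L((V_N)_1)$ generated by the weight one space is $\bigotimes_i L_{\g_i}(1,0)\cong V_Q$, a full subVOA of $V_N$. As $V_Q$-modules one has the multiplicity-free decomposition
\[
V_N=\bigoplus_{\mu\in N/Q}V_{Q+\mu},
\]
where $V_{Q+\mu}$ is the irreducible $V_Q$-module attached to the coset $Q+\mu$; for distinct $\mu$ these modules are pairwise inequivalent (they correspond to distinct elements of $Q^*/Q$) and each is a simple current.

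First I would treat the inclusion $\{\sigma_u\mid u\in Q^*/N\}\subseteq I(V_N)$. For $u\in Q^*$ and a root $\alpha$ we have $(u|\alpha)\in\Z$, so by the scalar action of $\sigma_u$ on weight vectors recalled in Section \ref{S:2}, $\sigma_u$ fixes every $e^\alpha$ and every element of $\h$; thus $\sigma_u\in I(V_N)$. On the component $V_{Q+\mu}$ the automorphism $\sigma_u$ acts by the scalar $\exp(-2\pi\sqrt{-1}(u|\mu))$, which is well defined since $(u|Q)\subseteq\Z$; consequently $\sigma_u=\mathrm{id}$ on $V_N$ if and only if $(u|\beta)\in\Z$ for all $\beta\in N$, i.e.\ $u\in N^*=N$ by unimodularity. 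This yields an injection $Q^*/N\hookrightarrow I(V_N)$, $u\mapsto\sigma_u$.

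For the reverse inclusion, let $g\in I(V_N)$. Since $g$ fixes $(V_N)_1$ pointwise and is a VOA automorphism, it fixes the generated subVOA $V_Q$ pointwise, so $g$ is a $V_Q$-module automorphism of $V_N$. As the components $V_{Q+\mu}$ are pairwise inequivalent and appear with multiplicity one, $g$ preserves each of them and, by Lemma \ref{L:mult}, acts on $V_{Q+\mu}$ as a scalar $\chi(\mu)$; compatibility with the products $V_{Q+\mu}\cdot V_{Q+\nu}\subseteq V_{Q+\mu+\nu}$ forces $\chi\in\hom(N/Q,\C^\times)$, and the assignment $g\mapsto\chi$ is injective. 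It remains to identify the image. The bilinear pairing $N/Q\times Q^*/N\to\Q/\Z$, $(\mu,u)\mapsto(u|\mu)+\Z$, is well defined and nondegenerate, so $u\mapsto\chi_u$ with $\chi_u(\mu)=\exp(-2\pi\sqrt{-1}(u|\mu))$ embeds $Q^*/N$ into $\hom(N/Q,\C^\times)$. The expected main obstacle is the cardinality match that converts this into the final equality: using that $N$ is unimodular and that $Q\subseteq N\subseteq Q^*$ all have rank $24$, one has $\det Q=[N:Q]^2$, whence $|Q^*/N|=[Q^*:N]=\det Q/[N:Q]=[N:Q]=|N/Q|=|\hom(N/Q,\C^\times)|$. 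Therefore $Q^*/N\cong\hom(N/Q,\C^\times)$, so the $\sigma_u$ already realize every character $\chi$; combining this with the injection $g\mapsto\chi$ we conclude $I(V_N)=\{\sigma_u\mid u\in Q^*/N\}\cong Q^*/N$.
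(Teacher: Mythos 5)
Your argument is correct. Note first that the paper does not actually prove this proposition: it is quoted from \cite{LS5} without proof, so there is no in-paper argument to compare against line by line. That said, your proof is the natural one and is sound: the full-rank property of the root system of a Niemeier lattice with roots gives $(V_N)_1=(V_Q)_1$ and $L((V_N)_1)\cong V_Q$ as a full subVOA; the multiplicity-free decomposition $V_N=\bigoplus_{\mu\in N/Q}V_{Q+\mu}$ into pairwise inequivalent simple currents forces any $g\in I(V_N)$ to act by scalars $\chi(\mu)$ with $\chi\in\hom(N/Q,\C^\times)$; and the nondegeneracy of the pairing $N/Q\times Q^*/N\to\Q/\Z$ (using $N=N^*$ and $Q^{**}=Q$) shows every such character is realized by some $\sigma_u$ with $u\in Q^*$, with kernel exactly $N$. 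This is also precisely the template the paper follows for its non-lattice computations in Section 6 (restrict to the subVOA generated by the weight one space, use multiplicity one and Schur's lemma, then pin down the possible scalars), so your proof fits the paper's methodology; the only ingredient you invoke without justification is the standard fact that a nonempty Niemeier root system has rank $24$, which is part of the classification and is needed to ensure $V_Q$ is a full subVOA with irreducible coset modules.
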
 

\begin{remark}  
In general, if $V$ is a $D$-graded simple current extension of $L(V_1)$, then $I(V)= D^*$, the set of all linear characters of $D$.
\end{remark}

\begin{remark} For any coweight $\lambda$ of $V_1$, $\sigma_\lambda\in I(V)$.
In fact, in the following cases, we prove that $I(V)$ is the set of such inner automorphisms.
However, we do now know whether this is true in general.
\end{remark}

In the following subsections, we will compute the inertia groups for some explicit examples of holomorphic VOAs of central charge $24$ (see Main Theorem 2 in Introduction for a summary). 
The results will be used to show the following theorem:
%that certain automorphisms of  holomorphic VOAs of central charge $24$ are uniquely determined up to conjugation. 

\begin{theorem}\label{T:conj} Let $V$ be a strongly regular holomorphic VOA of central charge $24$ and let $g\in\Aut (V)$  be of order $2$.
Assume that the types of $V_1$ and $V_1^g$ are given as in one of the rows of Table \ref{Ta:conj}.
%has the type $E_{6,3}G_{2,1}^3$, $A_{4,5}^2$, $C_{5,3}C_{2,2}A_{1,1}$ or $A_{7,4}A_{1,1}^3$.
%Let $g\in\Aut (V)$ of order $2$ such that the type of $V^g_1$ is given in a row of Table \ref{Ta:conj}.
Then the conjugacy class of $g$ is uniquely determined by $V^g_1$, conformal weight of $V(g)$ and the additional data given in the same row of Table \ref{Ta:conj}.
\end{theorem}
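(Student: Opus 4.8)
The plan is to prove Theorem \ref{T:conj} by combining the classification of finite-order automorphisms of the weight one Lie algebra with the explicit computation of the inertia group provided by Main Theorem 2. First I would observe that the argument reduces to two separate tasks that run in parallel for each row of Table \ref{Ta:conj}. Let $\varphi$ and $g$ be two order $2$ automorphisms of $V$ satisfying all the stated hypotheses, so that $V_1^{\varphi}\cong V_1^{g}$, the conformal weights of $V(\varphi)$ and $V(g)$ agree, and the additional data in the relevant row coincide. The goal is to exhibit an element of $\Aut(V)$ conjugating $\varphi$ to $g$. The strategy follows the outline in the introduction: first conjugate $\varphi$ and $g$ so that they agree on $V_1$, then analyze the residual ambiguity using the inertia group $I(V)$.

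For the first step I would restrict $\varphi$ and $g$ to the weight one Lie algebra $V_1$. Since $V$ is strongly regular of CFT-type, $V_1$ is reductive by Proposition \ref{Prop:posl}, and for the relevant cases one checks that the induced automorphisms $\varphi|_{V_1}$ and $g|_{V_1}$ are inner involutions of each simple ideal whose fixed-point subalgebras are isomorphic (both being $V_1^{\varphi}\cong V_1^{g}$). Applying Lemma \ref{L:conjinv} ideal by ideal—taking care of the excluded type $D_{2n}$ separately, where the fixed-point data must be supplemented by the additional Table \ref{Ta:conj} information—I would conclude that $\varphi|_{V_1}$ and $g|_{V_1}$ are conjugate by an inner automorphism $x$ of the Lie algebra $V_1$. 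Since inner automorphisms of $V_1$ lift to inner (hence VOA) automorphisms of $V$ via $\sigma_u$-type maps and the exponentials $\exp(a_{(0)})$, after replacing $\varphi$ by a conjugate I may assume $\varphi|_{V_1}=g|_{V_1}$, i.e.\ $\varphi g^{-1}\in I(V)$.

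For the second step, Main Theorem 2 (Theorems \ref{IE63}, \ref{IA45}, \ref{IC5}, \ref{A74}) pins down $I(V)$ exactly: it is trivial, $\Z_2$, or $\Z_2^3$ depending on the type of $V_1$, and in every case it consists of inner automorphisms $\sigma_\lambda$ with $\lambda$ ranging over coweights. Writing $\varphi=h\,g$ with $h\in I(V)$, I must show that $h$ can be absorbed into a conjugation fixing $g$, i.e.\ that $h$ lies in the image of the coboundary map $x\mapsto x g x^{-1} g^{-1}$ restricted to the centralizer structure, or equivalently that any two lifts $g$ and $hg$ of the same Lie-algebra involution with matching twisted-module data are conjugate in $\Aut(V)$. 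When $I(V)=1$ this is immediate. When $I(V)\cong\Z_2$ or $\Z_2^3$, I would use the remaining discriminating data in Table \ref{Ta:conj}—the conformal weight of $V(g)$ and the Lie-weight structure of the twisted module $V(g)_{1/2}$—together with Lemmas \ref{L:conjD73}, \ref{L:conjA45}, \ref{L:conjC53}, \ref{L:conjA74}, \ref{L:conjA74-2}, \ref{L:conjA74-3}, to show that the nontrivial elements of $I(V)$ either already conjugate $g$ to $hg$ or are themselves realized by an honest conjugation (for instance by a diagram-type automorphism of $V$ permuting the simple current sectors), leaving exactly one conjugacy class compatible with the prescribed data.

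The hard part will be this second step in the cases $V_1=C_{5,3}G_{2,2}A_{1,1}$ and $V_1=A_{7,4}A_{1,1}^3$, where $I(V)$ is nontrivial: I must rule out that distinct cosets $g I(V)^{\circ}$ give genuinely different conjugacy classes, and this is precisely where the additional invariants in Table \ref{Ta:conj} are essential. The delicate point is to verify that the action of $\sigma_\lambda\in I(V)$ on the irreducible $\sigma_g$-twisted module $V(g)$—computed through Li's $\Delta$-operator via formulas \eqref{Eq:V1h} and \eqref{Eq:Lh} and the conformal-weight formula of Lemma \ref{Lem:lowestwt}—changes the recorded data exactly when $g$ and $hg$ are non-conjugate, so that fixing the data forces a unique class. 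Establishing this compatibility, rather than the Lie-theoretic first step, is where the real work lies, and it relies essentially on the precise module decompositions and quantum-dimension computations assembled in Section \ref{S:inertia}.
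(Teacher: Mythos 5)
Your two-step outline (first match the automorphisms on $V_1$ using Kac's classification, then absorb the residual discrepancy, an element of $I(V)$, using the inertia-group computations and the extra invariants of Table \ref{Ta:conj}) is exactly the paper's strategy, and your identification of the nontrivial-$I(V)$ cases as the ones where the additional data does real work is also correct. However, your first step contains a concrete gap. You claim that, since $V_1^{\varphi}\cong V_1^{g}$, the restrictions to $V_1$ are \emph{inner} involutions of each simple ideal and are conjugate by an \emph{inner} automorphism of $V_1$ via Lemma \ref{L:conjinv} applied ideal by ideal. This fails in two of the rows. For $W_1=A_{4,5}^2$ the fixed-point type $C_2$ inside $A_4$ forces the restriction to each ideal to be \emph{outer} (a diagram involution times an inner one); Lemma \ref{L:conjinv} does not apply and one must instead invoke Kac's classification for outer automorphisms (the paper cites \cite[Exercise 8.10]{Kac}), and one must also rule out that $\bar g$ swaps the two $A_4$ ideals. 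More seriously, for $W_1=E_{6,3}G_{2,1}^3$ the fixed points on the three $G_2$ ideals have types $A_1^2$, $A_1^2$, $G_2$: two involutions with isomorphic global fixed-point algebras may act trivially on \emph{different} $G_2$ ideals, and no inner automorphism of the Lie algebra $V_1$ (nor any automorphism preserving each ideal) can reconcile them. The paper closes this gap with Lemma \ref{L:permG23}, which constructs an order $3$ element of $\Aut(V)$ cyclically permuting the three $G_2$ ideals; this is extracted from the explicit $\Z_3$-orbifold realization of $V$ from $V_{N(E_6^4)}$ and is a genuinely nontrivial input your proposal does not supply.

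A second, smaller imprecision: in step 2 you frame the problem as showing that $h\in I(V)$ "can be absorbed into a conjugation fixing $g$." The paper's mechanism is mostly the opposite: since $I(V)$ consists of explicit inner automorphisms $\sigma_\lambda$, the candidates for $g$ form a finite explicit list $\sigma_v$, $v\in u+\Z\langle\cdots\rangle$, and the extra invariants (order of $\sigma_v$ on $V$, the conformal weight $\langle v|v\rangle/2+\cdots$ of $V^{(v)}$ computed via Lemma \ref{Lem:lowestwt}, and the type or rank of $(\tilde V_{\sigma_v})_1$ read off from explicit weight vectors in the twisted module) simply \emph{exclude} all but one candidate up to Weyl-group conjugacy --- except in Lemma \ref{L:conjA74-3}, where the last two survivors are conjugated by the ideal-flipping automorphism of Lemma \ref{L:flip}. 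Your proposal gestures at both mechanisms, so this is a matter of emphasis rather than a gap, but the exclusion arguments (e.g.\ ruling out $v$ for which $\sigma_v$ has order $4$, or for which the twisted module has the wrong conformal weight) are where the case-by-case work actually happens.
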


%\begin{tiny}
\begin{longtable}[bht]{|c|c|c|c|c|} 
\caption{Uniqueness of some order $2$ automorphisms.
}\label{Ta:conj} \\
\hline 
$V_1$  & $V^g_1$&conf. wt of $V(g)$& additional data&reference\\
 \hline%\hline 
$E_{6,3}G_{2,1}^3$ &$D_5G_2A_1^4U(1)$&&&Lemma \ref{L:conjD73}\\\hline 
$A_{4,5}^2$&$C_2^2$&&&Lemma \ref{L:conjA45}\\\hline 
$C_{5,3}C_{2,2}A_{1,1}$ &$A_{4}A_{1}^3U(1)^2$&&&Lemma \ref{L:conjC53}\\\hline 
$A_{7,4}A_{1,1}^3$&$A_6A_1^2U(1)^2$&$1$&&Lemma \ref{L:conjA74}\\\hline 
$A_{7,4}A_{1,1}^3$&$A_4A_2A_1^2U(1)^2$&$1$&$(\tilde{V}_g)_1\not\cong A_7A_1^3$&Lemma \ref{L:conjA74-2}\\\hline 
$A_{7,4}A_{1,1}^3$&$A_5A_1^4U(1)$&$1$&${\rm (Lie)\ rank}(\tilde{V}_g)_1=10$&Lemma \ref{L:conjA74-3}\\\hline 
\end{longtable}
%\end{tiny}
%\color{black}
\subsection{Weight one Lie algebra of type $E_{6,3}G_{2,1}^3$}

Let $V$ be a strongly regular holomorphic VOA of central charge $24$ such that 
$V_1$ has the type $E_{6,3}G_{2,1}^3$. 
It was proved in \cite{LS5} that $V$ is isomorphic to the holomorphic VOA constructed by Miyamoto \cite{Mi3}.

Let $V_1=\bigoplus_{i=1}^4\g_i$ be the direct sum of simple ideals $\g_i$, where $\g_1$ and $\g_i$ $(i=2,3,4)$ are simple ideals of type $E_{6,3}$ and $G_{2,1}$, respectively.
Set $\g_{234}=\bigoplus_{i=2}^4\g_i$.

\begin{lemma}\label{L:E63}
The commutant $\Com_{V}(L(\g_{234}))$ is isomorphic to $\tilde{L}_{E_6}(3,0)$, the $\Z_3$-graded simple current extension of $L_{E_6}(3,0)$ described in Section \ref{S:EE63}.
In addition, we have $\Com_V(\Com_{V}(L(\g_{234})))=L(\g_{234})$.
\end{lemma}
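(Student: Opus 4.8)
The plan is to realize $C:=\Com_V(L(\g_{234}))$ as one member of a commutant pair whose partner is $L(\g_{234})$, and then to pin $C$ down by combining the integrality of conformal weights with the mirror-extension machinery of Section \ref{S:2}--Proposition \ref{P:Mirror}. First I would record the coarse structure of $C$. Since $V$ is simple of CFT-type and $L(\g_{234})\cong L_{G_2}(1,0)^{\otimes3}$ is simple and rational, Proposition \ref{P:ComSimple} gives that $C$ is simple. Because $L(V_1)=L(\g_1)\otimes L(\g_{234})$ is full and $V_1$ is semisimple, the conformal vector of $V$ splits as the sum of the Sugawara vectors of the simple ideals, so the conformal vector of $C$ is $\omega-\omega_{\g_{234}}=\omega_{\g_1}$; hence $L(\g_1)\cong L_{E_6}(3,0)$ is a \emph{full} subVOA of $C$ and $C$ has central charge $24-3\cdot\tfrac{14}{5}=\tfrac{78}{5}$. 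A one-line computation with the $0$-th and $1$-st modes, using $a_{(0)}v=[a,v]$ and $a_{(1)}v=\langle a|v\rangle\1$ for $a,v\in V_1$, shows $C_1=\{v\in V_1:[a,v]=0,\ \langle a|v\rangle=0\ \forall a\in\g_{234}\}=\g_1$, so $C$ is a strongly regular extension of $L_{E_6}(3,0)$ whose weight one space is exactly $\g_1$ of type $E_{6,3}$.

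Next I would prove $\Com_V(L(\g_1))=L(\g_{234})$, which already settles the second assertion. The subalgebra $L(\g_{234})$ commutes with $L(\g_1)$, hence lies in $\Com_V(L(\g_1))$, which is simple by Proposition \ref{P:ComSimple}. As a module over the rational VOA $L(\g_{234})$, $\Com_V(L(\g_1))$ is a direct sum of irreducible $L(\g_{234})$-modules, each of integral conformal weight since it sits inside the $\Z$-graded VOA $V$. But every nonvacuum irreducible $L_{G_2}(1,0)^{\otimes3}$-module has conformal weight in $\{2/5,4/5,6/5\}$, hence is non-integral; so $\Com_V(L(\g_1))$ is a sum of copies of the vacuum module, and by simplicity together with $\dim V_0=1$ it equals $L(\g_{234})$. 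Since $L(\g_1)\subseteq C$, this forces $L(\g_{234})\subseteq\Com_V(C)\subseteq\Com_V(L(\g_1))=L(\g_{234})$, i.e. $\Com_V(C)=L(\g_{234})$, which is the double-commutant claim.

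It remains to identify $C$. The previous step shows that $(C,L(\g_{234}))$ is a pair of mutual commutants in the holomorphic VOA $V$, so Proposition \ref{P:Mirror} applies: the fusion rings of $C$ and of $L(\g_{234})$ are isomorphic and every irreducible $C$-module occurs in $V$. In particular $C$ has exactly as many irreducible modules as $L_{G_2}(1,0)^{\otimes3}$, namely $2^3=8$. On the other hand $C$ is an extension of $L_{E_6}(3,0)$ whose graded pieces are $L_{E_6}(3,0)$-modules of integral conformal weight; computing the conformal weights $(\lambda|\lambda+2\rho)/30$ of the level $3$ dominant weights of $E_6$ shows that the only such modules are the three $\Z_3$ simple currents $L_{E_6}(3,0)$, $L_{E_6}(3,3\Lambda_1)$, $L_{E_6}(3,3\Lambda_6)$. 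These form a group $\cong\Z_3$ under fusion, so $C$ is either $L_{E_6}(3,0)$ or the full extension $\tilde{L}_{E_6}(3,0)$ of Section \ref{S:EE63}. Since $L_{E_6}(3,0)$ has $20$ inequivalent irreducible modules while $\tilde{L}_{E_6}(3,0)$ has $8$, and $C$ must have $8$, the case $C=L_{E_6}(3,0)$ is excluded and $C\cong\tilde{L}_{E_6}(3,0)$.

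The hard part will be the last step: one must verify that among the twenty level $3$ modules of $E_6$ \emph{no} module other than the three simple currents has integral conformal weight, so that $C$ cannot be a larger or exotic extension of $L_{E_6}(3,0)$, and one must correctly transport the module count (and, as a cross-check, the multiset of quantum dimensions, where $\qdim=1$ detects the unique simple current by Proposition \ref{P:qdim}) across the mirror correspondence to force the nontrivial extension. Both are finite verifications --- the conformal weights and fusion rules of $L_{E_6}(3,0)$ from the formula above and the ``Kac" package, and the module count of $\tilde{L}_{E_6}(3,0)$ recorded in Section \ref{S:EE63} --- but it is precisely there that the isomorphism $\Com_V(L(\g_{234}))\cong\tilde{L}_{E_6}(3,0)$ is won.
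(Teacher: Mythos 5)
Your proposal is correct and follows essentially the same route as the paper: simplicity of the commutant via Proposition \ref{P:ComSimple}, the observation that only the three $\Z_3$ simple currents among the $20$ irreducible $L_{E_6}(3,0)$-modules have integral conformal weight (reducing to the two candidates $L_{E_6}(3,0)$ and $\tilde{L}_{E_6}(3,0)$), the double-commutant identity from the non-integrality of the conformal weights of the nonvacuum $L_{G_2}(1,0)^{\otimes 3}$-modules, and finally the count of $8$ irreducible modules via Proposition \ref{P:Mirror} (1) and (4) to exclude $L_{E_6}(3,0)$. The extra details you supply (central charge, $C_1=\g_1$, strong regularity via Theorem \ref{T:HKL}) are correct and only make explicit what the paper leaves implicit.
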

\begin{proof}Recall that there are exactly $20$ inequivalent irreducible $L_{E_6}(3,0)$-modules.
One can  check directly that only three of them, $L_{E_6}(3,0)$, $L_{E_6}(3,3\Lambda_1)$ and $L_{E_6}(3,3\Lambda_6)$, have integral conformal weights.
Note that they are simple current modules and form a cyclic group of order $3$ under the fusion product.
By Proposition \ref{P:ComSimple}, $\Com_{V}(L(\g_{234}))$ is simple.
Hence, we have $\Com_{V}(L(\g_{234}))\cong L_{E_6}(3,0)$ or $\tilde{L}_{E_6}(3,0)$.

The subVOA $L(\g_{234})$ has exactly $8$ inequivalent irreducible modules and itself is the only one with integral conformal weight.
Hence $\Com_V(\Com_V(L(\g_{234})))=L(\g_{234})$.
By Proposition \ref{P:Mirror} (1) and (4), the commutant $\Com_{V}(L(\g_{234}))$ also has exactly $8$ irreducible modules.
Hence $\Com_{V}(L(\g_{234}))\cong\tilde{L}_{E_6}(3,0)$.
\end{proof}

By the lemma above, we now identify $\Com_{V}(L(\g_{234}))$ with $\tilde{L}_{E_6}(3,0)$.
Note that $L(\g_{234})\cong L_{G_2}(1,0)^{\otimes3}$.

\begin{lemma}\label{L:decE6} As a $\tilde{L}_{E_6}(3,0)\otimes L_{G_2}(1,0)^{\otimes3}$-module, the VOA $V$ decomposes as
\begin{align*}
V\cong &\tilde{L}_{E_6}(3,0)\otimes L_{G_2}(1,0)^{\otimes3}\oplus \bigoplus_{i=1}^3\left(\tilde{L}_{E_6}(3,\Lambda_4)^{s_i}\otimes (L_{G_2}(1,\Lambda_1)\otimes L_{G_2}(1,0)^{\otimes 2})^{\xi^{i-1}}\right)\oplus\\
&\bigoplus_{i=1}^3\left(\tilde{L}_{E_6}(3,\Lambda_1+\Lambda_6)^{t_i}\otimes(L_{G_2}(1,0)\otimes L_{G_2}(1,\Lambda_1)^{\otimes2})^{\xi^{i-1}}\right)\oplus\tilde{L}_{E_6}(3,\Lambda_2)\otimes L_{G_2}(1,\Lambda_1)^{\otimes3},
\end{align*} for some $\{s_1,s_2,s_3\}=\{t_1,t_2,t_3\}=\{0,1,2\}$, 
up to permutations on the latter three tensor components, where $\xi$ is an order $3$ cyclic permutation on the latter three tensor components.
\end{lemma}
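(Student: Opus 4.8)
The plan is to recognize $V$ as an extension of the commutant (dual) pair $\tilde{L}_{E_6}(3,0)\otimes L_{G_2}(1,0)^{\otimes 3}$ and then to pin the decomposition down by matching conformal weights. By Lemma \ref{L:E63} the subVOAs $V^1=\tilde{L}_{E_6}(3,0)=\Com_V(L(\g_{234}))$ and $V^2=L(\g_{234})\cong L_{G_2}(1,0)^{\otimes 3}$ are mutual commutants in $V$ and are both strongly regular; moreover $V^1\otimes V^2$ contains the full subVOA $L(V_1)=L_{E_6}(3,0)\otimes L_{G_2}(1,0)^{\otimes 3}$ and shares its conformal vector, so it too is a full subVOA of $V$. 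I would first invoke Proposition \ref{P:Mirror}: part (4), together with the holomorphicity of $V$, gives $J^1=I^1$ and $J^2=I^2$, so every irreducible module of each factor occurs, and part (1) supplies a fusion-preserving bijection $\psi\colon I^1\to I^2$ with multiplicity one, whence
\[
V\cong\bigoplus_{M\in I^1}M\otimes\psi(M).
\]
By Section \ref{S:EE63} (Table \ref{tableE6}) there are exactly $8$ irreducible $\tilde{L}_{E_6}(3,0)$-modules, and since $L_{G_2}(1,0)$ has precisely the two irreducibles $L_{G_2}(1,0)$ (conformal weight $0$) and $L_{G_2}(1,\Lambda_1)$ (conformal weight $2/5$), there are likewise $8$ irreducible $L_{G_2}(1,0)^{\otimes 3}$-modules, so the counts agree.

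Next I would determine $\psi$ through the integrality of conformal weights. Since $V^1\otimes V^2$ is a full subVOA, each summand $M\otimes\psi(M)$ is an irreducible module over it embedded in the $\Z_{\ge 0}$-graded VOA $V$, so the sum of the conformal weights of $M$ and $\psi(M)$ lies in $\Z_{\ge 0}$. Reading the $\tilde{L}_{E_6}(3,0)$-weights $0,\,4/5,\,6/5,\,8/5$ from Table \ref{tableE6} against the $L_{G_2}(1,0)^{\otimes 3}$-weights $0,\,2/5,\,4/5,\,6/5$ (indexed by the number of $\Lambda_1$'s among the three factors), integrality forces the coarse pairing $0\leftrightarrow 0$, $4/5\leftrightarrow 6/5$, $6/5\leftrightarrow 4/5$, and $8/5\leftrightarrow 2/5$. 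This already matches $\tilde{L}_{E_6}(3,0)$ with $L_{G_2}(1,0)^{\otimes 3}$, the unique weight-$4/5$ module $\tilde{L}_{E_6}(3,\Lambda_2)$ with the unique weight-$6/5$ module $L_{G_2}(1,\Lambda_1)^{\otimes 3}$, the three $\tilde{L}_{E_6}(3,\Lambda_1+\Lambda_6)^t$ with the three doubly-$\Lambda_1$ modules, and the three $\tilde{L}_{E_6}(3,\Lambda_4)^s$ with the three singly-$\Lambda_1$ modules. Note that every nontrivial summand then has conformal weight exactly $2$, consistent with $V_1$ being exactly the weight-one space of $L(V_1)$.

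Finally, within each of the two triply-degenerate blocks the bijection $\psi$ is an injection between three modules on each side, hence a bijection; relabeling the three isomorphic $G_2$ tensor factors (equivalently, choosing the identification of $\g_2,\g_3,\g_4$ with the three slots) I can arrange these bijections in the cyclic form displayed in the statement, with $\xi$ the order-$3$ cyclic permutation of the latter three tensor components and with $\{s_1,s_2,s_3\}=\{t_1,t_2,t_3\}=\{0,1,2\}$. I expect no deep obstacle: the content is the mirror-extension formalism of Proposition \ref{P:Mirror} combined with conformal-weight bookkeeping, and the only genuine subtlety is that the assignment of the labels $s_i,t_i$ to specific slots is determined only up to permuting the three identical $G_2$ factors, which is precisely the ``up to permutation'' clause. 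As a consistency check, the central charges $78/5$ of $\tilde{L}_{E_6}(3,0)$ and $3\cdot 14/5=42/5$ of $L_{G_2}(1,0)^{\otimes 3}$ sum to $24$, confirming that the dual pair is complete.
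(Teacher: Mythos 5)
Your proposal is correct and follows essentially the same route as the paper: the paper likewise invokes the commutant-pair decomposition of Proposition \ref{P:Mirror} (with $I^i=J^i$ by holomorphicity) and then pins down the pairing $\psi$ by listing the irreducible $\tilde{L}_{E_6}(3,0)\otimes L_{G_2}(1,0)^{\otimes3}$-modules with integral conformal weight (Table \ref{E63G21^3-module}), which is exactly your conformal-weight bookkeeping. The extra checks you include (the count of $8$ irreducibles on each side and the central charges summing to $24$) are consistent with, and implicit in, the paper's argument.
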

\begin{proof} Recall that $L_{G_2}(1,0)$ has exactly two inequivalent irreducible modules $L_{G_2}(1,0)$ and $L_{G_2}(1,\Lambda_1)$ and they have conformal weights $0$ and $2/5$, respectively.
Hence by Table \ref{tableE6}, all irreducible $\tilde{L}_{E_6}(3,0)\otimes L_{G_2}(1,0)^{\otimes3}$-modules with integral conformal weights are given as in Table \ref{E63G21^3-module}, up to permutation on the latter $3$ copies of $L_{G_2}(1,0)$.
By \eqref{Eq:decmirror} and Proposition \ref{P:Mirror} (4), we obtain this lemma.
\end{proof}
\begin{lemma}\label{E6id}
Any element of $I(V)$ acts on $\tilde{L}_{E_6}(3,0)(=\Com_{V}(L(\g_{234})))$ as the identity.
\end{lemma}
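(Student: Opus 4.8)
The plan is to restrict $g$ to the commutant, identify the restriction inside a small explicitly known group, and then eliminate the nontrivial possibilities using the global decomposition of $V$ in Lemma \ref{L:decE6}.

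First I would observe that any $g\in I(V)$ fixes $V_1$ pointwise, hence fixes both $\g_1$ and $\g_{234}$ pointwise; consequently $g$ acts as the identity on $L(\g_1)\cong L_{E_6}(3,0)$ and on $L(\g_{234})\cong L_{G_2}(1,0)^{\otimes3}$. Since $g$ fixes $L(\g_{234})$ pointwise, it preserves the commutant $\tilde{L}_{E_6}(3,0)=\Com_V(L(\g_{234}))$, so $g$ restricts to an automorphism of $\tilde{L}_{E_6}(3,0)$. Because $\tilde{L}_{E_6}(3,0)$ is the commutant of $L(\g_{234})$, its weight one space lies in the centralizer of $\g_{234}$ in $V_1=\g_1\oplus\g_{234}$, which is $\g_1$; since also $\g_1\subseteq(\tilde{L}_{E_6}(3,0))_1$, we get $(\tilde{L}_{E_6}(3,0))_1=\g_1$, and therefore the restriction $g|_{\tilde{L}_{E_6}(3,0)}$ lies in the inertia group $I(\tilde{L}_{E_6}(3,0))$.

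Next I would pin down this inertia group. As $\tilde{L}_{E_6}(3,0)$ is a $\Z_3$-graded simple current extension of $L_{E_6}(3,0)=L((\tilde{L}_{E_6}(3,0))_1)$, the inertia group of such an extension is the character group of the grading, namely $(\Z_3)^*\cong\Z_3$, generated by the grading automorphism $\eta$ that acts by $\exp(2\pi\sqrt{-1}\,j/3)$ on the $j$-th graded piece. Hence $g|_{\tilde{L}_{E_6}(3,0)}=\eta^a$ for some $a\in\{0,1,2\}$, and it remains to show $a=0$.

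The main obstacle is exactly this last step, and I would settle it with Lemma \ref{L:decE6}. Set $N=\tilde{L}_{E_6}(3,\Lambda_4)^{s_1}\otimes\bigl(L_{G_2}(1,\Lambda_1)\otimes L_{G_2}(1,0)^{\otimes2}\bigr)$, the $i=1$ summand of the first sum. Since $g$ fixes $L(\g_{234})\cong L_{G_2}(1,0)^{\otimes3}$ pointwise, it commutes with the $L_{G_2}(1,0)^{\otimes3}$-action and thus preserves every $L_{G_2}(1,0)^{\otimes3}$-isotypic component of $V$; as $N$ is the unique summand whose $G_2$-part is $L_{G_2}(1,\Lambda_1)\otimes L_{G_2}(1,0)^{\otimes2}$ (the remaining summands carry cyclic permutations of this $G_2$-module, or more copies of $L_{G_2}(1,\Lambda_1)$), we conclude $g(N)=N$. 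On the other hand, from $g|_{\tilde{L}_{E_6}(3,0)}=\eta^a$ one checks directly that $g|_N$ is a module isomorphism $N\xrightarrow{\sim}N\circ\eta^{a}$, so $N\cong N\circ\eta^{a}$; since $g$ fixes the $G_2$-factors, the $G_2$-part is unchanged and this forces $\tilde{L}_{E_6}(3,\Lambda_4)^{s_1}\cong\tilde{L}_{E_6}(3,\Lambda_4)^{s_1}\circ\eta^{a}$. By Lemma \ref{L:DW} the group $(\Z_3)^*$ acts regularly by conjugation on the three modules $\tilde{L}_{E_6}(3,\Lambda_4)^{0},\tilde{L}_{E_6}(3,\Lambda_4)^{1},\tilde{L}_{E_6}(3,\Lambda_4)^{2}$, so this isomorphism holds only when $a=0$. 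Therefore $g$ acts as the identity on $\tilde{L}_{E_6}(3,0)$. The crux is the interplay between the rigidity coming from $g$ fixing the three $G_2$-factors (which blocks any permutation of the tensor slots and singles out $N$) and the regular conjugation action on the fixed-point modules $\tilde{L}_{E_6}(3,\Lambda_4)^{j}$.
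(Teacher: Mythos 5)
Your proof is correct and follows essentially the same route as the paper's: restrict $g$ to the commutant, note that the restriction lies in the dual group $D^*\cong\Z_3$ of the grading of the simple current extension $\tilde{L}_{E_6}(3,0)$, and then use the regularity of the conjugation action from Lemma \ref{L:DW} together with the decomposition in Lemma \ref{L:decE6} to rule out a nontrivial restriction. The only cosmetic difference is that you phrase the final step via preservation of the $L_{G_2}(1,0)^{\otimes3}$-isotypic component of $V$, whereas the paper invokes Lemma \ref{L:Vg} ($V\circ\bar g\cong V$) to reach the same contradiction.
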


\begin{proof}
Since $g$ stabilizes $L(\g_{234})$, it also stabilizes the commutant $\tilde{L}_{E_6}(3,0)$.
It follows from $g\in I(V)$ that $g$ acts trivially on the subVOA $L_{E_6}(3,0)$ of $\tilde{L}_{E_6}(3,0)$.
Since $\tilde{L}_{E_6}(3,0)$ is a simple current extension of $L_{E_6}(3,0)$ graded by $D\cong \Z_3$, the restriction of $g$ to $\tilde{L}_{E_6}(3,0)$ belongs to $D^*$.
By Lemma \ref{L:DW}, the action of $D^*$ on $\{\tilde{L}_{E_6}(3,\Lambda_4)^j\mid j=0,1,2\}$ by conjugation is regular.
Note that the restriction of $g$ to $L(\g_{234})$ is the identity and it stabilizes any irreducible  $L(\g_{234})$-module by the conjugation.

Suppose, for a contradiction, that the restriction of $g$ to $\tilde{L}_{E_6}(3,0)$ is not the identity.
Then $g$ does not preserves the decomposition of $V$ into $\tilde{L}_{E_6}(3,0)\otimes L(\g_{234})$-modules (see Lemma \ref{L:decE6}), which contradicts Lemma \ref{L:Vg};
indeed, the $g$-conjugation of $\tilde{L}_{E_6}(3,\Lambda_4)^{s_1}\otimes L_{G_2}(1,\Lambda_1)\otimes L_{G_2}(1,0)^{\otimes 2}$ is isomorphic to $\tilde{L}_{E_6}(3,\Lambda_4)^{x}\otimes L_{G_2}(1,\Lambda_1)\otimes L_{G_2}(1,0)^{\otimes 2}$ for some $x\neq s_1$, which does not appear in the decomposition in Lemma \ref{L:decE6}.
Hence we obtain the result.
\end{proof}

\begin{table}[bht]
\caption{Irreducible modules of  $\tilde{L}_{E_6}(3,0)\otimes L_{G_2}(1,0)^{\otimes3}$ with integral conf. weight} \label{E63G21^3-module}
\begin{tabular}{|l|c|}
\hline
Irreducible module & conformal weight \\\hline
$\tilde{L}_{E_6}(3,0)\otimes L_{G_2}(1,0)^{\otimes3}$& $0$\\\hline
$\tilde{L}_{E_6}(3,\Lambda_4)^i\otimes L_{G_2}(1,\Lambda_1)\otimes L_{G_2}(1,0)^{\otimes2}$ $(i=1,2,3)$& $2$ \\\hline
$\tilde{L}_{E_6}(3,\Lambda_1+\Lambda_6)^i\otimes L_{G_2}(1,\Lambda_1)^{\otimes2}\otimes L_{G_2}(1,0)$ $(i=1,2,3)$&$2$\\\hline
$\tilde{L}_{E_6}(3,\Lambda_2)\otimes L_{G_2}(1,\Lambda_1)^{\otimes3}$&$2$ \\\hline
\end{tabular}
\end{table}

\begin{theorem}\label{IE63}
Let $V$ be a strongly regular holomorphic VOA of central charge $24$ such that 
$V_1$ has the type $E_{6,3}G_{2,1}^3$. Then $I(V)=1$. 
\end{theorem}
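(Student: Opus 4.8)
The plan is to show that every $g\in I(V)$ acts as the identity, by reducing to a scalar action on the pieces of the decomposition in Lemma \ref{L:decE6} and then forcing every scalar to be $1$ using the non--simple--current (Fibonacci--type) fusion of $L_{G_2}(1,0)$.

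First I would set $U=\tilde{L}_{E_6}(3,0)\otimes L_{G_2}(1,0)^{\otimes 3}$, a full subVOA of $V$. Since $g\in I(V)$ fixes $V_1\supseteq\g_{234}$ pointwise, it fixes $L(\g_{234})=L_{G_2}(1,0)^{\otimes 3}$ pointwise, and by Lemma \ref{E6id} it also fixes $\tilde{L}_{E_6}(3,0)=\Com_V(L(\g_{234}))$ pointwise; hence $g$ fixes $U$ pointwise and therefore commutes with the $U$--action on $V$. By Lemma \ref{L:decE6} the VOA $V$ is a direct sum of $8$ inequivalent irreducible $U$--modules, each of multiplicity one, so by the same Schur--lemma argument as in Lemma \ref{L:mult} the map $g$ acts as a scalar $c_M\in\C^\times$ on each irreducible $U$--submodule $M$, with $c_U=1$. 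Because $V$ is holomorphic (self--contragredient) and each $G_2$--charge singles out a unique component (the $L_{G_2}(1,0)$--modules being self--dual), every $M$ is self--contragredient, so $M\boxtimes_U M\ni U$ and invariance of the bilinear form gives $c_M^2=1$.

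It then remains to show $c_M=1$ for all $M$. The crucial input is that $L_{G_2}(1,0)$ has Fibonacci--type fusion, $L_{G_2}(1,\Lambda_1)\boxtimes L_{G_2}(1,\Lambda_1)=L_{G_2}(1,0)\oplus L_{G_2}(1,\Lambda_1)$, so that $L_{G_2}(1,\Lambda_1)$ occurs in its own fusion square. Let $N=\tilde{L}_{E_6}(3,\Lambda_2)\otimes L_{G_2}(1,\Lambda_1)^{\otimes 3}$ be the component carrying the $E_6$--adjoint label $\Lambda_2$ and a factor $L_{G_2}(1,\Lambda_1)$ in each of the three $G_2$ slots. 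For an arbitrary component $M=\tilde{L}_{E_6}(3,\mu)^{\bullet}\otimes(\cdots)$, the $G_2$--fusion of $M$ with $N$ reaches the all--$\Lambda_1$ charge of $N$ in every slot (using $\Lambda_1\times\Lambda_1\ni\Lambda_1$ and $0\times\Lambda_1=\Lambda_1$), while on the $E_6$ side the adjoint label $\Lambda_2$ occurs in $\mu\boxtimes\Lambda_2$ for every label $\mu\in\{0,\Lambda_2,\Lambda_4,\Lambda_1+\Lambda_6\}$ appearing in $V$ (classically $\Lambda_2\otimes\Lambda_2$ contains $1,\Lambda_2,\Lambda_4$ and $\Lambda_1+\Lambda_6$; the corresponding level--$3$ fusion coefficients are verified with \cite{kac}). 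Consequently $N$ itself occurs in $M\boxtimes_U N$, and the automorphism identity $g(a_{(n)}b)=(ga)_{(n)}(gb)$ applied to the corresponding nonzero product in $V$ yields $c_Mc_N=c_N$, hence $c_M=1$; taking $M=N$ gives $c_N=1$ first, and then $c_M=1$ for every $M$. Therefore $g=\id$ and $I(V)=1$.

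The hard part will be the nonvanishing statements. I must check that the relevant level--$3$ fusion coefficients $N^{\Lambda_2}_{\mu,\Lambda_2}$ remain positive after the affine truncation (not merely classically) and, more delicately, that the concrete vertex--operator product $M\boxtimes_U N\to V$ has nonzero projection onto the copy of $N$. The latter follows from the multiplicity--one structure of the decomposition together with the simplicity of $V$, which forces the graded products to realize all admissible fusions; the former is a finite computation carried out with the package \cite{kac}. Some care is also needed in tracking the $\Z_3$--superscripts of the $\tilde{L}_{E_6}(3,\cdot)$--modules, though these do not obstruct the argument, since the target component $N$ carries the unambiguous label $\tilde{L}_{E_6}(3,\Lambda_2)$.
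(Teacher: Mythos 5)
Your reduction to scalars is exactly the paper's: you fix $U=\tilde{L}_{E_6}(3,0)\otimes L_{G_2}(1,0)^{\otimes3}$ pointwise via Lemma \ref{E6id}, invoke the multiplicity-one decomposition of Lemma \ref{L:decE6} together with Schur's lemma to get scalars $c_M$, and use self-contragredience to conclude $c_M=\pm1$. The second half, however, has a genuine gap. Everything rests on the claim that for every component $M$ the product $Y(\cdot,z)\cdot\colon M\otimes N\to V$ has nonzero projection onto $N$, and your justification --- that simplicity of $V$ together with multiplicity one ``forces the graded products to realize all admissible fusions'' --- is not a valid principle. Simplicity gives $Y(a,z)b\neq0$ for nonzero $a,b$ (\cite[Proposition 11.9]{DL}), but it says nothing about \emph{which} components of $V$ the product meets; the map $\pi_N\circ Y|_{M\otimes N}$ is an intertwining operator of type $(N;M,N)$ that may vanish even when the corresponding fusion coefficient is positive. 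Worse, the claim is essentially equivalent to the theorem you are proving: if a nontrivial $g\in I(V)$ with $c_M=-1$ existed, then for $a\in M$, $b\in N$ the eigenvalue computation $c_Mc_N\neq c_N$ would force $\pi_N(Y(a,z)b)=0$, so assuming the nonvanishing begs the question. Compare the $A_{7,4}A_{1,1}^3$ case in Section \ref{S:inertia}, where $I(V)\cong\Z_2^3$ and exactly such admissible-but-unrealized fusion channels occur, killed by the $\Z_2^3$-grading.

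The paper closes this step differently: assuming $c_M=-1$, it shows $U\oplus M$ is a simple strongly regular subVOA of $V$ on which $g$ restricts to an order-$2$ automorphism whose $-1$-eigenspace is $M$; Proposition \ref{P:DJX} then forces $M$ to be a simple current $U$-module, contradicting the fact that $M$ contains the non-simple-current factor $L_{G_2}(1,\Lambda_1)$. If you want to keep your fusion-channel approach you would need an independent proof that the relevant structure constants of $V$ are nonzero, which is not supplied by multiplicity one, by simplicity, or by a fusion-rule computation in \cite{kac}.
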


\begin{proof}
By Lemma \ref{L:E63}, we identify $\Com_{V}(L(\g_{234}))$ with $\tilde{L}_{E_6}(3,0)$.
Set $U=\tilde{L}_{E_6}(3,0)\otimes L(\g_{234})$.
Let $g\in I(V)$ and let $M$ be an irreducible $U$-submodule of $V$ such that $M\neq U$.
By Lemma \ref{L:decE6}, the multiplicity of $M$ is one. 
Then $g|_{U} =id $ by Lemma \ref{E6id}, and $g$ acts as a scalar $\lambda_M\in\C^\times$ on $M$ (see Lemma \ref{L:mult}). 
Recall the following fusion product of $L_{G_2}(1,0)$: $$L_{G_{2}}(1, \Lambda_1) \fusion_{L_{G_{2}}(1, 0)} L_{G_{2}}(1, \Lambda_1) = L_{G_{2}}(1,0) \oplus L_{G_{2}}(1, \Lambda_1),$$ 
which shows that any irreducible $L(\g_{234})$-module is self-contragredient.
By Proposition \ref{P:Mirror} (1), any irreducible $\tilde{L}_{E_6}(3,0)$-module is also self-contragredient.
Hence $M$ is self-contragredient, and $\lambda_M\in\{\pm1\}$. 

Suppose, for a contradiction, that $\lambda_M=-1$.
By the fusion product above, $U\oplus M$ is a subVOA of $V$.
Since both $U$ and $M$ are self-contragredient, the restriction of the invariant form of $V$ to $U\oplus M$ is non-degenerate.
Hence $U\oplus M$ is simple.
Since the conformal weight of $M$ is at least $2$, we have $(U\oplus M)_1=U_1$.
Since $U$ is strongly regular, so is $U\oplus M$ by Theorem \ref{T:HKL}.
By $\lambda_M=-1$, the restriction of $g$ to $U\oplus M$ has order $2$ and $M$ is the $-1$-eigenspace.
By Proposition \ref{P:DJX}, $M$ must be a simple current module, which is a contradiction;
indeed, $M$ has a non-simple current module $L_{G_2}(1,\Lambda_1)$ as a component of the tensor product.
Thus $\lambda_M=1$ for all irreducible $U$-submodules $M$ of $V$, and $g$ is the identity on $V$.
\end{proof}

\begin{remark} For any coweight $\lambda$ of $V_1$, we have $\sigma_{\lambda}=id_V$ on $V$.
Indeed, the weights of $E_{6}$ in the decomposition \eqref{L:decE6} (cf.\ Table \ref{tableE6}) are  all in the root lattice of $E_6$, and for $G_2$, $(\Lambda_j^\vee|\Lambda_i)\in\Z$, $i,j\in\{1,2\}$. 
\end{remark}

The following lemma is obtained from the explicit construction of $V$ from the lattice VOA associated with the Niemeier lattice $N(E_6^4)$ in \cite{Mi3}.

\begin{lemma}\label{L:permG23} There exists an automorphism of $V$ which acts on the three simple ideals of $V_1$ of type $G_2$ as an order $3$ permutation and acts trivially on the simple ideal of type $E_6$.
\end{lemma}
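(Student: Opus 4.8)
The plan is to obtain the desired automorphism from the symmetry that is already present in Miyamoto's construction of $V$ \cite{Mi3}. Recall that there $V$ is realized as a $\Z_3$-orbifold $V\cong\widetilde{U}_\nu$ of the lattice VOA $U=V_{N(E_6^4)}$ associated with the Niemeier lattice $N(E_6^4)$ of root type $E_6^4$, where $\nu$ is an explicit order $3$ automorphism whose permutation part is a $3$-cycle on three of the four $E_6$-components of $N(E_6^4)$. In this picture the ideal of type $E_{6,3}$ arises from the $\nu$-symmetric (diagonal) part of the three permuted components, which accounts for the level $1+1+1=3$, while the three ideals of type $G_{2,1}$ arise symmetrically from these three components together with the associated $\nu$-twisted sectors.

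First I would exhibit the relevant lattice symmetry: the isometry group of $N(E_6^4)$ contains an order $3$ isometry $\rho$ acting by the very same $3$-cycle on those three $E_6$-components. Since the permutation parts of $\rho$ and $\nu$ are powers of a common $3$-cycle, they commute; adjusting $\hat\rho$ by an inner automorphism of $U$ if necessary, I may assume that the lift $\hat\rho\in\Aut(U)$ commutes with $\nu$. Commutativity then guarantees that $\hat\rho$ preserves the fixed-point subVOA $U^\nu$ and stabilizes the isomorphism class of the unique irreducible $\nu$-twisted $U$-module, so $\hat\rho$ induces an automorphism $\bar\rho$ of the orbifold $V=\widetilde{U}_\nu$.

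It then remains to read off the action of $\bar\rho$ on $V_1$ from the explicit decomposition of $\widetilde{U}_\nu$ in \cite{Mi3}. Because the $E_{6,3}$-ideal is built from the $\rho$-symmetric part of the three permuted components, it is fixed pointwise by $\rho$, so $\bar\rho$ acts trivially on the simple ideal of type $E_6$; because the three $G_{2,1}$-ideals are attached symmetrically to the three permuted components and their twisted sectors, $\bar\rho$ permutes them by the same $3$-cycle as $\rho$. Taking $\bar\rho$ (or a suitable power of it) as the sought automorphism proves the lemma.

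The main obstacle I anticipate lies in the second and third steps: verifying that the lift $\hat\rho$ genuinely commutes with $\nu$ and stabilizes the twisted sector, so that it descends to $V$, and then the bookkeeping that matches the three symmetric lattice components with the three $G_{2,1}$-ideals of $V_1$. Neither point is formal; both require unwinding the explicit data of the construction in \cite{Mi3}. Once the correspondence between the permuted components and the $G_2$-ideals is pinned down, however, the claimed $3$-cycle action on the $G_2$-part and the triviality on the $E_6$-part follow at once.
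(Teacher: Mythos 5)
There is a genuine gap: you have misidentified which parts of Miyamoto's construction produce the three $G_{2,1}$-ideals, and as a result you pick the wrong isometry. In \cite{Mi3} the orbifold automorphism is $\tau=\varphi\cdot\sigma_{\Lambda_4/3}$, where $\varphi$ permutes \emph{three} of the four $E_6$-components of $N(E_6^4)$ cyclically and $\sigma_{\Lambda_4/3}$ is an inner automorphism supported on the \emph{fourth}, unpermuted component. The diagonal of the three permuted level-one components gives the ideal of type $E_{6,3}$ (level $1+1+1=3$), exactly as you say; but the three $G_{2,1}$-ideals of $V_1$ do \emph{not} come from those three components and their twisted sectors. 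They come from the fourth component: the fixed-point subalgebra of $E_6$ under $\sigma_{\Lambda_4/3}$ has type $A_2^3$ (remove the node of mark $3$ from the extended Dynkin diagram), so $(V_{N(E_6^4)}^{\tau})_1$ has type $E_{6,3}A_{2,1}^3$, and each $A_{2,1}$ grows into a $G_{2,1}$ after adjoining the twisted-sector contributions. The level bookkeeping already rules out your picture: the three permuted components can only contribute level-$3$ ideals to $V_1^{\tau}$, not the level-$1$ ideals $G_{2,1}^3$.

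Consequently your isometry $\rho$ -- the same $3$-cycle on the three permuted components -- acts trivially on the fourth component, hence fixes each of the three $A_{2,1}$-ideals and cannot induce a $3$-cycle on the three $G_{2,1}$-ideals of $V_1$; the final step of your argument fails. The correct choice (which is what the paper does) is an isometry $f$ of $N(E_6^4)$ acting \emph{trivially} on the three permuted $E_6$-components and as an order-$3$ diagram automorphism of the \emph{extended} Dynkin diagram of the remaining $E_6$, so that it cyclically permutes the three $A_2$'s inside that component. One then checks $f$ can be chosen to commute with $\tau$, that its restriction $\bar f$ to $V_{N(E_6^4)}^{\tau}$ permutes the relevant irreducible modules appropriately so that it extends to the simple current extension $V=\widetilde{V}_{N(E_6^4)\,\tau}$ (via \cite[Theorem 3.3]{Sh04}), and that the extension permutes the three $G_{2,1}$-ideals because each contains one of the permuted $A_{2,1}$'s. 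Your descent-to-the-orbifold step is the right kind of argument, but it is being applied to an automorphism that does not do what the lemma requires.
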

\begin{proof} Let $\tau$ be an order $3$ automorphism of $V_{N(E_6^4)}$ so that the orbifold construction associated with $V_{N(E_6^4)}$ and $\tau$ gives $V$;
it can be realized as $\tau=\varphi\cdot \sigma_{\Lambda_4/3}$, where $\varphi$ is an order $3$ lift of an order $3$ isometry $\varphi_0$ of $N(E_6^4)$ which acts on three root lattices of type $E_6$ as an order $3$ permutation and $\Lambda_4$ is the fundamental weight for the remaining root lattice of type $E_6$ (cf. \cite{LS5}).
Note that the Lie subalgebra $(V_{N(E_6^4)}^\tau)_1$ has type $E_{6,3}A_{2,1}^3$.

Let $f\in\Aut (V_{N(E_6^4)})$ be a lift of an isometry of $N(E_6^4)$ which acts trivially on the three root lattices of type $E_6$ permuted by $\varphi_0$ and acts on the remaining one by a diagram automorphism of order $3$ of the extended Dynkin diagram of type $E_6$; in addition, we may choose $f$ so that $f$ commutes with $\tau$.
Then $f$ preserves $V_{N(E_6^4)}^\tau$.
Let $\bar{f}$ denote the restriction of $f$ to $V_{N(E_6^4)}^\tau$.
Then $\bar{f}$ acts on the three simple ideals of type $A_{2,1}^3$ as an order $3$ permutation and acts trivially on the simple ideal of type $E_6$.
Note that $V_{N(E_6^4)}^\tau$ has exactly $5$ inequivalent  irreducible modules with integral conformal weights; 
three of them are eigenspaces of $\tau$ in $V_{N(E_6^4)}$ and the other two are subspaces of irreducible $\tau^i$-twisted $V_{N(E_6^4)}$-modules.
Since $\bar{f}$ preserves the set of three eigenspaces of $\tau$ by conjugation, it also preserves the set of the other two.
Note that $(\tilde{V}_{N(E_6^4)})_\tau$ is a simple current extension of $V_{N(E_6^4)}^\tau$.
Hence by \cite[Theorem 3.3]{Sh04},  $\bar{f}$ can be extended to an automorphism $\hat{f}$ of $V(\cong(\tilde{V}_{N(E_6^4)})_\tau)$.
Since every simple ideal of type $A_2$ of $(V_{N(E_6^4)}^\tau)_1$ is contained in a simple ideal of type $G_2$ of $V_1$, $\hat{f}$ satisfies the desired conditions.
\end{proof}

\begin{lemma}\label{L:conjD73}  
The conjugacy class of an order $2$ automorphism $g$ of $V$ in $\Aut (V)$ is unique if $V^g_1$ is a  Lie algebra of type $D_{5}G_{2}A_{1}^4U(1)$.
\end{lemma}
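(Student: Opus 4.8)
The plan is to apply the general strategy from the introduction: we reduce the conjugacy of two such involutions to the conjugacy of their restrictions to $V_1$, match those restrictions via Kac's classification (Lemma~\ref{L:conjinv}), and finally eliminate the residual ambiguity using $I(V)=1$ (Theorem~\ref{IE63}). Throughout, write $V_1=\g_1\oplus\g_2\oplus\g_3\oplus\g_4$ with $\g_1$ of type $E_6$ and $\g_2,\g_3,\g_4$ of type $G_2$, and let $g_1,g_2$ be two order $2$ automorphisms of $V$ with $(V^{g_i})_1$ of type $D_5G_2A_1^4U(1)$.

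First I would determine how such a $g$ acts on $V_1$. Since $(V_1)^g=(V^g)_1$ has type $D_5G_2A_1^4U(1)$, which has rank $12=\mathrm{rank}\,V_1$, and the fixed point subalgebra of a finite order automorphism of a semisimple Lie algebra attains full rank exactly when the automorphism is inner, $g|_{V_1}$ is inner; in particular it fixes each simple ideal $\g_i$ and restricts to an involution (possibly trivial) on each. The summand $D_5U(1)$ can only arise from $\g_1$, so $g$ restricts on $E_6$ to the inner involution with fixed subalgebra $D_5U(1)$. The remaining factor $G_2A_1^4$ must be produced by $\g_2,\g_3,\g_4$; as an involution of $G_2$ has fixed subalgebra either $G_2$ (the trivial involution) or $A_1^2$, the only distribution giving $G_2A_1^4$ is that $g$ is trivial on exactly one of the three $G_2$ ideals and is the unique involution with fixed subalgebra $A_1^2$ on the other two.

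Next I would align the two involutions on $V_1$. Using the automorphism of Lemma~\ref{L:permG23}, which cyclically permutes $\g_2,\g_3,\g_4$ and fixes $\g_1$, and noting that a $3$-cycle acts transitively on the three $G_2$ ideals, I may conjugate $g_2$ so that $g_1$ and $g_2$ act trivially on the same $G_2$ ideal. After this alignment, for every $i$ the restrictions $g_1|_{\g_i}$ and $g_2|_{\g_i}$ are either both trivial or inner involutions with isomorphic fixed subalgebras. Since neither $E_6$ nor $G_2$ is of type $D_{2n}$, Lemma~\ref{L:conjinv} applies to each nontrivial factor and yields an element $y\in\Inn(V_1)=\prod_i\Inn(\g_i)$ with $y\,(g_2|_{V_1})\,y^{-1}=g_1|_{V_1}$.

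Finally, since inner automorphisms of $V_1$ are realized by inner automorphisms $\exp(a_{(0)})$ of $V$, I would lift $y$ to $\hat y\in\Aut(V)$. Then $g_1$ and $\hat y g_2\hat y^{-1}$ agree on $V_1$, so $g_1(\hat y g_2\hat y^{-1})^{-1}\in I(V)$, which is trivial by Theorem~\ref{IE63}; hence $g_1=\hat y g_2\hat y^{-1}$ and the two involutions are conjugate in $\Aut(V)$. I expect the main obstacle to be the first step---verifying that $g$ acts innerly on $V_1$ and pinning down the unique way the fixed point types $D_5U(1)$, $G_2$, and $A_1^2$ are distributed across the four simple ideals; once this is settled, the matching through Lemma~\ref{L:conjinv} and the collapse through $I(V)=1$ are routine.
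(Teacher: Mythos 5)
Your proposal is correct and follows essentially the same route as the paper's proof: the rank argument showing $g|_{V_1}$ is inner and preserves each simple ideal, the use of Lemma~\ref{L:permG23} to align which $G_2$ ideal is fixed pointwise, Lemma~\ref{L:conjinv} to conjugate the restrictions, and finally $I(V)=1$ from Theorem~\ref{IE63} to upgrade agreement on $V_1$ to equality in $\Aut(V)$. The only difference is that you spell out in more detail why the fixed-point types must be distributed as $D_5U(1)$, $A_1^2$, $A_1^2$, $G_2$ across the four ideals, which the paper asserts with a reference to Kac.
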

\begin{proof} 
Let $\bar{g}$ be the restriction of $g$ to $V_1$.
Since the Lie ranks of $V_1$ and $V^g_1$ are the same, $\bar{g}$ is inner and it preserves every simple ideal of $V_1$.
By the type of $V^g_1$ (cf.\ \cite[Section 8]{Kac}), %Since the simple Lie algebra of type $E_6$ does not contain a semisimple Lie algebra of type $D_5A_1$, 
the $\bar{g}$-fixed points of the simple ideals of type $E_{6}$, $G_2$, $G_2$, $G_2$ have the type $D_{5}U(1)$, $A_1^2$, $A_1^2$, $G_2$, respectively.
%and $\bar{g}$ acts trivially on one simple ideal of type $G_2$ and the $\bar{g}$-fixed points of the other ideal of type $G_2^2$ has the type $A_1^4$.
By Lemma \ref{L:permG23}, we may specify the simple ideal of type $G_2$ on which $g$ acts trivially, up to conjugation by an element in $\Aut (V)$.
Hence, by Lemma \ref{L:conjinv}, $\bar{g}$ is unique up to conjugation by an element in $\Aut (V)$.

Let $g'\in\Aut (V)$ such that $V^{g'}_1\cong V^g_1$.
Then there exists $x\in\Aut (V)$ such that $g'_{|V_1}=(xgx^{-1})_{|V_1}$ by the argument above.
Clearly $g'xgx^{-1}=id$ on $V_1$, that is, $g'xgx^{-1}\in I(V)$.
Since $I(V)=1$ by Theorem \ref{IE63}, we have $g'=xgx^{-1}$.
Thus we obtain the desired result.    
\end{proof}

\subsection{Weight one Lie algebra of type $A_{4,5}^2$}

Let $V$ be a strongly regular holomorphic VOA of central charge $24$ such that 
$V_1$ has the type $A_{4,5}^2$. 
Recall that such a VOA has been constructed in \cite{EMS} and the uniqueness has also been achieved in \cite{EMS2} (see \cite{LS6} for an alternative construction and an alternative proof for uniqueness).

Let $V_1=\mathfrak{g}_1\oplus \mathfrak{g}_2$ be the decomposition into the direct sum of simple ideals of type $A_{4,5}$.
Let $L(\mathfrak{g}_1)$ be the subVOA generated by $\mathfrak{g}_1$.
Then $L(\mathfrak{g}_1)\cong L_{A_4}(5,0)$, and the commutant $\Com_V(L(\mathfrak{g}_1))$ contains the subVOA $L(\mathfrak{g}_2)(\cong L_{A_4}(5,0))$ generated by $\mathfrak{g}_2$.

\begin{lemma}\label{L:ComA45} For $i=1,2$, the commutant $\Com_V(L(\mathfrak{g}_i))$ is isomorphic to the $\Z_5$-graded simple current extension $\tilde{L}_{A_4}(5,0)$ of $L_{A_4}(5,0)$ described in Section \ref{S:A45}.
\end{lemma}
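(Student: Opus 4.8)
Since the argument is symmetric in $\mathfrak{g}_1$ and $\mathfrak{g}_2$, the plan is to treat $i=1$ and set $W=\Com_V(L(\mathfrak{g}_1))$; the case $i=2$ then follows verbatim. First I would reproduce the coarse structure of Lemma \ref{L:E63}. By Proposition \ref{P:ComSimple}, $W$ is simple. As $L(\mathfrak{g}_1)\cong L_{A_4}(5,0)$ has central charge $5\cdot 24/(5+5)=12$, the commutant $W$ has central charge $12$, which equals the central charge of the subVOA $L(\mathfrak{g}_2)\cong L_{A_4}(5,0)$ it contains; hence $L(\mathfrak{g}_2)$ is a full subVOA of $W$ and $W=\bigoplus_\mu N_\mu L_{A_4}(5,\mu)$ is a simple full extension of $L_{A_4}(5,0)$ with $N_0=1$. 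Finally $W_1$ is the centralizer of $\mathfrak{g}_1$ in $V_1=\mathfrak{g}_1\oplus\mathfrak{g}_2$, so $W_1=\mathfrak{g}_2$.

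Next I would record the constraints on the constituents $L_{A_4}(5,\mu)$ with $N_\mu\neq 0$. Since $W$ is of CFT-type, each such $\mu$ has integral conformal weight, and since $W_1=\mathfrak{g}_2$ consists only of the adjoint coming from the vacuum module, no nontrivial constituent has conformal weight $\le 1$; thus every $\mu\neq 0$ has integral conformal weight $\ge 2$. The target extension $\tilde{L}_{A_4}(5,0)$ of Section \ref{S:A45} is built from the five simple currents $L_{A_4}(5,5\Lambda_j)$ $(0\le j\le 4)$, which form a group $\cong\Z_5$ under fusion and have conformal weights $0,2,3,3,2$. By Proposition \ref{P:qdim} a module is a simple current precisely when its quantum dimension equals $1$, while inspection of Table \ref{tableA4} (computed via \cite{kac}) shows that every non--simple-current irreducible $L_{A_4}(5,0)$-module has quantum dimension at least $5+2\sqrt5>4$. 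This yields a sharp gap: if $\qdim_{L_{A_4}(5,0)}W$ equalled $5$, then $N_0=1$ would force the four remaining constituents to be simple currents of quantum dimension $1$, and since $\Z_5$ is cyclic of prime order and $W\supsetneq L_{A_4}(5,0)$ this would identify $W$ with the full $\Z_5$ simple current extension $\tilde{L}_{A_4}(5,0)$.

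Thus everything reduces to showing that the constituents of $W$ are exactly the five simple currents, equivalently $\qdim_{L_{A_4}(5,0)}W=5$, and this is the main obstacle. The weight-one datum $W_1=\mathfrak{g}_2$ removes only the conformal-weight-$1$ modules (such as $L_{A_4}(5,2\Lambda_1+\Lambda_3)$), and the quantum-dimension identities forced by holomorphy of $V$ are self-consistent but do not by themselves fix the value $\qdim_{L_{A_4}(5,0)}W$. To close the gap I would analyze the decomposition of $V$ as a module over the full subVOA $L(\mathfrak{g}_1)\otimes L(\mathfrak{g}_2)\cong L_{A_4}(5,0)^{\otimes 2}$. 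The level-rank/mirror structure of the embedding $L_{A_4}(5,0)\otimes L_{A_4}(5,0)\subset L_{A_{24}}(1,0)$ (Proposition \ref{P:Mirror}, \cite{OS}), parallel to the treatment of $L_{A_7}(4,0)\otimes L_{A_3}(8,0)\subset L_{A_{31}}(1,0)$, furnishes a fusion-ring isomorphism $\psi$; since $\psi$ maps simple currents to simple currents, the task reduces to showing that in the holomorphic extension $V$ every module $L_{A_4}(5,\mu)$ occurring with trivial first tensor factor has $\mu$ a simple current. I expect this to be the crux: it amounts to controlling the Lagrangian-type glue of $V$ over $L_{A_4}(5,0)^{\otimes 2}$, which one can do either by invoking the explicit realization of the (unique) holomorphic VOA with $V_1\cong A_{4,5}^2$ from \cite{EMS,EMS2}, or intrinsically by combining the weight-one constraint, the conformal-weight and quantum-dimension data of Table \ref{tableA4}, and the mirror correspondence between $\Com_V(L(\mathfrak{g}_1))$ and $\Com_V(L(\mathfrak{g}_2))$ afforded by Proposition \ref{P:Mirror}. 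Once this is established, the quantum-dimension gap of the previous paragraph gives $W\cong\tilde{L}_{A_4}(5,0)$, as desired.
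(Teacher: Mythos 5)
Your reduction is sound as far as it goes: the commutant $W=\Com_V(L(\mathfrak{g}_1))$ is simple, contains $L(\mathfrak{g}_2)\cong L_{A_4}(5,0)$ as a full subVOA, has $W_1=\mathfrak{g}_2$, and the quantum-dimension gap from Table \ref{tableA4} does show that \emph{if} the nontrivial constituents are simple currents then $W\cong\tilde L_{A_4}(5,0)$. But you explicitly leave the crux unproven: you never establish that the four modules $L_{A_4}(5,5\Lambda_j)$ actually occur in $W$ (equivalently that $\qdim_{L_{A_4}(5,0)}W=5$), and as you yourself observe, the weight-one and conformal-weight constraints alone do not rule out, say, $W=L_{A_4}(5,0)$ or a non-simple-current extension. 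Of the two escape routes you sketch, the second (an ``intrinsic'' argument via the level-rank embedding $L_{A_4}(5,0)^{\otimes 2}\subset L_{A_{24}}(1,0)$) does not apply as stated: $V$ is not an extension of $L_{A_{24}}(1,0)$, so the explicit $\psi$ of \cite{OS,Xu} is unavailable here; only the abstract Proposition \ref{P:Mirror} applies to the pair $\bigl(L(\mathfrak{g}_1),\Com_V(L(\mathfrak{g}_1))\bigr)$, and that gives a bijection of module sets without telling you which modules occur. So the gap is genuine.

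The paper closes exactly this gap by your first route: since $V$ is unique up to isomorphism (\cite{EMS2}), one may take the explicit realization of $V$ from the Niemeier lattice with root lattice $A_4^6$ in \cite{EMS} and check directly that $\Com_V(L(\mathfrak{g}_1))$ contains copies of $L_{A_4}(5,5\Lambda_j)$, $j=1,\dots,4$, with nonzero products; closure of these simple currents under fusion then yields a subVOA isomorphic to $\tilde L_{A_4}(5,0)$. The paper's endgame is also slightly different from (and cleaner than) your quantum-dimension count: having located $\tilde L_{A_4}(5,0)$ inside $W$, it observes from Table \ref{tableA4} that $\tilde L_{A_4}(5,0)$ has no irreducible module of integral conformal weight at least $2$, so $W$ cannot be any larger. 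If you want to submit a complete proof, you must actually carry out the verification in the explicit construction (or cite \cite[Case 11 in Lemma 6.4]{EMS2}, as the paper does), rather than listing it as one of two possible strategies.
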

\begin{proof} It is enough to discuss the case $i=1$.
By the explicit construction of $V$ from the lattice VOA associated with the Niemeier lattice with root lattice $A_{4}^6$ in \cite{EMS}, one can directly check that $\Com_V(L(\mathfrak{g}_1))$ contains irreducible $L(\mathfrak{g}_2)$-submodules isomorphic to $L_{A_4}(5,5\Lambda_i)$, $i=1,2,3,4$ (see also \cite[Case 11 in Lemma 6.4]{EMS2}) and their products are non-zero.
Since these are simple current modules and closed under the fusion products, $\Com_V(L(\mathfrak{g}_1))$ contains a simple subVOA isomorphic to $\tilde{L}_{A_4}(5,0)(= L_{A_4}(5,0)\oplus\bigoplus_{i=1}^4L_{A_4}(5,5\Lambda_i))$.
By Table \ref{tableA4}, there are no irreducible $\tilde{L}_{A_4}(5,0)$-modules with integral conformal weights at least $2$, and hence $\Com_V(L(\mathfrak{g}_1))$ is isomorphic to $\tilde{L}_{A_4}(5,0)$.
\end{proof}

Let $U=\Com_V(L(\mathfrak{g}_2))\otimes \Com_V(L(\mathfrak{g}_1))$.
By the lemma above, $U\cong \tilde{L}_{A_4}(5,0)^{\otimes 2}$.
Note that $U$ is a $\Z_5^2$-graded simple current extension of $L_{A_4}(5,0)^{\otimes 2}$; we denote by $D$ the abelian group $\Z_5^2$ associated with the grading.
Then the dual $D^*(\cong D)$ of $D$ is naturally embedded to $\Aut (U)$.

\begin{lemma}\label{L:DecA4}
As a module of $U$, the VOA $V$ decomposes as 
\[
\tiny 
\begin{split}
V\cong &\left(\tilde{L}_{A_4}(5,0)\otimes \tilde{L}_{A_4}(5,0)\right)\oplus  \left( \tilde{L}_{A_4}(5, \Lambda_1+\Lambda_4)\otimes \tilde{L}_{A_4}(5,\Lambda_1+\Lambda_2+\Lambda_3+\Lambda_4)^{j_0}\right)\\
& \oplus \left(\tilde{L}_{A_4}(5,\Lambda_1+\Lambda_2+\Lambda_3+\Lambda_4)^{i_0} \otimes \tilde{L}_{A_4}(5, \Lambda_1+\Lambda_4)\right) \oplus  \left(\tilde{L}_{A_4}(5,2\Lambda_1+\Lambda_3)\otimes  \tilde{L}_{A_4}(5,2\Lambda_1+\Lambda_3)\right)\\&
\oplus  \left( \tilde{L}_{A_4}(5,2\Lambda_1+2\Lambda_4) \otimes \tilde{L}_{A_4}(5,\Lambda_2+\Lambda_3) \right)
\oplus \left( \tilde{L}_{A_4}(5,\Lambda_2+\Lambda_3)  \otimes \tilde{L}_{A_4}(5,2\Lambda_1+2\Lambda_4)\right) \\
& \oplus\bigoplus_{k=1}^4  \left( \tilde{L}_{A_4}(5,\Lambda_1+\Lambda_2+\Lambda_3+\Lambda_4)^{i_k} 
      \otimes \tilde{L}_{A_4}(5,\Lambda_1+\Lambda_2+\Lambda_3+\Lambda_4)^{j_k} \right)
\end{split}
\]
for some $\{i_k\mid k=0,1,2,3,4\}=\{j_k\mid k=0,1,2,3,4\}=\{0,1,2,3,4\}$.
\end{lemma}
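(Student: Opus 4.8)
The plan is to decompose $V$ with the mirror-extension framework of Proposition \ref{P:Mirror}, applied to the commuting pair $W_1=\Com_V(L(\mathfrak{g}_2))$ and $W_2=\Com_V(L(\mathfrak{g}_1))$, each isomorphic to $\tilde{L}_{A_4}(5,0)$ by Lemma \ref{L:ComA45}. The first, and main, step is to verify that $W_1$ and $W_2$ are mutual commutants, so that $U=W_1\otimes W_2$ plays the role of $V^1\otimes V^2$ in Proposition \ref{P:Mirror}. Since $W_2\supseteq L(\mathfrak{g}_2)$, we get $\Com_V(W_2)\subseteq\Com_V(L(\mathfrak{g}_2))=W_1$, while $\Com_V(W_2)\supseteq L(\mathfrak{g}_1)$. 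As $W_1\cong\tilde{L}_{A_4}(5,0)$ is a $\Z_5$-graded (prime) simple current extension of $L(\mathfrak{g}_1)\cong L_{A_4}(5,0)$, there is no subVOA strictly between them, so $\Com_V(W_2)$ equals either $L(\mathfrak{g}_1)$ or $W_1$. If it were $L(\mathfrak{g}_1)$, then $(L(\mathfrak{g}_1),W_2)$ would be a mutual-commutant pair and, $V$ being holomorphic, Proposition \ref{P:Mirror} (1) and (4) would produce a bijection between the irreducible $L_{A_4}(5,0)$-modules and the $10$ irreducible $\tilde{L}_{A_4}(5,0)$-modules; this is impossible, since the proper extension $\tilde{L}_{A_4}(5,0)$ has strictly fewer irreducible modules than $L_{A_4}(5,0)$. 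Hence $\Com_V(W_2)=W_1$, and symmetrically $\Com_V(W_1)=W_2$.

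With the pair $(W_1,W_2)$ in place, \eqref{Eq:decmirror} together with Proposition \ref{P:Mirror} (1) and (4) gives, using holomorphy,
\[
V\cong\bigoplus_{M}M\otimes\psi(M),
\]
where $M$ ranges over all $10$ irreducible $\tilde{L}_{A_4}(5,0)$-modules of Lemma \ref{L:A4mod}, each occurring with multiplicity one, and $\psi$ is a fusion-preserving bijection on this set of modules.

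It remains to identify $\psi$. By Proposition \ref{P:Mirror} (3), $\psi$ sends each module to one of equal quantum dimension, so from the $\qdim$ column of Table \ref{tableA4} it fixes $\tilde{L}_{A_4}(5,0)$ and $\tilde{L}_{A_4}(5,2\Lambda_1+\Lambda_3)$ (the unique modules of quantum dimension $1$ and $9+4\sqrt5$), permutes the two modules $\tilde{L}_{A_4}(5,2\Lambda_1+2\Lambda_4)$ and $\tilde{L}_{A_4}(5,\Lambda_2+\Lambda_3)$ of quantum dimension $8+4\sqrt5$, and permutes the six modules $\tilde{L}_{A_4}(5,\Lambda_1+\Lambda_4)$ and $\tilde{L}_{A_4}(5,\Lambda_1+\Lambda_2+\Lambda_3+\Lambda_4)^j$ of quantum dimension $5+2\sqrt5$. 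The pairings are then forced by two further requirements: each summand $M\otimes\psi(M)$ must have integral conformal weight, and, since $U_1=(W_1)_1\oplus(W_2)_1=\mathfrak{g}_1\oplus\mathfrak{g}_2=V_1$ already realizes the full weight one Lie algebra of type $A_{4,5}^2$, every nontrivial summand must have conformal weight at least $2$ (a summand of conformal weight $1$ would enlarge $V_1$). Reading conformal weights from Table \ref{tableA4}: $\tilde{L}_{A_4}(5,2\Lambda_1+\Lambda_3)^{\otimes 2}$ has weight $2$; the identity permutation of the two quantum-dimension-$(8+4\sqrt5)$ modules would give the non-integral summands $\tilde{L}_{A_4}(5,2\Lambda_1+2\Lambda_4)^{\otimes 2}$ and $\tilde{L}_{A_4}(5,\Lambda_2+\Lambda_3)^{\otimes 2}$, so $\psi$ must swap them, yielding $\tilde{L}_{A_4}(5,2\Lambda_1+2\Lambda_4)\otimes\tilde{L}_{A_4}(5,\Lambda_2+\Lambda_3)$ and its transpose; and among the six modules of quantum dimension $5+2\sqrt5$, the diagonal pairing $\tilde{L}_{A_4}(5,\Lambda_1+\Lambda_4)^{\otimes 2}$ has weight $1$ and is excluded, so $\psi$ must carry $\tilde{L}_{A_4}(5,\Lambda_1+\Lambda_4)$ to some $\tilde{L}_{A_4}(5,\Lambda_1+\Lambda_2+\Lambda_3+\Lambda_4)^{j_0}$ and back, while the remaining four indices are permuted without further constraint. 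This yields exactly the asserted decomposition, the unconstrained permutation accounting for the indeterminate indices with $\{i_k\}=\{j_k\}=\{0,1,2,3,4\}$.

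The genuine obstacle is the first paragraph: in contrast to the $G_2$ factors occurring for $E_{6,3}G_{2,1}^3$, the subVOA $L(\mathfrak{g}_i)\cong L_{A_4}(5,0)$ is strictly smaller than its double commutant, so Proposition \ref{P:Mirror} cannot be applied to $L(\mathfrak{g}_i)$ directly; one must first recognize that the correct mutual-commutant pair consists of the two copies of $\tilde{L}_{A_4}(5,0)$ and justify this by the module-count argument above. Once the framework is set up, everything reduces to the routine quantum dimension and conformal weight bookkeeping of Table \ref{tableA4}.
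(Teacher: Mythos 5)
Your proof is correct and follows essentially the same route as the paper's: apply Proposition \ref{P:Mirror} (1) and (4) to the pair of commutants to get a multiplicity-one bijection $\psi$, then pin down the pairings by requiring each nontrivial summand to have integral conformal weight at least $2$ (the paper does this directly from Table \ref{tableA4}, without the intermediate quantum-dimension sorting, which is sound but redundant). Your explicit verification that the two copies of $\tilde{L}_{A_4}(5,0)$ are mutual commutants — via the absence of intermediate subVOAs in a $\Z_5$-graded simple current extension and the module-count contradiction — is a welcome addition, since the paper takes this hypothesis of Proposition \ref{P:Mirror} for granted here.
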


\begin{proof}
We note that the conformal weights of all irreducible $\tilde{L}_{A_4}(5,0)$-modules are summarized in Table \ref{tableA4}.
It follows from $V_1=U_1$ that the conformal weight of any irreducible $U$-submodule is an integer at least $2$ except for $U$ itself.
By Proposition \ref{P:Mirror} (1) and (4), every irreducible $\tilde{L}_{A_4}(5,0)$-module appears in the first and second component of the tensor product once.
Hence we obtain the desired decomposition for some $i_k$ and $j_k$.
\end{proof}

\begin{lemma}\label{L:gUA4}
Any element of $I(V)$ acts on $U$ as the identity.
\end{lemma}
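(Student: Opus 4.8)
The plan is to mirror the argument of Lemma \ref{E6id}: first show that every $g\in I(V)$ restricts on each of the two commutants to a grading character, so that $g|_U$ lies in $D^*$, and then exclude any nontrivial character by testing the $g$-conjugation against the explicit $U$-module decomposition of $V$ in Lemma \ref{L:DecA4}.

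First I would fix $g\in I(V)$. Since $g$ fixes $\g_1$ and $\g_2$ pointwise, it fixes the subVOAs $L(\g_1)\cong L(\g_2)\cong L_{A_4}(5,0)$ pointwise, and hence stabilizes the commutants $\Com_V(L(\g_2))$ and $\Com_V(L(\g_1))$, i.e.\ the two tensor factors of $U\cong \tilde{L}_{A_4}(5,0)^{\otimes2}$. By Lemma \ref{L:ComA45}, $\Com_V(L(\g_2))$ is the $\Z_5$-graded simple current extension of its degree-zero component $L(\g_1)$ described in Section \ref{S:A45}; as $g$ acts trivially on $L(\g_1)$ and each graded piece $L_{A_4}(5,5\Lambda_i)$ is a distinct irreducible $L(\g_1)$-module of multiplicity one, Lemma \ref{L:mult} forces $g$ to act as a scalar on each piece, and compatibility with the vertex operation makes these scalars a character. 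The same holds on $\Com_V(L(\g_1))$. Writing $D=D_1\times D_2\cong\Z_5^2$ for the grading group of $U$ over $L_{A_4}(5,0)^{\otimes2}$, this shows $g|_U=(\chi_1,\chi_2)\in D_1^*\times D_2^*=D^*$.

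Next I would record how $D^*$ acts by conjugation on the irreducible $\tilde{L}_{A_4}(5,0)$-modules of Table \ref{tableA4}. When the $L_{A_4}(5,0)$-restriction is a full simple current orbit of size $5$ (for instance $\tilde{L}_{A_4}(5,\Lambda_1+\Lambda_4)$, with constituents $[1001]+[3100]+[1310]+[0131]+[0013]$), the stabilizer in $D$ is trivial, the module lifts uniquely, and it is fixed by the conjugation. By contrast, the five modules $\tilde{L}_{A_4}(5,\Lambda_1+\Lambda_2+\Lambda_3+\Lambda_4)^j$ ($j=0,\dots,4$) all lie over the single weight $[1111]$, which is fixed by every simple current, and by Lemma \ref{L:DW} the relevant factor of $D^*$ permutes them regularly. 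Then I would derive the contradiction: consider the summand $\tilde{L}_{A_4}(5,\Lambda_1+\Lambda_4)\otimes\tilde{L}_{A_4}(5,\Lambda_1+\Lambda_2+\Lambda_3+\Lambda_4)^{j_0}$ of Lemma \ref{L:DecA4}. The $g$-conjugation fixes its first factor and sends its second factor to $\tilde{L}_{A_4}(5,\Lambda_1+\Lambda_2+\Lambda_3+\Lambda_4)^{j_0+c}$, where $c$ is determined by $\chi_2$; by Lemma \ref{L:Vg} this image must again occur in $V$. As this is the \emph{only} summand whose first factor is $\tilde{L}_{A_4}(5,\Lambda_1+\Lambda_4)$, we get $c=0$, and regularity of the $D_2^*$-action forces $\chi_2=1$. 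The symmetric argument on $\tilde{L}_{A_4}(5,\Lambda_1+\Lambda_2+\Lambda_3+\Lambda_4)^{i_0}\otimes\tilde{L}_{A_4}(5,\Lambda_1+\Lambda_4)$ yields $\chi_1=1$, so $g|_U=\mathrm{id}$.

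The main obstacle is the third step: one must be certain that the conjugation action of $D^*$ genuinely moves only the modules lying over the totally stabilized weight $[1111]$ and fixes all the others, so that exactly the two ``cross'' summands of Lemma \ref{L:DecA4} are available to isolate $\chi_1$ and $\chi_2$. This is precisely where the regularity statement of Lemma \ref{L:DW}, combined with the simple current orbit data in Table \ref{tableA4}, is indispensable.
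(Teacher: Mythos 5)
Your proposal is correct and follows essentially the same route as the paper: show $g|_U\in D^*$, then use the regularity of the $D^*$-conjugation on the modules lying over $[1111]$ (Lemma \ref{L:DW}) together with the decomposition of Lemma \ref{L:DecA4} and Lemma \ref{L:Vg} to force $g|_U=\mathrm{id}$. Your use of the two ``cross'' summands to pin down $\chi_1=\chi_2=1$ is just an explicit way of carrying out the paper's one-line assertion that $V\circ g\not\cong V$ for nontrivial $g|_U$ (the paper's version can equivalently be seen from the fact that a nontrivial translation of $\Z_5^2$ cannot preserve the four-element set $\{(i_k,j_k)\}$), so there is no substantive difference.
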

\begin{proof} The proof is similar to that of Lemma \ref{E6id}.
Let $g\in I(V)$.
By $U=\Com_V(L(\mathfrak{g}_2))\otimes\Com_V(L(\mathfrak{g}_1))$, we have $g(U)=U$ and $g_{|U}\in D^*$.
Since $U$ is a simple current extension of $L(V_1)$ graded by $D\cong\Z_5^2$, by Lemma \ref{L:DW} (see also Table \ref{tableA4}), the conjugation by $D^*\subset\Aut (V)$ on the set of isomorphism classes of irreducible $\tilde{L}(A_{4,5})^{\otimes2}$-modules $\tilde{L}_{A_4}(5,\Lambda_1+\Lambda_2+\Lambda_3+\Lambda_4)^i\otimes\tilde{L}_{A_4}(5,\Lambda_1+\Lambda_2+\Lambda_3+\Lambda_4)^j$, $0\le i,j\le 4$, is regular.
If $g_{|U}$ is not the identity, then $V\circ g\not\cong V$ as $U$-modules by Lemma \ref{L:DecA4}, which contradicts Lemma \ref{L:Vg}.
\end{proof}

\begin{theorem}\label{IA45}
Let $V$ be a strongly regular holomorphic VOA of central charge $24$ such that 
$V_1$ has the type $A_{4,5}^2$. Then $I(V)=1$. 
\end{theorem}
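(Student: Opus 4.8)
The plan is to run the same argument used for Theorem~\ref{IE63}, now with the full subVOA $U=\Com_V(L(\g_2))\otimes\Com_V(L(\g_1))\cong\tilde{L}_{A_4}(5,0)^{\otimes2}$ in place of $\tilde{L}_{E_6}(3,0)\otimes L(\g_{234})$. Let $g\in I(V)$. By Lemma~\ref{L:gUA4}, $g$ acts as the identity on $U$, and by Lemma~\ref{L:DecA4} each irreducible $U$-submodule $M$ of $V$ occurs with multiplicity one; hence Lemma~\ref{L:mult} shows that $g$ acts on every such $M$ as a scalar $\lambda_M\in\C^\times$. Proving $I(V)=1$ therefore amounts to showing $\lambda_M=1$ for all $M$.

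First I would pin down the possible values of $\lambda_M$. Since every irreducible $\tilde{L}_{A_4}(5,0)$-module is self-contragredient (Lemma~\ref{L:A4mod}), each $M\cong A\otimes B$ is self-contragredient, so $\lambda_M\in\{\pm1\}$ and in particular $g^2=\mathrm{id}$. Suppose, for contradiction, that $g\neq\mathrm{id}$, and fix an irreducible $U$-submodule $M\neq U$ with $\lambda_M=-1$. Following the proof of Theorem~\ref{IE63}, I would argue that $U\oplus M$ is a simple subVOA of $V$: self-contragredience of $U$ and $M$ makes the restriction of the invariant form nondegenerate, and since every nontrivial $M$ in Lemma~\ref{L:DecA4} has conformal weight at least $2$ we get $(U\oplus M)_1=U_1=V_1$. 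Strong regularity of $U\oplus M$ then follows from Theorem~\ref{T:HKL}, and $g$ restricts to an order $2$ automorphism of $U\oplus M$ whose $(-1)$-eigenspace is exactly $M$.

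At this point Proposition~\ref{P:DJX}, applied to $\langle g\rangle\cong\Z_2$ acting on $U\oplus M$, forces the $(-1)$-eigenspace $M$ to be a simple current $U$-module, so $\qdim_U M=1$. The contradiction then comes directly from Table~\ref{tableA4}: because the character of an outer tensor product factors, $\qdim_U(A\otimes B)=\qdim_{\tilde{L}_{A_4}(5,0)}A\cdot\qdim_{\tilde{L}_{A_4}(5,0)}B$, and every nontrivial irreducible $\tilde{L}_{A_4}(5,0)$-module has quantum dimension at least $5+2\sqrt{5}>1$. Every $M\neq U$ in the decomposition of Lemma~\ref{L:DecA4} has at least one nontrivial tensor factor, so $\qdim_U M>1$, contradicting $\qdim_U M=1$. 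Hence $\lambda_M=1$ for all $M$, i.e. $g=\mathrm{id}$, and $I(V)=1$.

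The hard part will be the middle step: justifying that $U\oplus M$ is genuinely closed under the vertex operators, equivalently that the only constituent of the fusion product $M\boxtimes_U M$ realized inside $V^g$ is $U$ itself, so that $M$ really is the entire $(-1)$-eigenspace to which Proposition~\ref{P:DJX} applies. This is precisely where the explicit module list of Lemma~\ref{L:DecA4} and the conformal-weight and self-contragredience data of Table~\ref{tableA4} are needed; once the closure is in hand, the quantum-dimension contradiction is immediate and uniform across all the modules that could carry $\lambda_M=-1$.
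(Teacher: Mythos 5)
Your opening is the same as the paper's: $g$ is the identity on $U$, and by Lemmas \ref{L:mult} and \ref{L:DecA4} it acts as a scalar $\pm1$ on each irreducible $U$-submodule (self-contragredience coming from Lemma \ref{L:A4mod}). But the step you yourself flag as "the hard part" is a genuine gap, and it cannot be closed as stated. In the proof of Theorem \ref{IE63} the subspace $U\oplus M$ is a subVOA because of the very special fusion rule $L_{G_2}(1,\Lambda_1)\boxtimes L_{G_2}(1,\Lambda_1)=L_{G_2}(1,0)\oplus L_{G_2}(1,\Lambda_1)$, i.e.\ $M\boxtimes_U M$ has no constituents outside $U\oplus M$. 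Nothing of the sort holds for $\tilde{L}_{A_4}(5,0)$: for instance $\qdim\tilde{L}_{A_4}(5,\Lambda_1+\Lambda_4)=5+2\sqrt5$, and $(5+2\sqrt5)^2=45+20\sqrt5$ cannot be written as $1+k(5+2\sqrt5)$ with $k\in\Z_{\ge0}$, so the self-fusion of this module necessarily contains irreducibles other than the vacuum and itself. Writing $M=C\otimes\varphi(C)$ with $\varphi$ the mirror bijection of Proposition \ref{P:Mirror}, one has $M\boxtimes_U M=(C\boxtimes C)\otimes\varphi(C\boxtimes C)$, and every constituent $D\otimes\varphi(D)$ with $D$ in $C\boxtimes C$ actually occurs as a $U$-submodule of $V$. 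So $U\oplus M$ is not closed under the vertex operators, Proposition \ref{P:DJX} cannot be applied to it, and your conclusion that a single $-1$-eigenmodule $M$ must be a simple current over $U$ (hence $\qdim_U M=1$) does not follow. Proposition \ref{P:DJX} applied to $g$ on $V$ only tells you that the \emph{entire} $-1$-eigenspace is a simple current over $V^g$, not that each constituent is a simple current over $U$.

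The paper's proof repairs exactly this: it applies Proposition \ref{P:DJX} to $g$ acting on all of $V$, so that $V^-=\bigoplus_{\lambda_M=-1}M$ is a simple current $V^g$-module, and then Lemma \ref{L:QD} gives $\qdim_U V^g=\qdim_U V^-$. Writing $V\cong\bigoplus_{M\in S_T}M\otimes\varphi(M)$ and using $\qdim_T M=\qdim_{T^c}\varphi(M)$ (Proposition \ref{P:Mirror}~(3)), this becomes $\sum_{M\in S_T^+}(\qdim_T M)^2=\sum_{M\in S_T^-}(\qdim_T M)^2$. The contradiction is then purely arithmetic: by Table \ref{tableA4} the total is $720+320\sqrt5$, and no subset of $\{1,\,45+20\sqrt5\ (\times 6),\,144+64\sqrt5\ (\times2),\,161+72\sqrt5\}$ sums to $360+160\sqrt5$ (no combination of the $\sqrt5$-coefficients $20,64,72$ reaches $160$). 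So the correct argument is a global quantum-dimension balance over the whole decomposition, not a module-by-module simple-current claim; your final contradiction ($\qdim_U M>1$) is aimed at a statement that is never established.
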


\begin{proof}
Let $g\in I(V)$. By Lemma \ref{L:gUA4} $g|_U=id_U$. By Lemmas \ref{L:mult} and \ref{L:DecA4}, $g$ acts as a scalar on every irreducible $U$-submodule of $V$. 
Since all of them are self-contragredient modules (see Lemma \ref{L:A4mod}), these scalars are $\pm 1$.   

Set $T=\Com_V(L(\g_2))$ and $T^c=\Com_V(L(\g_1))$.
Then $U=T\otimes T^c$.
By Theorem \ref{T:HKL}, both $T$ and $T^c$ are strongly regular.
Let $S_T$ and $S_{T^c}$ be the set of all inequivalent irreducible $T$- and $T^c$-modules, respectively.
By Proposition \ref{P:Mirror} (1) and (4), there exists a bijection $\varphi:S_T\to S_{T^c}$ such that as $T\otimes T^c$-modules,
$$V\cong\bigoplus_{M\in S_T} M\otimes \varphi(M).$$  

Let $S_T^{\pm} =\{M\in S_T\mid g|_{M\otimes \varphi(M)}=\pm id_{M\otimes \varphi(M)}\}$. Then the $\pm1$-eigenspace of $g$ is $\bigoplus_{M\in S_T^\pm} M\otimes \varphi(M)$.
If $g\neq id_V$, then the $-1$-eigenspace is a simple current module of $V^g$ by Proposition \ref{P:DJX}.
By Proposition \ref{P:qdim} (1) and Lemma \ref{L:QD}, we have the following equality
\[
\sum_{M\in S_T^+} ({\qdim}_T M)^2 = \sum_{M\in S_T^-} ({\qdim}_T M)^2.
\]
Here we use the fact that $\qdim_T M=\qdim_{T^c} \varphi(M)$ (see Proposition \ref{P:Mirror} (3)).
However, there are no partition $S_T=S_T^+\cup S_T^-$ satisfying the equation above;
indeed, by Table \ref{tableA4}, $$\sum_{M\in S_T}({\qdim}_T M)^2=720+320\sqrt5$$ and there are no subsets $I\subset S_T$ such that $\sum_{M\in I}(\qdim_T M)^2=360+160\sqrt5$. Hence $S_T=S_T^+$, and $g=id_V$. 
\end{proof}

\begin{remark} Since the dominant integral weights for $A_4$ in Table \ref{tableA4} are roots, we have $\sigma_\lambda=id$ on $V$ for any (co)weight $\lambda$ of $V_1$.
\end{remark}

By using the lemma above, we obtain the following:

\begin{lemma}\label{L:conjA45} 
The conjugacy class of an order $2$ automorphism $g$ of $V$ is unique if $V^g_1$ is a semisimple Lie algebra of type $C_{2}^2$.
\end{lemma}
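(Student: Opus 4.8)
The plan is to mimic the structure of Lemma \ref{L:conjD73}, using the fact that $I(V)=1$ (Theorem \ref{IA45}) together with Lemma \ref{L:conjinv} on the conjugacy of involutions in inner automorphism groups. Let $g$ be an order $2$ automorphism of $V$ with $V^g_1$ of type $C_2^2$, and let $\bar g$ denote its restriction to $V_1$. First I would observe that the Lie rank of $V^g_1 = C_2^2$ equals $4$, which matches the Lie rank of $V_1 = A_{4,5}^2$; hence $\bar g$ is an inner automorphism of $V_1$ (an outer automorphism would strictly reduce the rank of the fixed-point subalgebra). Since $\bar g$ is inner, it preserves each simple ideal $\g_i$ of type $A_4$.

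Next I would analyze the action of $\bar g$ on each simple ideal separately. Restricted to a single $A_4$, the involution $\bar g$ must have fixed-point subalgebra whose types multiply to give $C_2^2$; since each $A_4$ has rank $4$ and $C_2$ has rank $2$, the natural splitting is that $\bar g$ acts on each of the two $A_4$ ideals with fixed-point subalgebra of type $C_2$. I would confirm via \cite[Section 8]{Kac} that the type-$C_2$ fixed-point subalgebra indeed arises from an involution of $A_4$ (this is the standard folding/involution datum for $\mathfrak{sl}_5$ giving $\mathfrak{sp}_4$). Because the type $A_4$ is not of the excluded form $D_{2n}$, Lemma \ref{L:conjinv} applies: the conjugacy class of this involution in $\Inn(A_4)$ is uniquely determined by the isomorphism type of its fixed-point set. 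Applying this to each of the two ideals, and noting that the two $A_4$ ideals can be interchanged by an automorphism of $V$ if needed (or that $\bar g$ acts the same way on each by symmetry of the data), I conclude that $\bar g$ is uniquely determined up to conjugation by an element of $\Aut(V)$.

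Finally I would upgrade from uniqueness on $V_1$ to uniqueness on all of $V$ by the inertia-group argument. Given two order $2$ automorphisms $g, g'$ of $V$ with $V^g_1 \cong V^{g'}_1 \cong C_2^2$, the preceding paragraph yields $x \in \Aut(V)$ with $g'|_{V_1} = (x g x^{-1})|_{V_1}$. Then $g' x g^{-1} x^{-1}$ acts as the identity on $V_1$, i.e.\ $g' x g^{-1} x^{-1} \in I(V)$. Since $I(V)=1$ by Theorem \ref{IA45}, we get $g' = x g x^{-1}$, so $g$ and $g'$ are conjugate in $\Aut(V)$.

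The main obstacle I anticipate is verifying precisely which involution of each $A_4$ ideal produces a $C_2$ fixed-point subalgebra and checking that the two simple ideals are treated symmetrically (or are exchangeable by an automorphism of $V$, analogous to the role played by Lemma \ref{L:permG23} in the $E_{6,3}G_{2,1}^3$ case). Unlike that case there may be no nontrivial permutation of the two $A_4$ factors realized by $\Aut(V)$, so I would instead need to rule out the possibility that $\bar g$ acts as an involution with a $C_2$ fixed subalgebra on one ideal and with a \emph{different} involution giving a non-$C_2$ type on the other that still combines to total type $C_2^2$; a rank and dimension count, together with the constraint that the total fixed type is exactly $C_2^2$, should close this off, but it requires care with the Kac classification data for $A_4$.
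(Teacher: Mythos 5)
Your proposal has a genuine error at its central step. You claim that the Lie rank of $V_1^g=C_2^2$ equals the Lie rank of $V_1=A_{4,5}^2$ and conclude that $\bar g$ is inner. But $A_4^2$ has rank $8$ while $C_2^2$ has rank $4$, so the ranks do \emph{not} match, and in fact $\bar g$ restricted to each simple ideal is an \emph{outer} automorphism of $A_4$: the inner involutions of $\mathfrak{sl}_5$ all have full-rank fixed-point algebras of the form $A_{k-1}A_{4-k}U(1)$, whereas the fixed points of type $C_2\cong B_2=\mathfrak{so}_5$ arise only from the outer involution $X\mapsto -X^{T}$. Because of this, Lemma \ref{L:conjinv} — which by its statement and proof concerns only involutions lying in $\Inn(\g)$ — cannot be applied here. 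The paper's proof instead observes that $\bar g$ must preserve each simple ideal (otherwise $V_1^{\bar g}$ would be a diagonal $A_4$), notes explicitly that the restriction to each ideal is \emph{not} inner, and then invokes the full Kac classification of finite-order automorphisms (\cite[Exercise 8.10]{Kac}, cf.\ \cite[Lemma 7.4]{EMS2}), which covers outer involutions and gives uniqueness of $\bar g$ up to conjugation by inner automorphisms of $V_1$.

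Your final step (passing from uniqueness on $V_1$ to uniqueness on $V$ via $I(V)=1$ and Theorem \ref{IA45}) is correct and is exactly what the paper does. Your worry about exchanging the two $A_4$ ideals or about asymmetric actions is also dissolved once the correct classification is used: a rank count forces each ideal's fixed subalgebra to be a rank-$2$ piece, hence each restriction is the unique (up to inner conjugacy) outer involution with fixed points $\mathfrak{so}_5$, and no analogue of Lemma \ref{L:permG23} is needed. But as written, the assertion that $\bar g$ is inner and the appeal to Lemma \ref{L:conjinv} are wrong and would need to be replaced by the outer-involution argument.
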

\begin{proof} Let $\bar{g}$ be the restriction of $g$ to $V_1$.
Then $\bar{g}$ preserves every simple ideal of type $A_{4}$; otherwise $V_1^{\bar{g}}$ has the type $A_{4}$.
Hence both the $\bar{g}$-fixed points of the simple ideals have the type $C_2$.
Note that the restriction of $\bar{g}$ to every simple ideal is not inner.
By \cite[Exercise 8.10]{Kac} (see also \cite[Lemma 7.4]{EMS2}), $\bar{g}$ is unique up to conjugation by an inner automorphism of $V_1$.

By Theorem \ref{IA45}, we have $I(V)=1$, and one can apply the same argument as in Lemma \ref{L:conjD73}, and 
we obtain the desired result.    
\end{proof}

\subsection{Weight one Lie algebra of type $C_{5,3}G_{2,2}A_{1,1}$}

Let $V$ be a strongly regular holomorphic VOA of central charge $24$ such that 
$V_1$ has the type $C_{5,3}G_{2,2}A_{1,1}$. Such a VOA has been constructed in \cite{LS3,EMS} and the uniqueness has been proved in \cite{EMS2}. 

Let $\g_1$, $\g_2$ and $\g_3$ be simple ideals of $\g$ of type $C_{5,3}$, $G_{2,2}$ and $A_{1,1}$, respectively.
Set $\g_{23}=\g_2\oplus \g_3$.
Then $V_1=\g_1\oplus\g_2\oplus\g_3$ and $\Com_V(L(\g_{23}))$ contains $L(\g_1)\cong L_{C_5}(3,0)$.
By Theorem \ref{T:HKL} and Proposition \ref{P:ComSimple}, $\Com_V(L(\g_{23}))$ is strongly regular.
Notice that $L_{C_5}(3,0)$ has exactly $3$ inequivalent irreducible modules whose conformal weights are integers at least $2$, namely, 
\[
L_{C_5}(3,2\Lambda_4),\quad  L_{C_5}(3,3\Lambda_2),\quad \text{ and }\quad  L_{C_5}(3,\Lambda_2+2\Lambda_5), 
\]
and that these $3$ modules are not simple current modules.
Therefore,  
$$
\Com_V(L(\g_{23}))\cong L_{C_5}(3,0) \oplus m_1 L_{C_5}(3,2\Lambda_4) \oplus m_2  L_{C_5}(3,3\Lambda_2)\oplus m_3 L_{C_5}(3,\Lambda_2+2\Lambda_5)
$$
as $L_{C_5}(3,0)$-modules, where $m_1,m_2,m_3$ are multiplicities.

\begin{lemma}\label{m3=0}
We have $m_3=0$. 
\end{lemma}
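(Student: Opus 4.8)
The plan is to assume $m_3\ge 1$ and derive a contradiction by producing a weight one vector in a subspace of $V$ that is forced to vanish. If $m_3\ge 1$, then the irreducible $L(V_1)$-module
\[
N:=L_{C_5}(3,\Lambda_2+2\Lambda_5)\otimes L_{G_2}(2,0)\otimes L_{A_1}(1,0)
\]
occurs in $V$ (its $\g_2$- and $\g_3$-factors are trivial, since $N\subset\Com_V(L(\g_{23}))=A$). Because $\Lambda_1$ is a coweight of $\g_3$, we have $\sigma_{\Lambda_1}\in I(V)$, so $\sigma_{\Lambda_1}$ acts trivially on $V_1$; hence the $(-1)$-eigenspace $V^-$ of $\sigma_{\Lambda_1}$, which is exactly the sum of the irreducible $L(V_1)$-submodules of $V$ whose $\g_3$-factor is $L_{A_1}(1,\Lambda_1)$, satisfies $(V^-)_1=0$.

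Next I would pin down the relevant simple current and compute conformal weights. The module $L_{C_5}(3,3\Lambda_5)$ is the order two simple current of $L_{C_5}(3,0)$, arising from the nontrivial (left--right) automorphism $\omega$ of the affine diagram of $C_5^{(1)}$, and one checks $\omega(\Lambda_2+2\Lambda_5)=\Lambda_3$, i.e. $L_{C_5}(3,3\Lambda_5)\boxtimes L_{C_5}(3,\Lambda_2+2\Lambda_5)=L_{C_5}(3,\Lambda_3)$. Using $h_\lambda=\tfrac{(\lambda|\lambda+2\rho)}{2(k+h^\vee)}$ one finds $h(L_{C_5}(3,\Lambda_3))=\tfrac{3}{4}$, $h(L_{C_5}(3,3\Lambda_5))=\tfrac{15}{4}$ and $h(L_{A_1}(1,\Lambda_1))=\tfrac14$. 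Consequently
\[
P:=L_{C_5}(3,\Lambda_3)\otimes L_{G_2}(2,0)\otimes L_{A_1}(1,\Lambda_1)
\]
has conformal weight $\tfrac34+0+\tfrac14=1$. As the $\g_3$-factor of $P$ is $L_{A_1}(1,\Lambda_1)$, any copy of $P$ in $V$ lies in $V^-$ and contributes to $(V^-)_1$; since $(V^-)_1=0$, the module $P$ cannot occur in $V$.

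It remains to manufacture $P$ inside $V$ from $N$ by fusion, and this is where the main work lies. Set $g:=L_{C_5}(3,3\Lambda_5)\otimes L_{G_2}(2,0)\otimes L_{A_1}(1,\Lambda_1)$; it is a simple current $L(V_1)$-module (quantum dimension $1$) of conformal weight $\tfrac{15}{4}+\tfrac14=4\in\Z$, and $N\boxtimes g=P$. If $g$ occurs in $V$, then, $V$ being a VOA and $g$ a simple current, the modes of $N$ and $g$ span a copy of $P$ in $V$, contradicting the previous paragraph; hence $m_3=0$. Thus everything reduces to showing that $g$ occurs in $V$, equivalently that $\Com_V(L(\g_2))$ contains the $\Z_2$-graded simple current extension $\tilde L(C_{5,3}A_{1,1})$ of $L_{C_5}(3,0)\otimes L_{A_1}(1,0)$ from Section~\ref{S:C5A1}. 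The hard part is precisely this: a priori the subVOA generated by $\g_1\oplus\g_3$ is only $L_{C_5}(3,0)\otimes L_{A_1}(1,0)$, so one must show that the $A_1$-current is glued through the $C_5$-center.

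I would establish this with the simple current machinery of Section~\ref{S:SCmod} together with holomorphicity: since $V$ is holomorphic, the simple current $L(V_1)$-modules occurring in $V$ should form a maximal totally isotropic subgroup of the simple current group $\langle L_{C_5}(3,3\Lambda_5)\rangle\times\langle L_{A_1}(1,\Lambda_1)\rangle\cong\Z_2^2$ of $L(V_1)$ for the conformal weight form (cf. Proposition~\ref{P:DJX}). As $L_{C_5}(3,3\Lambda_5)$ and $L_{A_1}(1,\Lambda_1)$ have nonintegral weights $\tfrac{15}{4}$ and $\tfrac14$ while their product $g$ has integral weight $4$, the unique nontrivial integral-weight simple current is $g$, whence $g\in V$. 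Once $\tilde L(C_{5,3}A_{1,1})\subset\Com_V(L(\g_2))$ is known, one may also finish directly from Lemma~\ref{L:SCEC5A1}: the $\g_3$-trivial parts of the only two irreducible $\tilde L(C_{5,3}A_{1,1})$-modules of integral conformal weight $\ge 2$ are $L_{C_5}(3,2\Lambda_4)$ and $L_{C_5}(3,3\Lambda_2)$, while $L_{C_5}(3,\Lambda_2+2\Lambda_5)$ could only come from the module built on $\Lambda_3$, which has conformal weight $1$ and hence cannot occur in $\Com_V(L(\g_2))$; this again gives $m_3=0$.
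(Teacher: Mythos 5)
Your reduction is correct in outline except at the one step you yourself identify as the hard part, and that step is a genuine gap. Granting that $g:=L_{C_5}(3,3\Lambda_5)\otimes L_{G_2}(2,0)\otimes L_{A_1}(1,\Lambda_1)$ occurs in $V$, the rest works: by \cite[Proposition 11.9]{DL} and the simple-current property, $N\boxtimes g\cong L_{C_5}(3,\Lambda_3)\otimes L_{G_2}(2,0)\otimes L_{A_1}(1,\Lambda_1)$ occurs in $V$; its conformal weight is indeed $\tfrac34+\tfrac14=1$; it lies in the $(-1)$-eigenspace of $\sigma_{(0,0,\Lambda_1)}\in I(V)$, which meets $V_1$ trivially — a contradiction. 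But the principle you invoke to get $g\subset V$, namely that the simple current $L(V_1)$-modules occurring in a holomorphic $V$ form a \emph{maximal} isotropic subgroup of the simple current group for the conformal-weight form, is not a result of this paper (Proposition \ref{P:DJX} says nothing of the kind) and is false in general. For example, for the holomorphic VOA of central charge $24$ with $V_1$ of type $A_{1,2}^{16}$, the simple currents of $L_{A_1}(2,0)^{\otimes 16}$ are indexed by $S\subset\{1,\dots,16\}$ with conformal weight $|S|/2$, so the maximal isotropic subgroup is the full even-weight code of dimension $15$; yet no weight-$2$ codeword can occur in $V$, since its top level is a $9$-dimensional space of conformal weight $1$ while $\dim V_1=48$ is already exhausted by $L(V_1)_1$. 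So maximality must be proved, not assumed. Worse, in this paper the fact that $g$ does occur in $V$ (equivalently, that $\Com_V(L(\g_2))$ contains $\tilde L(C_{5,3}A_{1,1})$) is only established in Lemma \ref{L:ComG22}, whose proof relies on Lemma \ref{m3=0} together with the multiplicity computation of Lemma \ref{L:lambda}; filling your gap with the paper's own machinery would therefore be circular.

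For comparison, the paper's argument never leaves the commutant $\Com_V(L(\g_{23}))$ and needs no information about which simple currents of $L(V_1)$ glue into $V$: the fusion rules of $L_{C_5}(3,0)$ show that $L_{C_5}(3,\Lambda_2+2\Lambda_5)$ cannot coexist with $L_{C_5}(3,2\Lambda_4)$ or $L_{C_5}(3,3\Lambda_2)$ inside the simple VOA $\Com_V(L(\g_{23}))$, so $m_3\neq0$ would force $m_1=m_2=0$; then the $\pm1$ map on the two remaining summands is an order~$2$ automorphism, and Proposition \ref{P:DJX} would make $m_3L_{C_5}(3,\Lambda_2+2\Lambda_5)$ a simple current for $L_{C_5}(3,0)$, which it is not. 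If you want to salvage your route, you need an independent, non-circular proof that $L_{C_5}(3,3\Lambda_5)\otimes L_{G_2}(2,0)\otimes L_{A_1}(1,\Lambda_1)$ occurs in $V$; as written, the proposal does not prove the lemma.
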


\begin{proof} Suppose, for a contradiction, that $m_3\neq0$.
By computing the fusion rules, the multiplicities of $L_{C_5}(3,0)$, $L_{C_5}(3,2\Lambda_4)$, $L_{C_5}(3,3\Lambda_2)$ and  $L_{C_5}(3,\Lambda_2+2\Lambda_5)$ are zero in the both fusion products 
$$ L_{C_5}(3,2\Lambda_4)\boxtimes L_{C_5}(3,\Lambda_2+2\Lambda_5)\quad \text{and}\quad L_{C_5}(3,3\Lambda_2)\boxtimes L_{C_5}(3,\Lambda_2+2\Lambda_5).$$
Hence $m_1=m_2=0$; otherwise, 
$Y(u,z)v=0$ for all $u\in m_1L_{C_5}(3,2\Lambda_4)$ $\oplus m_2 L_{C_5}(3,3\Lambda_2)$ and $v\in 
L_{C_5}(3,\Lambda_2+2\Lambda_5)$, which contradicts that $\Com_V(L(\g_{23}))$ is simple (see \cite[Proposition 11.9]{DL}). 
Moreover, the multiplicity of $L_{C_5}(3,\Lambda_2+2\Lambda_5)$ is zero in the fusion product
$$L_{C_5}(3,\Lambda_2+2\Lambda_5)\boxtimes L_{C_5}(3,\Lambda_2+2\Lambda_5).$$
Hence the linear map that acts as $1$ on $ L_{C_5}(3,0)$ and $-1$ on $m_3 L_{C_5}(3,\Lambda_2+2\Lambda_5)$ defines an order $2$ automorphism of $\Com_V(L(\g_{23}))$. It implies that $m_3L_{C_5}(3,\Lambda_2+2\Lambda_5)$ is a simple current module for $ L_{C_5}(3,0)$ (see Proposition \ref{P:DJX}), which is a contradiction. 
Hence, $m_3=0$. 
\end{proof}

By using computer, it is easy to verify that $L(V_1)(\cong L_{C_5}(3,0)\otimes L_{G_2}(2,0)\otimes L_{A_1}(1,0))$ has $27$ inequivalent irreducible modules which have integral conformal weights at least $2$; they are given as in Table \ref{tableC53G22A11}.

\begin{table}[bht]
\caption{Irreducible modules for $ L_{C_5}(3,0)\otimes L_{G_2}(2,0)\otimes L_{A_1}(1,0)$ with integral conformal weight at least $2$} \label{tableC53G22A11}
\begin{tabular}{|c|c|c|c|}
\hline
Highest weight &  $(2\Lambda_4,0,0)$&$(\Lambda_2+2\Lambda_5,0,0)$&$(3\Lambda_4,\Lambda_2,0)$\\
& $(3\Lambda_2,0,0)$&$(\Lambda_2+\Lambda_3+\Lambda_5,\Lambda_1,0)$&$(3\Lambda_5,0,\Lambda_1)$\\
&$(2\Lambda_3,\Lambda_1,0)$&$(2\Lambda_5,2\Lambda_1,0)$&$(2\Lambda_4+\Lambda_5,\Lambda_1,\Lambda_1)$\\
&$(2\Lambda_1+\Lambda_4,\Lambda_1,0)$&$(\Lambda_1+\Lambda_3+\Lambda_4,2\Lambda_1,0)$&$(2\Lambda_3+\Lambda_5,2\Lambda_1,\Lambda_1)$\\
&$(2\Lambda_2,2\Lambda_1,0)$&$(2\Lambda_2+\Lambda_4,\Lambda_2,0)$&$(\Lambda_3+2\Lambda_4,\Lambda_2,\Lambda_1)$\\
&$(\Lambda_1+\Lambda_5,\Lambda_2,0)$&$(3\Lambda_3,0,\Lambda_1)$&\\
&$(2\Lambda_1+\Lambda_2,\Lambda_2,0)$&$(\Lambda_1+2\Lambda_4,\Lambda_1,\Lambda_1)$&\\
&$(2\Lambda_1+\Lambda_5,0,\Lambda_1)$&$(2\Lambda_2+\Lambda_5,\Lambda_1,\Lambda_1)$&\\
&$(\Lambda_2+\Lambda_3,\Lambda_1,\Lambda_1)$&$(\Lambda_1+\Lambda_2+\Lambda_4,2\Lambda_1,\Lambda_1)$&\\
&$(\Lambda_5,2\Lambda_1,\Lambda_1)$&$(\Lambda_4+\Lambda_5,\Lambda_2,\Lambda_1)$&\\
&$(3\Lambda_1,\Lambda_2,\Lambda_1)$&$(\Lambda_1+2\Lambda_3,\Lambda_2,\Lambda_1)$&\\
 \hline
Conformal weight & $2$&$3$&$4$ \\ \hline
\end{tabular}
\end{table}

Decompose $V$ as an $ L(V_1)$-module:
%Now suppose 
\[
V\cong L_{C_5}(3,0)\otimes L_{G_2}(2,0)\otimes L_{A_1}(1,0) \oplus \bigoplus_{ 
(\lambda_1, \lambda_2, \lambda_3)} m_{\lambda_1, \lambda_2, \lambda_3} L_{C_5}(3,\lambda_1)\otimes L_{G_2}(2,\lambda_2)\otimes L_{A_1}(1,\lambda_3),
\]
where the multiplicities $m_{\lambda_1, \lambda_2, \lambda_3}$ are non-negative integers and $(\lambda_1, \lambda_2, \lambda_3)$ are the highest weights listed as in Table \ref{tableC53G22A11}. 

Let $\mathfrak{H}$ be a Cartan subalgebra of $V_1$. For any finite dimensional $V_1$-submodule $M$ of $V$, $M$ can be decomposed as  a sum of weight spaces for $\mathfrak{H}$. Let $\Pi(M)\subset \mathfrak{H}^*$ be the set of weights of $M$. For any positive integer $j$ and $z\in \mathfrak{H}$, define 
\[
S^j_M(z) = \sum_{\mu\in \Pi(M)} m_\mu \mu(z)^j, 
\]
where $m_\mu$ is the multiplicity of $\mu$ in $M$. In \cite{EMS}, the following formula 
is proved:
\[
S^2_{V_2}(z) = 32808\langle z, z\rangle - 2\dim(V_1) \langle z, z\rangle. 
\] 
Let $z=(0,0,\alpha_1)$ and $(0, \alpha_2,0)$, where $\alpha_1$ is a root of $A_1$ and $\alpha_2$ is a long root of $G_2$. Then we have two linear equations on $m_{\lambda_1, \lambda_2, \lambda_3}\in\Z_{\ge0}$ for $(\lambda_1,\lambda_2,\lambda_3)$ in column $2$ of Table \ref{tableC53G22A11}. 
In addition, we have $\dim V_2= 196884$.
By using computer, the system of linear equations can be solved as follows:

\begin{lemma}\label{L:lambda}
Let $(\lambda_1,\lambda_2,\lambda_3)$ be a weight listed in column 2 of Table \ref{tableC53G22A11}. 
Then the multiplicity $m_{\lambda_1,\lambda_2,\lambda_3}$ is given as follows: $$m_{\lambda_1,\lambda_2,\lambda_3}=\begin{cases}0&\text{if}\ (\lambda_1,\lambda_2,\lambda_3)=(2\Lambda_1+\Lambda_2,\Lambda_2,0),\\ 1&\text{otherwise}.
\end{cases}$$
\end{lemma}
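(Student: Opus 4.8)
The statement merely records the unique solution of the linear system assembled just above, so the content of the argument is to extract the eleven multiplicities from the two trace identities and the dimension constraint. First I would decompose $V_2$ as an $L(V_1)$-module: it equals the degree-two subspace $L(V_1)_2$ of the vacuum module together with, for each weight $(\lambda_1,\lambda_2,\lambda_3)$ of conformal weight $2$ in the second column of Table \ref{tableC53G22A11}, a copy of the lowest weight space of $L_{C_5}(3,\lambda_1)\otimes L_{G_2}(2,\lambda_2)\otimes L_{A_1}(1,\lambda_3)$ taken with multiplicity $m_{\lambda_1,\lambda_2,\lambda_3}$. Writing $E_{\lambda_i}$ for the irreducible $\g_i$-module of highest weight $\lambda_i$, this lowest weight space is $E_{\lambda_1}\otimes E_{\lambda_2}\otimes E_{\lambda_3}$ as a $V_1$-module; its dimension is $\dim E_{\lambda_1}\dim E_{\lambda_2}\dim E_{\lambda_3}$ and its set of weights is $\Pi(\lambda_1)+\Pi(\lambda_2)+\Pi(\lambda_3)$.

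Next I would use $S^2_{V_2}(z)=S^2_{L(V_1)_2}(z)+\sum_{(\lambda_1,\lambda_2,\lambda_3)}m_{\lambda_1,\lambda_2,\lambda_3}\,S^2_{E_{\lambda_1}\otimes E_{\lambda_2}\otimes E_{\lambda_3}}(z)$ and evaluate at $z=(0,0,\alpha_1)$ and $z=(0,\alpha_2,0)$. Since a weight of $E_{\lambda_1}\otimes E_{\lambda_2}\otimes E_{\lambda_3}$ is a sum $\nu_1+\nu_2+\nu_3$ with $\nu_i\in\Pi(\lambda_i)$, and $\alpha_1$, $\alpha_2$ pair to zero with the weights of the other two factors, one gets $S^2_{E_{\lambda_1}\otimes E_{\lambda_2}\otimes E_{\lambda_3}}(0,0,\alpha_1)=\dim E_{\lambda_1}\dim E_{\lambda_2}\sum_{\nu\in\Pi(\lambda_3)}(\nu|\alpha_1)^2$, which is $0$ when $\lambda_3=0$ and equals $2\dim E_{\lambda_1}\dim E_{\lambda_2}$ when $\lambda_3=\Lambda_1$; similarly the value at $(0,\alpha_2,0)$ is nonzero precisely for the modules with $\lambda_2\neq0$. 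After adding the fixed, directly computable contributions $S^2_{L(V_1)_2}(z)$ and equating with $(32808-2\dim V_1)\langle z,z\rangle$, the first choice of $z$ yields one relation among the four multiplicities with $\lambda_3=\Lambda_1$ and the second yields one relation among those with $\lambda_2\neq0$. The remaining input is the dimension identity $\dim L(V_1)_2+\sum_{(\lambda_1,\lambda_2,\lambda_3)}m_{\lambda_1,\lambda_2,\lambda_3}\dim E_{\lambda_1}\dim E_{\lambda_2}\dim E_{\lambda_3}=\dim V_2=196884$.

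Finally I would solve this system over $\Z_{\geq0}$. The decisive point is that the product dimensions $\dim E_{\lambda_1}\dim E_{\lambda_2}\dim E_{\lambda_3}$ are large, so the dimension identity is already nearly saturated by setting every multiplicity equal to $1$; non-negativity and integrality then force each $m_{\lambda_1,\lambda_2,\lambda_3}$ into $\{0,1\}$ and reduce the problem to deciding which, if any, must vanish. The two weight-sum relations single this out, leaving $m_{2\Lambda_1+\Lambda_2,\Lambda_2,0}=0$ with all other multiplicities equal to $1$ as the only admissible vector. Carrying this out is where the work lies: one must compute the several dozen representation dimensions of $C_5$, $G_2$, $A_1$, the weight sums $\sum_\nu(\nu|\alpha_2)^2$ and $\sum_\nu(\nu|\alpha_1)^2$, and the contribution of $L(V_1)_2$, assemble the three equations, and verify with the package ``Kac'' that they admit a unique non-negative integer solution. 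The main obstacle is therefore computational rather than conceptual: nothing beyond the already-established decomposition of $V_2$ and the trace formula is required, but one must handle the bookkeeping of these dimensions and weight multiplicities carefully and confirm that integrality pins the solution down uniquely.
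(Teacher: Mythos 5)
Your proposal is correct and follows essentially the same route as the paper: the paper likewise evaluates the trace identity $S^2_{V_2}(z)=(32808-2\dim V_1)\langle z,z\rangle$ at $z=(0,0,\alpha_1)$ and $z=(0,\alpha_2,0)$, combines the resulting two linear relations with $\dim V_2=196884$, and solves the system over $\Z_{\ge0}$ by computer. The only difference is that you spell out the bookkeeping (lowest weight spaces, weight sums for tensor factors) that the paper leaves implicit before invoking the machine computation.
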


Combining Lemmas \ref{m3=0} and \ref{L:lambda}, we obtain the following:

\begin{lemma}\label{hatC53}
The VOA $\Com_V(L(\g_{23}))$ has the decomposition 
\[
L_{C_5}(3,0) \oplus L_{C_5}(3,2\Lambda_4)\oplus L_{C_5}(3,3\Lambda_2)
\]
as a module for $L(\g_1)(\cong L_{C_5}(3,0))$. 
\end{lemma}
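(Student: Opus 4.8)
The plan is to assemble the two preceding lemmas, after making explicit how the decomposition of $\Com_V(L(\g_{23}))$ as an $L(\g_1)$-module is extracted from the decomposition of $V$ as an $L(V_1)$-module. First I would recall that the discussion preceding Lemma \ref{m3=0} has already reduced the problem to the three multiplicities $m_1,m_2,m_3$ in
\[
\Com_V(L(\g_{23}))\cong L_{C_5}(3,0)\oplus m_1 L_{C_5}(3,2\Lambda_4)\oplus m_2 L_{C_5}(3,3\Lambda_2)\oplus m_3 L_{C_5}(3,\Lambda_2+2\Lambda_5),
\]
and that Lemma \ref{m3=0} already supplies $m_3=0$. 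So it remains only to show $m_1=m_2=1$.

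The key bridge is the observation that, since $\g_1$ commutes with $\g_{23}$ and $L(V_1)=L(\g_1)\otimes L(\g_{23})$ is a full subVOA of $V$, the commutant $\Com_V(L(\g_{23}))$ is exactly the $\g_{23}$-invariant part of $V$, namely the sum of those irreducible $L(V_1)$-submodules whose $L(\g_{23})$-component is the vacuum module $L_{G_2}(2,0)\otimes L_{A_1}(1,0)$. Indeed, inside a summand $L_{C_5}(3,\lambda_1)\otimes L_{G_2}(2,\lambda_2)\otimes L_{A_1}(1,\lambda_3)$ the modes $a_{(n)}$ with $a\in\g_{23}$ and $n\ge0$ act only on the last two tensor factors, and a nonzero vector killed by all of them exists if and only if $(\lambda_2,\lambda_3)=(0,0)$, in which case these vectors span $\C(\1\otimes\1)$. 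Consequently the multiplicity of $L_{C_5}(3,\lambda_1)$ in $\Com_V(L(\g_{23}))$ equals the multiplicity $m_{\lambda_1,0,0}$ of $L_{C_5}(3,\lambda_1)\otimes L_{G_2}(2,0)\otimes L_{A_1}(1,0)$ in $V$; in particular $m_1=m_{2\Lambda_4,0,0}$ and $m_2=m_{3\Lambda_2,0,0}$.

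With this in hand I would read the two values off Lemma \ref{L:lambda}. Both weights $(2\Lambda_4,0,0)$ and $(3\Lambda_2,0,0)$ occur in the conformal-weight-$2$ column of Table \ref{tableC53G22A11}, and neither equals the exceptional weight $(2\Lambda_1+\Lambda_2,\Lambda_2,0)$; hence by Lemma \ref{L:lambda} both multiplicities are $1$. Combining $m_1=m_2=1$ with $m_3=0$ gives the asserted decomposition.

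Since the genuinely computational input is already in place (the $S^2_{V_2}$ identity of \cite{EMS} solved in Lemma \ref{L:lambda}, and the fusion-rule plus simple-current argument of Lemma \ref{m3=0}), there is no substantial obstacle remaining. The only point demanding care is the identification of $\Com_V(L(\g_{23}))$ with the $\g_{23}$-invariants and the attendant bookkeeping: the weight $(\Lambda_2+2\Lambda_5,0,0)$ controlling $m_3$ sits in the conformal-weight-$3$ column, so it is governed by Lemma \ref{m3=0} and \emph{not} by Lemma \ref{L:lambda}, whereas $m_1,m_2$ come from the conformal-weight-$2$ column handled by Lemma \ref{L:lambda}.
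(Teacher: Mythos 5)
Your proposal is correct and matches the paper's argument, which consists of the single line ``Combining Lemmas \ref{m3=0} and \ref{L:lambda}, we obtain the following''; you have simply made explicit the (correct) identification of $\Com_V(L(\g_{23}))$ with the sum of the irreducible $L(V_1)$-submodules of $V$ whose $L(\g_{23})$-component is the vacuum module, so that $m_1=m_{2\Lambda_4,0,0}$ and $m_2=m_{3\Lambda_2,0,0}$. Your bookkeeping is also right: $(2\Lambda_4,0,0)$ and $(3\Lambda_2,0,0)$ lie in the conformal-weight-$2$ column covered by Lemma \ref{L:lambda}, while $(\Lambda_2+2\Lambda_5,0,0)$ is handled separately by Lemma \ref{m3=0}.
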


Next, let us consider the commutant $\Com_V(L(\g_{2}))$.
Set $\g_{13}=\g_1\oplus \g_3$.
Then $L(\g_{13})$ is a full subVOA of $\Com_V(L(\g_{2}))$.
Note that $L(\g_{13})=L(C_{5,3}A_{1,1})\cong L_{C_5}(3,0)\otimes L_{A_1}(1,0)$.

\begin{lemma}\label{L:ComG22}
\begin{enumerate}[{\rm (1)}]
\item The VOA $\Com_V(L(\g_2))$ contains $\tilde{L}(C_{5,3}A_{1,1})$ as a full subVOA, where $\tilde{L}(C_{5,3}A_{1,1})$ is the $\Z_2$-graded simple current extension of $L(C_{5,3}A_{1,1})$ discussed in Section \ref{S:C5A1}.
%\item 
In addition, as $\tilde{L}(C_{5,3}A_{1,1})$-modules, 
\begin{equation}
\Com_V(L(\g_2))\cong \tilde{L}(C_{5,3}A_{1,1})\oplus\tilde{L}(C_{5,3}A_{1,1})[ 2\Lambda_4,0]\oplus \tilde{L}(C_{5,3}A_{1,1})[3\Lambda_2,0]\label{Eq:TC5A1},
\end{equation} 
where the irreducible $\tilde{L}(C_{5,3}A_{1,1})$-modules $\tilde{L}(C_{5,3}A_{1,1})[ \lambda,0]$ are described in Lemma \ref{L:SCEC5A1}.
\item For $g\in I(V)$, the restriction of $g$ to $\Com_V(L(\g_2))$ belongs to $\langle \sigma_{(0,\Lambda_1)}\rangle$, where $\Lambda_1$ is the fundamental weight of $\g_3$ (of type $A_1$).
\end{enumerate}
\end{lemma}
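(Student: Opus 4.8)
The plan is to analyze both parts through the action of the inner involution $\sigma_{(0,\Lambda_1)}$ on $W:=\Com_V(L(\g_2))$, using the quantum dimension machinery of Section \ref{S:QD}. For (1), I would first note that $W$ is strongly regular (Theorem \ref{T:HKL}, Proposition \ref{P:ComSimple}) with $W_1=\g_{13}$, so $L(\g_{13})=L_{C_5}(3,0)\otimes L_{A_1}(1,0)$ is a full subVOA, and the irreducible $L(\g_{13})$-submodules of $W$ are exactly the irreducible $L(V_1)$-submodules of $V$ whose $G_2$-component is trivial. Reading off Table \ref{tableC53G22A11}, these constituents of integral conformal weight at least $2$ lie among $L_{C_5}(3,\mu)\otimes L_{A_1}(1,\nu)$ with $(\mu,\nu)$ one of $(2\Lambda_4,0)$, $(3\Lambda_2,0)$, $(2\Lambda_1+\Lambda_5,\Lambda_1)$, $(\Lambda_2+2\Lambda_5,0)$, $(3\Lambda_3,\Lambda_1)$, $(3\Lambda_5,\Lambda_1)$. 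Lemma \ref{L:lambda} gives multiplicity $1$ for the three conformal weight $2$ constituents, while Lemma \ref{hatC53} excludes $(\Lambda_2+2\Lambda_5,0)$, whose $C_5$-label is not among $0,2\Lambda_4,3\Lambda_2$.

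The crux of (1) is to pin down the last two constituents. Here $\sigma:=\sigma_{(0,\Lambda_1)}$ acts as $+1$ on constituents with $A_1$-type $L_{A_1}(1,0)$ and as $-1$ on those with $L_{A_1}(1,\Lambda_1)$, so by Lemma \ref{hatC53} the fixed space is $W^{\sigma}\cong(L_{C_5}(3,0)\oplus L_{C_5}(3,2\Lambda_4)\oplus L_{C_5}(3,3\Lambda_2))\otimes L_{A_1}(1,0)$ as an $L(\g_{13})$-module. By Proposition \ref{P:DJX} the $(-1)$-eigenspace $W^{-}$ is a simple current $W^{\sigma}$-module, whence Lemma \ref{L:QD} gives $\qdim_{L(\g_{13})}W^{\sigma}=\qdim_{L(\g_{13})}W^{-}$. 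Since $L_{C_5}(3,2\Lambda_1+\Lambda_5)$, $L_{C_5}(3,3\Lambda_3)$, $L_{C_5}(3,3\Lambda_5)$ differ from $L_{C_5}(3,2\Lambda_4)$, $L_{C_5}(3,3\Lambda_2)$, $L_{C_5}(3,0)$ by the simple current $L_{C_5}(3,3\Lambda_5)$ (Lemma \ref{L:SCEC5A1}), their quantum dimensions coincide with $\qdim L_{C_5}(3,2\Lambda_4)$, $\qdim L_{C_5}(3,3\Lambda_2)$, $1$ by Proposition \ref{P:qdim}. Using that $(2\Lambda_1+\Lambda_5,\Lambda_1)$ already has multiplicity $1$, that the simple current $(3\Lambda_5,\Lambda_1)$ has multiplicity at most $1$, and that $\qdim L_{C_5}(3,3\Lambda_2)>1$ (it is not a simple current), the balance forces $(3\Lambda_3,\Lambda_1)$ and $(3\Lambda_5,\Lambda_1)$ each to occur with multiplicity $1$. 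Regrouping the six nontrivial constituents into the irreducible $\tilde{L}(C_{5,3}A_{1,1})$-modules of Lemma \ref{L:SCEC5A1} then yields \eqref{Eq:TC5A1} and exhibits $\tilde{L}(C_{5,3}A_{1,1})$ as a full subVOA of $W$.

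For (2), since $g\in I(V)$ fixes $V_1$ pointwise it stabilizes $L(\g_2)$ and hence $W$, and $g|_{W}\in I(W)$ fixes $L(\g_{13})$ pointwise; as every constituent in \eqref{Eq:TC5A1} has multiplicity one, Lemma \ref{L:mult} shows $g$ acts by a scalar on each, necessarily $\pm1$ because all constituents are self-contragredient ($C_5$- and $A_1$-modules being self-dual). The scalar on $L_{C_5}(3,3\Lambda_5)\otimes L_{A_1}(1,\Lambda_1)$ is $\pm1$, so after replacing $g$ by $g\sigma_{(0,\Lambda_1)}$ if it equals $-1$ (note $\sigma_{(0,\Lambda_1)}\in I(V)$, being inner from a coweight) I may assume $g$ fixes $\tilde{L}(C_{5,3}A_{1,1})$ pointwise; by Schur's lemma $g$ then acts by scalars $\mu_1,\mu_2\in\{\pm1\}$ on the irreducible modules $\tilde{L}(C_{5,3}A_{1,1})[2\Lambda_4,0]$ and $\tilde{L}(C_{5,3}A_{1,1})[3\Lambda_2,0]$. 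It remains to show $\mu_1=\mu_2=1$. If not, $g|_{W}$ is a nontrivial involution of $W$ with $L(\g_{13})\subset W^{g}$, so Proposition \ref{P:DJX} and Lemma \ref{L:QD} force the two eigenspaces to have equal quantum dimension over $L(\g_{13})$; writing $a=\qdim L_{C_5}(3,2\Lambda_4)$ and $b=\qdim L_{C_5}(3,3\Lambda_2)$, this reduces to one of $a=1+b$, $b=1+a$, or $a+b=1$. The last is impossible as $a,b>1$, and the first two are excluded by computing $a,b$ with \cite{kac}. Hence $g|_{W}\in\langle\sigma_{(0,\Lambda_1)}\rangle$.

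The main obstacle will be the quantum dimension bookkeeping. In (1) one must be certain that $W^{-}$ contains no constituent beyond the tabulated candidates and that the simple-current balance is rigid enough to force exact multiplicities; in (2) one must rule out the nontrivial involution, which ultimately rests on the explicit values of $\qdim L_{C_5}(3,2\Lambda_4)$ and $\qdim L_{C_5}(3,3\Lambda_2)$ and on the fact that neither $L_{C_5}(3,2\Lambda_4)$ nor $L_{C_5}(3,3\Lambda_2)$ is a simple current over $L_{C_5}(3,0)$, so that $|a-b|\neq1$.
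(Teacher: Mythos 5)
Your proposal follows the paper's own argument essentially step for step: the same identification of the candidate $L(\g_{13})$-constituents from Table \ref{tableC53G22A11} together with Lemmas \ref{m3=0} and \ref{L:lambda}, the same quantum-dimension balance between the $\pm1$-eigenspaces of $\sigma_{(0,0,\Lambda_1)}$ using the simple current $[3\Lambda_5,\Lambda_1]$ to force $m_{3\Lambda_5,0,\Lambda_1}=m_{3\Lambda_3,0,\Lambda_1}=1$, and for (2) the same reduction modulo $\sigma_{(0,\Lambda_1)}$ followed by the impossibility of an equal-quantum-dimension partition given $\qdim[2\Lambda_4,0]>32$ and $\qdim[3\Lambda_2,0]<17$. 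The only detail you gloss over is why $[0,0]\oplus[3\Lambda_5,\Lambda_1]$ is a \emph{simple} VOA (the paper notes the non-degeneracy of the invariant form there), but this is a minor point and the proof is correct.
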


\begin{proof}
Set $U=\Com_V(L(\g_2))$.
By Lemma \ref{m3=0}, $m_{(\Lambda_2+2\Lambda_5,0,0)}=0$. 
By Lemma \ref{L:lambda}, we also have the multiplicities $m_{2\Lambda_4,0,0}=m_{3\Lambda_2,0,0}=m_{(2\Lambda_1+\Lambda_5,0,\Lambda_1)}=1$.
By Table \ref{tableC53G22A11}, as a module for $L(\g_{13})(\cong L_{C_5}(3,0)\otimes L_{A_1}(1,0))$,
\[
\begin{split}
U\cong [0,0]\oplus[2\Lambda_4,0]\oplus[3\Lambda_2,0]\oplus m_{3\Lambda_5,0,\Lambda_1}[3\Lambda_5,\Lambda_1]\oplus[2\Lambda_1+\Lambda_5,\Lambda_1]\oplus m_{3\Lambda_3,0,\Lambda_1}[3\Lambda_3,\Lambda_1]
\end{split}
\]
where $[\lambda_1,\lambda_2]$ means the irreducible $L(\g_{13})$-module $L_{C_5}(3,\lambda_1)\otimes L_{A_1}(1,\lambda_2)$.

Notice that $\sigma=\sigma_{(0,0, \Lambda_1)}= \exp(2\pi \sqrt{-1} (0,0, \Lambda_1)_{(0)})$ associated with the fundamental weight $(0,0,\Lambda_1)$ of $A_1$ defines an automorphism of order $2$ on $U$. The fixed point set $U^+$ with respect to $\sigma$ is given by 
\begin{equation}
U^+=[0,0]\oplus[2\Lambda_4,0]\oplus[3\Lambda_2,0]\label{Eq:C5}
\end{equation}
and the $-1$-eigenspace $U^-$ is given by 
\begin{equation}
U^-=m_{3\Lambda_5,0,\Lambda_1}[3\Lambda_5,\Lambda_1]\oplus[2\Lambda_1+\Lambda_5,\Lambda_1]\oplus m_{3\Lambda_3,0,\Lambda_1}[3\Lambda_3,\Lambda_1]\label{Eq:C52}
\end{equation}
%Then by Proposition \ref{P:DJX}, $U^-$ is a simple current for $U^+$. Hence, by Proposition \ref{P:Mirror} (3), we have 
By Lemma \ref{L:QD}, we have
\[
{\qdim}_{L(\g_{13})} U^+ = {\qdim}_{L(\g_{13})} U^-.
\]
Recall the following fusion products for the simple current module $[3\Lambda_5,\Lambda_1]$:
$$[2\Lambda_4,0]\boxtimes[3\Lambda_5,\Lambda_1]=[2\Lambda_1+\Lambda_5,\Lambda_1],\quad [3\Lambda_2,0]\boxtimes[3\Lambda_5,\Lambda_1]=[3\Lambda_3,\Lambda_1].$$
Hence, by Proposition \ref{P:qdim} (1) and (2), we have
\begin{equation}
\begin{split}  
&{\qdim}_{L(\g_{13})}[0,0]={\qdim}_{L(\g_{13})}[3\Lambda_5,\Lambda_1]=1,\\
&{\qdim}_{L(\g_{13})}[2\Lambda_4,0]={\qdim}_{L(\g_{13})}[2\Lambda_1+\Lambda_5,\Lambda_1],\\
&{\qdim}_{L(\g_{13})}[3\Lambda_2,0]={\qdim}_{L(\g_{13})}[3\Lambda_3,\Lambda_1].
\end{split}\label{Eq:C53}
\end{equation}
Thus by \eqref{Eq:C5}, \eqref{Eq:C52} and \eqref{Eq:C53}, we obtain $$(m_{3\Lambda_5,0,\Lambda_1}-1)+(m_{3\Lambda_3,0,\Lambda_1}-1){\qdim}_{L(\g_{13})}[3\Lambda_2,0]=0.$$
%Since $[3\Lambda_2,0]$ is not simple current for $L(\g_{13})$, we have .
Since $[3\Lambda_5,\Lambda_1]$ is a simple current module but $[3\Lambda_2,0]$ is not for $L(\g_{13})$, we have $m_{3\Lambda_5,0,\Lambda_1}\in\{0,1\}$ and $\qdim_{L(\g_{13})}[3\Lambda_2,0]>1$.
% since $[3\Lambda_2,0]$ is not simple current for $L(\g_{13})$. 
Thus we obtain $m_{3\Lambda_5,0,\Lambda_1}=m_{3\Lambda_3,0,\Lambda_1}=1$.
Since $[3\Lambda_5,\Lambda_1]$ is self-contragredient and $U$ is simple, the invariant form on $[3\Lambda_5,\Lambda_1]$ is non-degenerate.
Hence the $\Z_2$-graded extension $[0,0]\oplus [3\Lambda_5,\Lambda_1]$ is simple, and it is isomorphic to $\tilde{L}(C_{5,3}A_{1,1})$.
By Lemma \ref{L:SCEC5A1} and the multiplicities above, we obtain (1).

By using ``Kac", we obtain $$32<{\qdim}_{L(\g_{13})}[2\Lambda_4,0]<33,\quad 16<{\qdim}_{L(\g_{13})}[3\Lambda_2,0]<17.$$
By Lemma \ref{L:SCEC5A1} and Proposition \ref{P:Mirror} (3), we obtain the following quantum dimensions for irreducible $\tilde{L}(C_{5,3}A_{1,1})$-modules:
\begin{align}\label{qdimAC}
\notag&{\qdim}_{\tilde{L}(C_{5,3}A_{1,1})}\tilde{L}(C_{5,3}A_{1,1})={\qdim}_{L(\g_{13})}[0,0]=1,\\
&{\qdim}_{\tilde{L}(C_{5,3}A_{1,1})}\tilde{L}(C_{5,3}A_{1,1})[2\Lambda_4,0]={\qdim}_{L(\g_{13})}[2\Lambda_4,0]> 32,\\
\notag&{\qdim}_{\tilde{L}(C_{5,3}A_{1,1})}\tilde{L}(C_{5,3}A_{1,1})[3\Lambda_2,0]={\qdim}_{L(\g_{13})}[3\Lambda_2,0]<17.
\end{align} 
Let $g\in I(V)$. Then $g$ stabilizes $L(\g_2)$ and so does $U=\Com_V(L(\g_2))$.
Let $U^0$ be the subVOA of $U$ isomorphic to $\tilde{L}(C_{5,3}A_{1,1})$.
Since $U^0$ is a simple current extension of $L(C_{5,3}A_{1,1})$ graded by $D\cong\Z_2$, we have $g_{|U^0}\in D^*=\langle \sigma_{(0,\Lambda_1)}\rangle$.
By replacing $g$ by $g\sigma_{(0,\Lambda_1)}^{-1}$ if necessary, 
%By the argument above, $g^{-1}\sigma_{(0,\Lambda_1)}=id$ on $U$, that is, $g=\sigma_{(0,\Lambda_1)}$ on $U$.
%First, 
we may assume that $g_{|U^0}$ is the identity.
Then by \eqref{Eq:TC5A1}, $g$ acts on each irreducible $U^0$-submodule of $U$ as a scalar.  
Since $\tilde{L}(C_{5,3}A_{1,1})[2\Lambda_4,0]$ and $\tilde{L}(C_{5,3}A_{1,1})[3\Lambda_2,0]$ are self-contragredient, the scalars are $\pm1$.
Suppose, for a contradiction, that the restriction ${g}_{|U}$ of $g$ to $U$ is not the identity.
Then by Proposition \ref{P:DJX}, the fixed point subspace and the $-1$-eigenspace of $g_{|U}$ has the same quantum dimension over $U^0$, which is impossible by \eqref{qdimAC}.
Hence ${g}_{|U}$ is the identity and we obtain (2).
\end{proof}
%\color{black}

Recall that the affine VOA $L(\g_2)(\cong L_{G_2}(2,0))$ has exactly $4$ inequivalent irreducible modules, namely,
\[
L_{G_2}(2,0),\ L_{G_2}(2,\Lambda_1),\ L_{G_2}(2,2\Lambda_1),\text{ and }  L_{G_2}(2,\Lambda_2).
\]
Let $a= {\qdim}_{L_{G_2}(2,0)} L_{G_2}(2,\Lambda_1)$, $b= {\qdim}_{L_{G_2}(2,0)} L_{G_2}(2,2\Lambda_1)$ and $c= {\qdim}_{L_{G_2}(2,0)} L_{G_2}(2,\Lambda_2)$. Then by Proposition \ref{P:qdim} (2) and their fusion rules, we have 
\begin{equation}\label{abc}
\begin{split}
a^2&= 1+a+b+c,\\  b^2&=1+a+b,\\ c^2&=1+b. 
\end{split}
\end{equation}
Note that $a> b>c >1$ since $a,b,c\in\R_{\ge1}$.

Let $M(\Lambda)= {\rm Hom}_{L_{G_2}(2,0)} (V, L_{G_2}(2,\Lambda))$ for $\Lambda= 0, \Lambda_1, 2\Lambda_1, \Lambda_2$. Then $M(0)=\Com_V(L(\g_2))$ and $M(\Lambda), \Lambda = 0, \Lambda_1, 2\Lambda_1, \Lambda_2,$ are irreducible $M(0)$-modules.
As a module for $M(0)\otimes L_{G_2}(2,0)$, the holomorphic VOA $V$ decomposes as 
\[
\begin{split}
V= & \bigoplus_{\Lambda\in\{0,\Lambda_1,2\Lambda_1,\Lambda_2\}}M(\Lambda)\otimes L_{G_2}(2,\Lambda).
\end{split}
\]

By Proposition \ref{P:Mirror} (3), for $\Lambda\in\{0,\Lambda_1,2\Lambda_1,\Lambda_2\}$, we have
\begin{equation}\label{a2b2c2}
{\qdim}_{M(0)\otimes L_{G_2}(2,0)} (M(\Lambda) \otimes L_{G_2}(2,\Lambda))= ({\qdim}_{L_{G_2}(2,0)} L_{G_2}(2,\Lambda))^2.
\end{equation}

\begin{theorem}\label{IC5}
$I(V)=\langle \sigma_{(0,0,\Lambda_1)}\rangle \cong \Z_2$. 
\end{theorem}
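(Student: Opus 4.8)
The plan is to prove the two inclusions separately, the first being immediate and the second being the substance. For $\langle\sigma_{(0,0,\Lambda_1)}\rangle\subseteq I(V)$: since $(0,0,\Lambda_1)$ is a coweight of $V_1$ and the weights of $V_1$ lie in the root lattice, $\sigma_{(0,0,\Lambda_1)}$ fixes $V_1$ pointwise, hence $\sigma_{(0,0,\Lambda_1)}\in I(V)$; and since it acts as $-1$ on the nonzero space $U^-$ in \eqref{Eq:C52}, it has order exactly $2$, so $\langle\sigma_{(0,0,\Lambda_1)}\rangle\cong\Z_2$.

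For the reverse inclusion, I would take an arbitrary $g\in I(V)$. By Lemma \ref{L:ComG22} (2), the restriction $g|_{\Com_V(L(\g_2))}$ lies in $\langle\sigma_{(0,\Lambda_1)}\rangle$. Since $\sigma_{(0,0,\Lambda_1)}$ restricts to $\sigma_{(0,\Lambda_1)}$ on $M(0)=\Com_V(L(\g_2))$, after replacing $g$ by $g\sigma_{(0,0,\Lambda_1)}$ if necessary I may assume $g$ is the identity on $M(0)$. As $g\in I(V)$ also fixes $L(\g_2)(\cong L_{G_2}(2,0))$ pointwise, $g$ commutes with the full subVOA $M(0)\otimes L_{G_2}(2,0)$, and therefore, by Schur's lemma, acts as a scalar $\mu_\Lambda$ on each irreducible summand $M(\Lambda)\otimes L_{G_2}(2,\Lambda)$ of $V$, with $\mu_0=1$.

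Next I would pin down these scalars. Each summand is self-contragredient: $L_{G_2}(2,\Lambda)$ is self-contragredient (all $G_2$-modules are), and pairing the holomorphic $V$ with its dual then forces $M(\Lambda)$ to be self-contragredient as well. Since $g$ preserves the invariant form, $\mu_\Lambda^2=1$, so $\mu_\Lambda=\pm1$ for every $\Lambda$ and in particular $g$ is an involution. If $g\neq\id$, write $V=V^g\oplus V^-$. By Proposition \ref{P:DJX}, $V^-$ is a simple current $V^g$-module, so Proposition \ref{P:qdim} (1) together with Lemma \ref{L:QD} gives $\qdim_{M(0)\otimes L_{G_2}(2,0)}V^g=\qdim_{M(0)\otimes L_{G_2}(2,0)}V^-$. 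By \eqref{a2b2c2} the four summands have quantum dimensions $1,a^2,b^2,c^2$ over $M(0)\otimes L_{G_2}(2,0)$, so this balance amounts to $1+\epsilon_1 a^2+\epsilon_2 b^2+\epsilon_3 c^2=0$ for some signs $\epsilon_i\in\{\pm1\}$.

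The main obstacle, and the only place the precise $G_2$ data enters, is ruling out this signed identity. Substituting the relations \eqref{abc}, every such signed sum collapses to a manifestly nonzero quantity; the two borderline cases (where the $a^2$ contribution cancels against $a$) reduce to $2+b-c$ and to $c-b$, both nonzero because $a>b>c>1$. This contradiction shows that no nontrivial involution of the assumed form exists, so after the reduction $g=\id$. Combining the two inclusions yields $I(V)=\langle\sigma_{(0,0,\Lambda_1)}\rangle\cong\Z_2$.
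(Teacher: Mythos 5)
Your proof is correct and follows essentially the same route as the paper's: reduce to $g=\mathrm{id}$ on $M(0)$ via Lemma \ref{L:ComG22} (2), deduce that $g$ acts by signs on the four self-contragredient summands $M(\Lambda)\otimes L_{G_2}(2,\Lambda)$, and rule out a nontrivial involution by the quantum-dimension balance from Proposition \ref{P:DJX} and Lemma \ref{L:QD} together with the relations \eqref{abc}. The only difference is that you spell out the sign-case analysis (the borderline sums $2+b-c$ and $c-b$) that the paper leaves as "one can easily verify," and this check is accurate.
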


\begin{proof} Clearly $\langle \sigma_{(0,0,\Lambda_1)}\rangle\subset I(V)$.
Let $g\in I(V)$. By Lemma \ref{L:ComG22} (2), $g|_{M(0)} = \sigma_{(0,0,\Lambda_1)}^i|_{M(0)}$ for some $i=0,1$.
Hence we may assume that $g=id$ on $M(0)$ by replacing $g$ by $g\sigma_{(0,0,\Lambda_1)}$ if necessary.
Then $g=id$ on $M(0)\otimes L_{G_2}(2,0)$, and $g$ acts on each irreducible $M(0)\otimes L_{G_2}(2,0)$-submodule of $V$ as a scalar. 
Since all the irreducible $M(0)\otimes L_{G_2}(2,0)$-submodules of $V$ are self-contragredient, the scalar must be $\pm 1$. 
%Hence the $(-1)$-eigenspace $V^-$ of $g$ in $V$ is a simple current module for $V^+=\{v\in V\mid g(v)=v\}$.
If $g\neq id$ on $V$, then by Lemma \ref{L:QD}, the fixed point subspace and the $-1$-eigenspace of $g$ have the same quantum dimension over $M(0)\otimes L_{G_2}(2,0)$.
However, by \eqref{abc} and \eqref{a2b2c2}, one can easily verify that it has no solutions. Hence, $g=id $ on $V$ if $g|_{M(0)}=id_{M(0)}$.
Thus we have $g\in\langle \sigma_{(0,0,\Lambda_1)}\rangle$ as desired. 
\end{proof}

\begin{lemma}\label{L:conjC53}  The conjugacy class of an order $2$ automorphism $g$ of $V$ is unique if $V^g_1$ is a Lie algebra of type $A_{4}A_{1}^2U(1)^2$.
\end{lemma}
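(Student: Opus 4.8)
The plan is to adapt the argument of Lemmas~\ref{L:conjD73} and~\ref{L:conjA45}, inserting one extra step to handle the fact that here the inertia group $I(V)=\langle\sigma_{(0,0,\Lambda_1)}\rangle\cong\Z_2$ is nontrivial (Theorem~\ref{IC5}), so that matching the fixed-point data only pins $g$ down modulo $I(V)$. Write $V_1=\g_1\oplus\g_2\oplus\g_3$ with $\g_1,\g_2,\g_3$ of types $C_5,G_2,A_1$, and set $\bar g=g|_{V_1}$. Because the three ideals have pairwise distinct types, $\bar g$ preserves each of them, and since none of $C_5,G_2,A_1$ has a nontrivial diagram automorphism, each $\bar g|_{\g_i}$ is inner. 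Comparing types, the $A_4$-summand of $V_1^g$ can only come from $\g_1$, which forces $\g_1^g\cong A_4U(1)$; then $\g_2^g$ cannot contain a $G_2$, so $\g_2^g\cong A_1^2$; and the remaining $U(1)$ must be $\g_3^g\cong U(1)$. Each $\bar g|_{\g_i}$ is thus an inner involution, and since no ideal is of type $D_{2n}$, Lemma~\ref{L:conjinv} shows its conjugacy class is determined by its fixed-point type. Hence $\bar g$ is determined up to conjugation by $\Inn(V_1)$, which is realized inside $\Aut(V)$ by inner automorphisms of $V$.

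Next I would reduce to a single assertion about $I(V)$. Given a second order~$2$ automorphism $g'$ with $V_1^{g'}\cong V_1^g$, the previous paragraph provides $x\in\Aut(V)$ with $(xg'x^{-1})|_{V_1}=\bar g$; replacing $g'$ by $xg'x^{-1}$ we may assume $g'|_{V_1}=g|_{V_1}$. Then $g'g^{-1}$ acts trivially on $V_1$, so $g'g^{-1}\in I(V)=\langle\sigma\rangle$ with $\sigma:=\sigma_{(0,0,\Lambda_1)}$, and therefore $g'\in\{g,\sigma g\}$. Note that $\sigma$ commutes with $g$: since $g$ fixes the Cartan subalgebra of $\g_3$ pointwise we have $g(0,0,\Lambda_1)=(0,0,\Lambda_1)$, whence $g\sigma g^{-1}=\sigma$. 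It now suffices to prove that $g$ and $\sigma g$ are conjugate in $\Aut(V)$.

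The key step is to produce the conjugating automorphism from the Weyl group of $\g_3$. After choosing coordinates so that the $g$-fixed subspace of $\g_3$ is its Cartan subalgebra, $g$ acts on $\g_3$ as $+1$ on the Cartan and $-1$ on the root spaces, which is exactly the restriction of $\sigma_{(0,0,\Lambda_1/2)}$. Hence $g'':=g\,\sigma_{(0,0,\Lambda_1/2)}^{-1}$ fixes $\g_3\subset V_1$ pointwise, so $g''x_{(m)}(g'')^{-1}=x_{(m)}$ for all $x\in\g_3$ and $m\in\Z$. Let $n\in\Aut(V)$ be the standard lift of the nontrivial Weyl reflection of $\g_3$, built from the $0$-modes of the root vectors of $\g_3$; then $n$ commutes with $g''$ and acts on the Cartan of $\g_3$ by $\lambda\mapsto-\lambda$. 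Using $n\sigma_u n^{-1}=\sigma_{nu}$ for $u\in V_1$, I obtain
\[
n\,\sigma_{(0,0,\Lambda_1/2)}\,n^{-1}=\sigma_{(0,0,-\Lambda_1/2)}=\sigma_{(0,0,\Lambda_1/2)}^{-1}=\sigma_{(0,0,\Lambda_1/2)}\,\sigma ,
\]
where the last equality uses $\sigma_{(0,0,\Lambda_1/2)}^2=\sigma_{(0,0,\Lambda_1)}=\sigma$ and $\sigma^2=1$. Since $n$ commutes with $g''$, this yields $ngn^{-1}=g''\,\sigma_{(0,0,\Lambda_1/2)}\,\sigma=g\sigma=\sigma g$, so $g$ and $\sigma g$ are conjugate and the lemma follows.

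The main obstacle is precisely this last sign computation: one must control the action of $g$ on all of $V$ (not merely on $V_1$) well enough to see that conjugating by the Weyl lift $n$ produces the central factor $\sigma$ rather than returning $g$. The computation above localizes this entirely in the $\g_3=A_1$ factor and shows the factor $\sigma$ is forced; this is why—unlike the $A_{7,4}A_{1,1}^3$ cases, where $I(V)\cong\Z_2^3$ is larger—no extra information about $V(g)$ or $\widetilde V_g$ is required here, in agreement with the empty ``additional data'' entry for this row in Table~\ref{Ta:conj}.
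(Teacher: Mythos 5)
Your argument is correct and follows essentially the same route as the paper's proof: reduce to the restriction $\bar g$ on $V_1$ via Lemma~\ref{L:conjinv}, use $I(V)=\langle\sigma_{(0,0,\Lambda_1)}\rangle$ (Theorem~\ref{IC5}) to cut the possibilities down to $g$ and $\sigma g$, and then conjugate by a lift of the Weyl reflection of the $A_1$-ideal. The paper compresses your last step by observing that $g$ must equal $\sigma_{(1/2)(\Lambda_5,\Lambda_1,\pm\Lambda_1)}$ globally and that $\Lambda_1$ and $-\Lambda_1$ are Weyl-conjugate, which is exactly the computation you carry out explicitly with $g''$ and $n$.
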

\begin{proof} 
Let $\bar{g}$ be the restriction of $g$ to $V_1$.
Since the Lie ranks of $V_1$ and $V^{\bar{g}}_1$ are the same, $\bar{g}$ is inner and it preserves every simple ideal of $V_1$.
By the type of $V_1^g$ (cf.\ \cite[Section 8]{Kac}), the $\bar{g}$-fixed points of the simple ideals of type $C_{5}$, $G_2$ and $A_1$ have the type $A_4U(1)$, $A_1^2$, $U(1)$, respectively.
%Since a simple Lie algebra of type $C_{5}$ does not contain a semisimple Lie algebra of type $A_{4}A_1$, the $\bar{g}$-fixed points of the simple ideal of type $C_5$ has the type $A_{4,6}U(1)$.
%By the level, the $\bar{g}$-fixed points of $G_{2,2}$ and $A_{1,1}$ have the type $A_{1,6}A_{1,2}$ and $U(1)$, respectively.
By Lemma \ref{L:conjinv}, $\bar{g}$ is unique up to conjugation by an inner automorphism of $V_1$; we may assume that $\bar{g}=\sigma_{(1/2)(\Lambda_5,\Lambda_1,\Lambda_1)}$ at the beginning of the proof.
Since $I(V)=\langle \sigma_{(0,0,\Lambda_1)}\rangle$ by Theorem \ref{IC5}, we have $g=\sigma_{(1/2)(\Lambda_5,\Lambda_1,\Lambda_1)}$ or $g=\sigma_{(1/2)(\Lambda_5,\Lambda_1,-\Lambda_1)}$.
Clearly, on a root system of type $A_1$, the weights $\Lambda_1$ and $-\Lambda_1$ are conjugate by an element of the Weyl group.
Hence  $\sigma_{(1/2)(\Lambda_5,\Lambda_1,\Lambda_1)}$ and $\sigma_{(1/2)(\Lambda_5,\Lambda_1,-\Lambda_1)}$ are conjugate by an inner automorphism of $V$.
Thus we obtain the desired result.    
\end{proof}

\subsection{Weight one Lie algebra of type $A_{7,4}A_{1,1}^3$}

Let $V$ be a strongly regular holomorphic VOA of central charge $24$ such that 
$V_1$ has the type $A_{7,4}A_{1,1}^3$.
Recall that such a holomorphic VOA has been constructed in \cite{Lam} and that the uniqueness has also been established in \cite{LS6}. 
Let $V_1=\bigoplus_{i=1}^4\g_i$ be the sum of simple ideals, where  $\g_1$ and $\g_i$ ($2\le i\le 4$) has the type $A_{7,4}$ and $A_{1,1}$, respectively.
Set $\g_{234}=\bigoplus_{i=2}^4\g_i$.

Now, we recall an alternative construction from \cite[Section 3.1.2]{Xu} by using mirror extension;
%The main idea is due to \cite[Section 3.1.2]{Xu};
a holomorphic conformal net associated with $SU(8)_4\times SU(2)_1\times SU(2)_1\times SU(2)_1$ has been constructed. 
By translating the conformal net setting in \cite{Xu} to the VOA setting (cf.\ \cite{HKL,DJX,Lin}), we can apply the results in \cite{Xu} to obtain a holomorphic VOA of central charge $24$ with the weight one Lie algebra of the type $A_{7,4}A_{1,1}^3$.
Remark that the uniqueness result in \cite{LS6} shows that $V$ and the holomorphic VOA studied by \cite{Xu} are the same, up to isomorphism.
Hence we know the $L_{A_7}(4,0)$-module structure of $\Com_V(L(\g_{234}))$ from \cite{Xu};
$\Com_V(L(\g_{234}))$ contains $\tilde{L}_{A_7}(4,0)$, a $\Z_4$-graded simple current extension of $L(\g_1)(\cong L_{A_7}(4,0))$ (see Section \ref{S:A7}), as a full subVOA and
decomposes as a module of $\tilde{L}_{A_7}(4,0)$ as follows:
\begin{equation}
\Com_V(L(\g_{234}))\cong\tilde{L}_{A_7}(4,0)\oplus\tilde{L}_{A_7}(4,\Lambda_4+\Lambda_5+\Lambda_7),\label{Eq:A74dec}
\end{equation}
where the $\tilde{L}_{A_7}(4,0)$-module $\tilde{L}_{A_7}(4,\Lambda_4+\Lambda_5+\Lambda_7)$ is described in Lemma \ref{L:A7mod}.

Recall that $L_{A_1}(1,0)$ has exactly $2$ inequivalent irreducible modules $L_{A_1}(1,0)$ and $L_{A_1}(1,\Lambda_1)$ and both are simple current modules.
Since the conformal weight of $L_{A_1}(1,\Lambda_1)$ is $1/4$, $L(\g_{234})(\cong L_{A_1}(1,0)^{\otimes3})$ is the only irreducible $L(\g_{234})$-module with integral conformal weight.
Hence $\Com_V(\Com_V(L(\g_{234})))=L(\g_{234})$.
For $\lambda_i\in\{0,\Lambda_1\}$, $i\in\{1,2,3\}$, set $$M(\lambda_1,\lambda_2,\lambda_3)={\rm Hom}_{L(\g_{234})}(V,\bigotimes_{i=1}^3 L_{A_{1}}(1,\lambda_i)).$$
Then $M(0,0,0)=\Com_V(L(\g_{234}))$ and $M(\lambda_1,\lambda_2,\lambda_3)$ are irreducible $M(0,0,0)$-modules.
As a module of $M(0,0,0)\otimes L(\g_{234})$, the holomorphic VOA $V$ decomposes as 
\begin{equation}\label{Eq:decA74}
V\cong\bigoplus_{\lambda_1,\lambda_2,\lambda_3\in\{0,\Lambda_1\}}M(\lambda_1,\lambda_2,\lambda_3)\otimes \left(\bigotimes_{i=1}^3L_{A_1}(1,\lambda_i)\right).
\end{equation}
By Propositions \ref{P:DJX} (1) and \ref{P:Mirror} (3), $M(\lambda_1,\lambda_2,\lambda_3)$ are simple current modules.
Hence $V$ is a $\Z_2^3$-graded simple current extension of $M(0,0,0)\otimes L(\g_{234})$.
The explicit description of the $L_{A_7}(4,0)$-module structure of $M(\lambda_1,\lambda_2,\lambda_3)$ for a specified order of the tensor product $L_{A_1}(1,0)^{\otimes3}$ can be found in \cite[Section 3.1.3]{Xu} as follows:
% and the proof in VOA setting is similar.
 
\begin{lemma}\label{L:modCA11}
The VOA $M(0,0,0)$ has exactly $8$ inequivalent irreducible modules, and as $L_{A_7}(4,0)$-modules, they decompose as:
\[
\tiny
\begin{split}
%\Com_V(L(A_{1,1}^3))
M(0,0,0)\cong& L_{A_7}(4,0)\oplus L_{A_7}(4,4\Lambda_2)\oplus L_{A_7}(4,4\Lambda_4)\oplus L_{A_7}(4,4\Lambda_6)\oplus 
L_{A_7}(4,\Lambda_4+\Lambda_5+\Lambda_7)\\
&\oplus L_{A_7}(4,\Lambda_2+\Lambda_3+\Lambda_5+\Lambda_6)
\oplus L_{A_7}(4,\Lambda_1+\Lambda_3+\Lambda_4)\oplus L_{A_7}(4,\Lambda_1+\Lambda_2+\Lambda_6+\Lambda_7),\\
M(\Lambda_1,0,0) \cong& L_{A_7}(4,4\Lambda_1)\oplus L_{A_7}(4,4\Lambda_3)\oplus 
L_{A_7}(4,4\Lambda_5) \oplus  L_{A_7}(4,4\Lambda_7)\oplus L_{A_7}(4,\Lambda_3+\Lambda_4+\Lambda_6+\Lambda_7)\\
&\oplus L_{A_7}(4,\Lambda_1+\Lambda_2+\Lambda_4+\Lambda_5)
\oplus L_{A_7}(4,\Lambda_2+\Lambda_3+\Lambda_7)\oplus L_{A_7}(4,\Lambda_1+\Lambda_5+\Lambda_6),\\
M(0,\Lambda_1,\Lambda_1) \cong&  L_{A_7}(4,2\Lambda_5 + 2\Lambda_7)\oplus  L_{A_7}(4,2\Lambda_3 + 2\Lambda_5)\oplus L_{A_7}(4,2\Lambda_1 + 2\Lambda_3)\oplus 
L_{A_7}(4,2\Lambda_1 + 2\Lambda_7) \\
&\oplus L_{A_7}(4,\Lambda_3+\Lambda_5) \oplus L_{A_7}(4,\Lambda_1+\Lambda_3+2\Lambda_6)
\oplus L_{A_7}(4,\Lambda_1+2\Lambda_4+\Lambda_7)\oplus L_{A_7}(4,2\Lambda_2+ \Lambda_5+\Lambda_7),\\
M(\Lambda_1,\Lambda_1,\Lambda_1) \cong&  L_{A_7}(4,2\Lambda_4 + 2\Lambda_6)\oplus  L_{A_7}(4,2\Lambda_3 + 2\Lambda_4)\oplus L_{A_7}(4, 2\Lambda_2)\oplus 
L_{A_7}(4,2\Lambda_6)\\
& \oplus L_{A_7}(4,\Lambda_2+\Lambda_4+ 2\Lambda_7) \oplus L_{A_7}(4,\Lambda_2+2\Lambda_5)
\oplus L_{A_7}(4,2\Lambda_3+\Lambda_6)\oplus L_{A_7}(4,2\Lambda_2+ \Lambda_5+\Lambda_7),\\
M(0,\Lambda_1,0)\cong
% \cong
%& L_{A_7}(4,\Lambda_3+\Lambda_6+\Lambda_7)\oplus L_{A_7}(4,\Lambda_1+\Lambda_4+ \Lambda_5+\Lambda_6) \\
%& \oplus L_{A_7}(4,\Lambda_2+\Lambda_3+ \Lambda_4+\Lambda_7)\oplus L_{A_7}(4,\Lambda_1+ \Lambda_2+\Lambda_5), \\
M(0,0,\Lambda_1) \cong
& L_{A_7}(4,\Lambda_3+\Lambda_6+\Lambda_7)\oplus L_{A_7}(4,\Lambda_1+\Lambda_4+ \Lambda_5+\Lambda_6)\\
& \oplus L_{A_7}(4,\Lambda_2+\Lambda_3+ \Lambda_4+\Lambda_7)\oplus L_{A_7}(4,\Lambda_1+ \Lambda_2+\Lambda_5), \\
M(\Lambda_1,\Lambda_1,0) \cong
%& L_{A_7}(4,\Lambda_3+\Lambda_4+\Lambda_5)\oplus L_{A_7}(4,\Lambda_1+\Lambda_2+ \Lambda_3+\Lambda_6)\\
%& 
%\oplus L_{A_7}(4,\Lambda_1+\Lambda_4+\Lambda_7)\oplus L_{A_7}(4,\Lambda_2+ \Lambda_5+\Lambda_6+\Lambda_7), \\
M(\Lambda_1,0,\Lambda_1) \cong 
& L_{A_7}(4,\Lambda_3+\Lambda_4+\Lambda_5)\oplus L_{A_7}(4,\Lambda_1+\Lambda_2+ \Lambda_3+\Lambda_6)\\
& \oplus L_{A_7}(4,\Lambda_1+\Lambda_4+\Lambda_7)\oplus L_{A_7}(4,\Lambda_2+ \Lambda_5+\Lambda_6+\Lambda_7). 
\end{split}
\] 
\end{lemma}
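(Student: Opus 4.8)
The plan is to split the statement into the counting and parametrization of the irreducible $M(0,0,0)$-modules, and the determination of their $L_{A_7}(4,0)$-module structure.

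For the first part, I would apply the mirror extension of Proposition \ref{P:Mirror} to the mutually commutant pair $V^1=L(\g_{234})$ and $V^2=M(0,0,0)=\Com_V(L(\g_{234}))$ inside the holomorphic VOA $V$. Since $L(\g_{234})\cong L_{A_1}(1,0)^{\otimes3}$ has exactly $2^3=8$ inequivalent irreducible modules $\bigotimes_{i=1}^3 L_{A_1}(1,\lambda_i)$ with $\lambda_i\in\{0,\Lambda_1\}$, and $V$ is holomorphic, Proposition \ref{P:Mirror} (4) shows that all of them occur in $V$, while Proposition \ref{P:Mirror} (1) provides a fusion-ring isomorphism $\psi$ onto the set of irreducible $M(0,0,0)$-modules occurring in $V$, each with multiplicity one. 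Hence $M(0,0,0)$ has exactly $8$ irreducible modules, namely the multiplicity spaces $M(\lambda_1,\lambda_2,\lambda_3)=\psi(\bigotimes_i L_{A_1}(1,\lambda_i))$, and this reproduces the decomposition \eqref{Eq:decA74}.

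The module $M(0,0,0)$ itself is immediate: combining \eqref{Eq:A74dec} with the $L_{A_7}(4,0)$-module structure of $\tilde{L}_{A_7}(4,0)$ from Section \ref{S:A7} and of $\tilde{L}_{A_7}(4,\Lambda_4+\Lambda_5+\Lambda_7)$ from Lemma \ref{L:A7mod} yields precisely the eight summands in the first line. I would also note that this list equals $\{L_{A_7}(4,\psi(a))\mid a\in\Pi_0\}$ read off from Table \ref{tableMir}, so that $M(0,0,0)$ is the mirror of the conformal extension $L_{D_{10}}(1,0)=\bigoplus_{a\in\Pi_0}L_{A_3}(8,a)$ of \eqref{Eq:D10} across the dual pair $L_{A_7}(4,0)\otimes L_{A_3}(8,0)\subset L_{A_{31}}(1,0)$.

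For the remaining six modules I would keep this dual pair in play, since $\psi$ transports combinations of irreducible $L_{A_3}(8,0)$-modules to combinations of irreducible $L_{A_7}(4,0)$-modules. As $L_{A_3}(8,0)$ sits in the lattice VOA $L_{D_{10}}(1,0)$ as a full subVOA, each irreducible $L_{D_{10}}(1,0)$-module restricts to a finite, explicitly computable sum of $L_{A_3}(8,0)$-modules, and the images of these sums under $\psi$ furnish the candidate $L_{A_7}(4,0)$-content of the $M(\lambda_1,\lambda_2,\lambda_3)$. To attach the correct label $(\lambda_1,\lambda_2,\lambda_3)$ I would impose two constraints from \eqref{Eq:decA74}: first, since $L_{A_1}(1,\Lambda_1)$ has conformal weight $1/4$ and $V$ is $\Z$-graded, a summand $L_{A_7}(4,\mu)$ of conformal weight $h_\mu$ can appear in $M(\lambda_1,\lambda_2,\lambda_3)$ only if $h_\mu+\tfrac14\#\{i:\lambda_i=\Lambda_1\}\in\Z$; second, because each $M(\lambda_1,\lambda_2,\lambda_3)$ is a simple current of $M(0,0,0)$ (as noted after \eqref{Eq:decA74}), Lemma \ref{L:QD} forces its total $L_{A_7}(4,0)$-quantum dimension to equal that of $M(0,0,0)$. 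Together with the fusion rules of $L_{A_7}(4,0)$ computed by ``Kac'' \cite{kac}, these constraints pin down each decomposition, and the outcome agrees with the conformal-net computation of \cite[Section 3.1.3]{Xu}.

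The main obstacle is this last determination. Simple currents of $L_{A_7}(4,0)$ are of little help: the simple-current group is $\Z_8=\langle L_{A_7}(4,4\Lambda_1)\rangle$, but $M(0,0,0)$ already absorbs the order-$4$ subgroup $\langle L_{A_7}(4,4\Lambda_2)\rangle$, so fusing $M(0,0,0)$ with simple currents reaches only $M(\Lambda_1,0,0)$. The six modules $M(0,\Lambda_1,\Lambda_1)$, $M(\Lambda_1,\Lambda_1,\Lambda_1)$, $M(0,\Lambda_1,0)\cong M(0,0,\Lambda_1)$ and $M(\Lambda_1,\Lambda_1,0)\cong M(\Lambda_1,0,\Lambda_1)$ involve genuinely non-simple-current $L_{A_7}(4,0)$-modules such as $L_{A_7}(4,2\Lambda_5+2\Lambda_7)$, which are invisible to simple-current fusion; isolating them is where the real work lies, namely evaluating $\psi$ on all the relevant level-$8$ dominant weights of $A_3$ and bookkeeping the $L_{A_3}(8,0)$-content of the nontrivial $L_{D_{10}}(1,0)$-modules, cross-checked against the conformal-weight and quantum-dimension constraints above.
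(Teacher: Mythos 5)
Your first two steps are sound and match what the paper actually does: the count of eight irreducibles follows from Proposition \ref{P:Mirror} (1),(4) applied to the commutant pair $L(\g_{234})$, $M(0,0,0)$ inside the holomorphic $V$, and the decomposition of $M(0,0,0)$ itself is just \eqref{Eq:A74dec} combined with Lemma \ref{L:A7mod}. But you should be aware that the paper does not prove the remaining six decompositions at all: it imports them verbatim from \cite[Section 3.1.3]{Xu}, after using the uniqueness theorem of \cite{LS6} to identify $V$ with the VOA underlying Xu's conformal net. Your proposal instead claims an independent derivation, and that derivation has a genuine gap exactly at the step you flag as ``the real work.''

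Concretely: your candidate-generating mechanism cannot produce the required data. The lattice VOA $L_{D_{10}}(1,0)$ has only $4$ irreducible modules, while the eight $M(\lambda_1,\lambda_2,\lambda_3)$ exhibit $6$ distinct $L_{A_7}(4,0)$-contents, so ``restrict each irreducible $L_{D_{10}}(1,0)$-module to $L_{A_3}(8,0)$ and apply $\psi$'' is numerically insufficient. Worse, $L_{D_{10}}(1,0)\otimes L_{A_7}(4,0)$ is \emph{not} a subVOA of $L_{A_{31}}(1,0)$ (that is precisely why the extension is a ``mirror'' one), and the map $\psi$ of Proposition \ref{P:Mirror} is defined only on the set $\Pi$ of $A_3$-weights occurring in the vacuum module $L_{A_{31}}(1,0)$, with Table \ref{tableMir} recording it only on $\Pi_0$; weights such as $2\Lambda_5+2\Lambda_7$ for $A_7$ arise from non-vacuum sectors of the level-rank duality, which the paper's toolkit does not supply. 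Finally, even granting the six contents, your two constraints cannot attach them to the correct labels: $M(\Lambda_1,0,0)$ and $M(0,\Lambda_1,0)$ satisfy identical conformal-weight congruences (both pair with a single $L_{A_1}(1,\Lambda_1)$ of weight $1/4$) and identical quantum-dimension conditions (all seven are simple currents of $M(0,0,0)$), yet have visibly different $L_{A_7}(4,0)$-contents; distinguishing them requires the $\Z_2^3$ fusion structure and ultimately the external input from \cite{Xu}. So either carry out the full level-rank branching for the non-vacuum sectors, or do what the paper does and cite \cite[Section 3.1.3]{Xu} together with the identification of $V$ via \cite{LS6}.
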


\begin{lemma}\label{iAT}
Any element of $I(V)$ acts on $M(0,0,0)$ as the identity. 
\end{lemma}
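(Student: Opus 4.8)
The plan is to show that every $g\in I(V)$ acts by a scalar on each homogeneous piece of $M(0,0,0)$, and then to force all of these scalars to be $1$ by playing off the simple‑current grading of $\tilde L_{A_7}(4,0)$ against the fusion products of the non‑simple‑current summands.

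First I would record the structural reduction. Since $g$ fixes $V_1$ pointwise it fixes $\g_{234}$, hence stabilises $L(\g_{234})$ and its commutant $M(0,0,0)=\Com_V(L(\g_{234}))$, and it acts trivially on the full subVOA $L(\g_1)\cong L_{A_7}(4,0)$. By Lemma \ref{L:modCA11} the VOA $M(0,0,0)$ is a multiplicity‑free sum of eight irreducible $L_{A_7}(4,0)$‑modules, so Lemma \ref{L:mult} shows that $g$ acts on each summand by a scalar, equal to $1$ on $L_{A_7}(4,0)$ itself. In particular $g$ preserves the subVOA $\tilde L_{A_7}(4,0)$ and the module $\tilde L_{A_7}(4,\Lambda_4+\Lambda_5+\Lambda_7)$ of \eqref{Eq:A74dec}. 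Write $C_i=L_{A_7}(4,4\Lambda_{2i})$ $(i\in\Z_4)$ for the four simple‑current summands and $N_j$ $(j\in\Z_4)$ for the remaining four, ordered so that $C_i\boxtimes C_j\cong C_{i+j}$ and $C_i\boxtimes N_j\cong N_{i+j}$. Because $\tilde L_{A_7}(4,0)$ is the $\Z_4$‑graded simple current extension of $L_{A_7}(4,0)$ of Section \ref{S:A7}, the scalars of $g$ on $C_0,\dots,C_3$ are $1,\zeta,\zeta^2,\zeta^3$ for a fourth root of unity $\zeta$, and the scalar on $N_j$ is $\mu\zeta^{\,j}$, where $\mu$ is the scalar on $N_0$.

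The heart of the argument is to determine which summands actually occur in $N_0\boxtimes_{L_{A_7}(4,0)}N_0$. Since $g$ is a VOA automorphism, every nonzero product $Y(a,z)b$ with $a,b\in N_0$ landing in a summand $C_k$ (resp.\ $N_k$) of $M(0,0,0)$ yields $\mu^2=\zeta^{k}$ (resp.\ $\mu^2=\mu\zeta^{k}$). Using \cite{kac} I would compute this fusion product and exhibit two simple currents $C_k,C_{k'}$ with $k-k'$ \emph{odd} both occurring in it; together they give $\zeta^{k-k'}=1$, and as $\zeta^4=1$ this forces $\zeta=1$. Then any non‑simple‑current summand $N_k\subseteq N_0\boxtimes N_0$ gives $\mu^2=\mu$, whence $\mu=1$. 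With $\zeta=\mu=1$ all eight scalars are $1$, so $g$ is the identity on $M(0,0,0)$, as claimed. (For the final step $\mu=1$ one may alternatively invoke Proposition \ref{P:DJX} and the quantum dimension identity of Lemma \ref{L:QD} applied to the holomorphic $V$, via Proposition \ref{P:qdim}; but the direct fusion route is cleaner.)

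The main obstacle is precisely this fusion computation, and the conceptual point that makes it work is that $[4\Lambda_2]$ is a simple current of \emph{trivial} monodromy charge with $N_0$: the conformal weights of $C_1$, $N_0$ and $C_1\boxtimes N_0$ are all integral, so $Q_{C_1}(N_0)=h_{N_0}+h_{C_1}-h_{N_1}\equiv0\pmod 1$. Consequently the $\Z_4$‑grading that organises $\tilde L_{A_7}(4,0)$ is \emph{not} respected by the $L_{A_7}(4,0)$‑fusion of the modules $N_j$, so $N_0\boxtimes N_0$ meets several degree classes at once; it is exactly this charge mixing that collapses the putative nontrivial scalars $\zeta,\mu$. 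I expect the delicate part to be the bookkeeping of which $C_k$ and $N_k$ actually appear, and checking (via their conformal weights) that the corresponding products are genuinely nonzero inside $M(0,0,0)$ rather than merely allowed by the fusion rules.
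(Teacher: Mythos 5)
Your structural reduction is sound and matches the paper's: $g$ stabilizes $M(0,0,0)$, acts trivially on $L_{A_7}(4,0)$, acts by the scalars $1,\zeta,\zeta^2,\zeta^3$ ($\zeta^4=1$) on the simple-current summands $C_i=L_{A_7}(4,4\Lambda_{2i})$ and by $\mu\zeta^{j}$ on the remaining summands $N_j$. The gap is in your key step. You propose to exhibit two simple currents $C_k,C_{k'}$ with $k-k'$ odd inside $N_0\boxtimes_{L_{A_7}(4,0)}N_0$; this is impossible. For any irreducible module $M$ and any simple current $J$ one has $N_{MM}^{J}=\dim\hom(M\boxtimes J',M')$, and since $M\boxtimes J'$ is irreducible this equals $1$ exactly when $M\cong M'\boxtimes J$ and $0$ otherwise; hence at most one element of the cyclic group $\{C_k\}$ can occur in $N_0\boxtimes N_0$. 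Concretely $N_0'=L_{A_7}(4,\Lambda_1+\Lambda_3+\Lambda_4)=N_2$, so the unique simple current occurring is $C_2$, which only yields $\mu^2=\zeta^2$ and gives no control on $\zeta$ itself. The ``charge mixing'' you invoke does not occur among the simple-current constituents, so your route to $\zeta=1$ collapses.

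The fallback through non-simple-current constituents $N_k\subseteq N_0\boxtimes N_0$ also does not close the argument: the relation $\mu^2=\mu\zeta^{k}$ is valid only if the projection of $Y(a,z)b$ ($a,b\in N_0$) onto the summand $N_k$ of $M(0,0,0)$ is actually nonzero, and this is a property of the particular vertex algebra structure on $M(0,0,0)$, not of the fusion ring; simplicity guarantees $Y(a,z)b\neq0$ but not that every fusion channel is realized. This is precisely the information the paper must import from elsewhere: its proof reduces to an order-two automorphism $f$ acting as $-1$ on $C_1,C_3$ and as $\pm\varepsilon$ on the $N_j$, and then excludes both signs of $\varepsilon$ by transporting the putative fixed-point subVOA through the mirror extension $L_{A_7}(4,0)\otimes L_{A_3}(8,0)\subset L_{A_{31}}(1,0)$ into $L_{D_{10}}(1,0)$: one sign would force a proper subVOA of $L_{D_{10}}(1,0)$ to contain all of $L_{D_{10}}(1,0)_1$ (hence to generate the whole VOA), and the other would realize a Lie algebra of type $A_3$ as the fixed points of an involution of $D_{10}$, which does not exist. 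Your proposal contains no substitute for this global input, so the case $\zeta=-1$ (equivalently $f\neq \mathrm{id}$) remains open in your argument.
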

\begin{proof} Let $g\in I(V)$. Then $g$ stabilizes $L(\g_{234})$ and so  does $\Com_V(L(\g_{234}))=M(0,0,0)$.
Recall from \eqref{Eq:A74dec} that $\Com_V(L(\g_{234}))$ contains a full subVOA $\tilde{L}_{A_7}(4,0)$.
Since $g$ acts on each irreducible $L_{A_7}(4,0)$-submodule of $\Com_V(L(\g_{234}))$ as a scalar, it stabilizes $\tilde{L}_{A_7}(4,0)$. 
Since $\tilde{L}_{A_7}(4,0)$ is a simple current extension of $L_{A_7}(4,0)$ graded by $D\cong \Z_4$ (see Section \ref{S:A7}), we have $g= \xi^j\in D^*$ on $\tilde{L}_{A_7}(4,0)$ for some $j$, where $\xi\in \Aut( \tilde{L}_{A_7}(4,0))$ is defined by 
\[
\xi=\begin{cases}
1 & \text{ on } L_{A_7}(4,0),\\
(\sqrt{-1})^i & \text{ on } L_{A_7}(4,4\Lambda_{2i}),\ (i=1,2,3).\\
%\sqrt{-1} & \text{ on } L_{A_7}(4,4\Lambda_2),\\
%-1 & \text{ on } L_{A_7}(4,4\Lambda_4),\\
%-\sqrt{-1} & \text{ on } L_{A_7}(4,4\Lambda_6).
\end{cases}
\]  

Suppose, for a contradiction, that $g=\xi^j \neq id$ on $\tilde{L}_{A_7}(4,0)$. Set $f= g$ if $j=2$ and $f=g^2$ if $j=1,3$. Then $f$ acts trivially on $L_{A_7}(4,4\Lambda_4)$ and acts as $-1$ on both $L_{A_7}(4,4\Lambda_2)$ and $L_{A_7}(4,4\Lambda_6)$. 
By \eqref{Eq:A74dec}, the $\tilde{L}_{A_7}(4,0)$-module structure of $\Com_V(L(\g_{234}))$ is 
$$
\Com_V(L(\g_{234}))\cong\tilde{L}_{A_7}(4,0)\oplus\tilde{L}_{A_7}(4,\Lambda_4+\Lambda_5+\Lambda_7),
$$
and by Lemma \ref{L:A7mod}, the $L_{A_7}(4,0)$-module structure of $\tilde{L}_{A_7}(4,\Lambda_4+\Lambda_5+\Lambda_7)$ is
\begin{align*}
\tilde{L}_{A_7}(4,\Lambda_4+\Lambda_5+\Lambda_7)\cong&L_{A_7}(4,\Lambda_4+\Lambda_5+\Lambda_7)\oplus L_{A_7}(4,\Lambda_2+\Lambda_3+\Lambda_5+\Lambda_6)\\
&\oplus L_{A_7}(4,\Lambda_1+\Lambda_3+\Lambda_4)\oplus L_{A_7}(4,\Lambda_1+\Lambda_2+\Lambda_6+\Lambda_7).
\end{align*}
Note that $$\tilde{L}_{A_7}(4,\Lambda_4+\Lambda_5+\Lambda_7)\cong\tilde{L}_{A_7}(4,0)\boxtimes_{L_{A_7}(4,0)}L_{A_7}(4,\Lambda_4+\Lambda_5+\Lambda_7),$$
and 
\begin{equation}\label{nscA74}
\begin{split}
L_{A_7}(4,\Lambda_2+\Lambda_3+\Lambda_5+\Lambda_6)& \cong L_{A_7}(4,4\Lambda_2)\boxtimes_{L_{A_7}(4,0)}L_{A_7}(4,\Lambda_4+\Lambda_5+\Lambda_7)\\
L_{A_7}(4,\Lambda_1+\Lambda_3+\Lambda_4)& \cong L_{A_7}(4,4\Lambda_4)\boxtimes_{L_{A_7}(4,0)}L_{A_7}(4,\Lambda_4+\Lambda_5+\Lambda_7),\\
L_{A_7}(4,\Lambda_1+\Lambda_2+\Lambda_6+\Lambda_7)
& \cong L_{A_7}(4,4\Lambda_6)\boxtimes_{L_{A_7}(4,0)}L_{A_7}(4,\Lambda_4+\Lambda_5+\Lambda_7). 
\end{split}
\end{equation}
By the fusion rules, one can verify that $L_{A_7}(4,\Lambda_2+\Lambda_3+\Lambda_5+\Lambda_6)$ is self-contragredient as an $L_{A_7}(4,0)$-module and hence $f$ acts on it as $\varepsilon\in\{\pm1\}$.
By \eqref{nscA74}, $f$ acts on $L_{A_7}(4,\Lambda_1+\Lambda_3+\Lambda_4)$, $L_{A_7}(4,\Lambda_1+\Lambda_2+\Lambda_6+\Lambda_7)$ and $L_{A_7}(4,\Lambda_4+\Lambda_5+\Lambda_7)$ as $-\varepsilon$, $\varepsilon$ and $-\varepsilon$, respectively.

Assume $\varepsilon=1$.
Then 
\[
\begin{split}
 (\Com_V(L(\g_{234})))^f = &L_{A_7}(4,0)\oplus L_{A_7}(4,4\Lambda_4)\oplus  L_{A_7}(4,\Lambda_2+\Lambda_3+\Lambda_5+\Lambda_6)\\ &
\oplus L_{A_7}(4,\Lambda_1+\Lambda_2+\Lambda_6+\Lambda_7)
\end{split}
\] 
forms a proper subVOA of $\Com_V(L(\g_{234}))$. 
By Proposition \ref{P:Mirror} (2) and Section 5.3 (cf.\ \cite[Section 3.1.3]{Xu}), along with the uniqueness of $V$, we see that $\Com_V(L(\g_{234}))$ corresponds to an extension of $L_{A_3}(8,0)$ isomorphic to $L_{D_{10}}(1,0)$ via the mirror extension of the embedding $L_{A_7}(4,0)\otimes L_{A_3}(8,0)\subset L_{A_{31}}(1,0)$.
In fact, by \eqref{Eq:D10} and Table \ref{tableMir}, the subVOA $(\Com_V(L(\g_{234})))^f$ corresponds the proper subVOA $X$ of $L_{D_{10}}(1,0)$. As an $L_{A_3}(8,0)$-module, $X$ decomposes as
\[
%\tiny
\begin{split}
&X\cong L_{A_3}(8,0)\oplus L_{A_3}(8,8\Lambda_2)
\oplus L_{A_3}(8,\Lambda_1+2\Lambda_2+\Lambda_3)
\oplus L_{A_3}(8,\Lambda_1+4\Lambda_2+\Lambda_3).
\end{split}
\]
By Table \ref{tableMir}, we have $\dim X_1=190$, and $X_1=L_{D_{10}}(1,0)_1$.
Then $L_{D_{10}}(1,0)$ is generated by $X$, which contradicts that $X$ is a proper subVOA of $L_{D_{10}}(1,0)$.

Assume $\varepsilon=-1$.
Then 
\[
\begin{split}
 &(\Com_V(L(\g_{234})))^f \\
 = & L_{A_7}(4,0)\oplus L_{A_7}(4,4\Lambda_4)\oplus  L_{A_7}(4,\Lambda_4+\Lambda_5+\Lambda_7) %\\ & 
\oplus L_{A_7}(4,\Lambda_1+\Lambda_3+\Lambda_4)
\end{split}
\] 
forms a proper subVOA of $\Com_V(L(\g_{234}))$.
Let $W$ be the $(-1)$-eigenspace of $f$.
Then by Proposition \ref{P:DJX}, $\Com_V(L(\g_{234}))=(\Com_V(L(\g_{234})))^f\oplus W$ is a $\Z_2$-graded simple current extension.
%In particular, $\qdim_{\Com_V(L(\g_{234}))^f} W=1$, and
By Lemma \ref{L:QD}, we have $\qdim_{L_{A_7}(4,0)}\Com_V(L(\g_{234}))^f=\qdim_{L_{A_7}(4,0)} W$.
By the similar argument as above,  
\[
\begin{split}
X\cong L_{A_3}(8,0)\oplus L_{A_3}(8,8\Lambda_2)
\oplus L_{A_3}(8,2\Lambda_1+\Lambda_2+4\Lambda_3)\oplus L_{A_3}(8,4\Lambda_1+\Lambda_2+2\Lambda_3)
\end{split}
\] 
forms a proper subVOA of $L_{D_{10}}(1,0)$ and 
\[
\begin{split}
Y\cong  L_{A_3}(8,8\Lambda_1)\oplus L_{A_3}(8,8\Lambda_3)
\oplus L_{A_3}(8,\Lambda_1+2\Lambda_2+\Lambda_3)
\oplus L_{A_3}(8,\Lambda_1+4\Lambda_2+\Lambda_3)
\end{split}
\]
is a module of $X$. Moreover, $X\oplus Y\cong L_{D_{10}}(1,0)$. 
By Proposition \ref{P:Mirror} (3) and Table \ref{tableMir}, we have $\qdim_{L_{A_3}(8,0)}X=\qdim_{L_{A_7}(4,0)}(\Com_V(L(\g_{234})))^f$ and $\qdim_{L_{A_3}(8,0)}Y=\qdim_{L_{A_7}(4,0)} W$.
Hence $\qdim_{L_{A_3}(8,0)}X=\qdim_{L_{A_3}(8,0)}Y$, and $Y$ is a simple current module for $X$.
Since $X\oplus Y$ is isomorphic to $L_{D_{10}}(1,0)$ as a VOA, we must have $Y\fusion_X Y =X$; otherwise, the multiplicity of $X$ in $Y\fusion _X Y$ is zero and $Y$ is a proper submodule of $X\oplus Y$ as an $X\oplus Y$-module, which is not possible as $L_{D_{10}}(1,0)$ is simple. 
Hence $X\oplus Y$ is a $\Z_2$-graded simple current extension and $X$ is the fixed points of the order $2$ automorphism associated with the grading.
However, the weight one Lie algebra of $X$ has the type $A_3$, which cannot be realized as a fixed point Lie subalgebra of type $D_{10}$ for any order $2$ automorphism (cf. \cite[Chapter X, Theorem~6.1, TABLE II and pp.513--515]{H}), which is a contradiction.  

Thus $g$ is the identity on $M(0,0,0)=\Com_V(L(\g_{234}))$.
\end{proof}

\begin{theorem}\label{A74}
We have $I(V)=\langle \sigma_{(0,\Lambda_1,0,0)}, \sigma_{(0,0,\Lambda_1,0)}, \sigma_{(0,0,0, \Lambda_1)}\rangle\cong \Z_2^3$. 
\end{theorem}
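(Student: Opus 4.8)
The plan is to identify $I(V)$ with the character group $D^*$ of the grading group $D=\Z_2^3$ of the simple current extension \eqref{Eq:decA74}, using Lemma \ref{iAT} as the single nontrivial input. First I would check that the three displayed inner automorphisms lie in $I(V)$ and determine how they act on the grading. Since $\Lambda_1$ is the fundamental weight of a type $A_1$ ideal, one has $(\Lambda_1\mid\alpha)=1$ for its root $\alpha$, so $\sigma_{(0,\Lambda_1,0,0)}$ fixes $V_1$ pointwise and hence lies in $I(V)$; moreover $\sigma_{(0,\Lambda_1,0,0)}^2=\sigma_{(0,2\Lambda_1,0,0)}$ corresponds to a root of $\g_2$ and so acts trivially on every weight occurring in $V$, making each generator an involution. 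Because $\Lambda_1$ is supported on $\g_2\subset\g_{234}$, the operator $(\Lambda_1)_{(0)}$ vanishes on the commutant $M(0,0,0)=\Com_V(L(\g_{234}))$, while on the $\g_2$-tensor factor it acts as the scalar $\exp(-2\pi\sqrt{-1}(\Lambda_1\mid\mu))$, which equals $+1$ on $L_{A_1}(1,0)$ and $-1$ on $L_{A_1}(1,\Lambda_1)$ (using $(\Lambda_1\mid\alpha)\in\Z$ and $(\Lambda_1\mid\Lambda_1)=1/2$). Hence $\sigma_{(0,\Lambda_1,0,0)}$ acts on each homogeneous component of \eqref{Eq:decA74} as the corresponding coordinate sign character of $D\cong\Z_2^3$, and likewise for the other two generators. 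These three coordinate characters form a basis of $D^*$, so the subgroup they generate is exactly $D^*\cong\Z_2^3$.

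For the reverse inclusion I would take $g\in I(V)$. Being in $I(V)$, $g$ fixes $V_1$ pointwise, hence fixes $L(\g_{234})\cong L_{A_1}(1,0)^{\otimes3}$ pointwise; by Lemma \ref{iAT} it also fixes $M(0,0,0)=\Com_V(L(\g_{234}))$ pointwise. Therefore $g$ acts as the identity on the degree-zero subalgebra $V^0:=M(0,0,0)\otimes L(\g_{234})$ of the $\Z_2^3$-graded simple current extension \eqref{Eq:decA74}, and so $g$ commutes with the $V^0$-action on $V$, i.e.\ it is a $V^0$-module endomorphism of $V$. The eight homogeneous pieces $V^\alpha=M(\alpha)\otimes\bigotimes_i L_{A_1}(1,\lambda_i)$ are pairwise inequivalent irreducible $V^0$-modules, since they are already distinguished by their $L(\g_{234})$-tensor factor, which records $(\lambda_1,\lambda_2,\lambda_3)\in\{0,\Lambda_1\}^3$ faithfully. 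Consequently $g$ preserves each $V^\alpha$ and, by Schur's lemma, acts on it as a scalar $c_\alpha$.

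Finally, applying the automorphism identity $gY(a,z)b=Y(ga,z)gb$ to $a\in V^\alpha$ and $b\in V^\beta$ forces $c_{\alpha+\beta}=c_\alpha c_\beta$ together with $c_0=1$, so $\alpha\mapsto c_\alpha$ is a character of $D$ and $g\in D^*$. Combined with the first paragraph this gives $I(V)=D^*=\langle\sigma_{(0,\Lambda_1,0,0)},\sigma_{(0,0,\Lambda_1,0)},\sigma_{(0,0,0,\Lambda_1)}\rangle\cong\Z_2^3$. I do not expect any genuine obstacle at this stage: the entire difficulty of the theorem is concentrated in Lemma \ref{iAT} (the vanishing of the $I(V)$-action on the commutant $M(0,0,0)$, established through the mirror extension $L_{A_7}(4,0)\otimes L_{A_3}(8,0)\subset L_{A_{31}}(1,0)$ and the obstruction that a type $A_3$ algebra cannot be the fixed points of an order $2$ automorphism of $L_{D_{10}}(1,0)$), and once that is granted the remainder is the standard identification of the inertia group of a simple current extension with the dual of its grading group, plus the routine evaluation of the three fundamental-weight scalars.
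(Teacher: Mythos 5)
Your proposal is correct and follows essentially the same route as the paper: the paper's proof also reduces everything to Lemma \ref{iAT} and then invokes the standard fact that the inertia group of a $\Z_2^3$-graded simple current extension of $\Com_V(L(\g_{234}))\otimes L_{A_1}(1,0)^{\otimes 3}$ is the dual of the grading group, realized by the three inner automorphisms. Your write-up merely makes explicit the Schur's-lemma/character argument and the scalar computations that the paper leaves as routine.
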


\begin{proof}
Let $g\in I(V)$. By Lemma \ref{iAT},  $g=id$ on $\Com_V(L(\g_{234}))$. 
Since $V$ is a simple current extension of $\Com_V(L(\g_{234}))\otimes L_{A_1}(1,0)^{\otimes 3}$ graded by $E\cong\Z_2^3$, we have  $I(V)=E^*=\langle \sigma_{(0,\Lambda_1,0,0)}, \sigma_{(0,0,\Lambda_1,0)}, \sigma_{(0,0,0, \Lambda_1)}\rangle\cong \Z_2^3$ as a subgroup, where $\Lambda_1$ is the fundamental weight of $A_{1,1}$.
\end{proof}

Let us discuss some automorphisms of $V$ and their conjugacy classes.
We now specify the order of the (isomorphic) simple ideals $\g_2$, $\g_3$ and $\g_4$ so that $V$ decomposes as in Lemma \ref{L:modCA11}.

\begin{lemma}\label{L:flip} The following two inner automorphisms are conjugate in $\Aut (V)$:
$$\sigma_{((1/2)(\Lambda_1+\Lambda_7),\Lambda_1,\Lambda_1,0)},\quad\sigma_{((1/2)(\Lambda_1+\Lambda_7),\Lambda_1,0,\Lambda_1)}.$$

\end{lemma}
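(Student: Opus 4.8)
The plan is to produce an explicit automorphism $\hat\tau\in\Aut(V)$ that fixes the simple ideals $\g_1$ and $\g_2$ pointwise and interchanges $\g_3$ and $\g_4$; conjugation by $\hat\tau$ will then carry the first inner automorphism to the second. The mechanism is the standard identity $\phi\sigma_u\phi^{-1}=\sigma_{\phi(u)}$ for $\phi\in\Aut(V)$ and $u\in V_1$, which follows from $\phi u_{(0)}\phi^{-1}=(\phi u)_{(0)}$. Writing $u=((1/2)(\Lambda_1+\Lambda_7),\Lambda_1,\Lambda_1,0)$ in the Cartan subalgebra of $V_1=\g_1\oplus\g_2\oplus\g_3\oplus\g_4$, an automorphism that is the identity on $\g_1,\g_2$ and interchanges $\g_3\leftrightarrow\g_4$ (via the isometric identification of their Cartans, so that $\Lambda_1\mapsto\Lambda_1$) sends $u$ to $((1/2)(\Lambda_1+\Lambda_7),\Lambda_1,0,\Lambda_1)$, which is exactly the second element.

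To build $\hat\tau$, first I would define an automorphism $\tau$ of the full subVOA $U=M(0,0,0)\otimes L(\g_{234})$ by letting it act as the identity on $M(0,0,0)=\Com_V(L(\g_{234}))$ and as the transposition of the second and third tensor factors of $L(\g_{234})\cong L_{A_1}(1,0)^{\otimes3}$. This is manifestly an automorphism of $U$. Since $\g_1\subseteq M(0,0,0)$ and $\g_{234}\subseteq L(\g_{234})$, we have $V_1=U_1$, on which $\tau$ fixes $\g_1=M(0,0,0)_1$ and $\g_2$ pointwise while interchanging $\g_3$ and $\g_4$.

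The key step is to extend $\tau$ to $V$. By \eqref{Eq:decA74}, $V$ is a $\Z_2^3$-graded simple current extension of $U$, so it suffices to check that $\tau$-conjugation permutes the homogeneous components $M(\lambda_1,\lambda_2,\lambda_3)\otimes\left(\bigotimes_{i=1}^3 L_{A_1}(1,\lambda_i)\right)$. Transposing the last two tensor factors carries the component indexed by $(\lambda_1,\lambda_2,\lambda_3)$ to the one indexed by $(\lambda_1,\lambda_3,\lambda_2)$, which is legitimate precisely because $M(\lambda_1,\lambda_2,\lambda_3)\cong M(\lambda_1,\lambda_3,\lambda_2)$ as $M(0,0,0)$-modules: the six components symmetric in the last two arguments are fixed, while the two remaining pairs are matched by the isomorphisms $M(0,\Lambda_1,0)\cong M(0,0,\Lambda_1)$ and $M(\Lambda_1,\Lambda_1,0)\cong M(\Lambda_1,0,\Lambda_1)$ recorded in Lemma \ref{L:modCA11}. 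Because the induced map on the grading group is the coordinate transposition, a group automorphism of $\Z_2^3$, the extension theorem \cite[Theorem 3.3]{Sh04} yields $\hat\tau\in\Aut(V)$ restricting to $\tau$ on $U$.

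Finally, since $\hat\tau|_{V_1}=\tau|_{V_1}$ fixes $\g_1,\g_2$ and swaps $\g_3\leftrightarrow\g_4$, we get $\hat\tau(u)=((1/2)(\Lambda_1+\Lambda_7),\Lambda_1,0,\Lambda_1)$ and hence $\hat\tau\sigma_u\hat\tau^{-1}=\sigma_{\hat\tau(u)}$, which is the second inner automorphism. The only point requiring care is the compatibility in the third step: one must confirm that $\tau$ really does permute the irreducible $U$-summands of $V$, i.e.\ that the module isomorphisms of Lemma \ref{L:modCA11} supply the data needed to satisfy the hypotheses of \cite[Theorem 3.3]{Sh04}. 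Once that is in place the remaining bookkeeping is formal.
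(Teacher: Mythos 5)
The construction breaks down at your third step. For your $\tau$ (identity on $M(0,0,0)$, transposition of the last two $L_{A_1}(1,0)$ factors) to permute the homogeneous components of $V$, you need $M(0,\Lambda_1,0)\cong M(0,0,\Lambda_1)$ and $M(\Lambda_1,\Lambda_1,0)\cong M(\Lambda_1,0,\Lambda_1)$ \emph{as $M(0,0,0)$-modules}: the $L(\g_{234})$-part of the $\tau$-conjugate of the component indexed by $(\lambda_1,\lambda_2,\lambda_3)$ forces it to be compared with the component indexed by $(\lambda_1,\lambda_3,\lambda_2)$, and then the $M(0,0,0)$-parts must match. But Lemma \ref{L:modCA11} only records that these pairs have isomorphic restrictions to $L_{A_7}(4,0)$; as $M(0,0,0)$-modules the eight multiplicity spaces $M(\lambda_1,\lambda_2,\lambda_3)$ are pairwise \emph{inequivalent}, since by Proposition \ref{P:Mirror} (1) the pairing between irreducible $\Com_V(L(\g_{234}))$-modules and irreducible $L(\g_{234})$-modules occurring in the holomorphic VOA $V$ is a bijection on isomorphism classes. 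Consequently $V\circ\tau$ has an irreducible $U$-summand isomorphic to $M(0,\Lambda_1,0)\otimes L_{A_1}(1,0)\otimes L_{A_1}(1,0)\otimes L_{A_1}(1,\Lambda_1)$, which does not occur in $V$; by Lemma \ref{L:Vg} your $\tau$ cannot extend to an automorphism of $V$, so the hypotheses of \cite[Theorem 3.3]{Sh04} are not met and $\hat\tau$ does not exist.

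The obstruction is not cosmetic: the same argument shows that \emph{any} automorphism of $V$ interchanging $\g_3$ and $\g_4$ must act nontrivially on $M(0,0,0)$ — and, as the paper's proof reveals, on $\g_1$ by an inner twist of the order $2$ diagram automorphism of $A_7$. The paper therefore proceeds differently: it takes the explicit order $2$ automorphism $g$ of the framed realization of $V$ from \cite[Section 6.1]{Lam} with $V_1^g$ of type $D_{4,8}A_{1,2}U(1)$, uses Lemmas \ref{L:Vg} and \ref{L:modCA11} to see that $g$ fixes $\g_2$ and swaps $\g_3$ with $\g_4$, and then observes that the diagram automorphism fixes $\Lambda_1+\Lambda_7$, so that $g\sigma_ug^{-1}=\sigma_{g(u)}$ is the required conjugation. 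To salvage your approach you would have to replace $\mathrm{id}_{M(0,0,0)}$ by an automorphism of $M(0,0,0)$ whose conjugation action swaps $M(0,\Lambda_1,0)$ with $M(0,0,\Lambda_1)$ and $M(\Lambda_1,\Lambda_1,0)$ with $M(\Lambda_1,0,\Lambda_1)$ while fixing $\tfrac12(\Lambda_1+\Lambda_7)$ in the Cartan subalgebra of $\g_1$ — which amounts to constructing by hand the automorphism the paper obtains from \cite{Lam}.
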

\begin{proof}In \cite[Section 6.1]{Lam}, along with the uniqueness result for the case $A_{7,4}A_{1,1}^3$ (\cite{LS6}), $V$ was constructed as a framed VOA.
In addition, $V$ has an order $2$ automorphism $g$ such that $(\widetilde{V}_g)_1$ has the structure $D_{5,8}A_{1,2}$.
By the explicit action of $g$ on $V_1$, we know that $V_1^g$ has the structure $D_{4,8}A_{1,2}U(1)$.
In particular, $g$ flips two simple ideals of type $A_{1,1}$.

%Let $V_1=\bigoplus_{i=1}^4\g_i$, where the types of $\g_1$ and $\g_i$, $(2\le i\le 4)$ are $A_{7,4}$ and $A_{1,1}$, respectively.

Clearly, $g$ preserves $L(\g_{234})$ and so does $\Com_V(L(\g_{234}))(=M(0,0,0))$.
Note that the $g$-conjugation of $M(\Lambda_1,0,0)$ is neither $M(0,\Lambda_1,0)$ nor $M(0,0,\Lambda_1)$ since the number of irreducible $L_{A_7}(4,0)$-submodules are different (see Lemma \ref{L:modCA11}).
By Lemma \ref{L:Vg}, $g$ must preserves $M(\Lambda_1,0,0)\otimes L_{A_1}(1,\Lambda_1)\otimes L_{A_1}(1,0)^{\otimes2}$ in \eqref{Eq:decA74}.
Hence $g(\g_2)=\g_2$, and $g(\g_3)=\g_4$.
Note that $\g_1^g$ has the type $D_4$ and $\g_2^g=U(1)$. 
%Clearly, $g(\g_1)=\g_1$ and $g(\g_2)=\g_2$.
By \cite[Proposition 8.1]{Kac}, up to conjugation by inner automorphisms of $V$, we may assume that $g$ preserves the (fixed) Cartan subalgebra $\mathfrak{H}$ of $V_1$ and $g_{|\g_1}=\sigma_{x}\tau$ and $g_{|\g_2}=\sigma_{\Lambda_1/2}$, where $\tau\in\Aut(\g_1)$ is the order $2$ diagram automorphism on $\mathfrak{H}\cap\g_1$ and $x\in\mathfrak{H}^\tau$.
Thus $g$ gives the desired conjugation since $\Lambda_1+\Lambda_7$ is fixed by $\tau$ in $\g_1$.
\end{proof}

\begin{lemma}\label{L:conjA74}  
The conjugacy class of an order $2$ automorphism $g$ of $V$ is unique if $V^g_1$ is a Lie algebra of type $A_{6}A_{1}^2U(1)^2$ and the conformal weight of $V(g)$ is one.
%belongs to $(1/2)\Z$ and $(\tilde{V}_g)_1$ has dimension $120$.
\end{lemma}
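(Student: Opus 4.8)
The plan is to follow the strategy of Lemmas \ref{L:conjD73}, \ref{L:conjA45} and \ref{L:conjC53}: first pin down the restriction $\bar g=g|_{V_1}$ up to conjugacy in $\Aut(V)$, then lift to $g$ itself using the inertia group $I(V)\cong\Z_2^3$ from Theorem \ref{A74}, and finally invoke the conformal weight of $V(g)$ to remove the residual ambiguity. Since $V_1$ and $V_1^g$ both have Lie rank $10$, $\bar g$ is inner and preserves every simple ideal (exactly as in Lemma \ref{L:conjD73}). Comparing with Kac's classification of the fixed-point subalgebras \cite[Section 8]{Kac}, the fixed points of $\bar g$ on $\g_1$ (type $A_7$) must be of type $A_6U(1)$, on exactly one of the three ideals $\g_2,\g_3,\g_4$ (type $A_1$) they are $U(1)$, and on the remaining two they are $A_1$, so that $\bar g$ is trivial there. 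By Lemma \ref{L:conjinv} the inner involution of $A_7$ with fixed points $A_6U(1)$ is unique up to $\Inn(\g_1)$, and the involution of $A_1$ with fixed points $U(1)$ is unique; thus $\bar g$ is determined up to conjugacy by the single choice of which $A_1$ becomes $U(1)$.

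The key subtlety is that this choice genuinely matters. By Lemma \ref{L:modCA11} the module $M(\Lambda_1,0,0)$ attached to $\g_2$ has eight irreducible $L_{A_7}(4,0)$-constituents, while $M(0,\Lambda_1,0)\cong M(0,0,\Lambda_1)$ has only four. Since $\g_1$ is the unique simple ideal of type $A_7$, any automorphism of $V$ preserves $L(\g_1)$ and hence (by Lemma \ref{L:Vg}) the number of $L_{A_7}(4,0)$-constituents of a conjugated module; therefore no automorphism can carry $\g_2$ to $\g_3$ or $\g_4$. On the other hand, the automorphism of Lemma \ref{L:flip} interchanges $\g_3$ and $\g_4$. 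Hence, up to conjugation, there are exactly two candidates for $\bar g$ — call them type (a), in which $\g_2$ becomes $U(1)$, and type (b), in which $\g_3$ (equivalently $\g_4$) does — and these two are not $\Aut(V)$-conjugate to each other.

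To separate the candidates and fix the $I(V)$-twist I would compute the conformal weight of $V(g)$. Writing $g=\sigma_v$ for an appropriate $v$ in the Cartan subalgebra, $V(g)$ is realized by Li's $\Delta$-operator (Proposition \ref{Prop:twist}), and its conformal weight is obtained by minimizing the expression of Lemma \ref{Lem:lowestwt} over the constituents of the decomposition \eqref{Eq:decA74}, with the $L_{A_7}(4,0)$-content read off from Lemma \ref{L:modCA11}. Twisting $g$ by an element of $I(V)=\langle \sigma_{(0,\Lambda_1,0,0)},\sigma_{(0,0,\Lambda_1,0)},\sigma_{(0,0,0,\Lambda_1)}\rangle$ replaces $v$ by $v+w$ and alters this minimum, so the requirement that the conformal weight equal $1$ isolates a definite conjugacy class: I expect the value $1$ to survive in only one of the two types and, within that type, only for twists carried into one another by the flip of Lemma \ref{L:flip} and by the Weyl reflections of the $A_1$-ideals (which identify $\sigma_v$ with $\sigma_{w(v)}$).

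With $\bar g$ determined up to conjugacy and the admissible twist fixed, the lifting argument of Lemma \ref{L:conjD73} finishes the proof: given two order $2$ automorphisms $g,g'$ with $V_1^{g}\cong V_1^{g'}$ of type $A_6A_1^2U(1)^2$, one has $g'=\iota\,xgx^{-1}$ with $\iota\in I(V)$ and $x\in\Aut(V)$, and since $I(V)$ is normal the conformal-weight constraint together with the flip and Weyl conjugations forces $g'$ and $g$ into the same class. The main obstacle is precisely the conformal-weight computation: one must evaluate Lemma \ref{Lem:lowestwt} on enough constituents to verify that the value $1$ is attained by exactly one conjugacy class, and it is here that the asymmetry between $\g_2$ and $\{\g_3,\g_4\}$ recorded in Lemma \ref{L:modCA11} becomes decisive.
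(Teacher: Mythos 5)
Your overall architecture matches the paper's: restrict to $V_1$, invoke Lemma \ref{L:conjinv} and Theorem \ref{A74} to reduce to $g=\sigma_v$ for $v$ in an explicit coset, then use the conformal weight of $V(g)$ to pin down $v$. But there is a gap at the step where you must rule out the candidates in which the nontrivial $A_1$-ideal is $\g_3$ or $\g_4$ rather than $\g_2$. You correctly note (via the $8$-versus-$4$ constituent counts in Lemma \ref{L:modCA11}) that these candidates are not $\Aut(V)$-conjugate to the $\g_2$-candidate, but you then defer their elimination to an unexecuted conformal-weight minimization, saying you ``expect'' the value $1$ to survive in only one type. That is not the mechanism that works: for $u=\tfrac12(\Lambda_1,0,\Lambda_1,0)$ or $\tfrac12(\Lambda_1,0,0,\Lambda_1)$, \emph{every} lift $\sigma_v$ with $v\in u+\Z\langle(0,\Lambda_1,0,0),(0,0,\Lambda_1,0),(0,0,0,\Lambda_1)\rangle$ has order $4$ on $V$, because the weight $(4\Lambda_j,\Lambda_1,0,0)$ occurring in $M(\Lambda_1,0,0)\otimes L_{A_1}(1,\Lambda_1)\otimes L_{A_1}(1,0)^{\otimes 2}$ pairs with $v$ to a value in $\tfrac14\Z\setminus\tfrac12\Z$ (the $\g_1$-contribution $2(\Lambda_1|\Lambda_j)$ has denominator $4$ and nothing in the $\g_3,\g_4$ slots can repair it). So these candidates contradict the hypothesis that $g$ has order $2$ before any twisted conformal weight is computed; the conformal weight of $V(g)$ is not the discriminating invariant there, and a computation via Lemma \ref{Lem:lowestwt} alone would not detect the problem.

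Two smaller points. First, once $u=\tfrac12(\Lambda_1,\Lambda_1,0,0)$ is forced, the final step is much simpler than your planned minimization over all constituents of \eqref{Eq:decA74}: after using a Weyl reflection to replace $-\Lambda_1$ by $\Lambda_1$ in the $\g_2$-slot, only four candidates for $v$ remain, and for the three with $v\neq(\tfrac12\Lambda_1,\tfrac12\Lambda_1,\Lambda_1,\Lambda_1)$ one has $\langle v|v\rangle/2<1$, so by \eqref{Eq:Lh} the vacuum vector already lies in conformal weight $<1$ of $V^{(v)}$ --- no use of Lemma \ref{L:flip} is needed in this lemma (the paper reserves it for Lemma \ref{L:conjA74-3}). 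Second, your proposal is a plan rather than a proof: the decisive computations (the order check above and the evaluation of $\langle v|v\rangle$) are exactly the content of the paper's argument and are left unverified in your write-up.
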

\begin{proof}
%Let $f$ be an order $2$ automorphism of $V_1$ such that $V^f_1$ is a semisimple Lie algebra of type $A_{6,4}A_{1,1}^2U(1)^2$.
Let $\bar{g}$ be the restriction of $g$ to $V_1$.
Since the Lie ranks of $V_1$ and $V^g_1$ are the same, $\bar{g}$ is inner and it preserves every simple ideal of $V_1$.
By the type of $V_1^g$ (cf.\ \cite[Section 8]{Kac}), the fixed point sets $\g_1^{\bar{g}}$ and $\g_{234}^{\bar{g}}$ have the type $A_{6,4}U(1)$ and $A_{1,1}^2U(1)$, respectively.
By Lemma \ref{L:conjinv}, $\bar{g}$ is unique up to conjugation by an inner automorphism of $V_1$;
%$\bar{g}$ is unique up to conjugation by an element in $\Inn(V_1)$.
%By the argument above, 
we may assume that $\bar{g}=\sigma_{u}$ for some $$u\in\left\{\frac12(\Lambda_1,\Lambda_1,0,0), \frac12(\Lambda_1,0,\Lambda_1,0),\frac12(\Lambda_1,0,0,\Lambda_1)\right\}.$$
By Theorem \ref{A74}, we have $g=\sigma_v$ on $V$ for some $$v\in u+\Z\langle (0,0,0,\Lambda_1), (0,0,\Lambda_1,0), (0,\Lambda_1,0,0)\rangle.$$
%Then $g\in\sigma_uI(V)$.

We now recall the $\Com_V(L(\g_{234}))\otimes L(\g_{234})$-modules structure of $V$ in \eqref{Eq:decA74} (see also Lemma \ref{L:modCA11}).
%, where $\g_{234}$ is the semisimple ideal of $V_1$ of type $A_{1,1}^3$.
Note that the weight $(4\Lambda,\Lambda_1,0,0)$ appears in the module $M(\Lambda_1,0,0)\otimes L_{A_1}(1,\Lambda_1)\otimes L_{A_1}(1,0)^{\otimes2}$.
If $u=(1/2)(\Lambda_1,0,\Lambda_1,0)$ or $u=(1/2)(\Lambda_1,0,0,\Lambda_1)$, then we have $(u|(4\Lambda,\Lambda_1,0,0))\in(1/4)\Z$.
% the inner product of $v$ and weights in $M(\Lambda_1,0,0)\otimes L_{A_1}(1,\Lambda_1)\otimes L_{A_1}(1,0)^{\otimes2}$ belongs to $1/4+(1/2)\Z$.
Hence $\sigma_v$ has order $4$ on $V$, which is a contradiction.
Thus $u=(1/2)(\Lambda_1,\Lambda_1,0,0)$.
%%\color{black}
%$I(V)=\langle \sigma_{(0,0,0,\Lambda_1)},\sigma_{(0,0,\Lambda_1,0)},\sigma_{(0,\Lambda_1,0,0)}\rangle$.
Since $\sigma_{(1/2)(\Lambda_1,\Lambda_1,0,0)}$ is conjugate to $\sigma_{(1/2)(\Lambda_1,-\Lambda_1,0,0)}$ by inner automorphism of $V$, we have $$v\in\left\{(\frac12\Lambda_1,\frac12\Lambda_1,0,0),\ (\frac12\Lambda_1,\frac12\Lambda_1,\Lambda_1,0),\  (\frac12\Lambda_1,\frac12\Lambda_1,0,\Lambda_1),\ (\frac12\Lambda_1,\frac12\Lambda_1,\Lambda_1,\Lambda_1)\right\}.$$
Suppose, for a contradiction, that $v\neq (\frac12\Lambda_1,\frac12\Lambda_1,\Lambda_1,\Lambda_1)$.
Then $\langle v|v\rangle/2<1$, and by \eqref{Eq:Lh}, the vacuum vector $\1\in V^{(v)}(\cong V(g))$ has conformal weight less than one, which is a contradiction.
Thus we obtain $v=((1/2)\Lambda_1,(1/2)\Lambda_1,\Lambda_1,\Lambda_1)$, and the conjugacy class of $g$ is unique in $\Aut (V)$.
\end{proof}

\begin{remark} For $v=((1/2)\Lambda_1,(1/2)\Lambda_1,\Lambda_1,\Lambda_1)$, the automorphism $\sigma_v\in \Aut (V)$ has order $2$; one can  check directly that the inner product $(v|\Lambda)\in(1/2)\Z$ for the highest weight $\Lambda$ of any irreducible $L(V_1)$-module with integral conformal weight at least $2$.
\end{remark}

\begin{lemma}\label{L:conjA74-2}  
The conjugacy class of an order $2$ automorphism $g$ of $V$ is unique if $V^g_1$ is a Lie algebra of type $  A_{4}A_{2} A_{1}^2U(1)^2$, the conformal weight of $V(g)$ is one and the type of $(\tilde{V}_g)_1$ is not $A_{7,4}A_{1,1}^3$.
\end{lemma}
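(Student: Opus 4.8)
The plan is to imitate the proof of Lemma \ref{L:conjA74}: reduce $g$ to an inner automorphism $\sigma_v$ with $v$ in a short explicit list, and then cut the list down to a single conjugacy class using, in turn, the order of $\sigma_v$, the conformal weight of $V(g)$, and the type of $(\widetilde{V}_g)_1$. First I would set $\bar g=g|_{V_1}$. Since the Lie ranks of $V_1$ and $V^g_1=A_4A_2A_1^2U(1)^2$ are both $10$, $\bar g$ is inner and preserves each simple ideal $\g_1,\dots,\g_4$, with $\g_1$ of type $A_{7,4}$ and $\g_2,\g_3,\g_4$ of type $A_{1,1}$. Reading off the fixed points, $\bar g|_{\g_1}$ has fixed subalgebra $A_4A_2U(1)$, so by Lemma \ref{L:conjinv} it is conjugate in $\Inn(\g_1)$ to $\sigma_{\Lambda_5/2}$, while among $\g_2,\g_3,\g_4$ exactly one carries the nontrivial involution (fixed points $U(1)$) and the other two are fixed. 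Hence, up to conjugation by $\Inn(V_1)$, $\bar g=\sigma_u$ with $u\in\{\tfrac12(\Lambda_5,\Lambda_1,0,0),\tfrac12(\Lambda_5,0,\Lambda_1,0),\tfrac12(\Lambda_5,0,0,\Lambda_1)\}$.

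Since $g$ and $\sigma_u$ agree on $V_1$, Theorem \ref{A74} gives $g=\sigma_v$ with $v\in u+\Z\langle(0,\Lambda_1,0,0),(0,0,\Lambda_1,0),(0,0,0,\Lambda_1)\rangle$. Exactly as in Lemma \ref{L:conjA74}, I would test the weight $(4\Lambda_1,\Lambda_1,0,0)$ occurring in the summand $M(\Lambda_1,0,0)\otimes L_{A_1}(1,\Lambda_1)\otimes L_{A_1}(1,0)^{\otimes2}$ of \eqref{Eq:decA74} (see Lemma \ref{L:modCA11}): its $\g_1$-pairing with $\tfrac12\Lambda_5$ equals $\tfrac34$, and this cannot be pushed into $\tfrac12\Z$ by any inertia shift unless the $A_1$-involution sits on $\g_2$; otherwise $\sigma_v$ would have order $4$, contradicting that $g$ is an involution. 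Thus $u=\tfrac12(\Lambda_5,\Lambda_1,0,0)$, and after applying the $A_1$-Weyl reflections (which identify $\tfrac12\Lambda_1$ with $\tfrac32\Lambda_1$ on $\g_2$) and the flip $\g_3\leftrightarrow\g_4$ of Lemma \ref{L:flip}, the surviving candidates are $v=(\tfrac12\Lambda_5,\tfrac12\Lambda_1,b\Lambda_1,c\Lambda_1)$ with $(b,c)\in\{(0,0),(1,0),(1,1)\}$.

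I would then bring in the conformal weight. By Proposition \ref{Prop:level} one has $\tfrac12\langle v|v\rangle=1+\tfrac14(b^2+c^2)$, and by \eqref{Eq:Lh} together with Lemma \ref{Lem:lowestwt} the conformal weight of $V(g)\cong V^{(v)}$ is the minimum over the $L(V_1)$-components of $V$ of $\ell+\sum_i\min_\mu(v_i|\mu)+\tfrac12\langle v|v\rangle$. The vacuum component contributes $\tfrac12\langle v|v\rangle$, and evaluating the low-lying components (e.g.\ those built on $L_{A_7}(4,\Lambda_3+\Lambda_5)$ inside $M(0,\Lambda_1,\Lambda_1)$) shows that this minimum equals $1$ for $(0,0)$ and for $(1,1)$, but exceeds $1$ for $(1,0)$. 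Hence the assumption that $V(g)$ has conformal weight $1$ leaves precisely the two candidates $(0,0)$ and $(1,1)$.

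Finally, the type of $(\widetilde{V}_g)_1$ must separate these two. Conformal weight $1$ forces $V(g)_{1/2}=0$, so Theorem \ref{Thm:Dimformula} yields $\dim(\widetilde{V}_g)_1=3\cdot40-0-72+24=72$ for both candidates. The obstacle is exactly here: since $\dim A_{7,4}A_{1,1}^3=\dim D_{5,4}C_{3,2}A_{1,1}^2=72$ and both algebras have rank $10$, neither the dimension formula nor the rank distinguishes them, so I must instead read the type of $(\widetilde{V}_g)_1$ off the $V^g_1$-weights of the weight-one vectors of the twisted module $V(g)$. Carrying this out should show that for $(0,0)$, where $v_{\g_3}=v_{\g_4}=0$ and $\g_3,\g_4$ remain untwisted, three $A_{1,1}$-ideals survive and $(\widetilde{V}_g)_1\cong A_{7,4}A_{1,1}^3$, whereas for $(1,1)$ the ideals $\g_3,\g_4$ are absorbed into larger simple factors and $(\widetilde{V}_g)_1\cong D_{5,4}C_{3,2}A_{1,1}^2$. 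The hypothesis $(\widetilde{V}_g)_1\not\cong A_{7,4}A_{1,1}^3$ then excludes $(0,0)$, so $g$ is conjugate to $\sigma_{(\frac12\Lambda_5,\frac12\Lambda_1,\Lambda_1,\Lambda_1)}$ and its conjugacy class is unique. This last weight-structure analysis of $V(g)$ is the hard part, precisely because the two possible targets cannot be told apart by dimension or rank.
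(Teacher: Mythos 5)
Your overall route is the paper's route: reduce $\bar g$ to $\sigma_u$ with $u$ in a three-element list, use the order-$4$ obstruction on the weight $(4\Lambda_1,\Lambda_1,0,0)$ of $M(\Lambda_1,0,0)\otimes L_{A_1}(1,\Lambda_1)\otimes L_{A_1}(1,0)^{\otimes2}$ to force the $A_1$-involution onto $\g_2$, invoke Theorem \ref{A74} to pin $g=\sigma_v$ up to inertia shifts, and then eliminate candidates using the norm $\langle v|v\rangle$ and the type of $(\widetilde V_g)_1$. (Your $\tfrac12\Lambda_5$ versus the paper's $\tfrac12\Lambda_3$ is immaterial by Lemma \ref{L:conjinv}.) But the decisive step is missing: you write that the weight-structure analysis ``should show'' that the candidate $v=(\tfrac12\Lambda_5,\tfrac12\Lambda_1,0,0)$ yields $(\widetilde V_{\sigma_v})_1\cong A_{7,4}A_{1,1}^3$, and you correctly flag this as the hard part --- which means the proof is not actually closed, since without it the hypothesis $(\widetilde V_g)_1\not\cong A_{7,4}A_{1,1}^3$ eliminates nothing. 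The paper closes it concretely: for $v=(\tfrac12\Lambda_3,\tfrac12\Lambda_1,0,0)$ one takes the $L(V_1)$-submodule $M$ of $V$ with highest weight $\lambda=(\Lambda_1+\Lambda_5+\Lambda_6,\Lambda_1,0,0)$ (conformal weight $2$), observes that $M_2$ contains a vector of weight $r(\lambda)=(-\Lambda_2-\Lambda_3-\Lambda_7,-\Lambda_1,0,0)$ with $(r(\lambda)|v)=-2$, so by \eqref{Eq:Lh} and Lemma \ref{Lem:wtTw} this vector lands in $(M^{(v)})_1$ with weight $r(\lambda)+(2\Lambda_3,\tfrac12\Lambda_1,0,0)$; that weight together with $\{(\beta,0,0,0)\mid\beta\in\{\alpha_1,\alpha_2,\alpha_4,\alpha_5,\alpha_6,\alpha_7\}\}$ is a simple system of type $A_7$, and then $\dim(\widetilde V_{\sigma_v})_1=72$ plus Schellekens' list forces the type $A_{7,4}A_{1,1}^3$. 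You would need to supply this (or an equivalent) computation; there is no shortcut via dimension or rank, exactly as you observed.

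Two smaller points. First, your exclusion of the mixed candidate $(b,c)=(1,0)$ by claiming the minimal conformal weight ``exceeds $1$'' is unsubstantiated and is not the clean argument: since $\tfrac12\langle v|v\rangle=\tfrac54$, the vacuum vector sits in $(V^{(v)})_{5/4}$, and $5/4\notin 1+\tfrac12\Z$ is already incompatible with $V(g)$ having conformal weight $1$, regardless of where the minimum actually falls. Second, your appeal to Lemma \ref{L:flip} to identify $(1,0)$ with $(0,1)$ is unnecessary (the norm argument kills both at once) and is slightly delicate, because the flip automorphism acts on $\g_1$ through the diagram automorphism $\tau$, so it carries $\sigma_{(\frac12\Lambda_5,\ast,\Lambda_1,0)}$ to $\sigma_{(\frac12\Lambda_3,\ast,0,\Lambda_1)}$ and one still has to reconcile the $A_7$-components; better to avoid it here.
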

\begin{proof} 
Let $\bar{g}$ be the restriction of $g$ to $V_1$.
Since the Lie ranks of $V_1$ and $V^g_1$ are the same, $\bar{g}$ is inner, and it preserves every simple ideal of $V_1$. 
By the type of $V_1^g$ (cf.\ \cite[Section 8]{Kac}), the fixed point sets $\g_1^{\bar{g}}$ and $\g_{234}^{\bar{g}}$ have the type $A_{4}A_{2}U(1)$ and $A_{1}^2U(1)$, respectively.
By Lemma \ref{L:conjinv}, up to conjugation by an inner automorphism of $V_1$, we may assume that $\bar{g}=\sigma_u$ for some $$u\in\left\{\frac12(\Lambda_3,\Lambda_1,0,0),\frac12(\Lambda_3,0,\Lambda_1,0), \frac12(\Lambda_3,0,0,\Lambda_1)\right\}.$$
By Theorem \ref{A74}, $g=\sigma_{v}$ on $V$ for some $$v\in u+\Z\langle (0,0,0,\Lambda_1),{(0,0,\Lambda_1,0)},(0,\Lambda_1,0,0)\rangle.$$
%\color{black}

By the same argument as in the proof of Lemma \ref{L:conjA74}, 
if $u=(1/2)(\Lambda_3,0,\Lambda_1,0)$ or $(1/2)(\Lambda_3,0,0,\Lambda_1)$, 
then the order of $g$ is $4$ on $V$, which is a contradiction.
Hence we obtain $u=(1/2)(\Lambda_3,\Lambda_1,0,0)$.
Note that $\sigma_{(1/2)(\Lambda_3,\Lambda_1,0,0)}$ is conjugate to $\sigma_{(1/2)(\Lambda_3,-\Lambda_1,0,0)}$ by inner automorphism of $V$.
Hence $$v\in\left\{(\frac12\Lambda_3,\frac12\Lambda_1,0,0),\ (\frac12\Lambda_3,\frac12\Lambda_1,\Lambda_1,0),\  (\frac12\Lambda_3,\frac12\Lambda_1,0,\Lambda_1),\ (\frac12\Lambda_3,\frac12\Lambda_1,\Lambda_1,\Lambda_1)\right\}.$$

Suppose, for a contradiction, that $v=(\frac12\Lambda_3,\frac12\Lambda_1,\Lambda_1,0)$ or $(\frac12\Lambda_1,\frac12\Lambda_1,0,\Lambda_1)$.
Then $\langle v|v\rangle/2=5/4$, and by \eqref{Eq:Lh}, the vacuum vector $\1$ belongs to $(V^{(v)})_{5/4}$, which contradicts that the conformal weight of $V(g)$ is one.

Suppose, for a contradiction, that $v=(\frac12\Lambda_3,\frac12\Lambda_1,0,0)$.
Then $\langle v|v\rangle/2=1$.
By \eqref{Eq:decA74} and Lemma \ref{L:modCA11}, $V$ contains an irreducible $L(V_1)$-submodule $M$ with the highest weight $\lambda=(\Lambda_1+\Lambda_5+\Lambda_6, \Lambda_1,0,0)$.
Note that the conformal weight of $M$ is $2$.
Then $M_2$ contains a weight vector $x$ of weight $r(\lambda)=(-\Lambda_2-\Lambda_3-\Lambda_7,-\Lambda_1,0,0)$, where $r$ is the product of the longest elements in the Weyl group of $V_1$.
Now we view $x$ as an element in $M^{(v)}\subset V^{(v)}$.
Since $(r(\lambda)|v)=-2$, we have $x\in (M^{(v)})_1$ (see \eqref{Eq:Lh}) and its weight in $M^{(v)}$ is  $$r(\lambda)+(2\Lambda_3,\frac12\Lambda_1,0,0)=(-\Lambda_2+\Lambda_3-\Lambda_7,-\frac12\Lambda_1,0,0)$$ (see \eqref{Eq:V1h}).
Then this weight and the set $\{(\beta,0,0,0)\mid \beta\in \{\alpha_1, \alpha_2, \alpha_7, \alpha_6, \alpha_5, \alpha_4\}\}$ form a set of simple roots of type $A_7$ (with respect to the normalized inner product of $V_1$) in $(\tilde{V}_{\sigma_v})_1$.
Hence $(\tilde{V}_{\sigma_v})_1$ contains a Lie subalgebra of type $A_7$.
%By \eqref{Eq:decA74} and Lemma \ref{L:modCA11}, one can directly check that the conformal weight of $V^{(v)}$ is $1$.
By the dimension formula (Theorem \ref{Thm:Dimformula}), we have $\dim (\tilde{V}_{\sigma_v})_1=72$.
Hence $(\tilde{V}_{\sigma_v})_1$ must have the type $A_{7,4}A_{1,1}^3$, which contradicts the assumption.
Thus $v=(\frac12\Lambda_3,\frac12\Lambda_1,\Lambda_1,\Lambda_1)$, and we obtain this proposition.
\end{proof}

\begin{remark} For $v=(\frac12\Lambda_3,\frac12\Lambda_1,\Lambda_1,\Lambda_1)$, the type of $(\tilde{V}_{\sigma_v})_1$ must be $D_{5,4}C_{3,2}A_{1,1}^2$.
Indeed, one can directly check that $(\tilde{V}_{\sigma_v})_1$ contains a Lie subalgebra of type $D_5$ by the following way:
$V$ contains an irreducible $L(V_1)$-submodule $M$ with the highest weight   $\lambda'=(\Lambda_3+\Lambda_5, 0, \Lambda_1, \Lambda_1)$ (see Lemma \ref{L:modCA11}). 
By  the same argument, $(M^{(v)})_1$ contains a weight vector of weight
$$r(\lambda')+(2\Lambda_3,\frac12\Lambda_1,\Lambda_1,\Lambda_1)=(\Lambda_3-\Lambda_5, -\frac{1}2\Lambda_1, 0,0).$$
Then this weight and the roots $\{(\beta,0,0,0)\mid \beta\in\{\alpha_4, \alpha_5, \alpha_6, \alpha_7\}\}$ form a set of simple roots of type $D_5$.
\end{remark}

\begin{lemma}\label{L:conjA74-3}  
The conjugacy class of an order $2$ automorphism $g$ of $V$ is unique if $V^g_1$ is a Lie algebra of type $ A_{5}A_{1}^4U(1)$, the conformal weight of $V(g)$ is one and the Lie rank of $(\tilde{V}_g)_1$ is $10$.
\end{lemma}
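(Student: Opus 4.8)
The plan is to mirror the proof of Lemma \ref{L:conjA74-2}: pin down $\bar g:=g|_{V_1}$ up to conjugacy, lift it through $I(V)$ to an explicit inner automorphism $\sigma_v$, and then let the conformal weight of $V(g)$ and the Lie rank of $\tilde{V}_g$ cut the list of candidates down to a single conjugacy class.

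First I would note that $V_1$ and $V^g_1$ have the same Lie rank $10$, so $\bar g$ is inner and stabilizes every simple ideal. Since the only involution of $A_7$ whose fixed subalgebra has a factor of type $A_5$ is the one with fixed points $A_5A_1U(1)$ (the splitting $\mathfrak s(\mathfrak{gl}_2\oplus\mathfrak{gl}_6)$), and the four $A_1$-factors of $V^g_1$ force $\bar g$ to act trivially on $\g_{234}$, we obtain $\g_1^{\bar g}\cong A_5A_1U(1)$ with $\bar g|_{\g_1}$ conjugate to $\sigma_{\Lambda_2/2}$. By Lemma \ref{L:conjinv} we may take $\bar g=\sigma_u$ with $u=\tfrac12(\Lambda_2,0,0,0)$, and then Theorem \ref{A74} gives $g=\sigma_v$ with
\[
v=\bigl(\tfrac12\Lambda_2,\ \epsilon_2\Lambda_1,\ \epsilon_3\Lambda_1,\ \epsilon_4\Lambda_1\bigr),\qquad \epsilon_i\in\{0,1\}.
\]
Here $\g_2$ is distinguished from $\g_3,\g_4$: by Lemma \ref{L:modCA11} the module $M(\Lambda_1,0,0)$ has eight $L_{A_7}(4,0)$-constituents whereas $M(0,\Lambda_1,0)\cong M(0,0,\Lambda_1)$ has only four, so no automorphism of $V$ interchanges $\g_2$ with $\g_3$ or $\g_4$, while $\g_3\leftrightarrow\g_4$ is realized by Lemma \ref{L:flip}. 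Thus, up to conjugacy, a candidate is determined by whether the nontrivial index lies in $\{2\}$ or in $\{3,4\}$, together with $k:=\epsilon_2+\epsilon_3+\epsilon_4$.

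Next I would determine the conformal weight of $V(g)\cong V^{(v)}$. A direct computation with $\langle\cdot|\cdot\rangle=k_i(\cdot|\cdot)$ gives $\langle v|v\rangle/2=\tfrac34+\tfrac{k}{4}$, but the true conformal weight is the minimum, over the $L(V_1)$-constituents of $V$ in \eqref{Eq:decA74} and Lemma \ref{L:modCA11}, of the expression in Lemma \ref{Lem:lowestwt}; since $v$ is dominant with $(v|\alpha)\ge-1$, the formula applies and the vacuum constituent contributes $\langle v|v\rangle/2$. The crucial point is that a nonvacuum constituent $L_{A_7}(4,\lambda_1)\otimes\bigotimes_iL_{A_1}(1,\lambda_i)$ can drop below this, because the twisted conformal weight $\ell(\lambda_1)+\tfrac12\min\{(\Lambda_2|\mu):\mu\in\Pi(\lambda_1)\}$ of the $A_7$-factor is negative, for instance $-\tfrac14$ at $\lambda_1=2\Lambda_6$. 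Carrying out this minimization (conveniently with the package ``Kac'' \cite{kac}) yields conformal weights $\tfrac34,1,\tfrac34,\tfrac12$ for $k=0,1,2,3$ respectively, so the hypothesis forces $k=1$; then $\dim V(g)_{1/2}=0$, and Theorem \ref{Thm:Dimformula} gives $\dim(\tilde{V}_g)_1=3\cdot48-0-72+24=96$ (using $\dim V^g_1=48$, $\dim V_1=72$).

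Finally I would separate the two surviving classes ($\epsilon_2=1$ versus $\epsilon_3=1$) by the rank. By Proposition \ref{Prop:V1}, a semisimple algebra of dimension $96$ with $h^\vee/k=3$ and rank $10$ must be $E_{6,4}C_{2,1}A_{2,1}$, while $A_{8,3}A_{2,1}^2$ is the competing type of rank $12$. For the class that is to be discarded I would, exactly as an $A_7$ was produced in Lemma \ref{L:conjA74-2}, exhibit a conformal-weight-one vector in some $M^{(v)}$ whose $\mathfrak H$-weight (computed from \eqref{Eq:V1h} and \eqref{Eq:Lh}), adjoined to the simple roots $\{(\alpha_i,0,0,0):1\le i\le7\}$ of $\g_1$, forms a simple system of type $A_8$; this forces $(\tilde{V}_g)_1\cong A_{8,3}A_{2,1}^2$ of rank $12$, contradicting the hypothesis. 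Hence only the other class survives, its orbifold has $(\tilde{V}_g)_1\cong E_{6,4}C_{2,1}A_{2,1}$ of rank $10$, and the conjugacy class of $g$ is unique. The main obstacle is the interplay of the two middle steps: unlike Lemma \ref{L:conjA74}, here $\langle v|v\rangle/2\ge1$ for every surviving $v$, so the conformal weight must be extracted constituent by constituent from the long list in Lemma \ref{L:modCA11}; and because the finite-dimensional $A_7$-module sits at the grade-zero (top) level of $L_{A_7}(4,\lambda_1)$, every weight of $\Pi(\lambda_1)$ — in particular $-2\Lambda_2$ — already occurs at conformal weight $\ell(\lambda_1)$, which is exactly what produces the $\mathfrak H$-weight-zero twisted vectors governing the rank. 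Bookkeeping those vectors (equivalently, building the distinguishing $A_8$) and tracking their $\phi_1(g)$-eigenvalues is the delicate part that decides which $k=1$ class has rank $10$.
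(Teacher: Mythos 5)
Your strategy is sound and uses exactly the paper's toolkit (Lemma \ref{L:conjinv}, Theorem \ref{A74}, Lemma \ref{Lem:lowestwt}, Lemma \ref{L:flip}, and the rank criterion via weight-zero twisted vectors), but you normalize differently: the paper takes $u=\tfrac12(-\theta,0,0,0)$, for which $\langle u|u\rangle=2$, so the vacuum constituent alone forces $\langle v|v\rangle\in\Z$ and leaves four candidates, of which one is killed by a single explicit constituent dropping the conformal weight to $\le\tfrac12$, one by a single explicit weight-zero vector in $(V^{(v)})_1$ (rank $>10$), and the last two are identified by Lemma \ref{L:flip}. Your $u=\tfrac12(\Lambda_2,0,0,0)$ is a legitimate representative of the same involution class, but since $\langle u|u\rangle/2=\tfrac34$ the conformal-weight cut now genuinely requires the minimization over all constituents of Lemma \ref{L:modCA11}, and this — together with the final rank computation — is precisely what you assert rather than perform. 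Both assertions do check out: for $k=3$ the constituent $L_{A_7}(4,2\Lambda_6)\otimes L_{A_1}(1,\Lambda_1)^{\otimes3}$ has twisted conformal weight $\tfrac12$ (and with one $\Lambda_1$ removed it gives $\tfrac34$ for $k=2$), while for $v=(\tfrac12\Lambda_2,\Lambda_1,0,0)$ the constituent $L_{A_7}(4,\Lambda_1+\Lambda_5+\Lambda_6)\otimes L_{A_1}(1,\Lambda_1)\otimes L_{A_1}(1,0)^{\otimes2}$ contains a weight vector of weight $(-2\Lambda_2,-\Lambda_1,0,0)$ at degree $2$, hence a weight-zero vector in $(V^{(v)})_1$ and rank $>10$; one checks (by the $A_7$-congruence classes of the weights in $M(0,\Lambda_1,0)$) that no such vector exists for $v=(\tfrac12\Lambda_2,0,\Lambda_1,0)$, which is therefore the surviving class. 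Two further points you gloss over: (i) deducing $(\tfrac12\Lambda_2,0,\Lambda_1,0)\sim(\tfrac12\Lambda_2,0,0,\Lambda_1)$ from Lemma \ref{L:flip} is not immediate, because the flip acts on $\g_1$ by a diagram automorphism and carries $\Lambda_2/2$ to $\Lambda_6/2$; you need additionally that $\sigma_{\Lambda_2}=\mathrm{id}_V$ (true, since $(\Lambda_2|\lambda)\in\Z$ for every weight $\lambda$ of $V$) and a Weyl-group lift to return to $\Lambda_2/2$ without picking up an $I(V)$-ambiguity; (ii) identifying the bad orbifold as $A_{8,3}A_{2,1}^2$ is unnecessary — a single weight-zero vector already contradicts rank $10$ by \cite[Lemma 8.1 b)]{Kac}, which is the shortcut the paper takes. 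So: no fatal gap, but as written this is a proof plan whose two decisive verifications are left to the reader, and the paper's choice of base point makes both of them essentially one-line checks.
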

\begin{proof} 
Let $\bar{g}$ be the restriction of $g$ to $V_1$.
Since the Lie ranks of $V_1$ and $V^g_1$ are the same, $\bar{g}$ is inner and it preserves every simple ideal of $V_1$;
the fixed point sets $\g_1^{\bar{g}}$ and $\g_{234}^{\bar{g}}$ have the types $A_{5}A_{1}U(1)$ and $A_{1}^3$, respectively.
By Lemma \ref{L:conjinv}, up to conjugation by an inner automorphism of $V$, we may assume that $\bar{g}=\sigma_u$ and $u=\frac12(-\theta,0,0,0)$, where $\theta(=-\Lambda_1-\Lambda_7)$ is the highest root.

By Theorem \ref{A74}, 
$g=\sigma_{v}$ on $V$ for some $v\in u+\Z\langle (0,0,0,\Lambda_1),{(0,0,\Lambda_1,0)},(0,\Lambda_1,0,0)\rangle$.
Since the conformal weight of $V(g)$ is one, we have $\langle v|v\rangle\in\Z$.
Since $\langle \theta|\theta\rangle=8$ on $\g_1$ and $\langle \Lambda_1|\Lambda_1\rangle=1/2$ on $\g_{234}$, we have
$$v\in\left\{(-\frac\theta2,0,0,0),\ (-\frac\theta2,\Lambda_1,\Lambda_1,0),\  (-\frac\theta2,\Lambda_1,0,\Lambda_1),\ (-\frac\theta2,0,\Lambda_1,\Lambda_1)\right\}.$$

Suppose, for a contradiction, that $v=(-\theta/2,0,0,0)$.
By \eqref{Eq:decA74}, Lemmas \ref{L:A7mod} and \ref{L:modCA11}, $V$ contains an irreducible $L(V_1)$-module $M$ with the highest weight $(\Lambda_1+\Lambda_2+\Lambda_6+\Lambda_7,0,0,0)$.
The conformal weight of $M$ is $2$ and $M_2$ contains a weight vector $x$ of weight $(2\theta,0,0,0)$.
Then $x\in (M^{(v)})_1$ and its weight is $0$.
By \cite[Lemma 8.1 b)]{Kac},  the Lie rank of $(\widetilde{V}_{\sigma_v})_1$ is greater than $10$, which is a contradiction.

Suppose, for a contradiction, that $v=(-\theta/2,0,\Lambda_1,\Lambda_1)$.
Then $\langle v|v\rangle=3$.
By \eqref{Eq:decA74} and Lemma \ref{L:modCA11}, $V$ contains an irreducible $L(V_1)$-module $M$ with the highest weight $(2\theta,0,\Lambda_1,\Lambda_1)$.
Note that $M$ has a weight vector of weight $(2\theta,0,-\Lambda_1,-\Lambda_1)$ and $$((2\theta,0,-\Lambda_1,-\Lambda_1)|v)=-3.$$
By Lemma \ref{Lem:lowestwt}, the conformal weight of $M^{(v)}$ is at most $1/2(=2-3+{3}/{2})$, which contradicts that the conformal weight of $V(g)(\cong V^{(v)})$ is one.

Thus $v=(-\theta/2,\Lambda_1,\Lambda_1,0)$ or $v=(-\theta/2,\Lambda_1,0,\Lambda_1)$.
By Lemma \ref{L:flip}, the associated inner automorphisms are conjugate in $\Aut (V)$.
Therefore we obtain this proposition.
\end{proof}

\section{Orbifold constructions associated with inner automorphisms}
%\mycolor{Throughout this section, %we fix a set of simple roots of the weight one Lie algebra.
%we adopt the labeling of simple roots $\alpha_1,\dots,\alpha_\ell$ of (the root system of) a simple Lie algebra of rank $\ell$ as in \cite[Section 11.4]{Hu}.
%Let $\Lambda_1,\dots,\Lambda_\ell$ be the fundamental weights so that $(\Lambda_i|\alpha_j^\vee)=\delta_{i,j}$, where $\alpha_j^\vee=2\alpha_j/(\alpha_j|\alpha_j)$.
%with respect to $\alpha_1,\dots,\alpha_\ell$.
%We often use $(k_1, \dots, k_\ell)$ to denote the dominant integral weight $\sum_{i=1}^\ell k_i\Lambda_i$, where $k_1, \dots, k_\ell$ are non-negative integers.  
%}{blue}
The main purpose of this section is to prove the following theorem:

\begin{theorem}\label{T:rev}
Let $V$ be a strongly regular holomorphic VOA of central charge $24$.
Let $V_1$  and $u$ be described as in one of the rows of Table \ref{T:GM}.
Then $\sigma_u$ is an order $2$ automorphism of $V$ and satisfies Condition (I) (see Section \ref{S:Orb}). Moreover, the weight one Lie algebra of $(\tilde{V}_{\sigma_u})$ has the type given on the same row.
\end{theorem}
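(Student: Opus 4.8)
The plan is to verify the three assertions of the theorem — that $\sigma_u$ has order $2$, that it satisfies Condition (I), and that $(\tilde V_{\sigma_u})_1$ has the stated type — row by row through Table \ref{T:GM}, using a single three-step template in each case. Throughout, $u$ is the explicit element of the fixed Cartan subalgebra $\mathfrak{H}$ of $V_1$ recorded in the table; since $V_1$ is semisimple and $u\in\mathfrak{H}$, the operator $u_{(0)}$ acts semisimply with rational spectrum, so $\sigma_u=\exp(-2\pi\sqrt{-1}u_{(0)})$ is well defined. First I would pin down the order: $\sigma_u$ acts on a weight-$\lambda$ vector by $\exp(-2\pi\sqrt{-1}(u|\lambda))$, and because $L(V_1)$ is a full subVOA of $V$ (Proposition \ref{Prop:V1}), the weights occurring in $V$ are exactly those of the highest-weight modules $L_{\g_i}(k_i,\lambda_i)$ in the $L(V_1)$-decomposition of $V$. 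It then suffices to check, from the explicit coordinates of $u$ and the normalization $(\alpha|\alpha)=2$ for long roots, that $2(u|\lambda)\in\Z$ for every such weight (so $\sigma_u^2=1$) while $(u|\lambda)\notin\Z$ for at least one weight (so $\sigma_u\neq1$); this is a finite computation giving order exactly $2$.

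For Condition (I) I would use Li's $\Delta$-operator. As $\sigma_u$ is inner and fixes $u$, Proposition \ref{Prop:twist} (applied to the untwisted module $M=V$) makes $V^{(u)}$ an irreducible $\sigma_u$-twisted $V$-module, and by uniqueness of the irreducible twisted module over a holomorphic VOA we have $V(\sigma_u)\cong V^{(u)}$. By \eqref{Eq:Lh} the conformal-weight operator on $V^{(u)}$ is $L(0)+u_{(0)}+\tfrac{\langle u|u\rangle}{2}\,\mathrm{id}$; after choosing the representative of $u$ (within its coset modulo the coweight lattice, which leaves $\sigma_u$ unchanged) so that $(u|\alpha)\ge-1$ for all roots $\alpha$, Lemma \ref{Lem:lowestwt} gives the conformal weight of the $u$-twist of each $L(V_1)$-component of highest weight $\lambda=(\lambda_i)$ as $\ell_\lambda+\sum_i\min\{(u_i|\mu):\mu\in\Pi(\lambda_i)\}+\tfrac{\langle u|u\rangle}{2}$. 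Minimizing over the finitely many components produces the conformal weight of $V(\sigma_u)$, which I verify lies in $(1/2)\Z_{>0}$ in each row; the choice of $u$ is made precisely so that $\langle u|u\rangle/2$ and the weight shifts combine to a positive half-integer.

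For the weight-one Lie algebra, Theorem \ref{Thm:EMS} guarantees that $\tilde V_{\sigma_u}=V^{\sigma_u}\oplus W^{(1,0)}$ is again a strongly regular holomorphic VOA of central charge $24$, so by Proposition \ref{Prop:V1} its weight-one algebra is semisimple and appears in Schellekens' list, with the level of every simple ideal forced by $h^\vee/k=(\dim(\tilde V_{\sigma_u})_1-24)/24$. I would first compute $\dim(\tilde V_{\sigma_u})_1$ from the dimension formula (Theorem \ref{Thm:Dimformula}), using the known $\dim V_1$, the dimension of the fixed algebra $(V^{\sigma_u})_1=V_1^{\sigma_u}$ (whose type is read off from Kac's classification of the involution, \cite[Section 8]{Kac}), and $\dim V(\sigma_u)_{1/2}$, extracted from the twisted grading via \eqref{Eq:Lh} and Lemma \ref{Lem:wtTw}. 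To determine the Dynkin type and not merely the dimension, I would exhibit explicit weight-one vectors of the twisted sector $W^{(1,0)}$ — realized inside $V^{(u)}$ as $\Delta$-operator images of suitable weight vectors of $L(V_1)$-submodules of $V$, with their $\mathfrak{H}$-weights computed from \eqref{Eq:V1h} — and show that these together with the roots of $V_1^{\sigma_u}$ span a root system of the target type. Since $\langle\cdot|\cdot\rangle=k(\cdot|\cdot)$ on each ideal (Proposition \ref{Prop:level}), the levels are consistent, and the dimension together with the exhibited semisimple subalgebra identifies the type uniquely in the list.

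The order-two check and the conformal-weight computation are routine. The main obstacle is the last step: the dimension formula alone cannot separate distinct Schellekens types of equal dimension — for instance the two target types $C_{5,3}G_{2,2}A_{1,1}$ and $A_{7,4}A_{1,1}^3$ both have weight-one dimension $72$ — so the crux is the explicit construction of the new root vectors in the twisted sector and the verification that their $\mathfrak{H}$-weights, after the $\Delta$-operator shift, assemble into the correct Dynkin diagram (with rank and the exhibited subalgebra disambiguating the type). This in turn requires the $L(V_1)$-module decomposition of $V$ in enough detail to locate weight vectors of the required weight, which is where the bulk of the case-by-case work will lie.
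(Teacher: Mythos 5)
Your proposal follows essentially the same route as the paper's proof: the order-two and Condition (I) checks are done via the finite list of irreducible $L(V_1)$-modules with integral conformal weight at least $2$ together with \eqref{Eq:Lh} and Lemma \ref{Lem:lowestwt}, and the type of $(\tilde V_{\sigma_u})_1$ is identified via the dimension formula, Schellekens' list, the fixed subalgebra $V_1^{\sigma_u}$, and, in the two ambiguous cases ($D_{7,3}A_{3,1}G_{2,1}$ and $D_{5,4}C_{3,2}A_{1,1}^2$), explicit twisted-sector weight vectors assembling into the target root system. You also correctly flag where the residual work lies, namely establishing that the needed $L(V_1)$-submodules actually occur in $V$, which the paper handles by counting irreducible modules of commutants (Lemma \ref{agc}) and by eliminating candidate highest weights through integrality of Cartan integers (Lemma \ref{L:noweightA7}).
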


\begin{tiny}
\begin{longtable}[bht]{|c|c|c|c|c|c|c|c|} 
\caption{Lie algebra structures for the orbifold constructions
}\label{T:GM} \\
\hline 
$V_1$  & $\dim V_1$&$u$&$\langle u|u\rangle$& $V_1^{\sigma_u}$&$\dim V_1^{\sigma_u}$&$(\tilde{V}_{\sigma_u})_1$&$\dim (\tilde{V}_{\sigma_u})_1$\\
 \hline%\hline 
$C_{4,10}$  &$36$ & $\Lambda_2$&$10$  &$C_{2,10}^2$&$20$&$A_{4,5}^2$&$48$ \\ \hline 
 $D_{7,3}A_{3,1}G_{2,1}$ &$120$& $\frac12(\Lambda_5, \Lambda_2, 0)$  &$4$&$D_{5,3}G_{2,1}A_{1,3}^2A_{1,1}^2U(1)$&$72$&$E_{6,3}G_{2,1}^3$&$120$  \\ \hline %
 $A_{5,6}C_{2,3}A_{1,2}$  & $48$& $\frac12(\Lambda_1,\Lambda_1,0)$ &$2$  &$A_{4,6}A_{1,6}A_{1,2}U(1)^2$&$32$&$C_{5,3}G_{2,2}A_{1,1}$&$72$\\ \hline 
 $C_{7,2}A_{3,1}$  & $120$& $\frac12(\Lambda_7,\Lambda_2)$ &$2$ &$A_{6,4}A_{1,1}^2U(1)^2$&$56$&$A_{7,4}A_{1,1}^3$&$72$\\\hline 
 $D_{5,4}C_{3,2}A_{1,1}^2$    &$72$ & $\frac12(\Lambda_5, \Lambda_3, 0,0)$&$2$  &$A_{4,4}A_{2,4}A_{1,1}^2U(1)^2$&$40$&$A_{7,4}A_{1,1}^3$&$72$ \\\hline %     
 $E_{6,4}C_{2,1}A_{2,1}$    & $96$&$\frac12(\Lambda_2,2\Lambda_1,\Lambda_1+\Lambda_2)$  &$3$ &$A_{5,4}A_{1,4}A_{1,1}^3U(1)$&$48$&$A_{7,4}A_{1,1}^3$&$72$\\\hline % 
\end{longtable}
\end{tiny}

\subsection{General method}\label{S:GM}

Let $V$ be a VOA and $u\in V_1$ satisfying the assumptions of Theorem \ref{T:rev}.
One can easily determine $\langle u|u\rangle$ and the type of $V_1^{\sigma_u}$, which are described in Table \ref{T:GM}.

Let $L(V_1)$ be the subVOA generated by $V_1$.
When the type of $V_1$ is $C_{4,10}$, $D_{7,3}A_{3,1}G_{2,1}$, $A_{5,6}C_{2,3}A_{1,2}$, $C_{7,2}A_{3,1}$, $D_{5,4}C_{3,2}A_{1,1}^2$ or $E_{6,4}C_{2,1}A_{2,1}$, then $L(V_1)$ has  $46$, $23$, $266$, $11$, $71$ or $29$ inequivalent irreducible modules with integral conformal weight at least $2$, respectively; for the tables of their highest weights, see Appendix A. Note that the table for the case  $A_{5,6}C_{2,3}A_{1,2}$ is omitted because of the large number of such modules.
These tables also archive the inner product of the highest weight and $u$ and the conformal weight of the $\sigma_u$-twisted module.

Let $M$ be an irreducible $L(V_1)$-submodule $M$ of $V$.
Let $\lambda$ be the highest weight of $M$.
Then the conformal weight of $M$ is at least two or $0$.
Note that the conformal weight of $M$ is $0$ if and only if $M=L(V_1)$.
By the tables in Appendix A, we have $(u|\lambda)\in(1/2)\Z$, and $\sigma_u^2=id$ on $M$. 
Clearly, $\sigma_u$ is not identity.
Hence the order of $\sigma_u$ is $2$ on $V$.
In addition, by the tables in Appendix A, we know that the conformal weight of $M^{(u)}$ is at least one; note that the conformal weight of $L(V_1)^{(u)}$ is equal to $\langle u|u\rangle/2$, which is at least one by Table \ref{T:GM}.
Thus the conformal weight of $V^{(u)}$ is also at least one.

Now, we apply the $\Z_2$-orbifold construction to $V$ and $\sigma_u$, and obtain a strongly regular holomorphic VOA $\tilde{V}_{\sigma_u}$ of central charge $24$.
The dimension of $(\tilde{V}_{\sigma_u})_1$ is determined by the dimension formula in Theorem \ref{Thm:Dimformula} and the information given  in Table  \ref{T:GM}.
The remaining task is to identify the Lie algebra structure of $(\tilde{V}_{\sigma_u})_1$.
Note that there are few possibilities for them by Schellekens' list.
In addition, $(\tilde{V}_{\sigma_u})_1$ has a Lie subalgebra $V_1^{\sigma_u}$ as the fixed point subspace of an order $2$ automorphism of $(\tilde{V}_{\sigma_u})_1$.

If the type of $V_1$ is $C_{4,10}$, $A_{5,6}C_{2,3}A_{1,2}$, $C_{7,2}A_{3,1}$ or $E_{6,4}C_{2,1}A_{2,1}$, then the Lie algebra structure of $(\tilde{V}_{\sigma_u})_1$ is uniquely determined by the dimension and the Lie subalgebra structure.
If the type of $V_1$ is $D_{7,3}A_{3,1}G_{2,1}$ (resp. $D_{5,4}C_{3,2}A_{1,1}^2$), then that of $(\tilde{V}_{\sigma_u})_1$ is $E_{6,3}G_{2,1}^3$ or $D_{7,3}A_{3,1}G_{2,1}$ (resp. $A_{7,4}A_{1,1}^3$ or  $D_{5,4}C_{3,2}A_{1,1}^2 $); we will determine the type of $V_1$ in the subsections below.

\subsection{Case $D_{7,3}A_{3,1}G_{2,1}$}\label{S:D7}
Assume that $V_1$ 
has the type $D_{7,3}A_{3,1}G_{2,1}$.
Let $\g_1$, $\g_2$, $\g_3$ be the simple ideals of $V_1$ of type $D_{7}$, $A_{3}$, $G_{2}$. respectively.
Set $\g_{23}=\g_2\oplus\g_3$.
The subVOA $\Com_V(L(\g_{23}))$ of $V$ contains $L(\g_1)(\cong L_{D_7}(3,0))$ as a full subVOA.
Since $L_{D_7}(3,0)$ and $L_{D_7}(3,\Lambda_1+\Lambda_5)$ are the only irreducible $L_{D_7}(3,0)$-modules with integral conformal weights up to isomorphism (see Table \ref{tableD73} in Appendix A), we have 
\[
\Com_V(L(\g_{23}))\cong L_{D_7}(3,0)\oplus m L_{D_7}(3,\Lambda_1+\Lambda_5)
\]
as $L_{D_7}(3,0)$-modules, where $m$ is the multiplicity. 

\begin{lemma}\label{agc}
The multiplicity $m$ is not zero.
\end{lemma}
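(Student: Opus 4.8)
The plan is to argue by contradiction: I will suppose $m=0$, so that $\Com_V(L(\g_{23}))=L_{D_7}(3,0)=L(\g_1)$, and then derive a contradiction by comparing the numbers of irreducible modules on the two sides of a dual (mirror) pair, using Proposition \ref{P:Mirror}.

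First I would record the relevant central charges. With $\dim D_7=91$, $\dim A_3=15$, $\dim G_2=14$ and the given levels, the central charges of $L(\g_1)\cong L_{D_7}(3,0)$ and of $L(\g_{23})\cong L_{A_3}(1,0)\otimes L_{G_2}(1,0)$ are $91/5$ and $29/5$, and they sum to $24$. Since $L(V_1)=L(\g_1)\otimes L(\g_{23})$ is a full subVOA of $V$ (Proposition \ref{Prop:V1}), the conformal vector of the commutant $\Com_V(L(\g_1))$ is $\omega_V-\omega_{L(\g_1)}=\omega_{L(\g_{23})}$, so $L(\g_{23})$ sits inside $\Com_V(L(\g_1))$ as a \emph{full} subVOA.

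Next I would show $\Com_V(L(\g_1))=L(\g_{23})$. By Proposition \ref{P:ComSimple} this commutant is simple, and as an $L(\g_{23})$-module it is a sum of irreducible modules, each of which has conformal weight in $\Z_{\geq0}$ because $\Com_V(L(\g_1))$ is $\Z_{\geq0}$-graded. The conformal weights of the eight irreducible $L(\g_{23})$-modules are the sums of an element of $\{0,3/8,1/2,3/8\}$ (for $L_{A_3}(1,0)$) and an element of $\{0,2/5\}$ (for $L_{G_2}(1,0)$); the only integer occurring among these sums is $0$, attained solely by $L(\g_{23})$ itself. Hence every irreducible $L(\g_{23})$-submodule of $\Com_V(L(\g_1))$ is isomorphic to $L(\g_{23})$, and since $\Com_V(L(\g_1))$ is a simple VOA containing $L(\g_{23})$ as a full subVOA, this forces $\Com_V(L(\g_1))=L(\g_{23})$. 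Then I would suppose $m=0$, which gives $\Com_V(L(\g_{23}))=L(\g_1)$ as well, so that $(L(\g_1),L(\g_{23}))$ is a dual pair satisfying the hypotheses of Proposition \ref{P:Mirror}. Because $V$ is holomorphic, parts (1) and (4) yield a bijection between the \emph{full} sets of irreducible modules of $L(\g_1)$ and of $L(\g_{23})$. But $L(\g_{23})$ has exactly $4\cdot2=8$ irreducible modules, whereas $L_{D_7}(3,0)$ has $36$ (the number of level-$3$ dominant integral weights of $D_7$, counted directly or with \cite{kac}); as $8\neq36$ this is a contradiction, so $m\neq0$.

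The main obstacle is the identification $\Com_V(L(\g_1))=L(\g_{23})$: it rests on the central-charge match that makes $L(\g_{23})$ a full subVOA, combined with the fact that no nontrivial irreducible $L(\g_{23})$-module has integral conformal weight. Once this dual pair is in place, the contradiction reduces to the purely numerical mismatch $8\neq36$ supplied by the holomorphic mirror bijection of Proposition \ref{P:Mirror}.
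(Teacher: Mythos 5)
Your proof is correct and follows essentially the same route as the paper: both rest on the observation that $L(\g_{23})$ is the only irreducible module for itself with integral conformal weight (which gives the double-commutant/dual-pair condition needed for Proposition \ref{P:Mirror}), and both then derive the contradiction from the count $8\neq 36$ of irreducible modules via Proposition \ref{P:Mirror} (1) and (4). The only cosmetic difference is that you set up the dual pair by first identifying $\Com_V(L(\g_1))=L(\g_{23})$ explicitly, whereas the paper simply verifies $\Com_V(\Com_V(L(\g_{23})))=L(\g_{23})$ directly.
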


\begin{proof}
Since $L(\g_{23})(\cong L_{A_3}(1,0)\otimes L_{G_2}(1,0))$ is the only irreducible module for itself with integral conformal weight (cf.\ Table \ref{tableD73}), we have  $L(\g_{23})=\Com_V(\Com_V(L(\g_{23})))$.
By Proposition \ref{P:Mirror} (1) and (4), the numbers of inequivalent irreducible modules for $\Com_V(L(\g_{23}))$ and $L(\g_{23})$ are the same. 
Since the VOA $L_{D_7}(3,0)$ has $36$ irreducible modules and $L_{A_3}(1,0)\otimes L_{G_2}(1,0)$ has $8$ irreducible modules, the multiplicity $m$ must be non-zero.
\end{proof}

By the lemma above, $V$ contains an irreducible $L(V_1)$-submodule with the highest weight $(\Lambda_1+\Lambda_5,0,0)$.
Note that its conformal weight is $2$ (see Table \ref{tableD73}).
Hence $V_2$ contains a weight vector $x$ of weight $(-\Lambda_1-\Lambda_5,0,0)$.
By Lemma \ref{Lem:wtTw} and \eqref{Eq:Lh}, $x\in (V^{(u)})_1$ and its weight is
$$(-\Lambda_1-\Lambda_5,0,0)+(\frac32\Lambda_5,\frac12\Lambda_2,0)=(\frac{1}2 \Lambda_5 -\Lambda_1, \frac{1}2 \Lambda_2,0).$$ 
Since $V_1$ and $(\tilde{V}_{\sigma_u})_1$ share a Cartan subalgebra, $$\{ (\alpha_1,0,0), \dots, (\alpha_4,0,0), (-\theta,0,0), (\frac{1}2 \Lambda_5 -\Lambda_1, \frac{1}2 \Lambda_2,0)\}$$ is a set of weights of $(\tilde{V}_{\sigma_u})_1$
and it generates a root system of type $E_{6}$ (with respect to a normalized Killing form on $(\tilde{V}_{\sigma_u})_1$), where $\theta$ is the highest root of $D_7$.
We have discussed in Section \ref{S:GM} that the type of $(\tilde{V}_{\sigma_u})_1$ is $E_{6,3}G_{2,1}^3$ or $D_{7,3}A_{3,1}G_{2,1}$.
Hence, it must be $E_{6,3}G_{2,1}^3$.

\subsection{Case $D_{5,4}C_{3,2}A_{1,1}^2$}\label{S:D5}
Assume that $V_1$ has the type $D_{5,4}C_{3,2}A_{1,1}^2 $.
By $\langle u|u\rangle=2$, the conformal weight of $L(V_1)^{(u)}$ is one and $(L(V_1)^{(u)})_1$ is spanned by the vacuum vector (\cite[Lemma 3.4]{LS3}).
Let $M$ be an irreducible $L(V_1)$-submodule of $V$ such that the conformal weight of $M$ is an integer at least two and the associated $\sigma_u$-twisted module has conformal weight $1$; note that such a submodule exists by $\dim (V^{(u)}_{1})=32$.
Let $\Lambda$ be the highest weight of $M$.
By Table \ref{tableD54} in Appendix A, $\Lambda$ is one of the $7$ weights listed in Table \ref{tableHD54}. 
%inequivalent irreducible $L(V_1)$-modules such that the conformal weight is an integer at least $2$ and the associated $\sigma_u$-twisted module has conformal weight $1$; these weights are summarized in Table \ref{tableHD54}.
%Let $M$ be one of them and let $\Lambda$ be its highest weight.
By Lemma \ref{Lem:wtTw}, $(M^{(u)})_1$ contains a weight vector of weight $ r\Lambda + (2\Lambda_5, \Lambda_3,0,0)$, where $r$ is the product of the longest element of the Weyl group of each simple ideal; the weights $ r\Lambda + (2\Lambda_5, \Lambda_3,0,0)$ are also summarized in Table \ref{tableHD54}.

\begin{longtable}[bht]{|c|c|c|} 
\caption{Candidates for $L(V_1)$-submodules
}\label{tableHD54} \vspace{-0.5cm} 
\\ \hline 
Highest weight $\Lambda$  & $r\Lambda+ (2\Lambda_5, \Lambda_3,0,0)$ & Submodule of $V$ ?\\
 \hline\hline 
$(4\Lambda_4, 2\Lambda_3 , 0,0)$  & $(-2\Lambda_5, -\Lambda_3, 0,0)$ &   \\ \hline 
 $(4\Lambda_4, \Lambda_2, 0,0)$  & $(-2\Lambda_5, -\Lambda_2+\Lambda_3, 0,0)$ & No  \\ \hline %
 $(\Lambda_1+2\Lambda_5, 0 ,\Lambda_1,\Lambda_1)$  & $(-\Lambda_1-2\Lambda_4+2\Lambda_5, \Lambda_3,-\Lambda_1,-\Lambda_1)$  & No \\ \hline 
 $(\Lambda_1+2\Lambda_4, 2\Lambda_3, 0,0)$  & $(-\Lambda_1, -\Lambda_3, 0,0)$ & No \\ \hline 
 $(\Lambda_1+2\Lambda_4, \Lambda_2, 0,0)$    & $(-\Lambda_1, -\Lambda_2+\Lambda_3, 0,0)$ & \\ \hline %     
 $(\Lambda_4+\Lambda_5, \Lambda_1+\Lambda_3, 0,0)$    &$(-\Lambda_4+\Lambda_5, -\Lambda_1, 0,0)$ & \\ \hline % 
  $(3\Lambda_4+\Lambda_5, \Lambda_3, 0,0)$  & $(-\Lambda_4 -\Lambda_5, 0,0,0)$ & No   \\ \hline % 
\end{longtable}  

%Now, we assume that $M$ is an irreducible $L(V_1)$-submodule of $V$.
%Remark that such a $M$ exists by $(V^{(u)})_1\neq\{0\}$.

\begin{lemma}\label{L:noweightA7}
%Let $N$ be an irreducible  such that its conformal weight is in $\Z_{\ge2}$ and that of $N^{(u)}$ is $1$.
%Then 
The highest weight of $M$ is neither 
%The VOA $V$ does not contain highest weight module of weight 
$(4\Lambda_4, \Lambda_2,0, 0)$, $(\Lambda_1+2\Lambda_5, 0, \Lambda_1, \Lambda_1)$, $(\Lambda_1+2\Lambda_4, 2\Lambda_3,0,0)$ nor $(3\Lambda_4+\Lambda_5, \Lambda_3,0,0)$.
\end{lemma}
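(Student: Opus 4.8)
The plan is to argue by contradiction: assuming $M$ has one of the four listed highest weights $\Lambda$, I would show that the weight vector exhibited above, of weight $x=r\Lambda+(2\Lambda_5,\Lambda_3,0,0)$, must be a root of $(\tilde{V}_{\sigma_u})_1$ whose $V_1^{\sigma_u}$-weight is incompatible with both admissible structures of $(\tilde{V}_{\sigma_u})_1$. First I would locate this vector precisely. Since $M^{(u)}$ has conformal weight $1$, its lowest weight space lies in $V(\sigma_u)_1$; and as $V(\sigma_u)=W^{(1,0)}\oplus W^{(1,1)}$ with $W^{(1,1)}$ an irreducible $V^{\sigma_u}$-module having all its conformal weights in $\tfrac12+\Z$ (Section \ref{S:Orb}), we get $V(\sigma_u)_1=W^{(1,0)}_1\subseteq(\tilde{V}_{\sigma_u})_1$. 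Hence $x$ is a nonzero weight, i.e.\ a root, of the reductive Lie algebra $(\tilde{V}_{\sigma_u})_1$, whose type is $A_{7,4}A_{1,1}^3$ or $D_{5,4}C_{3,2}A_{1,1}^2$ by Section \ref{S:GM}.

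Next I would exploit the reverse involution. By Theorem \ref{Thm:EMS}, $(\tilde{V}_{\sigma_u})_1=V_1^{\sigma_u}\oplus P$, where $P=W^{(1,0)}_1$ (of dimension $32$) is the $(-1)$-eigenspace of the involution on $(\tilde{V}_{\sigma_u})_1$ with fixed point subalgebra $V_1^{\sigma_u}\cong A_4A_2A_1^2U(1)^2$, a subalgebra of full rank. I would describe $P$ as a $V_1^{\sigma_u}$-module in each case. In the $A_{7,4}A_{1,1}^3$ case, $P$ consists of the roots of $A_7$ lying outside its Levi subalgebra $A_4A_2U(1)$ together with the two root vectors of the third $A_{1,1}$; as an $A_4\times A_2$-module this is $\C^5\otimes(\C^3)^*$, its dual, and two trivial summands. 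In the $D_{5,4}C_{3,2}A_{1,1}^2$ case, $P$ arises from the $(-1)$-parts of the Hermitian-symmetric involutions of $D_5$ and $C_3$, namely $\wedge^2\C^5$, its dual, $S^2\C^3$ and its dual.

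I would then compute the $A_4$-, $A_2$- and $A_1^2$-weights and the $U(1)^2$-charges of $x$ for each candidate and match them against $P$. The weight with nonzero $A_1$-entries is excluded at once: the two $A_{1,1}$'s are direct ideals in either type and their root vectors already lie in $V_1^{\sigma_u}$, so $P$ carries trivial $A_1^2$-weight. For the three weights with trivial $A_1$-part, the decisive feature in the $A_7$ case is that a nonzero $A_4$-weight forces a nonzero $A_2$-weight and conversely (the tensor summands $\C^5\otimes(\C^3)^*$ and its dual), whereas each of these $x$ has exactly one of the two nonzero. In the $D_5C_3$ case, two of them carry an $A_4$-weight of defining rather than exterior-square type, while the remaining one---whose trace-free $A_2$-weight accidentally occurs in a symmetric square---is separated by its $U(1)$-charge.

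The step I expect to be the main obstacle is this last matching in the $D_{5,4}C_{3,2}A_{1,1}^2$ case, where the semisimple ($A_4\times A_2$) weight of one candidate genuinely appears among the weights of $P$; ruling it out forces one to compute the abelian $U(1)$-charge correctly, i.e.\ to use the full $C_3$-weight and not merely its trace-free projection, and to compare it with the charge on the symmetric-square summand. Bookkeeping the two $U(1)$-gradings, and confirming that the three surviving weights exactly exhaust the non-Levi part of $P$ in the $A_7$ case, is where the real care lies.
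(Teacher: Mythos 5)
You correctly supply the geometric fact the paper leaves implicit---that $\mu=r\Lambda+(2\Lambda_5,\Lambda_3,0,0)$ lies in $W^{(1,0)}_1\subseteq(\tilde V_{\sigma_u})_1$ and is therefore a root of a semisimple Lie algebra of type $A_{7,4}A_{1,1}^3$ or $D_{5,4}C_{3,2}A_{1,1}^2$---and your route from there is genuinely different from the paper's: you decompose the $(-1)$-eigenspace $P$ of the reverse involution as a $V_1^{\sigma_u}$-module via Kac's classification and try to show $\mu$ is not a weight of $P$, whereas the paper rescales the form on weights by the ratios of the levels and checks in one line that $2(\alpha|\mu)/(\mu|\mu)\notin\Z$ for a single explicit root $\alpha$ of $V_1^{\sigma_u}$, uniformly in both candidate types. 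Your exclusion of $(\Lambda_1+2\Lambda_5,0,\Lambda_1,\Lambda_1)$ by its nonzero $A_1^2$-weight, of all three remaining candidates in the $A_{7,4}A_{1,1}^3$ case (exactly one of the $A_4$- and $A_2$-projections vanishes, which is incompatible with $\C^5\otimes(\C^3)^*\oplus(\C^5)^*\otimes\C^3$ and with the extra $A_1$), and of $(\Lambda_1+2\Lambda_4,2\Lambda_3,0,0)$ and $(3\Lambda_4+\Lambda_5,\Lambda_3,0,0)$ in the $D_5C_3$ case by the class of the $A_4$-weight modulo the root lattice, all check out.

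The gap sits exactly where you flag it: excluding $\Lambda=(4\Lambda_4,\Lambda_2,0,0)$, i.e.\ $\mu=(-2\Lambda_5,-\Lambda_2+\Lambda_3,0,0)$, when $(\tilde V_{\sigma_u})_1\cong D_{5,4}C_{3,2}A_{1,1}^2$. Here the $A_4$-projection of $\mu$ vanishes and its $A_2$-projection genuinely is a weight of $S^2(\C^3)^*$, and you propose to finish by comparing $U(1)$-charges, measuring the ``full $C_3$-weight'' $\epsilon_3$ of $\mu$ against the charges $\pm2$ of the symmetric squares. That comparison tacitly assumes that the central $U(1)$ of the $\mathfrak{gl}_3$ sitting inside the \emph{new} $C_3$ is the same line of $\mathfrak{H}$ as the center of $\mathfrak{gl}_3\subset\g_2$, equivalently that the Cartan of the new $C_3$ is contained in that of the old one. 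This is not given: Kac's classification determines $P$ only as an abstract $V_1^{\sigma_u}$-module, and the two $U(1)$-directions of $V_1^{\sigma_u}$---one inside $\g_1=D_5$, one inside $\g_2=C_3$---could a priori be redistributed by a rotation when reassembled into the Cartans of the new $D_5$ and new $C_3$; ruling that out is essentially as hard as the statement being proved. The step can be repaired by a length count: $\mu$ would have to be a root of the new $C_{3,2}$ ideal, whose roots have squared length $2$ or $4$ for the level-rescaled form $(\cdot|\cdot)_{\g_1}+2(\cdot|\cdot)_{\g_2}+4(\cdot|\cdot)_{\g_3}+4(\cdot|\cdot)_{\g_4}$, whereas $(\mu|\mu)=5+1=6$---but that is precisely the paper's integrality argument, not the charge comparison you describe.
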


\begin{proof}
%Suppose, for a contradiction, that $V$ contains a highest weight module of weight $(4\Lambda_4, \Lambda_2,0, 0)$, $(\Lambda_1+2\Lambda_5, 0, \Lambda_1, \Lambda_1)$, $(\Lambda_1+2\Lambda_4, 2\Lambda_3,0,0)$, or $(3\Lambda_4+\Lambda_1, \Lambda_3,0,0)$. 
%Then $(\tilde{V}_{\sigma_h})_1$ contains a weight vector of weight 
%\begin{align*}
%\mu_1=(-2\Lambda_5, -\Lambda_2+\Lambda_3, 0,0),\\
%\mu_2=(-\Lambda_1-2\Lambda_4+2\Lambda_5, \Lambda_3,\Lambda_1,\Lambda_1),\\
%\mu_3=-(\Lambda_1, -\Lambda_3, 0,0),\\
%\mu_4= (-\Lambda_4 -\Lambda_5, 0,0,0),
%\end{align*} respectively. 
%Since the levels of simple ideals 
Let $\g_1$, $\g_2$, $\g_3$, $\g_4$ be simple ideals of $V_1$ of type $D_{5,4}$, $C_{3,2}$, $A_{1,1}$ and $A_{1,1}$, respectively.
% be simple ideals of $V_1$ such that $V_1=\bigoplus_{i=1}^4\g_i$.
%are different, 
By the compatibility of levels and inner products in $V_1$ (see Proposition 2.4), we modify the inner product on $V_1$ so that $$((x_1,x_2,x_3,x_4)|(y_1,y_2,y_3,y_4))=(x_1|y_1)_{|\g_1}+2(x_2|y_2)_{|\g_2}+4(x_3|y_3)_{\g_3}+4(x_4|y_4)_{\g_4}.$$
Note that the scalars $2,4,4$ are ratios of levels of $\g_1$ and $\g_2$, $\g_3$, $\g_4$, respectively.

Let $\mu=r\Lambda+ (2\Lambda_5, \Lambda_3,0,0)$ in Table \ref{tableHD54}.
Since $(\tilde{V}_{\sigma_u})_1$ is a semisimple Lie algebra, for any weights $\alpha$, $\beta$ of $(\tilde{V}_{\sigma_u})_1$, we obtain $2(\alpha|\beta)/(\alpha|\alpha)\in\Z$.
However, we obtain a contradiction by 
\begin{align*} \frac{2((0, \alpha_2, 0,0)|\mu)}{(\mu| \mu)}&=-\frac13\quad {\rm if}\quad \mu=(-2\Lambda_5, -\Lambda_2+\Lambda_3, 0,0),\\
\frac{2((0, 0,0, \alpha_1), \mu)}{(\mu| \mu)}&= -\frac{1}{6}\quad {\rm if}\quad \mu=(-\Lambda_1-2\Lambda_4+2\Lambda_5, \Lambda_3,-\Lambda_1,-\Lambda_1),\\
\frac{2((\alpha_1, 0,0,0 )| \mu)}{(\mu| \mu)}&= -\frac{1}{2}\quad {\rm if}\quad \mu=(-\Lambda_1, -\Lambda_3, 0,0),\\
\frac{2((\alpha_4, 0,0,0)| \mu)}{(\mu| \mu)}&= -\frac{1}{2}\quad {\rm if}\quad \mu=(-\Lambda_4 -\Lambda_5, 0,0,0).
\end{align*}
%where $\mu=r\Lambda+ (2\Lambda_5, \Lambda_3,0,0)$ (see Table \ref{tableHD54} for explicit descriptions of $\mu$), 
%and we obtain this lemma.
%\mycolor{\bf In my calculation, the case $(\Lambda_1+2\Lambda_4, 2\Lambda_3,0,0)$ has no contradictions.
%It seems to me that this case is possible for $A_{7,4}A_{1,1}^3$. See below.}{red}
\end{proof}

%In this case, the set $\{ \mu, (0, \alpha_1, 0,0), (0, \alpha_2, 0,0)\}$ will generate a root subsystem . However, $$2((0, \alpha_2, 0,0), \mu)/(\mu, \mu)=-1/3 $$ is not an integer, which is a contradiction. Hence, $V$ does not contain a highest weight module of weight $(4\Lambda_4, \Lambda_2,0, 0)$.  The other cases can be proved similarly. 

%In fact, let $\mu_1=(-\Lambda_1-2\Lambda_4+2\Lambda_5, \Lambda_3,\Lambda_1,\Lambda_1)$, $ \mu_2=-(\Lambda_1, -\Lambda_3, 0,0)$, and $\mu_3= (-\Lambda_4 -\Lambda_5, 0,0,0)$. Then we have 
%\[
%\begin{split}
%{2((0, 0,0, \alpha_1), \mu_1)}/ {(\mu_1, \mu_1)}= -{1}/{6},\\
%{2((\alpha_1, 0,0,0 ), \mu_2)}/ {(\mu_2, \mu_2)}= -{1}/{2},\\
%{2((\alpha_4, 0,0,0), \mu_3)} /{(\mu_3, \mu_3)}= -{1}/{2}.
%\end{split}
%\]

Let us consider the remaining cases.
If the highest weight of $M$ is $(4\Lambda_4, 2\Lambda_3, 0)$, then $(V^{(u)})_1$ contains a vector of weight $-(2\Lambda_5, \Lambda_3,0)$. 
Note that $-(2\Lambda_5, \Lambda_3,0)$ is orthogonal to the root systems associated 
with the Lie subalgebra $ A_{4,4} A_{2,4}A_{1,1}^2$ of $V^{\sigma_u}_1$ and 
it generates a root system of type $A_1$. 
Since the root system $D_{5}C_3A_1^2$ does not contain $A_4A_2A_1^3$ as a root subsystem, $(\tilde{V}_{\sigma_u})_1$ has the type $A_{7,4}A_{1,1}^3$.

If the highest weight of $M$ is $(\Lambda_4+\Lambda_5, \Lambda_1+\Lambda_3,0,0)$ (resp.\ $(\Lambda_1+2\Lambda_4, \Lambda_2, 0,0)$), then $(V^{(u)})_1$ contains a vector of weight $(\Lambda_5-\Lambda_4, -\Lambda_1,0,0)$ (resp.\ $(-\Lambda_1, -\Lambda_2+\Lambda_3, 0,0)$). 
Then this weight and the set $\{ (\alpha_1, 0,0,0)$,  $(\alpha_2, 0,0,0)$, $(\alpha_3, 0,0,0)$, $(\alpha_4, 0,0,0)$, $(0, \alpha_1, 0,0)$, $(0, \alpha_2,0,0)\}$ form a root system of type $A_7$. 
Since $D_{5}C_3A_1^2$ does not contain $A_7$ as a root subsystem, $(\tilde{V}_{\sigma_u})_1$ must be of type $A_{7,4}A_{1,1}^3$.
%Similarly, if $V$ contains a weight vector of weight $(\Lambda_1+2\Lambda_4, \Lambda_2, 0,0)$, then $V^{(h)}$ contains a weight vector of weight $(-\Lambda_1, -\Lambda_2+\Lambda_3, 0,0)$ . Again $\{ (\alpha_1, 0,0,0)$,  $(\alpha_2, 0,0,0)$, $(\alpha_3, 0,0,0)$, $(\alpha_4, 0,0,0)$, $(0, \alpha_1, 0,0)$, $(0, \alpha_2,0,0)$, $(-\Lambda_1, -\Lambda_2+\Lambda_3, 0,0)$ generates a root system of type $A_7$. 
%If $V$ contains a weight vector of weight $(\Lambda_1+2\Lambda_4, 2\Lambda_3,0,0)$, then $(V^{(u)})_1$ contains a weight vector of weight $(-\Lambda_1, -\Lambda_3, 0,0)$.
%Thus, by the argument above, we obtain the following lemma.

%As a summary of this section, we obtain the following theorem.
Thus, the type of $(\tilde{V}_{\sigma_u})_1$ is $A_{7,4}A_{1,1}^3$.
Combining the results in Sections \ref{S:GM}, \ref{S:D7} and \ref{S:D5}, we obtain Theorem \ref{T:rev}.

\section{Main theorem}

\begin{theorem}\label{T:Main}
Up to isomorphism, there exists a unique strongly regular holomorphic VOA of central charge $24$ if its  weight one Lie algebra has the type $C_{4,10}$, $D_{7,3}A_{3,1}G_{2,1}$, $A_{5,6}C_{2,3}A_{1,2}$, $A_{3,1}C_{7,2}$, $D_{5,4}C_{3,2}A_{1,1}^2$, or $E_{6,4}C_{2,1}A_{2,1}$. 
\end{theorem}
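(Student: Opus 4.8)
The plan is to apply the reverse orbifold criterion (Theorem \ref{Thm:Rev}) with $n=2$ to each of the six weight one Lie algebra types separately. Fix one type $\g$ from the list and let $V$ be an arbitrary strongly regular holomorphic VOA of central charge $24$ with $V_1\cong\g$. First I would invoke Theorem \ref{T:rev}: reading off the corresponding row of Table \ref{T:GM} produces an element $u\in V_1$ for which $\sigma_u$ is an order $2$ automorphism satisfying Condition (I), with $V_1^{\sigma_u}\cong\mathfrak{p}$ and $(\widetilde{V}_{\sigma_u})_1$ of the type recorded in the last column, namely one of $A_{4,5}^2$, $E_{6,3}G_{2,1}^3$, $C_{5,3}G_{2,2}A_{1,1}$ or $A_{7,4}A_{1,1}^3$. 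Since each of these four types already belongs to the list of cases whose holomorphic VOA is known to be unique, there is a single strongly regular holomorphic VOA $W$ of central charge $24$ with that weight one Lie algebra, whence $\widetilde{V}_{\sigma_u}\cong W$. This verifies hypotheses (a) and (b) of Theorem \ref{Thm:Rev} with $\mathfrak{p}=V_1^{\sigma_u}$.

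The substance is then the remaining hypothesis of Theorem \ref{Thm:Rev}: that every order $2$ automorphism $\varphi$ of $W$ satisfying Condition (I), (A) $(W^\varphi)_1\cong\mathfrak{p}$, and (B) $(\widetilde{W}_\varphi)_1\cong\g$ lies in a single conjugacy class of $\Aut(W)$. This is exactly what Theorem \ref{T:conj} delivers, once the hypotheses of the relevant lemma are matched. I would pair each $\g$ with its lemma: $C_{4,10}$ with Lemma \ref{L:conjA45} (here $W_1=A_{4,5}^2$, $(W^\varphi)_1\cong C_2^2$), $D_{7,3}A_{3,1}G_{2,1}$ with Lemma \ref{L:conjD73}, $A_{5,6}C_{2,3}A_{1,2}$ with Lemma \ref{L:conjC53}, and the three remaining types $C_{7,2}A_{3,1}$, $D_{5,4}C_{3,2}A_{1,1}^2$, $E_{6,4}C_{2,1}A_{2,1}$ with Lemmas \ref{L:conjA74}, \ref{L:conjA74-2}, \ref{L:conjA74-3} respectively, all via $W_1=A_{7,4}A_{1,1}^3$. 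For the first three types the cited lemma concludes uniqueness from the isomorphism type of $(W^\varphi)_1$ alone, which is supplied verbatim by (A), so nothing further is required there.

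The delicate point, and the main obstacle, is the last three types, where $W$ is the \emph{same} VOA with $W_1=A_{7,4}A_{1,1}^3$; the three admissible fixed point types $A_6A_1^2U(1)^2$, $A_4A_2A_1^2U(1)^2$, $A_5A_1^4U(1)$ are distinct, but the corresponding lemmas demand, beyond the fixed point type, that the conformal weight of $W(\varphi)$ equal $1$ together with a constraint on $(\widetilde{W}_\varphi)_1$. I would derive these from (A), (B) and the dimension formula (Theorem \ref{Thm:Dimformula}). Indeed (A) fixes $\dim(W^\varphi)_1=\dim\mathfrak{p}$ and (B) fixes $\dim(\widetilde{W}_\varphi)_1=\dim\g$, so substituting into the dimension formula gives $\dim W(\varphi)_{1/2}=3\dim\mathfrak{p}-\dim\g-\dim W_1+24$, which vanishes in each of the three cases. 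Writing the twisted module as $W(\varphi)=W^{(1,0)}\oplus W^{(1,1)}$ with $W^{(1,0)}$ integrally graded and $W^{(1,1)}$ graded in $1/2+\Z$ (notation of Section \ref{S:Orb}), one has $(\widetilde{W}_\varphi)_1=(W^\varphi)_1\oplus (W^{(1,0)})_1$; since $\dim\g>\dim\mathfrak{p}$, the summand $(W^{(1,0)})_1$ is nonzero, so the positive conformal weight of $W^{(1,0)}$ is the integer $1$, while $\dim W(\varphi)_{1/2}=0$ forces the conformal weight of $W^{(1,1)}$ to be at least $3/2$. Hence the conformal weight of $W(\varphi)$ is exactly $1$.

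Finally the type/rank constraints are immediate from (B): the algebra $(\widetilde{W}_\varphi)_1\cong\g$ is not of type $A_{7,4}A_{1,1}^3$ when $\g=D_{5,4}C_{3,2}A_{1,1}^2$ (feeding Lemma \ref{L:conjA74-2}), and has Lie rank $10$ when $\g=E_{6,4}C_{2,1}A_{2,1}$ (feeding Lemma \ref{L:conjA74-3}). With all hypotheses of the relevant uniqueness lemma now verified, Theorem \ref{T:conj} gives that $\varphi$ lies in a unique conjugacy class, and Theorem \ref{Thm:Rev} then yields $V\cong\widetilde{W}_\varphi$. Running this argument through each of the six types establishes uniqueness, and combining it with the previously settled $64$ cases proves that the isomorphism class of a strongly regular holomorphic VOA of central charge $24$ with nonzero weight one subspace is determined by its weight one Lie algebra.
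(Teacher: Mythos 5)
Your proposal is correct and follows essentially the same route as the paper: invoke Theorem \ref{Thm:Rev} with the data of Tables \ref{T:GM} and \ref{Tab:Main}, verify (a) and (b) via Theorem \ref{T:rev} and the known uniqueness of the four target VOAs $W$, and verify the conjugacy-class hypothesis via Lemmas \ref{L:conjA45}, \ref{L:conjD73}, \ref{L:conjC53}, \ref{L:conjA74}, \ref{L:conjA74-2}, \ref{L:conjA74-3}. Your explicit derivation of the condition that $W(\varphi)$ has conformal weight one from the dimension formula (Theorem \ref{Thm:Dimformula}) together with (A) and (B) is exactly the intended justification of the paper's remark that ``all assumptions in the lemmas follow from $(\widetilde{W}_\varphi)_1\cong\mathfrak{g}$,'' just spelled out in more detail.
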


\begin{proof}
Let $\mathfrak{g}$ be a semisimple Lie algebra in Table \ref{Tab:Main}.
Let $\mathfrak{p}$ be a Lie subalgebra of $\mathfrak{g}$ whose type is given in the same row of Table \ref{Tab:Main}.
Let $W$ be a strongly regular holomorphic VOA of central charge $24$ such that $W_1$ has the type given in the same row of Table \ref{Tab:Main}.
Applying Theorem \ref{Thm:Rev} to our setting on $\mathfrak{g}$, $\mathfrak{p}$ and $W$, we obtain this theorem;
indeed, by taking $\sigma=\sigma_{u}$, where $u$ is as in Table \ref{T:GM}, the assumptions (a) and (b) hold by Theorem \ref{T:rev}, Table \ref{T:GM} and the uniqueness of $W$;
the assumptions (A) and (B) hold by using the corresponding lemma listed in  Table \ref{Tab:Main} (see also Theorem \ref{T:conj} and Table \ref{Ta:conj}).
Note that all assumptions in the lemmas follow from the fact $(\tilde{W}_\varphi)_1\cong\g$, where $\varphi$ is the order $2$ automorphism associated with the grading of the simple current extension $W$ of $V^\sigma$.
\end{proof}

\begin{tiny}
\begin{longtable}[bht]{|c|c|c|c|} 
\caption{Lie algebra structures for the uniqueness results
%$\mathfrak{H}$-weight of $L(\Lambda)^{(h)}$
}\label{Tab:Main} \\
%\begin{tabular}{|c|c|c|c|}
\hline 
$\mathfrak{g}$  & $\mathfrak{p}$&$W_1$& Ref. for uniqueness of conj. class in $\Aut (W)$\\
 \hline%\hline 
$C_{4,10}$  &$C_{2,10}^2$&$A_{4,5}^2$&Lemma \ref{L:conjA45} \\ \hline 
 $D_{7,3}A_{3,1}G_{2,1}$ &$D_{5,3}G_{2,1}A_{1,3}^2A_{1,1}^2U(1)$&$E_{6,3}G_{2,1}^3$&Lemma \ref{L:conjD73}   \\ \hline %
 $A_{5,6}C_{2,3}A_{1,2}$  & $A_{4,6}A_{1,6}A_{1,2}U(1)^2$&$C_{5,3}G_{2,2}A_{1,1}$& Lemma \ref{L:conjC53}\\ \hline 
 $C_{7,2}A_{3,1}$  &$A_{6,4}A_{1,1}^2U(1)^2$&$A_{7,4}A_{1,1}^3$& Lemma \ref{L:conjA74}\\\hline 
 $D_{5,4}C_{3,2}A_{1,1}^2$   &$A_{4,4}A_{2,4}A_{1,1}^2U(1)^2$&$A_{7,4}A_{1,1}^3$&Lemma \ref{L:conjA74-2} \\\hline %     
 $E_{6,4}C_{2,1}A_{2,1}$    &$A_{5,4}A_{1,4}A_{1,1}^3U(1)$&$A_{7,4}A_{1,1}^3$& Lemma \ref{L:conjA74-3}\\\hline % 
 % $(3\Lambda_4+\Lambda_5, \Lambda_3, 0,0)$  & & $(-\Lambda_4 -\Lambda_5, 0,0,0)$    \\ \hline % 
\end{longtable}
\end{tiny}

\appendix\section{Tables for highest weights}

\begin{tiny}
\begin{longtable}[bht]{|c|c|c|c||c|c|c|c|}
\caption{Irreducible modules with conformal weight in $\Z_{\ge2}$ for $C_{4,10}$
% and their $\sigma_u$-twisted modules.
%conformal weights of $L_{C_4}(10, \Lambda)^{(u)}$
}\label{tableC410}\\
%\tiny
%\begin{tabular}{|c|c|c|c||c|c|c|c|}
\hline
Highest  & Conformal wt & $(u| \Lambda)$& Conformal wt of& Highest  & Conformal wt & $(u| \Lambda)$& Conformal wt of \\
weight&   & & $\sigma_u$-twisted module&  weight&  & & $\sigma_u$-twisted module\\ \hline \hline 
$(0,0,0,10)$ &   $10$ &   $10$ &  $5$ & 
$(0,0,2,3)$ & $3$ & $5$ & $3$\\ \hline      
$(2,0,0,5)$ & $4$ & $6$   & $3$  & 
$(0,0,2,4)$ & $4$ & $6$ &$3$\\ \hline    
$(2,0,0,4)$ & $3$ & $5$ & $3$ &
$(0,0,4,0)$ & $2$ & $4$ & $3$\\ \hline 
$(4,0,0,6)$ & $6$ & $8$ & $3$ &
$(0,0,4,4)$ & $6$ & $8$ & $3$ \\ \hline
$(4,0,0,2)$ & $2$ & $4$ & $3$  &
$ (0,0,6,3)$ & $7$ & $9$ & $3$\\ \hline
$  (6,0,0,1)$ & $2$ &$4$ & $3$  & 
$  (0,0,10,0)$ & $8$ & $10$& $3$\\ \hline
$ (10,0,0,0)$ & $3$  & $5$   &$3$  & 
$ (0,2,6,0)$  & $5$  & $8$  & $2$\\ \hline
$ (6,2,0,2)$ &   $4$ & $7$  & $2$ & 
%$  (0,3,0,0)$   1.0000000   1      & \\ \hline
$(0,3,0,7)$ &$8$ & $10$ & $3$ \\ \hline
$(0,3,2,1)$ & $3$ & $6$ & $2$  & 
$ (2,3,0,4)$ &   $5$  & $8$ & $2$ \\ \hline
$ (0,3,2,3)$  &  $5$  & $8$   &$2$   & 
$ (2,3,0,2)$ &  $3$  & $6$  & $2$\\ \hline
$ (0,3,6,1)$ &  $7$  & $9$  & $3$  & 
$ (6,3,0,0)$  & $3$  & $6$   & $2$\\ \hline
$ (0,5,0,0)$  & $2$  & $5$  & $2$& 
$ (0,5,0,5)$  & $7$ &  $10$ & $2$  \\ \hline 
$ (0,7,2,0)$  & $5$  &  $9$ & $1$  & 
$ (2,7,0,1)$  & $5$  &  $9$ & $1$    \\ \hline 
$ (0,8,0,0)$  & $4$  & $8$ & $1$   & 
$ (0,8,0,2)$ & $6$   & $10$ & $1$  \\ \hline
$ (1,0,3,6)$ & $8$   & $19/2$ & $5/2$   & 
%$ (3,0,1,0)$ & $0000000   1      & \\ \hline
$ (1,0,5,1)$  & $4$  & $13/2$    & $5/2$ \\ \hline
$ (5,0,1,3)$  & $4$  & $13/2$ & $5/2$  & 
$ (1,2,1,1)$  & $2$  & $9/2$  & $5/2$\\ \hline
$ (1,2,1,5)$  & $6$  & $17/2$ & $5/2$  & 
$ (1,4,3,0)$  & $4$  & $15/2$ & $3/2$ \\ \hline
$ (3,4,1,2)$  & $5$  & $17/2$ & $3/2$  &  
$ (1,4,3,1)$  & $5$  & $17/2$ & $3/2$ \\ \hline
$ (3,4,1,1)$  & $4$  & $15/2$&  $3/2$ & 
$ (1,4,5,0)$  & $6$  & $19/2$ & $3/2$\\ \hline
$ (5,4,1,0)$  & $4$  & $15/2$ & $3/2$  & 
$ (2,2,2,2)$  & $4$  & $7$ & $2$ \\ \hline
$ (2,2,4,2)$  & $6$  & $9$ & $2$  & 
$ (4,2,2,0)$  & $3$  & $6$ & $2$\\ \hline
$ (3,0,3,1)$  & $3$  & $11/2$ & $5/2$  & 
$ (3,0,3,3)$  & $5$  & $15/2$ & $5/2$\\ \hline
$ (4,1,4,0)$  & $4$  & $7$ &  $2$ & 
$ (4,1,4,1)$  & $5$  & $8$ & $2$ \\ \hline
%\end{tabular}
\end{longtable}

\begin{longtable}[bht] {|c|c|c|c|}
\caption{Irreducible modules with conformal weights in $\Z_{\ge2}$ for  $D_{7,3}A_{3,1}G_{2,1}$
%\geq 2$ for $D_{7,3}A_{3,1}G_{2,1}$
%and minimal weights of $L(\Lambda)^{(h)}$, $h= \frac{1}2(\Lambda_5, \Lambda_2, 0)$
}\label{tableD73}\\
%\tiny
%\begin{tabular}{|c|c|c|c|}
\hline
Highest weight & Conformal wt & $(u|\Lambda)$ & Conformal wt of the $\sigma_u$-twisted module
%& Minimal weight &Highest  & Minimal weight & $(h| \Lambda)$& Minimal weight 
\\  \hline
$( (0,0,0,0,0,3,0),1,0)$,  $( (0,0,0,0,0,3,0),3,0)$& $  3 $ &  $4$ & $1$    \\ \hline 
$( (3,0,0,0,0,0,0),2,0)$ & $  2 $ &  $5/2$ & $3/2$     \\ \hline
$( (0,0,0,0,0,0,3),1,0)$,  $( (0,0,0,0,0,0,3),3,0)$& $  3 $ & $4$ & $1$    \\ \hline
$( (0,0,0,0,0,1,1),0,1)$ & $  2 $ &  $5/2$ & $3/2$   \\ \hline
$( (1,0,0,0,0,1,0),1,1)$,  $((1,0,0,0,0,1,0),3,1)$& $  2 $ &  $2$ & $2$    \\ \hline
%& $  2 $ &     \\ \hline
$( (1,0,0,0,0,1,1),2,1)$ & $  3 $ &   $7/2$ & $3/2$    \\ \hline
$( (1,0,0,0,0,0,1),1,1)$,  $( (1,0,0,0,0,0,1),3,1)$ & $  2 $ &  $2$ & $2$    \\ \hline
%& $  2 $ &     \\ \hline
$( (1,0,0,0,1,0,0),0,0)$ & $  2 $ &  $3$ & $1$     \\ \hline
$( (0,1,0,0,0,0,1),1,0)$,  $( (0,1,0,0,0,0,1),3,0)$& $  2 $ & $5/2$ & $3/2$ \\ \hline
$( (0,0,0,0,1,0,0),2,0)$ & $  2 $ &  $3$ & $1$   \\ \hline
$( (0,1,0,0,0,1,0) ,1,0)$,  $( (0,1,0,0,0,1,0),3,0)$ & $  2 $ & $2$ & $2$    \\ \hline
%& $  2 $ &     \\ \hline
$((1,0,1,0,0,0,0),0,1)$ & $  2 $ &  $2$ & $2$   \\ \hline
$( (0,0,0,1,0,0,1),1,1)$, $( (0,0,0,1,0,0,1) ,3,1)$ & $  3$ &   $7/2$ & $3/2$   \\ \hline
%& $ 3 $ &     \\ \hline
$( (0,0,1,0,0,0,0) ,2,1)$ & $ 2 $ &   $2$ & $2$   \\ \hline
$( (0,0,0,1,0,1,0) ,1,1)$, $( (0,0,0,1,0,1,0) ,3,1)$& $ 3 $ &  $7/2$ & $3/2$    \\ \hline
%& $ 3 $ &   \\ \hline
%\end{tabular}
\end{longtable}

%\begin{comment}

\begin{longtable}[bht] {|c|c|c|c|}
\caption{Irreducible modules with conformal weights in $\Z_{\ge2}$ for $C_{7,2}A_{3,1}$
%with integral weights $\geq 2$ for $A_{3,1} C_{7,2}$
%and minimal weights of $L(\Lambda)^{(h)}$, $h= \frac{1}2(\Lambda_2,\Lambda_7)$
}\label{tableC72}\\
%\tiny
%\begin{tabular}
\hline
Highest weight & Conformal wt & $(u| \Lambda)$ & Conformal wt of the $\sigma_u$-twisted module
%& Minimal weight &Highest  & Minimal weight & $(h| \Lambda)$& Minimal weight 
\\  \hline 
$((0,0,0,0,0,0,2),(0,1,0))$ &   $4$ & $4$ & $1$ \\ \hline    
$((0,0,0,0,0,1,0),(0,1,0))$ & $2$    & $2$ & $1$ \\ \hline    
$((1,0,0,0,0,0,1),(0,0,0) )$  &$2$    & $2$  & $1$ \\ \hline%    
$((0,0,0,0,2,0,0),(0,0,0))$ & $3$       & $5/2$  & $3/2$ \\ \hline%-     1    
$((0,2,0,0,0,0,0),(0,1,0) )$& $2$     &  $3/2$ & $3/2$  \\ \hline%    
$( (0,0,1,0,0,1,0),(0,0,1))$& $3$     &  $5/2$ & $3/2$ \\ \hline%-     1    
$((0,0,1,0,0,1,0),(1,0,0)  )$ & $3$   &  $5/2$ & $3/2$ \\ \hline%-     1    
$((1,0,0,1,0,0,0),(0,0,1) )$  & $ 2$       &$3/2$   & $3/2$ \\ \hline%-     1    
$((1,0,0,1,0,0,0),(1,0,0) )$  & $2$       & $3/2$  & $3/2$ \\ \hline%-     1    
$( (0,0,1,0,1,0,0),(0,1,0))$  & $3$      & $5/2$  & $3/2$ \\ \hline% -     1    
$((0,1,0,1,0,0,0),(0,0,0))$  & $2$       & $3/2$  & $3/2$  \\ \hline%-     1    
%\end{tabular}
\end{longtable}

\begin{longtable}[bht]{|c|c|c|c|} 
\caption{Irreducible modules with integral weights in $\Z_{\ge2}$ for $D_{5,4}C_{3,2}A_{1,1}^2 $}\label{tableD54} \\
%\begin{tabular}{|c|c|c|c|}
\hline 
Highest weight  & Conformal& $(u| \Lambda)$ & Conformal weight of \\
& weight & & $\sigma_u$-twisted module 
\\  \hline
%$((0,0,0,0,0), (0,0,0) , 0,0\}$  & $   0 $ &    \\ \hline %   0       -     1    
$((4,0,0,0,0), (0,0,0) , 0,0)$  & $   2 $ &  $1$&   $2$  \\ \hline %   2       -     1    
$((0,0,0,4,0), (0,0,0) , 1,1)$  & $   3 $ &  $3/2$&  $3/2$  \\ \hline 
 $((0,0,0,0,4), (0,0,0) , 1,1)$  & $   3 $ & $5/2$ &  $5/2$   \\ \hline %   3       -     1    
 $((0,0,0,4,0), (0,0,2) , 0,0)$  & $   4 $ & $3$ &  $1$   \\ \hline %   4       -     1    
 $((0,0,0,0,4), (0,0,2) , 0,0)$  & $   4 $ & $4$ & $2$   \\ \hline %   4       -     1    
 $((0,0,0,0,0), (0,0,2) , 1,1)$  & $   2 $ & $3/2$& $3/2$   \\ \hline 
 $((4,0,0,0,0), (0,0,2) , 1,1)$  & $   4 $ & $5/2$ &  $5/2$    \\ \hline 
%%%%% Not need : contains weight 1%%%%%%%%%%%%
 $((0,0,0,4,0), (0,1,0) , 0,0)$  & $   3 $ & $2$  & $1$   \\ \hline %   3       -     1    
 $((0,0,0,0,4), (0,1,0) , 0,0)$  & $   3 $ & $3$  & $2$   \\ \hline %   3       -     1    
 %% $((0,0,0,0,0), (0,1,0) , 1,1)$  & $   1 $ &    \\ \hline %   1       -     1    
 $((4,0,0,0,0), (0,1,0) , 1,1)$  & $   3 $ &  $3/2$ &   $5/2$   \\ \hline 
 %% $((0,0,0,0,0), (1,0,1) , 0,0)$  & $   1 $ &    \\ \hline %   1       -     1    
 $((4,0,0,0,0), (1,0,1) , 0,0)$  & $   3 $ &  $2$  & $2$  \\ \hline %   
 $((0,0,0,4,0), (1,0,1) , 1,1)$  & $   4 $ &  $5/2$  & $3/2$  \\ \hline     
  $((0,0,0,0,4), (1,0,1) , 1,1)$  & $   4 $ & $7/2$ & $5/2$   \\ \hline
%%%%% Not need: contains weight 1 %%%%%%%%%%%%
%%  $((0,0,0,1,1), (0,0,0) , 0,0)$  & $  1 $ &    \\ \hline %   1       -     1    
 $((2,0,0,1,1), (0,0,0) , 0,0)$  & $  2 $ &  $3/2$ &   $3/2$ \\ \hline     
 $((1,0,0,2,0), (0,0,0) , 1,1)$  & $  2 $ & $1$ & $3/2$    \\ \hline 
 $( (1,0,0,0,2), (0,0,0) , 1,1)$  & $  2 $ & $3/2$  & $1$   \\ \hline 
 $((1,0,0,2,0), (0,0,2) , 0,0)$  & $  3 $ &  $5/2$  & $1$  \\ \hline 
 $( (1,0,0,0,2), (0,0,2) , 0,0)$  & $  3 $ & $3$ & $3/2$   \\ \hline 
 $((0,0,0,1,1), (0,0,2) , 1,1)$  & $  3 $ &  $5/2$ & $3/2$  \\ \hline 
 $((2,0,0,1,1), (0,0,2) , 1,1)$  & $  4 $ &  $3$ & $2$  \\ \hline %    
%%%%%%%%%%%%%%%%%%%%%%%%%%%%%%%%%%%%%%%
 $((1,0,0,2,0), (0,1,0) , 0,0)$  & $  2 $ &  $3/2$ & $1$  \\ \hline %     
 $( (1,0,0,0,2), (0,1,0) , 0,0)$  & $  2 $ & $2$ &  $3/2$   \\ \hline
 $((0,0,0,1,1), (0,1,0) , 1,1)$  & $  2 $ &  $3/2$&  $3/2$   \\ \hline     
 $((2,0,0,1,1), (0,1,0) , 1,1)$  & $  3 $ &  $2$ &  $2$   \\ \hline    
 $((0,0,0,1,1), (1,0,1) , 0,0)$  & $  2 $ &  $2$ &  $1$  \\ \hline %  
 $((2,0,0,1,1), (1,0,1) , 0,0)$  & $  3 $ &  $5/2$ & $3/2$   \\ \hline %    
 $((1,0,0,2,0), (1,0,1) , 1,1)$  & $  3 $ &  $2$ & $3/2$  \\ \hline %     
 $( (1,0,0,0,2), (1,0,1) , 1,1)$  & $  3 $ & $5/2$  & $2$   \\ \hline 
%%%%%%%%%%%%% Contains non-integral %%%%%%% 
 $( (0,0,0,1,3), (0,0,1) , 0,0)$  & $  3 $ &  $3$ & $3/2$  \\ \hline   
 %993  {25,2,0,0)$  & $  1 $ &    \\ \hline %   1       -     1    
 $( (0,0,0,3,1), (0,0,1) , 0,0)$  & $  3 $ &  $5/2$  & $1$   \\ \hline % 
 $( (3,0,0,0,0), (0,0,1) ,0,0)$  &  $  2 $ &  $3/2$  & $3/2$   \\ \hline %     
 %%%%%%%%%%%%% 2 ineq str.%%%%%%%%%%%%%%%%%
$( (2,0,0,0,0), (0,2,0) , 0,0)$  & $  2 $ & $3/2$ &    $3/2$    \\ \hline %   2       -     1    
$( (0,0,0,2,2), (0,2,0) , 1,1)$  & $  4 $ &  $3/2$ &   $3/2$   \\ \hline 
$( (0,0,0,2,2), (2,0,0) , 0,0)$  & $  3 $ &  $3$  & $2$  \\ \hline %   3       -     1    
 $( (2,0,0,0,0), (2,0,0) , 1,1)$  & $  2 $ & $1$ &  $2$   \\ \hline %   2       -     1    
 %% **( 33x 2 =33) %%%%%%%%%%%%%%%%%%%%%% 
%%%% Conatins non -integral %%%%%%%%%%%%%
$(  (0,0,1,0,0), (0,0,1) , 1,1)$  & $  2 $ &  $3/2$  & $3/2$   \\ \hline 
$( (0,1,0,2,0), (0,0,1) , 1,1)$  & $  3 $ &   $2$  & $3/2$  \\ \hline 
$( (2,0,1,0,0), (0,0,1) , 1,1)$  & $  3 $ &   $2$  & $2$ \\ \hline %   3       -     1    
$( (0,1,0,0,2), (0,0,1) , 1,1)$  & $  3 $ &   $5/2$  & $2$ \\ \hline
 %%%%%%%%%%%%%%%%%%%%%%%%%%%%%%%%%%%%
$(  (0,0,1,0,0), (1,1,0) , 0,1)$, $(  (0,0,1,0,0), (1,1,0) , 1,0)$ & $  2 $ &  $3/2$ &   $3/2$   \\ \hline %   2       -     1    
$( (0,1,0,2,0), (1,1,0) , 0,1)$, $( (0,1,0,2,0), (1,1,0) , 1,0)$  & $  3 $ &   $2$ &   $3/2$  \\ \hline %   3       -     1    
$( (2,0,1,0,0), (1,1,0) , 0,1)$, $( (2,0,1,0,0), (1,1,0) , 1,0)$ & $  3 $ &   $2$& 
 $2$ \\ \hline    
$( (0,1,0,0,2), (1,1,0) , 0,1)$, $( (0,1,0,0,2), (1,1,0) , 1,0)$ & $  3 $ &   $5/2$  & $2$  \\ \hline %   3       -     1    
%  & $  2 $ &    \\ \hline %   2       -     1    
%  & $  3 $ &    \\ \hline %   3       -     1    
%  & $  3 $ &    \\ \hline %   3       -     1    
%  & $  3 $ &    \\ \hline %   3       -     1    
%%%%%%%%%%%%%%%%%%%%%%%%%%%%%%%%%
$( (0,0,1,1,1), (0,0,1) , 0,1)$, $( (0,0,1,1,1), (0,0,1) , 1,0)$  & $  3 $ &  $5/2$ &   $3/2$   \\ \hline %   3       -     1    
$( (1,1,0,0,0), (0,0,1) , 0,1)$, $( (1,1,0,0,0), (0,0,1) , 1,0)$  & $  2 $ &  $3/2$ &   $3/2$   \\ \hline %   2       -     1    
%  & $  3 $ &    \\ \hline %   3       -     1    
%  & $  2 $ &    \\ \hline %   2       -     1   
%% %%% (50x 2 =50) %%%%%%%%%%%%%%
%%%% contains non- integral %%%%%%%%%
$( (0,0,1,1,1), (1,1,0) , 0,0)$  & $  3 $ &   $5/2$ &  $3/2$   \\ \hline 
$( (1,1,0,0,0), (1,1,0) , 0,0)$  & $  2 $ &    $3/2$ &  $3/2$ \\ \hline %   2       -     1    
%%%%%%%%%%%%%%%%%%%%%%%%%%%%%%%%%%%
$((0,0,2,0,0), (0,0,0) , 0,0)$  & $  2 $ &  $3/2$ &   $3/2$   \\ \hline %    
$( (0,2,0,0,0), (0,0,0) , 1,1)$  & $  2 $ &  $1$ &   $2$  \\ \hline %   2       -     1    
$( (0,2,0,0,0), (0,0,2) , 0,0)$  & $  3 $ &   $5/2$&   $3/2$  \\ \hline 
$((0,0,2,0,0), (0,0,2) , 1,1)$  & $  4 $ &   $3$ &  $2$ \\ \hline   
%%%%%%%% (52x 2 =52) %%%%%%%%%%%
$( (0,2,0,0,0), (0,1,0) , 0,0)$  & $  2 $ &  $3/2$ & $3/2$   \\ \hline 
$((0,0,2,0,0), (0,1,0) , 1,1)$  & $  3 $ &   $2$ & $2$ \\ \hline %   3       -     1    
$((0,0,2,0,0), (1,0,1) , 0,0)$  & $  3 $ &   $5/2$ &   $3/2$  \\ \hline 
$( (0,2,0,0,0), (1,0,1) , 1,1)$  & $  3 $ &  $3/2$ &  $5/2$   \\ \hline %   3       -     1    
%%%%%%%%%%%%%%%%%%%%%%%%%%%%
$(  (0,1,0,1,1), (0,2,0) , 0,0)$  & $  3 $ & $5/2$&   $3/2$    \\ \hline 
$( (1,0,1,0,0), (0,2,0) , 1,1)$  & $  3 $ &  $2$  &$2$  \\ \hline %   3       -     1    
$( (1,0,1,0,0), (2,0,0) , 0,0)$  & $  2 $ &  $3/2$ & $3/2$   \\ \hline 
$(  (0,1,0,1,1), (2,0,0) , 1,1)$  & $  3 $ &  $2$  &$2$  \\ \hline %   3       -     1    
%%%%%%%%%%%%%%% (54x2 =54)
%%%%%%% contains non-integral%%%%%%%%%
$(  (0,1,1,0,0), (0,1,1),0,0)$  & $  3 $ &   $5/2$&   $3/2$  \\ \hline %   3       -     1    
$(  (0,1,1,0,0), (1,0,0),0,0)$  & $  2 $ &   $3/2$ &   $3/2$  \\ \hline 
%%%%%%%%%%%%%%%%%%%%%%%%%%%%%
$((1,0,0,1,1),(0,1,1),0,1)$, $(  (1,0,0,1,1),(0,1,1),1,0)$ & $  3 $ & $5/2$ &   $3/2$   
\\ \hline     
%  & $  3 $ &    \\ \hline %   3       -     1    
$(  (1,0,0,1,1),(1,0,0),0,1)$, $(  (1,0,0,1,1),(1,0,0),1,0)$ & $  2 $ &   $3/2$ &   $3/2$  \\ \hline    
%  & $  2 $ &    \\ \hline %   2       -     1    
  \end{longtable}  
%\end{comment}

\begin{longtable}[bht] {|c|c|c|c|}
\caption{Irreducible modules with integral weights $\geq 2$ for $E_{6,4}C_{2,1}A_{2,1}$}\label{tableE64}
%\tiny
%\begin{tabular}
\\
\hline
Highest weight  & Conformal wt & $(h| \Lambda)$ & Conformal wt of the $\sigma_u$-twisted module
%& Minimal weight &Highest  & Minimal weight & $(h| \Lambda)$& Minimal weight 
\\  \hline 
$((4,0,0,0,0,0),(0,0),(0,1))$ & $3$& $5/2$& $2$ \\ \hline    
$((4,0,0,0,0,0),(0,0),(1,0))$ & $3$& $5/2$& $2$ \\ \hline  
$((0,0,0,0,0,4),(0,0),(0,1))$ & $3$& $5/2$& $2$\\ \hline    
$((0,0,0,0,0,4),(0,0),(1,0))$ & $3$& $5/2$& $2$ \\ \hline   
$((1,0,1,0,0,0),(1,0), (0,0))$ & $2$ & $2$& $3/2$ \\ \hline    
$( (1,0,0,0,1,1),(1,0),(1,0))$ & $3$ & $3$& $3/2$ \\ \hline  
$((1,0,0,0,1,1),(1,0),(0,1))$&  $3$  & $3$& $3/2$ \\ \hline    
$((0,1,0,0,0,1),(1,0),(1,0))$&  $2$  & $5/2$& $1$ \\ \hline    
$ ((0,1,0,0,0,1),(1,0),(0,1)) $& $2$ & $5/2$& $1$ \\ \hline   
$  ((2,0,0,0,0,2),(0,1),(0,0)) $& $3$& $5/2$& $2$ \\ \hline    
$  ((0,0,0,0,0,2),(0,1),(1,0)) $& $2$& $2$& $3/2$ \\ \hline    
$  ((0,0,0,0,0,2),(0,1),(0,1)) $& $2$& $2$& $3/2$ \\ \hline    
$  ((2,0,0,0,0,0),(0,1),(1,0))$&  $2$& $2$& $3/2$ \\ \hline  
$  ((2,0,0,0,0,0),(0,1),(0,1))$&  $2$& $2$& $3/2$ \\ \hline   
$  ((0,0,0,0,1,1),(1,0),(0,0))$&  $2$& $2$& $3/2$ \\ \hline   
$((1,1,0,0,0,0),(1,0),(1,0))$&  $2$  & $5/2$& $1$ \\ \hline   
$((1,1,0,0,0,0),(1,0),(0,1))$&  $2$  & $5/2$& $1$ \\ \hline    
$((1,0,1,0,0,1),(1,0),(1,0))$&  $3$  & $3$& $3/2$ \\ \hline   
$((1,0,1,0,0,1),(1,0),(0,1))$&  $3$  & $3$& $3/2$ \\ \hline   
$((0,0,0,1,0,0),(0,1),(0,0))$&  $2$ & $2$& $3/2$ \\ \hline   
$((1,0,0,1,0,0),(0,1),(1,0))$&  $3$ & $3$& $3/2$ \\ \hline   
$((1,0,0,1,0,0),(0,1),(0,1))$&  $3$ & $3$& $3/2$ \\ \hline   
$((0,0,0,1,0,1),(0,1),(1,0))$&  $3$ & $5/2$& $2$ \\ \hline   
$((0,0,0,1,0,1),(0,1),(0,1))$&  $3$ & $5/2$& $2$ \\ \hline   
$((1,1,0,0,0,1),(0,0),(0,0))$&  $2$ & $2$& $3/2$ \\ \hline   
$((0,0,1,0,0,1),(0,0),(0,1))$&  $2$ & $2$& $3/2$ \\ \hline     
$((0,0,1,0,0,1),(0,0),(1,0))$&  $2$ & $2$& $3/2$ \\ \hline     
$((1,0,0,0,1,0),(0,0),(0,1))$&  $2$ & $2$& $3/2$ \\ \hline    
$((1,0,0,0,1,0),(0,0),(1,0))$&  $2$ & $2$& $3/2$ \\ \hline 
%\end{tabular}
\end{longtable}

\end{tiny}

\end{document}